\newcommand{\memph}[1]{{\emph{#1}}}
\patchcommand\@starttoc{\begin{quote}}{\end{quote}}
\def\@tocline#1#2#3#4#5#6#7{\relax
  \ifnum #1>\c@tocdepth 
  \else
    \par \addpenalty\@secpenalty\addvspace{#2}%
    \begingroup \hyphenpenalty\@M
    \@ifempty{#4}{%
      \@tempdima\csname r@tocindent\number#1\endcsname\relax
    }{%
      \@tempdima#4\relax
    }%
    \parindent\z@ \leftskip#3\relax \advance\leftskip\@tempdima\relax
    \rightskip\@pnumwidth plus4em \parfillskip-\@pnumwidth
    #5\leavevmode\hskip-\@tempdima
      \ifcase #1
       \or\or \hskip 1em \or \hskip 2em \else \hskip 3em \fi%
      #6\nobreak\relax
    \dotfill\hbox to\@pnumwidth{\@tocpagenum{#7}}\par
    \nobreak
    \endgroup
  \fi}
 \theoremstyle{plain}
 \newtheorem{thm}{Theorem}[section]
 \newtheorem{cor}[thm]{Corollary}
 \newtheorem{lem}[thm]{Lemma}
 \newtheorem{prop}[thm]{Proposition}
\theoremstyle{definition}
 \newtheorem{defn}[thm]{Definition}
 \newtheorem{test}[thm]{Test}
\theoremstyle{remark}
 \newtheorem{rem}[thm]{Remark}
 \newtheorem{ter}[thm]{Terminology}
 \newtheorem{nota}[thm]{Notation}
 \newtheorem{conv}[thm]{Convention}
 \newtheorem{exam}[thm]{Example}
 \numberwithin{equation}{section}
\theoremstyle{plain}
\newtheorem*{thm*}{Theorem}
\newcommand{\VF}{\textup{VF}}
\newcommand{\ACVF}{\textup{ACVF}}
\newcommand{\RV}{\textup{RV}}
\newcommand{\DC}{\textup{DC}}
\newcommand{\MM}{\mdl M}
\newcommand{\OO}{\mdl O}
 \DeclareMathOperator{\ran}{ran}
 \DeclareMathOperator{\dom}{dom}
 \DeclareMathOperator{\id}{id}
 \DeclareMathOperator{\lh}{lh}
 \DeclareMathOperator{\aut}{Aut}
 \DeclareMathOperator{\supp}{supp}
 \DeclareMathOperator{\acl}{acl}
 \DeclareMathOperator{\dcl}{dcl}
 \DeclareMathOperator{\pr}{pr}
 \DeclareMathOperator{\alg}{ac}
 \DeclareMathOperator{\mgl}{GL}
\DeclareMathOperator{\jcb}{Jcb}
\DeclareMathOperator{\K}{\mathds{k}}
\DeclareMathOperator{\res}{res}  
\def\XXint#1#2#3{{\setbox0=\hbox{$#1{#2#3}{\int}$}
\vcenter{\hbox{$#2#3$}}\kern-.5\wd0}}
\newcommand{\Z}{\mathds{Z}}
\newcommand{\A}{\mathds{A}}
\newcommand{\C}{\mathds{C}}
\newcommand{\G}{\mathds{G}}
\newcommand{\Q}{\mathds{Q}}
\newcommand{\N}{\mathds{N}}
\newcommand{\R}{\mathds{R}}
\newcommand{\omin}{$o$\nobreakdash}
\newcommand{\cmin}{$C$\nobreakdash}
\newcommand{\T}{$T$\nobreakdash}
\newcommand{\dand}{\quad \text{and} \quad}
\newcommand{\tand}{~\text{and}~}
\newcommand{\gB}{\mathfrak{B}}
\newcommand{\ga}{\mathfrak{a}}
\newcommand{\gb}{\mathfrak{b}}
\newcommand{\gc}{\mathfrak{c}}
\newcommand{\gh}{\mathfrak{h}}
\newcommand{\go}{\mathfrak{o}}
\newcommand{\gp}{\mathfrak{p}}
\newcommand{\gq}{\mathfrak{q}}
\newcommand{\0}{\emptyset}
\DeclareMathAlphabet{\mathpzc}{OT1}{pzc}{m}{it}
\newcommand{\dbra}[1]{
  \,[\mkern-6.8mu[\, #1 \,]\mkern-6.8mu]}
\newcommand{\dpar}[1]{
   (\mkern-4mu (#1 )\mkern-4mu )}
 \DeclarePairedDelimiter\abs{\lvert}{\rvert}
 \newcommand{\lan}[3]{\mathcal{L}_{#1 \textup{#2} #3}}
\newcommand{\mdl}[1]{\mathcal{#1}}  
\newcommand{\bb}[1]{\mathbb{#1}}
\newcommand{\limplies}{\rightarrow}
\newcommand{\rest}{\upharpoonright}
\newcommand{\fun}{\longrightarrow}
\newcommand{\efun}{\longmapsto}
\newcommand{\sub}{\subseteq}
\newcommand{\mi}{\smallsetminus}
\newcommand{\la}{\langle}
\newcommand{\ra}{\rangle}
\providecommand\given{}
\newcommand\SetSymbol[1][]{%
\nonscript \: #1 \vert
\allowbreak
\nonscript\:
\mathopen{}}
\DeclarePairedDelimiterX\set[1]\{\}{%
\renewcommand\given{\SetSymbol[\delimsize]}
#1
}
\newbox\gnBoxA
\newdimen\gnCornerHgt
\newdimen\gnArgHgt
\def\code #1{%
        \setbox\gnBoxA=\hbox{$#1$}%
        \gnArgHgt=\ht\gnBoxA%
        \ifnum \gnArgHgt<\gnCornerHgt
                \gnArgHgt=0pt%
        \else
                \advance \gnArgHgt by -\gnCornerHgt%
        \fi
        \raise\gnArgHgt\hbox{$\ulcorner$} \box\gnBoxA %
                \raise\gnArgHgt\hbox{$\urcorner$}}
\newcommand{\mVF}{{\mu}{\VF}}
\newcommand{\mgVF}{{\mu} {\VF}}
\newcommand{\mRV}{{\mu}{\RV}}
\newcommand{\mgRV}{{\mu}{\RV}}
\newcommand{\mG}{{\mu}{\Gamma}}
\newcommand{\RES}{\textup{RES}}
\newcommand{\mgRES}{{\mu}{\RES}}
\newcommand{\mRES}{{\mu}{\RES}}
\newcommand{\isp}{\textup{I}_{\textup{sp}}}
\newcommand{\dsp}{\textup{D}_{\textup{sp}}}
\newcommand{\mgisp}{\mu \isp}
\newcommand{\mispdb}{\mu \dsp}
\newcommand{\mgE}{{\mu}{\bb E}}
\newcommand{\mgL}{{\mu}{\bb L}}
\newcommand{\mL}{{\mu}{\bb L}}
\DeclareMathOperator{\rv}{rv}
\DeclareMathOperator{\csn}{csn}
\DeclareMathOperator{\rcsn}{\overline {csn}}
\DeclareMathOperator{\vv}{val}
\DeclareMathOperator{\gsk}{\mathbf{K}_+}
\DeclareMathOperator{\ggk}{\mathbf{K}}
\DeclareMathOperator{\sggk}{!\mathbf{K}}
\DeclareMathOperator{\fn}{FN}
\DeclareMathOperator{\fib}{fib}
\DeclareMathOperator{\fin}{fin}
\DeclareMathOperator{\rad}{rad}
\DeclareMathOperator{\vrv}{vrv}
\DeclareMathOperator{\RVH}{RVH}
\DeclareMathOperator{\can}{\mathbf{c}}
\DeclareMathOperator{\gal}{Gal}
\DeclareMathOperator{\ito}{int}
\DeclareMathOperator{\db}{d \! b}
\DeclareMathOperator{\bdd}{b \! d}
\newcommand{\var}{\textup{Var}}
\DeclareMathOperator{\RVV}{RV_\infty}
\newcommand{\ddx}{\tfrac{d}{d x}}
\DeclareMathOperator{\bfk}{\bf k}
\DeclareMathOperator{\puC}{\tilde \C}
\newcommand{\loc}{\textup{loc}}
\DeclareMathOperator{\tor}{Tor}
\DeclareMathOperator{\Vol}{Vol}
\newcommand{\Var}{\textup{Var}}
\DeclareMathOperator{\tbk}{tbk}
\newcommand{\gmv}{\ggk^{\hat \mu}\var_{\C}}
\newcommand{\ccirc}{\circ \circ}
\DeclareMathOperator{\Cl}{Cl}
\begin{document}

\title[Bounded integral and motivic Milnor fiber]{Bounded integral and motivic Milnor fiber}

\author[Arthur Forey]{Arthur Forey}
\address[A. Forey]{D-MATH, ETH Z\"urich, R\"amistrasse 101, CH-8092 Z\"urich, Switzerland}
\email{arthur.forey@math.ethz.ch}

\author[Yimu Yin]{Yimu Yin}
\address[Y. Yin]{Pasadena, California}
\email{yimu.yin@hotmail.com}

\thanks{Many thanks to Ehud Hrushovski and Fran\c cois Loeser for encouragements and enlightening discussions during the preparation of this manuscript. We also thanks Johannes Nicaise for suggesting to work on the Fubini theorem. Thanks also to Raf Cluckers and Martin Hils for inspiring discussions and comments. A.F.\ is supported by a DFG-SNF lead agency program grant
  (grant 200020L\_175755). }


\begin{abstract}
We construct a new Hrushovski-Kazhdan style motivic integral, the so-call bounded integral, that interpolates the integral with volume forms and that without volume forms. This is done within the framework of model theory of algebraically closed valued fields of equicharacteristic zero. As the main application, we rectify and refine some results of Hrushovski and Loeser on how to construct motivic Milnor fiber without resolution of singularities.
\end{abstract}

\maketitle

\tableofcontents

\pagebreak

\section{Introduction}
Recent years have seen significant development in applying the Hrushovski-Kazhdan integration theory to the  study of a variety of topics in geometry and topology, especially those around Denef-Loeser's motivic Milnor fiber. Such an approach  was first articulated in  \cite{hru:loe:lef} for the purpose of  finding a more conceptual (read ``resolution-free'') construction of the complex  motivic Milnor fiber, among other things; unfortunately, it contains a substantial technical oversight. The main goal of this paper is to present a new Hrushovski-Kazhdan style integral,  the so-call \memph{bounded integral}, that interpolates (or refines) the integral with volume forms and that without volume forms, and thereby fully rectify the results in  \cite{hru:loe:lef}.
The overhaul taken up here constitutes a comprehensive expansion of the theoretical framework of the Hrushovski-Kazhdan style integration that should prove amenable to new applications in the future.

We now explain our construction in more detail.
In this introduction, for concreteness, we work in  the field $\puC = \bigcup_{m \in \Z^+} \C \dpar{ t^{1 / m} }$ of complex Puiseux series. This field is the algebraic closure of the field $\C \dpar t$ of complex Laurent series. A typical element  takes the form $x = \sum_{n \in \Z} a_n t^{n/m}$ for some $m \in \Z^+$ such that its support $\supp(x) = \set{n/m \in \Q \given a_n \neq 0 }$ is well-ordered, in other words, there is a $q \in \Q$ such that $a_n = 0$ for all $n/m < q$. We think of $\bfk \coloneqq \C$ as a subfield of $\puC$ via the embedding $a \efun at^0$. The map $\vv: \tilde \C^\times \fun \Q$ given by $x \efun \min\supp(x)$ is indeed a valuation, and its valuation ring $\OO$ consists of those series $x$ with $\min\supp(x) \geq 0$ and its maximal ideal $\MM$ of those series $x$ with $\min\supp(x) > 0$. Its residue field $\K$ admits a section onto $\bfk$ and hence is isomorphic to $\C$. For a series $x = \sum_{n \in \Z} a_n t^{n/m} \in \puC$ with $\vv(x) = p/m$, let $\rv(x) = a_p t^{p/m}$, which is called the \memph{leading term} of $x$.

It is well-known that $(\puC, \OO)$ is an algebraically closed valued field of equicharacteristic zero. Thus, we may  think of it as an $\lan{}{RV}{}$-model of the first-order theory $\ACVF(0,0)$ of such fields, with the parameter space (the ``ground field'') $\bb S = \C \dpar t$.  Here  the two sorts $\VF$, $\RV$ of the language $\lan{}{RV}{}$ are  interpreted, respectively, as $\puC$, $\tilde \C^\times / 1 + \MM$ (or, equivalently, the set of leading terms) and the cross-sort function $\rv : \VF^\times \fun \RV$ as the quotient map (or the leading term map described above). The obvious epimorphism from $\RV$ onto the value group $\Gamma = \Q$, also referred to as the $\Gamma$-sort, with the kernel $\K^\times$, is denoted by $\vrv$. See \cite[\S~2]{Yin:special:trans} for the precise definitions. The Galois group $\gal(\puC / \C \dpar t)$, which  is also the group of automorphisms over $\C \dpar t$ of $\puC$ as a model of $\ACVF(0,0)$, may be identified with the procyclic group $\hat \mu$ of roots of unity (the limit of the inverse system of groups $\mu_n$ of $n$th roots of unity).

The category $\VF_*$ essentially consists of the definable subsets of $\VF^n$, $n \geq 0$, as objects and the definable bijections between them as morphisms. The category $\RV[k]$ essentially consists of the definable subsets of $\RV^k$ as objects and the definable bijections between them as morphisms. The category  $\RV[*]$  is the coproduct of $\RV[k]$, $k \geq 0$, and hence is equipped with a gradation by ambient dimensions. The actual definitions will be recalled in \S~\ref{sec:HK:main}.

To such categories one associates Grothendieck rings $\ggk\VF_*$ and $\ggk\RV[*]$, roughly constructed as the free group generated by isomorphism classes of objects, subject to the usual scissor relation $[A \mi B] + [B] = [A]$ for any objects $A \sub B$, and the product being given by fiber product. See  Definition~\ref{def:Kgroup} for details.

Pulling back from $\RV$ to $\VF$ along the map $\rv: \VF^\times \fun \RV$ yields a ring homomorphism between the   Grothendieck rings
\begin{equation}\label{lifting:hom}
\bb L : \ggk\RV[*] \fun \ggk\VF_*
\end{equation}
One of the  main results of \cite{hrushovski:kazhdan:integration:vf} states that $\bb L$ is surjective with the kernel $(\bm P-1)$, where $\bm P$ stands for the element $[\{1\}] - [\rv(\MM \mi 0)]$  in $\ggk \RV[1]$ (so the principal ideal $(\bm P - 1)$ is not homogenous), and hence, inverting, we get a canonical isomorphism
\[
\int : \ggk\VF_* \fun \ggk\RV[*]/(\bm P-1).
\]

The structure of $\ggk \RV[*]$ can be significantly elucidated. To wit, it is isomorphic to a tensor product of two other Grothendieck rings $\ggk \RES[*]$ and $\ggk \Gamma[*]$, where  $\RES[*]$ is  the category of twisted constructible sets in the residue field $\K$  and $\Gamma[*]$ is the category of definable sets in the value group $\Gamma$ (as an \omin-minimal group), both are graded by ambient dimensions. The objects of $\RES[*]$ are twisted because the short exact sequence
\[
1 \fun \K^\times \fun \RV \fun \Gamma \fun 0
\]
does not admit a natural splitting, and $\ggk \Gamma[*]$ is not the Grothendieck ring of \omin-minimal groups because not all definable bijections are admitted as morphisms. Anyway, we have two retractions from $\ggk \RV[*]$ onto a quotient $\sggk \RES$ of $\ggk \RES$ (the gradation is forgotten) that vanish on $(\bm P-1)$, reflecting the fact that there are two Euler characteristics in the $\Gamma$-sort; these are denoted by $\bb E_{b}$, $\bb E_{g}$.

The objects of $\VF_*$, $\RV[*]$ may be equipped with $\Gamma$-volume forms, and the resulting categories are denoted by $\mgVF[*]$, $\mgRV[*]$. There is again a  canonical isomorphism
\[
\int^\mu : \ggk\mgVF[*] \fun \ggk\mgRV[*]/(\bm P),
\]
where the principal ideal $(\bm P)$ is now homogenous. The ring $\ggk\mgRV[*]$ can  be expressed as a tensor product of  $\ggk \mgRES[*]$ and $\ggk \mG[*]$, which induces two retractions $\mgE_ {b}$, $\mgE_{g}$ from $\ggk\mgRV[*]$ onto $\sggk \RES[*]$ that vanish on $(\bm P)$; see the right column of the diagram (\ref{diag-diamond-interpol}).

We think of $\int$ as the universal additive invariant (or generalized Euler characteristic) of definable sets and $\int^\mu$ as a motivic integral. They are not directly related, though: in  $\ggk\VF_*$, two objects are identified if there is a definable bijection between them, whereas in $\ggk\mgVF[*]$, two objects are identified if there is a measure-preserving bijection between two dense subobjects.

The main result of this paper is  that the two isomorphisms $\int$, $\int^\mu$ can be interpolated by a third one, the bounded integral (Theorem \ref{main:prop}):
\[
\int^\diamond : \ggk \mgVF^\diamond[*] \fun \ggk  \mgRV^{\db}[*] / (\bm P_\Gamma).
\]
The category $\mgVF^\diamond[*]$ consists of those \memph{proper invariant} objects of $\mgVF[*]$, which roughly means a bounded definable set that is a union of open boxes of valuative radius $\gamma$, $\gamma \in \Gamma$. The category $\mgRV^{\db}[*]$ consists of those \memph{doubly bounded} objects of $\mgRV[*]$, in the sense that its image in the value group is doubly bounded. See \S~\ref{sec:inv:cov} for the precise definitions. Note that $\mgVF^{\diamond}[*]$  is also graded since, as in classical measure theory, gradation by ambient dimensions is a necessity in the presence of volume forms (a curve has no volume if considered as a subset of a surface). The ideal $(\bm P_\Gamma)$ is homogenous but no longer principal; it is generated by the elements $\bm P_\gamma \in \ggk \mgRV^{\db}[1]$, one for each $\gamma \in \Z^+$, defined as follows. Let $\MM_\gamma$ be the open  disc around $0$ with valuative radius $\gamma$ and $t_\gamma \in \vrv^{-1}(\gamma)$ a definable element. Then $\bm P_\gamma$ is the element
\[
[\rv(\MM \mi \MM_{\gamma})] + [\{t_\gamma\}] - [\{1\}]
\]
with the constant volume form $0$, and it does not depend on the choice of $t_\gamma$. We can express $\ggk  \mgRV^{\db}[*]$ as a tensor product of $\ggk \mRES[*]$ and $\ggk \mG^{\db}[*]$. Since the objects of $\mG^{\db}[*]$ are doubly bounded, the two Euler characteristics coincide and consequently there is only one retraction $\bb E^{\diamond}$ from $\ggk  \mgRV^{\db}[*]$ onto $\sggk \RES$ that vanishes on $(\bm P_\Gamma)$.

The situation is summarized in the  commutative diagram (\ref{diag-diamond-interpol}). More precisely speaking, the bounded integral $\int^\diamond$ is a common refinement of $\int$ and $\int^\mu$: it is finer than $\int$ because it carries volume forms, and it is finer than $\int^\mu$ because it only admits those measure-preserving morphisms that are defined everywhere (as opposed to on dense subobjects).
\begin{figure}[htb]
\begin{equation}\label{diag-diamond-interpol}
\bfig
  \hSquares(0,0)/<-`->`->`->`->`<-`->/<400>[{\ggk \VF_*}`{\ggk \mgVF^\diamond[*]}`{\ggk \mgVF[*]}`{\ggk \RV[*] / (\bm P - 1)}`{\ggk \mgRV^{\db}[*] / (\bm P_\Gamma)}`{\ggk \mgRV[*] / (\bm P)}; ``\int`\int^{\diamond}`\int^\mu``]
\efig
\end{equation}
\end{figure}


%
%

The general strategy for the construction of the bounded integral $\int^\diamond$ follows roughly that of the original Hrushovski-Kazhdan integral. One first defines a lifting homomorphism from $\ggk  \mgRV^{\db}[*]$ to $\ggk \mgVF^\diamond[*]$,  shows that it is surjective, then studies its kernel.
On the technical side, in order to stay within the realm of proper invariant sets at each step of the construction, one is led to the notion of a \memph{proper covariant} bijection, first formulated in \cite{hru:loe:lef}. It goes as follows (see Definition~\ref{prop:pseu} for more details). For $\alpha \in \Gamma$, let $\pi_\alpha : \VF\fun \VF/\MM_\alpha$ be the projection map;  if $\alpha\in \Gamma^n$ then $\pi_{\alpha}$ denotes the product of the maps $\pi_{\alpha_i}$. A definable bijection $f$ from $A\sub \VF^n$ onto  $B \sub \VF^m$ is said to be proper covariant if there are $\alpha \in \Gamma^n$, $\beta \in \Gamma^n$, and a function $f_\downarrow : \pi_{\alpha}(A) \fun \pi_{\beta}(B)$ such that $A$ is $\alpha$-invariant, $B$ is $\beta$-invariant, and  $\pi_\beta \circ f = f_\downarrow \circ \pi_\alpha$. In order to show the existence of such proper covariant bijections of a very special kind, we establish in \S~\ref{sec:cont:fib:prop} some general properties of definable continuous functions, reminiscent of those of definable continuous functions in \omin-minimal theories (see, for instance,  \cite[Chapter~6]{dries:1998}).

Next, we turn to the construction of the motivic Milnor fiber attached to a nonconstant polynomial function $f : (\C^d, 0) \fun (\C, 0)$,  where $0$ is assumed to be a singular point, that is, $\nabla f (0) = 0$.

By an (algebraic) variety over a field $\bfk$, we mean a reduced separated $\bfk$-scheme of finite type. We denote by $\Var_{\bfk}$ the category of  varieties over $\bfk$.
Let $\ggk \var_{\C}$ be the Grothendieck ring of complex algebraic varieties and $\ggk^{\hat \mu} \var_{\C}$ the corresponding Grothendieck ring with good $\hat \mu$-actions. Here, in addition to the factorization requirement for the $\hat \mu$-actions, the qualifier ``good'' also means that a $\C$-vector space endowed with a linear $\hat \mu$-action is identified with the same  $\C$-vector space but endowed with the trivial $\hat \mu$-action.

Let $\mathscr L$ be the space of formal arcs on $\C^d$ at $0$. So each element in $\mathscr L$ is of the form $\gamma(t) = (\gamma_1(t), \ldots, \gamma_d(t))$, where $\gamma_i(t)$ is a complex formal power series and $\gamma_i(0) = 0$. For each $m \in \Z^+$, let $\mathscr L_m$ be the space of such arcs modulo $t^{m+1}$ (also referred to as ``truncated arcs''). Consider the following locally closed subset of $\mathscr L_m$:
\begin{equation*}
  \mdl X_{f,m}  = \set{\gamma(t) \in \mathscr L_m \given f(\gamma(t)) = t^m \mod t^{m+1} }.
\end{equation*}
It may be viewed in a natural way as the set of closed points of an algebraic variety over $\C$ and, as such, carries a natural $\mu_{m}$-action. The \memph{motivic zeta function} attached to $f$ is then the generating series whose coefficients are in effect the ``$\hat \mu$-equivariant motivic volumes'' of the sets of truncated arcs above:
\[
 Z_f(T) = \sum_{m \in \Z^+} [\mdl X_{f,m}] [\A]^{-nd}T^m \in \gmv[[\A]^{-1}] \dbra{T}.
\]
It is  shown in \cite{denefloeser:arc, den:loe:2002} that $Z_f(T)$ is rational and belongs to $\gmv[[\A]^{-1}][T]_\dag$, where the subscript ``$\dag$'' indicates that the ring is localized by  a certain multiplicative family of elements. The \memph{motivic Milnor fiber}
\[
\mathscr S_{f} = - \lim_{T \limplies \infty} Z_{f}(T)
\]
is then extracted from this rational expression via a formal process of sending the variable $T$ to infinity (this process is also summarized in \cite[\S~8.4]{hru:loe:lef}). Of course, to justify calling $\mathscr S_{f}$ a ``Milnor so-and-so'' one needs to show, at the very least, that invariants of the \memph{topological Milnor fiber} $F_{f}$ attached to $f$ can be recovered from it.  This is indeed the case for, say, the Euler characteristic and the Hodge characteristic.

Originally,  the proofs that $Z_f(T)$ is rational and  the Euler (or Hodge) characteristics coincide both rely on resolution of singularities.  More recently, in \cite{hru:loe:lef}, these results are established by way of a more conceptual construction. To begin with, $Z_{f}(T)$ is expressed as
\begin{equation*}\label{intro:zeta:mot}
\sum_{m \in \Z^+} H_m([\mdl X_f]) T^m,
\end{equation*}
where the coefficients $H_m([\mdl X_f])$ are certain integrals of definable sets that take values in the ring $\gmv [[\A]^{-1}]$,  and the so-called \memph{nonarchimedean Milnor fiber} of $f$ is the definable set
\[
\mdl X_f = \set{x \in \MM^d \given \rv(f(x)) = \rv(t) }.
\]
Formulated in this way, the rationality of $Z_{f}(T)$ essentially follows from  certain computation rules of (convergent) geometric series. That the Euler characteristics of $\mathscr S_{f}$ and $F_{f}$ coincide follows from the fact that we can express both the Euler characteristic of each coefficient of $Z_{f}(T)$ and that of $F_{f}$ in terms of traces of the monodromy action on the cohomological groups of $F_{f}$, where the first expression relies on  the resolution-free proofs of the A'Campo-Denef-Loeser formula (this is the main point of \cite{hru:loe:lef}) and quasi-unipotence of local monodromy (see \cite[Remark~8.5.5]{hru:loe:lef}). However, technical difficulties, arising from the fact that none of the morphisms (8.1.4), (8.1.6), and (8.1.7) in \cite{hru:loe:lef} is surjective, contrary to what is claimed in \cite[Proposition~10.10(2)]{hrushovski:kazhdan:integration:vf}, make their construction of the integral $H_m$ incomplete.

To remedy this, as an application of our main result, we complete their construction by recasting it in the formalism of the bounded integral $\int^\diamond$. More specifically, recall from \cite[\S~4.3]{hru:loe:lef} the canonical isomorphism
\[
\Theta : \sggk \RES \fun \gmv,
\]
which is essentially the operation of ``twisting back'' via a reduced cross-section (Definition~\ref{twistback}). Let  $\gmv[[\A]^{-1}]_{\loc}$ be the localization of $\gmv[[\A]^{-1}]$ by the multiplicative family generated by the elements $1 - [\A]^{-i}$, $i \in \Z^+$. Let $\ggk^{\hat \mu} \var_{\C}[[\A]^{-1}][T]_{\dag}$ be the localization of $\gmv[[\A]^{-1}][T]$ by the multiplicative family generated by the elements $1 - [\A]^{a}T^b$, where $a \in \Z$ and $b \in \Z^+$; it may be regarded as a subring of  $\ggk^{\hat \mu} \var_{\C}[[\A]^{-1}] \dbra T$. Then we have a  commutative diagram (\ref{diag-diamond-zeta-function}).
\begin{figure}[htb]
\begin{equation}\label{diag-diamond-zeta-function}
\bfig
  \iiixiii(0,400)/<-`->`->`->`->`<-`->`->`->`->``->/<1200,400>[{\ggk \VF_*}`{\ggk^\natural \mgVF^\diamond[*]}`{\ggk^\natural \mgVF[*]}`{\ggk \RV[*] / (\bm P - 1)}`{\ggk^\natural \mgRV^{\db}[*] / (\bm P_\Gamma)}`{\ggk^\natural \mgRV^{\bdd}[*] / (\bm P)}`\gmv`\gmv{[[\A]^{-1}][T]_{\dag}}`\gmv{[[\A]^{-1}]_{\loc}[T]_{\dag}}; ``\int`\int^{\diamond}`\int^\mu```\Theta \circ \bb E_b`Z`Z``]
  \square(0,0)|aamb|/`->`->`=/<1200,400>[\gmv`\gmv{[[\A]^{-1}][T]_{\dag}}`\gmv{[[\A]^{-1}]}`\gmv{[[\A]^{-1}]};
  ``-\lim`]
  \square(1200,0)/``->`->/<1200,400>[\gmv{[[\A]^{-1}][T]_{\dag}}`\gmv{[[\A]^{-1}]_{\loc}[T]_{\dag}}`
  \gmv{[[\A]^{-1}]}`\gmv{[[\A]^{-1}]_{\loc}};``-\lim`]
 \efig
\end{equation}
\end{figure}
The superscript ``$\natural$'' means that the integral is restricted to the subring of a sort of ``integrable'' sets. The map $Z$ assigns, to each object $[\bm U] \in \ggk^\natural \mgRV^{\db}[*]$,  a power series
\[
Z([\bm U])(T) = \sum_{m \in \Z^+} \bm H_m([\bm U]) T^m \in \ggk^{\hat \mu} \var_{\C}[[\A]^{-1}] \dbra T,
\]
that is, the motivic zeta function of $\bm U$, where $\bm H_m$ is the rectified version of $H_m$. Note that $Z$ is a ring homomorphism with respect to the Hadamard product in the target. It turns out that $Z([\bm U])(T)$ is rational and belongs to $\gmv[[\A]^{-1}][T]_{\dag}$. The formal process $T \limplies \infty$ mentioned above then lands us in $\gmv[[\A]^{-1}]$; this is the map $-\lim$. If we take $[\bm U] / (\bm P_\Gamma) = \int^{\diamond}([\mdl X_f])$ then the end result is the motivic Milnor fiber $\mathscr S_{f}$.

Since $\mdl X_f$ also lives in $\VF_*$, we can recover   $\mathscr S_{f}$ directly as $(\Theta \circ \bb E_b \circ \int)([\mdl X_f])$ in $\gmv$, without even inverting $[\A]$. This is indeed a salient point. However, for comparison with the Denef-Loeser construction, we have to go down to  $\gmv[[\A]^{-1}]$, since that is where the coefficients of the motivic zeta function live.

The composite  $\Theta \circ \bb E_b \circ \int$ is referred to as the \memph{motivic volume operator}, usually denoted by $\Vol$, and is already used implicitly in \cite{hrushovski:kazhdan:integration:vf}. Since its reemergence in \cite{hru:loe:lef}, it has been featured in a growing list of applications, see \cite{thuong-int-conj, nicaise_ps_tropical, Nic:Pay:trop:fub, forey_virtual_2017, NS-deg-stab-rat}.

The nonarchimedean Milnor fiber $\mdl X_f$ is closely related to the analytic Milnor fiber $\mathcal{F}^{\mathrm{an}}_{f}$ introduced in \cite{nicaise:sebag:motivic:serre}. The $\puC$-rational points of $\mathcal{F}^{\mathrm{an}}_{f}$ may be understood as an object of  $\VF_*$ (but not $\mgVF^\diamond[*]$) and hence, as such, admits a motivic volume $\Vol(\mathcal{F}^{\mathrm{an}}_{f})$. This is indeed the motivic Milnor fiber $\mathscr S_{f}$. This fact is already proven  in \cite{nicaise_ps_tropical}, but without taking the $\hat \mu$-actions into account, and also in \cite{Nic:Pay:trop:fub, forey_virtual_2017}. The arguments there all rely on resolution of singularities.

The bounded integral $\int^\diamond$ is also used in \cite{FY:Mot-Int-Mil-Fib} to obtain similar results on real motivic Milnor fiber.

In \S~\ref{csndag}, as a computational tool, we introduce the notion of a twistoid. There we prove two Fubini theorems that are not related to the rest of the paper. The first is  a version of the  tropical motivic Fubini theorem of \cite{Nic:Pay:trop:fub}. In a  simplified form (but of central interest), the second states the following.

\begin{thm*}
Let $f :A \fun B$, $f : A' \fun B$ be morphisms of varieties over $\C$. If $\Vol_b([f^{-1}(b)])=\Vol_b([f'^{-1}(b)])$ for all $b\in B$  then $\Vol([A])=\Vol([A'])$. Consequently, if $\Vol_b([f^{-1}(b)])= 0$ for all $b\in B$ then $\Vol([A]) = 0$.
\end{thm*}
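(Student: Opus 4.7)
The plan is to derive the statement as a specialization of the more general Fubini theorem for twistoids that is the second main result of Section \ref{csndag}. The present statement is the ``pointwise uniqueness'' consequence of that Fubini theorem, extracted by specializing to the case where the base is a variety over $\C$ and the integrand is a variety-valued family.

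First, I would transport everything to the Hrushovski--Kazhdan setting. Via the residue field section $\C \hookrightarrow \puC$, the varieties $A, A', B$ embed as definable subsets of $\VF_*$, and the morphisms $f, f'$ become definable maps over $B$. Under the motivic volume operator $\Vol = \Theta \circ \bb E_b \circ \int$, which on residue-field data is essentially the identity on classes, the hypothesis $\Vol_b([f^{-1}(b)]) = \Vol_b([f'^{-1}(b)])$ translates into pointwise equality of fiber volumes in $\gmv$ for every closed point $b \in B$.

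Second, I would package the fibrations $f, f'$ as twistoids $\tau, \tau'$ over $B$ in the sense of Section \ref{csndag}; these encode the families $\{f^{-1}(b)\}_{b \in B}$ and $\{f'^{-1}(b)\}_{b \in B}$ as objects carrying both the residue-field structure of the fibers and the parametrization by $B$. By the twistoid-Fubini theorem, the motivic volume of the total space equals the motivic integral over $B$ of the fiber volumes:
\[
\Vol([A]) \;=\; \int_{b \in B} \Vol_b([f^{-1}(b)]) \, d\Vol(b),
\]
and symmetrically for $A'$. Since the two integrands coincide pointwise on $B$ by hypothesis, the integrated classes in $\gmv$ coincide, yielding $\Vol([A]) = \Vol([A'])$. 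For the ``consequently'' clause, apply the first assertion with $A'$ the empty scheme, whose fibers all have vanishing motivic volume by convention.

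The main obstacle is not in this deduction but in the upstream twistoid-Fubini theorem on which it rests. Specifically, one must verify that the twistoid class $[\tau]$, and hence its image under the integration map to $\gmv$, is actually determined by the fiberwise volume function $b \mapsto \Vol_b([f^{-1}(b)])$ rather than by finer family data. This is the defining feature of the twistoid formalism, where the relative structure over $B$ is designed precisely so that pointwise fiber volumes at closed points of $B$ suffice to recover the integrated class; it is also what fails for the naive Grothendieck ring of varieties over $B$ and what necessitates the twistoid construction in the first place. Once this feature is in place, the theorem as stated is an immediate corollary.
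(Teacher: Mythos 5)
Your argument rests on an alleged ``twistoid-Fubini'' disintegration formula $\Vol([A])=\int_{b\in B}\Vol_b([f^{-1}(b)])\,d\Vol(b)$, and this is where the proposal breaks down: no such formula is proved in \S~\ref{csndag}, and it is not even well-defined as stated, since the operators $\Vol_b$ take values in rings $\ggk^{{\hat \mu}^{r_b}}\var_{\C_b}$ that vary with $b$ (Remark~\ref{rem:volb}) and no motivic measure on $B$ is constructed against which the function $b\mapsto\Vol_b([A_b])$ could be integrated. Moreover, ``twistoid'' in this paper means a set in $\RV$ all of whose $\gamma$-twistbacks coincide (Definition~\ref{twistback}); it is a computational device for evaluating $\bb E_b$, not a packaging of a family over $B$, and there is no ``relative structure over $B$ designed so that pointwise fiber volumes determine the integrated class.'' The actual statement, Theorem~\ref{fubini}, is deliberately weaker than a disintegration: it says only that if for each $b$ one can choose $b$-definable objects $\bm U_b,\bm U'_b\in\RV[*]$ with $[A_b]=[\bb L\bm U_b]$, $[A'_b]=[\bb L\bm U'_b]$ and $\bb E_b([\bm U_b])=\bb E_b([\bm U'_b])$, then $\bb E_b(\int[A])=\bb E_b(\int[A'])$. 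By deferring everything to ``the defining feature of the twistoid formalism'' you are deferring the entire content of the theorem; its proof requires uniformizing the fiberwise hypothesis by compactness, applying special bijections to the top terms so the data factors through $\rv(b)$ (Lemma~\ref{special:term:constant:disc}), producing bipolar twistoid decompositions uniformly over the base (Lemma~\ref{coh:decom}), reducing to the semiring-congruence statement in $\gsk\RES$ (Test~\ref{test}), and the product-free construction-graph analysis that makes that statement expressible by an $\lan{}{RV}{}$-formula so that compactness can again be applied over the base. Nothing in your outline substitutes for these steps.

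There is a second, independent error: you translate the hypothesis into equality of fiber volumes ``for every closed point $b\in B$'' and assert that fiber volumes at closed points suffice to recover the total class. The paper says explicitly the opposite: the hypothesis must be tested for $b$ ranging over a sufficiently saturated model $\bb U$, i.e.\ including generic points, and ``it is clearly not enough to test the hypothesis just inside $\puC$'' (Remark~\ref{rem:volb}); a fortiori $\C$-rational points of $B$ do not suffice, and the compactness arguments above need the hypothesis exactly at such generic $b$. So even granting some Fubini-type input, your specialization to closed points would not yield the theorem. (Your reduction of the ``consequently'' clause to the case $A'=\0$ is fine and agrees with the paper.)
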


Here the operator $\Vol_b$ is similar to $\Vol$, but over the parameter space $\C\dpar t (b)$. It is essential to consider $b$ in a sufficiently saturated model $\bb U$ of $\ACVF(0,0)$ instead of $\puC$, allowing the hypothesis to be tested at generic points.  A more general version, with $A$, $B$ possibly arbitrary definable sets, appears as Theorem \ref{fubini}, but one needs to be more careful about the definition of $\Vol_b$.

Some property of this form is needed in \cite{loeser2019motivic}.
We expect to find more applications of these results in motivic Donaldson-Thomas theory, along the line of \cite{Nic:Pay:trop:fub}.

The paper is organized as follows. In \S~\ref{sec-tech} we recall and extend certain facts on definable sets in $\ACVF(0,0)$ that are needed later on. In particular,  some new results about continuous definable functions in this environment are obtained, which could be of independent interest, such as fusion of continuous fibers (Proposition~\ref{prop:continuity:fiberwise}) and curve selection (Lemma~\ref{curve:select}), which closely resemble their counterparts in the \omin-minimal setting. In the short \S~\ref{sec:HK:main} is introduced notations and results from the original Hrushovski-Kazhdan paper \cite{hrushovski:kazhdan:integration:vf}. In \S~\ref{sec:retract} we show how to replace classes of polytopes with numbers  and thereby construct a retraction map from the Grothendieck ring in $\RV$ to that of varieties. Invariance and covariance are defined and investigated  in \S~\ref{sec-pscb}, where the surjectivity of the lifting homomorphism (\ref{lifting:hom}) is also established. Then in \S~\ref{section:int} we study its kernel and finish the construction of the bounded integral $\int^\diamond$. Finally, \S~\ref{sec:spec:pui} is devoted to the construction of motivic zeta function and motivic Milnor fiber, rectifying the result in \cite{hru:loe:lef}.

\section{Technicalities}
\label{sec-tech}

\subsection{Setting and notation}\label{sec:notation}

\begin{nota}[Coordinate projections]\label{indexing}
For each $n \in \N$, let $[n]$ denote the set $\{1, \ldots, n\}$. For any $E \sub [n]$, we write $\pr_E(A)$, or even $A_E$, when there is no danger of confusion, for the projection of $A$ into the coordinates contained in $E$. It is often more convenient to use simple standard descriptions as subscripts. For example, if $E$ is a singleton $\{i\}$ then we shall always write $E$ as $i$ and $\tilde E \coloneqq [n] \mi E$ as $\tilde i$; similarly, if $E = [i]$, $\set{k \given  i \leq k \leq j}$, $\set{k \given  i < k < j}$, $\{\text{all the coordinates in the sort $S$}\}$, etc., then we may write $\pr_{\leq i}$, $\pr_{[i, j]}$, $A_{(i, j)}$, $A_{S}$, etc.; in particular, we shall frequently write $A_{\VF}$ and $A_{\RV}$ for the projections of $A$ into the $\VF$-sort and $\RV$-sort coordinates.

Unless otherwise specified, by writing $a \in A$ we shall mean that $a$ is a finite tuple of elements (or ``points'') of $A$, whose length, denoted by $\lh(a)$, is not always indicated. If $a = (a_1, \ldots, a_n)$ then, for $1 \leq i < j \leq n$, following the notational scheme above, $a_i$, $a_{\tilde i}$, $a_{\leq i}$, $a_{[i, j]}$, $a_{\VF}$, etc., are shorthand for the corresponding subtuples of $a$.

We shall write $\{t\} \times A$, $\{t\} \cup A$, $A \mi \{t\}$, etc., simply as $t \times A$, $t \cup A$, $A \mi t$, etc., when it is clearly understood that $t$ is an element and hence must be interpreted as a singleton in these expressions.

For $a \in A_{\tilde E}$, the fiber $\{b : ( b, a) \in A \} \sub A_E$ over $a$ is often denoted by $A_a$. Note that, in the discussion below, the distinction between the two sets $A_a$ and $A_a \times \{a\}$ is usually immaterial and hence they may and  shall be tacitly identified. In particular, given a function $f : A \fun B$ and $b \in B$, the pullback $f^{-1}(b)$ is sometimes written as $A_b$ as well. This is a special case since functions are identified with their graphs. This notational scheme is especially useful when the function $f$ has been clearly understood in the context and hence there is no need to spell it out all the time.
\end{nota}

From here on we work in a sufficiently saturated model $\bb U$ of $\ACVF(0,0)$, together with a parameter  set $\mathbb{S}$, which is supposed to be a substructure of $\bb U$. So by a definable set  we shall mean an $\bb S$-definable set, unless indicated otherwise.

\begin{nota}
There is a special element $\infty = \rv(0)$ in the $\RV$-sort. For simplicity, we shall write $\RV$ to mean the $\RV$-sort without the element $\infty$, and $\RV_\infty$ otherwise --- that is, $\RV = \rv(\VF^\times)$ and $\RV_\infty = \rv(\VF)$ --- although the difference rarely matters (when it does we will of course provide further clarification).
\end{nota}

\begin{ter}[Sets and subsets]\label{nota:sub}
By a definable set in $\VF$ we mean a definable subset in the sort $\VF$, by which we just mean a subset of $\VF^n$ for some $n$, unless indicated otherwise; similarly for other sorts or structures  in place of $\VF$ that have been clearly understood in the context, such as $\RV_\infty$, $\K$, $\MM$, or any substructure $\bb M$ of $\bb U$. In particular,  a definable set without further qualification means a definable set in $\bb U$, that is, a  definable subset of $\VF^n \times \RV^m_\infty$ for some $n, m \in \N$.
\end{ter}

\begin{ter}[Relations as functions]
We may think of a relation $C$ between two sets $A$ and $B$, that is, a subset $C \sub A \times B$, as a function from $A$ into the powerset $\mdl P(B)$. For instance, if $A \sub \VF^n \times \RV^m$ is a definable set then it may be thought of as a \memph{definable} function from $A_{\VF}$ into  $\mdl P(\RV^m)$. Thus a definable function $f : A \fun \mdl P(B)$ is ontologically the same as the definable set $\bigcup_{a \in A} a \times f(a)$. We say that $f$ is \memph{finitary} if $f(a)$ is finite for every $a \in A$.
\end{ter}

The default topology in $\bb U$ is the valuation topology.

\begin{nota}\label{defn:disc}
A set $\gp \sub \VF^n \times \RV_\infty^m$ of the form $\prod_{i \leq n} \gb_i \times t$ is called an (\memph{open, closed, $\RV$-}) \memph{polydisc} if each $\gb_i$ is an (open, closed, $\RV$-) disc. The \memph{radius} $\rad(\gp)$ of the polydisc $\gp$ is the tuple $(\rad(\gb_1), \ldots, \rad(\gb_n))$. The open and closed polydiscs centered at a point $(a,t) \in \VF^n \times \RV^m$ with radius $\gamma \in \Gamma_\infty^n$ are denoted by $\go((a,t), \gamma)$, $\gc((a,t), \gamma)$, respectively.

The subdiscs $\go(0, \gamma)$, $\gc(0, \gamma)$ of $\VF$ are more suggestively denoted by $\MM_\gamma$, $\OO_\gamma$, respectively, and $\MM_0$, $\OO_0$ are even more suggestively denoted by $\MM$, $\OO$. Write $\RV^{\ccirc}_\infty = \rv(\MM)$ and $\RV^{\ccirc} = \RV^{\ccirc}_\infty \mi \infty$. More generally, for $\gamma \in \Gamma$, denote by $\RV^{\ccirc}_\gamma$ the set $\rv(\MM_{\gamma} \mi 0)$.

The \memph{$\RV$-hull} of a set $A$, denoted by $\RVH(A)$, is the union of all the $\RV$-polydiscs whose intersections with $A$ are nonempty. If $A$ equals $\RVH(A)$ then $A$ is called an \memph{$\RV$-pullback}.
\end{nota}

\begin{ter}\label{rvfiber}
A \memph{$\VF$-fiber} of a set $A$ is a set of the form $A_t$, where $t \in A_{\RV}$; in particular,  a $\VF$-fiber of a function $f : A \fun B$ is a set of the form $f_t$ for some  $t \in f_{\RV}$ (here $f$ also stands for its own graph) which is indeed (the graph of) a function.  We say that $A$ is open if every one of its $\VF$-fibers is, $f$ is continuous if every one of its $\VF$-fibers is, and so on. This is the right point of view since, for instance, it can be readily checked that if $f$, $g$ are continuous functions with $\ran(f) \sub \dom(g)$ then  $g \circ f$ is also continuous.
\end{ter}

\begin{nota}\label{gamwhat}
Semantically we shall treat the value group $\Gamma$ as a definable sort (the $\Gamma$-sort). However, syntactically any reference to $\Gamma$ may be eliminated in the usual way and we can still work with $\lan{}{RV}{}$-formulas for the same purpose. This implies that $\gamma\in \Gamma$ is definable means that $\vv^{-1}(\gamma)$ is definable.

We shall write $\gamma^\sharp$, $\gamma \in \Gamma$, when we want to emphasize that it is the set $\vrv^{-1}(\gamma) \sub \RV$  that is being considered. More generally, if $I$ is a set in $\Gamma$ then we write $I^\sharp = \bigcup\set{\gamma^\sharp \given \gamma \in I}$. Similarly, if $U$ is a set in $\RV$ then $U^\sharp$ stands for $\bigcup\set{\rv^{-1}(t) \given t \in U}$. In particular, $\gamma^{\sharp \sharp}=\set{ x \in \VF \given \vv(x)=\gamma}$.
\end{nota}

\begin{nota}\label{jcbG:op}
Let $U \sub \RV^n \times \Gamma^m$, $V \sub \RV^{n'} \times \Gamma^{m'}$, and $C \sub U \times V$. The \memph{$\Gamma$-Jacobian} of $C$ at $((u, \alpha), (v, \beta)) \in C$, written as $\jcb_{\Gamma} ((u, \alpha), (v, \beta))$, is the element
\[
\jcb_{\Gamma} ((u, \alpha), (v, \beta)) =  - \Sigma (\vrv(u), \alpha) + \Sigma (\vrv(v), \beta),
\]
where $\Sigma (\gamma_1, \ldots, \gamma_n) = \gamma_1 + \ldots + \gamma_n$. If $C$ is the graph of a function then we just write $\jcb_{\Gamma}C(u, \alpha)$ instead of $\jcb_\Gamma((u, \alpha), C(u, \alpha))$.
\end{nota}

%

\begin{conv}\label{topterm}
We may and shall assume that in any $\lan{}{RV}{}$-formula, every $\VF$-sort term (polynomial) occurs in the scope of an instance of the function symbol $\rv$. For example, if $f(x)$, $g(x)$ are polynomials then the formula $f(x) = g(x)$ is equivalent to $\rv(f(x) - g(x)) = \infty$. The polynomial $f(x)$ in $\rv(f(x))$ is referred to as a \memph{top term} (of the $\lan{}{RV}{}$-formula in question).
\end{conv}

\begin{nota}[The definable sort $\DC$ of discs]\label{disc:exp}
At times it will be more convenient to work in the traditional expansion $\bb U^{\textup{eq}}$ of $\bb U$ by all definable sorts. However, for our purpose, a much simpler expansion $\bb U^{\bullet}$ suffices. This expansion has only one additional sort $\DC$ that contains, as elements, all the open and closed discs. Since each point in $\VF$ may be regarded as a closed disc of valuative radius $\infty$, for convenience, we may and occasionally do think of $\VF$ as a subset of $\DC$. Heuristically, we may think of a disc that is properly contained in an $\RV$-disc as a ``thickened'' point of certain stature in $\VF$. For each $\gamma \in \Gamma$, there are two additional cross-sort maps $\VF \fun \DC$ in $\bb U^{\bullet}$, one sends $a$ to the open disc, the other to the closed disc, of radius $\gamma$ that contain $a$.

The expansion $\bb U^{\bullet}$ can help reduce the technical complexity of our discussion. However, as is the case with the definable $\Gamma$-sort, it is conceptually inessential since, for the purpose of this paper, all allusions to discs as (imaginary) elements may be eliminated in favor of objects already definable in $\bb U$.

Whether parameters in $\DC$ are used or not shall be indicated explicitly, if it is necessary, by means of  introducing parameters of the form $\code \ga$ or the phrase ``in $\bb U^{\bullet}$,'' in which case the substructure $\mathbb{S}$ may contain names for discs that may or may not be definable from $\VF(\mathbb{S}) \cup \RV(\mathbb{S})$.

Note that it is redundant to include in $\DC$ discs centered at $0$, since they may be identified with their valuative radii ($\Gamma$ is already treated as a definable sort).

For a disc $\ga \sub \VF$, the corresponding imaginary element in $\DC$ is denoted by $\code{\ga}$ when notational distinction makes the discussion more streamlined; $\code{\ga}$ may be heuristically thought of as the ``name'' of $\ga$. Conversely, a set $D \sub \DC$ is often identified with the set $\set{\ga \given \code \ga \in D}$, in which case $\bigcup D$ denotes a subset of $\VF$.
\end{nota}

\subsection{Of points, discs, and atoms}
The substructure $\bb S$ is \memph{$\VF$-generated} if it is generated solely by elements in $\VF$, similarly for being $(\VF, \RV)$-generated, and so on. If $\bb S$ is $\VF$-generated then the map $\rv$ is surjective in $\bb S$, but its value group $\Gamma(\bb S)$ could still be trivial. If $\Gamma(\mathbb{S})$ is indeed nontrivial then the model-theoretic algebraic closure $\acl \mathbb{S}$ of $\mathbb{S}$ is a model of $\ACVF(0,0)$; see, for instance, \cite[Corollary~4.18]{Yin:QE:ACVF:min}. In that case, we also know what the definable closure $\dcl \mathbb{S}$ of $\mathbb{S}$ is:

\begin{lem}\label{cut:to:hensel:substru}
Suppose that $\mathbb{S}$ is $\VF$-generated and $\Gamma(\mathbb{S})$ is nontrivial. Then $\dcl \mathbb{S} = \mathbb{S}$ if and only if the underlying valued field $(\VF(\mathbb{S}), \OO(\mathbb{S}))$ of $\mathbb{S}$ is henselian.
\end{lem}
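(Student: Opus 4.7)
The plan is to analyze $\dcl(\mathbb{S})$ via the automorphism group $G := \aut(\bb U / \mathbb{S})$ and split the argument by sort. Set $K := \VF(\mathbb{S})$; since $\mathbb{S}$ is $\VF$-generated, $\RV(\mathbb{S}) = \rv(K^\times)$, and $G$ coincides with the group of valued field automorphisms of $\bb U$ fixing $K$ pointwise. By saturation of $\bb U$, the restriction of $G$ to $K^{\mathrm{alg}} \subseteq \bb U$ realises the full decomposition group $D_v \subseteq \aut(K^{\mathrm{alg}}/K)$ of the induced valuation, whose fixed field is classically the henselization $K^h$ of $K$ inside $K^{\mathrm{alg}}$. (The hypothesis that $\Gamma(\mathbb{S})$ is nontrivial ensures that this whole valuation-theoretic picture is nontrivial and that $\acl(\mathbb{S})$ is a model.)

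The contrapositive of $(\Rightarrow)$ is then immediate: if $K \neq K^h$, any $a \in K^h \setminus K$ is fixed by all of $D_v$, hence by all of $G$, so $a \in \dcl(\mathbb{S}) \setminus \mathbb{S}$. For $(\Leftarrow)$, assume $K$ is henselian. An element $a \in \dcl(\mathbb{S}) \cap \VF$ cannot be transcendental over $K$ (else saturation produces infinitely many $G$-conjugates), so $a \in K^{\mathrm{alg}}$ is $D_v$-fixed, giving $a \in K$. Now consider $t \in \dcl(\mathbb{S}) \cap \RV$, and set $\alpha := \vrv(t)$. The induced action of $G$ on $\Gamma$ realises, by saturation, the full automorphism group of the divisible ordered abelian group $\Gamma$ fixing $v(K^\times)$, so $\alpha$ must lie in the divisible hull $\mathbb{Q} \cdot v(K^\times)$. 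In the case $\alpha = v(b)$ with $b \in K^\times$, the ratio $s := t \rv(b)^{-1} \in \K^\times$ is also $G$-fixed; the induced $G$-action on the algebraically closed residue field $\K$ realises all of $\aut(\K/\res K)$, and since $\res K$ has characteristic zero (so is perfect and $\dcl$-closed in $\K$), we get $s \in \res(K)^\times = \rv(\OO(K)^\times)$, and hence $t \in \rv(K^\times) = \RV(\mathbb{S})$.

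The main obstacle, which I plan to rule out by a Kummer-theoretic contradiction, is the ramified case $\alpha \in \mathbb{Q} \cdot v(K^\times) \setminus v(K^\times)$. Let $n \geq 2$ be minimal with $n\alpha = v(b)$ for some $b \in K^\times$. Pass to $K' := K(\mu_n)$: this extension is unramified (equicharacteristic zero), still henselian, and satisfies $v(K'^\times) = v(K^\times)$, so $\alpha \notin v(K'^\times)$ and $\aut(\bb U/K') \subseteq G$. Pick $a \in \bb U$ with $a^n = b$; since otherwise $v(a) = \alpha \in v(K'^\times)$, we have $b \notin (K'^\times)^n$, so $K'(a)/K'$ is cyclic Galois of degree $n$, and it is totally ramified (because $v(a) = \alpha \notin v(K'^\times)$). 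Hence the generator $\sigma \in \gal(K'(a)/K')$ sending $a \mapsto \zeta a$ (for a primitive $n$th root of unity $\zeta \in K'$) acts trivially on the residue field $\res(K'(a)) = \res(K')$. By quantifier elimination and saturation, $\sigma$ extends to some $\tilde\sigma \in \aut(\bb U/K')$ that still acts trivially on all of $\K$ (the partial $\lan{}{RV}{}$-isomorphism defined by $\sigma$ on $K'(a)$ and by the identity on $\K$ is compatible and elementary, hence extends). Then for any $t = c \rv(a) \in \vrv^{-1}(\alpha)$ with $c \in \K^\times$, one computes $\tilde\sigma(t) = c \, \bar\zeta \, \rv(a) = \bar\zeta \, t \neq t$, contradicting $K'$-definability of $t$ (a fortiori $\mathbb{S}$-definability). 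This completes the plan.
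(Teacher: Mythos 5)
Your argument is correct, and on the $\VF$-sort it coincides with the paper's proof: both identify $\aut(\bb U/\mathbb{S})$ restricted to $K^{\mathrm{alg}}$ with the decomposition group and use the classical fact that its fixed field is the henselization, so that $\VF(\dcl\mathbb{S})=\VF(\mathbb{S})$ precisely when $K=\VF(\mathbb{S})$ is henselian. Where you genuinely diverge is the $\RV$-sort: the paper simply asserts, unconditionally, that $\RV(\dcl\mathbb{S})=\RV(\mathbb{S})$ ``by quantifier elimination and routine syntactical inspection,'' whereas you prove the $\RV$-statement only under the henselianity hypothesis (which suffices for the lemma) by an explicit Galois-theoretic analysis: reduction of $\vrv(t)$ to the divisible hull of $v(K^\times)$, a twist by $\rv(b)$ and residue-Galois descent in the unramified case, and a Kummer/total-ramification automorphism ruling out fractional values. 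Your route buys a self-contained and more informative argument; the paper's buys a stronger (unconditional) claim about the $\RV$-part at the cost of detail. Two small points you should make explicit: the statements that the $G$-action on $\Gamma$ (resp.\ on $\K$) realises enough automorphisms are exactly the stable embeddedness/purity facts for the value group and residue field in $\ACVF(0,0)$ --- for $\K$ the cleanest justification in your setting is that $s\in\dcl(\mathbb{S})\cap\K\subseteq\acl(\mathbb{S})\cap\K=\overline{\res K}$ and that, $K$ being henselian, every element of $\gal(\overline{\res K}/\res K)$ lifts to the (full) decomposition group $\gal(K^{\mathrm{alg}}/K)$; and in the Kummer step, total ramification of $K'(a)/K'$ does not follow merely from $v(a)=\alpha\notin v(K'^\times)$ (that only gives $e>1$) but from the minimality of $n$, which forces $e\geq n$ and hence $[K'(a):K']=e=n$, $f=1$. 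With these standard facts supplied, the proof is complete.
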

\begin{proof}
We have $\VF(\acl \mathbb{S}) = \VF(\mathbb{S})^{\alg}$, where the latter is the field-theoretic algebraic closure of $\VF(\mathbb{S})$. Since the valued field automorphisms of $(\VF(\mathbb{S})^{\alg}, \OO(\acl \mathbb{S}))$ over $(\VF(\mathbb{S}), \OO(\mathbb{S}))$ are in one-to-one correspondence with the $\lan{}{RV}{}$-automorphisms of $\acl \mathbb{S}$ over $\mathbb{S}$ and $\VF(\mathbb{S})$ is the fixed field with respect to these automorphisms if and only if $(\VF(\mathbb{S}), \OO(\mathbb{S}))$ is henselian, we see that $\VF (\dcl \mathbb{S}) = \VF(\mathbb{S})$ if and only if $(\VF(\mathbb{S}), \OO(\mathbb{S}))$ is henselian. By quantifier elimination and routine syntactical inspection, $\RV(\dcl \mathbb{S}) = \RV(\mathbb{S}) $.  The lemma follows.
\end{proof}

We shall impose at a later stage the assumption that $\mathbb{S}$ is $\VF$-generated and (without loss of generality) definably closed,  that is, $\dcl \mathbb{S} = \mathbb{S}$.

\begin{lem}[{\cite[Lemma~4.10]{Yin:QE:ACVF:min}}]\label{finite:VF:project:RV}
For any finite definable set $A$ in $\VF$ there is a definable bijection $f : A \fun U$ for some set $U$ in $\RV$.
\end{lem}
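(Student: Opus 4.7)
My overall strategy is to invoke quantifier elimination for $\ACVF(0,0)$ in the language $\lan{}{RV}{}$, so that distinct points of a finite definable set $A$ can be separated by the $\rv$-values of finitely many polynomials with coefficients in $\mathbb{S}$. This yields a definable injection into $\RV_\infty^N$ for some $N$, from which the required bijection is immediate.

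First I would handle the arity-one case, where $A = \{a_1, \dots, a_k\} \sub \VF$. Each $a_i \in \acl(\mathbb{S})$, and by QE its complete type over $\mathbb{S}$ is determined by its quantifier-free $\lan{}{RV}{}$-type. Since $a_i \neq a_j$ implies $\tp(a_i/\mathbb{S}) \neq \tp(a_j/\mathbb{S})$, some atomic $\rv$-formula separates them: there is a polynomial $p_{ij} \in \mathbb{S}[x]$ with $\rv(p_{ij}(a_i)) \neq \rv(p_{ij}(a_j))$. Enumerating these finitely many polynomials as $p_1, \dots, p_N$, the definable map
\[
\phi : A \fun \RV_\infty^N, \qquad a \efun \bigl(\rv(p_1(a)), \dots, \rv(p_N(a))\bigr)
\]
is injective, and $U \coloneqq \phi(A)$ is a finite definable set in $\RV_\infty$. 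If strictly landing in $\RV$ is desired, one can replace any $p_\ell$ that vanishes at some element of $A$ by $p_\ell - c$ for a suitably generic $c \in \mathbb{S}$; the paper itself notes that the $\RV$ vs.\ $\RV_\infty$ distinction rarely matters.

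For general arity, I would induct on $n$: given $A \sub \VF^n$ with $n \ge 2$, set $A' = \pr_{\tilde n}(A) \sub \VF^{n-1}$, which is finite and definable. For each $a' \in A'$ the fiber $A_{a'} \sub \VF$ is a finite definable set over $\mathbb{S} \cup a'$, so the arity-one case provides a definable bijection $A_{a'} \fun U_{a'} \sub \RV^{N_{a'}}$. Because $A'$ is finite, one can take the common union of the separating polynomials across all fibers to produce a single uniform $N$ and a definable bijection $\Phi : A \fun \bigcup_{a' \in A'} \{a'\} \times U_{a'}$. Applying the inductive hypothesis to $A'$ yields a definable bijection $\psi : A' \fun V \sub \RV^M$; composing with $\Phi$ via $(a', u) \mapsto (\psi(a'), u)$ gives the desired definable bijection from $A$ onto a subset of $\RV^{M+N}$.

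The principal technical point is the uniformity of separating polynomials across fibers in the induction step. This is not a genuine obstacle because $A'$ is itself finite, so one simply takes the union of the finite collections of separating polynomials attached to each fiber; QE then guarantees injectivity on each fiber simultaneously. The whole argument ultimately rests on the QE theorem for $\ACVF(0,0)$ in $\lan{}{RV}{}$ together with the finiteness of $A$.
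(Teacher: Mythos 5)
There is a genuine gap, and it sits at the heart of the lemma. Your separation step rests on the claim that $a_i \neq a_j$ implies $\tp(a_i/\mathbb{S}) \neq \tp(a_j/\mathbb{S})$, and this is false precisely in the essential case: the elements of a finite $\mathbb{S}$-definable set are typically conjugate under $\aut(\bb U/\mathbb{S})$ and then have \emph{identical} complete types over $\mathbb{S}$. For instance, over $\mathbb{S} = \C \dpar{t}$ in $\puC$, the set $A = \{t^{1/2}, -t^{1/2}\}$ is definable and its two points are swapped by a Galois automorphism fixing $\mathbb{S}$, so no formula over $\mathbb{S}$ — atomic or otherwise — separates them. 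What the lemma requires is not separation by $\mathbb{S}$-formulas but an $\mathbb{S}$-definable \emph{function} into $\RV$ taking distinct (in general non-definable) values on conjugate points; that such a function exists is exactly the nontrivial content being proved, so your argument assumes what it must establish. A telling sanity check: your proof never uses residue characteristic $0$, yet quantifier elimination in $\lan{}{RV}{}$ holds for ACVF in all residue characteristics, while the lemma itself fails in residue characteristic $p$ (the paper points this out immediately after the statement). The actual proof (Hrushovski–Kazhdan, Lemma~3.9; Yin, Lemma~4.10) is built on the characteristic-zero averaging trick: the symmetric element $\avg(A)$ is definable, and if $|A|>1$ the map $a \efun \rv\bigl(a - \avg(A)\bigr)$ cannot be constant on $A$, because a sum of $k$ terms with equal $\rv$ cannot vanish when the residue characteristic is $0$; one then recurses on the fibers, which are definable over $\mathbb{S}$ together with $\RV$-parameters.

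A secondary issue is the uniformity in your induction step: the separating polynomials for the fiber $A_{a'}$ have coefficients in $\mathbb{S}\cup a'$, with $a'$ algebraic but not definable, so ``taking the union of the separating polynomials across all fibers'' does not produce data over $\mathbb{S}$, and the assembled map $\Phi$ is not obviously $\mathbb{S}$-definable. This could be repaired by making the fiberwise construction canonical (hence automorphism-equivariant), or by a compactness/uniform-definability argument, but as written it is another place where definability over the base is asserted rather than proved. The first gap, however, is the decisive one: without an argument that handles conjugate points — which is where residue characteristic $0$ enters — the proof does not go through.
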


The phenomenon exemplified by this simple lemma is dubbed ``internalizing finite sets.'' Its failure in positive residue characteristics belies the deep obstacles one faces in developing a Hrushovski-Kazhdan integration theory therein; see the discussion around \cite[Lemma~3.9]{hrushovski:kazhdan:integration:vf}.

\begin{lem}[{\cite[Lemma~3.3]{Yin:special:trans}}]\label{dcl:to:ac}
For $a, b \in \VF$ and $t \in \RV$, if $b$ is algebraic over $(a, t)$ then it is algebraic over $a$.
\end{lem}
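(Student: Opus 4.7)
The plan is to use quantifier elimination in $\ACVF(0,0)$ together with the top-term convention (Convention~\ref{topterm}) to extract, from the hypothesized algebraicity of $b$ over $(a,t)$, a polynomial equation with coefficients in $\VF(a)$ that witnesses $b \in \acl(a)$. Without loss of generality $b$ is a single element of $\VF$.

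First I would invoke quantifier elimination to produce a quantifier-free $\lan{}{RV}{}$-formula $\phi(x, a, t)$ such that $\phi(b, a, t)$ holds and $\{x \in \VF : \phi(x, a, t)\}$ is finite. By Convention~\ref{topterm}, every $\VF$-sort term in $\phi$ occurs inside an instance of $\rv$. Letting $P_1(x, a), \ldots, P_k(x, a) \in \VF(a)[x]$ be the top terms of $\phi$ in $x$, once $a$ and $t$ are fixed the formula reduces to a boolean combination of $\RV$-conditions on the tuple $\Phi(x) := (\rv(P_1(x, a)), \ldots, \rv(P_k(x, a)))$; hence the solution set equals $\Phi^{-1}(S)$ for some $\RV$-definable $S \sub \RV_\infty^k$.

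The key step is a continuity/openness observation: for any $\bar u = (u_1, \ldots, u_k) \in S$ with $u_i \neq \infty$ for all $i$, the fiber $\Phi^{-1}(\bar u)$ is open in $\VF$ in the valuation topology, since each $\rv^{-1}(u_i)$ is an open disc in $\VF^\times$ and each polynomial map $x \mapsto P_i(x, a)$ is continuous, so the fiber is a finite intersection of open preimages. Because $\VF$ has no isolated points, a nonempty such fiber is infinite, which would contradict finiteness of $\Phi^{-1}(S)$. Hence every $x \in \Phi^{-1}(S)$ must lie in a fiber $\Phi^{-1}(\bar u)$ with $u_i = \infty$ for some $i$, i.e., must satisfy $P_i(x, a) = 0$ for some index $i$ with $P_i$ nonzero as a polynomial in $x$ (identically zero or nonzero constant $P_i$ contribute only trivially).

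It follows that $b$ is a root of some nonzero $P_i(X, a) \in \VF(a)[X]$, so $b$ is algebraic over $\VF(a)$ in the field-theoretic sense and therefore $b \in \acl(a)$. The main conceptual input is the openness observation: any $\RV$-condition that imposes only finite $\rv$-values on the top terms cuts out an open, hence infinite-if-nonempty, subset of $\VF$, so the only way to produce a finite $(a,t)$-definable set in $\VF$ is through a polynomial identity whose coefficients lie in $\VF(a)$. The rest is routine quantifier-elimination bookkeeping.
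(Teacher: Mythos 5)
Your argument is correct. The paper itself gives no proof of this lemma (it is quoted from an external reference), so there is nothing internal to compare against; on its own terms, your quantifier-elimination-plus-openness argument is sound: after the top-term normalization the $(a,t)$-definable finite set containing $b$ is of the form $\Phi^{-1}(S)$ with $\Phi(x)=(\rv(P_1(x,a)),\ldots,\rv(P_k(x,a)))$, each fiber of $\Phi$ over a tuple with all entries in $\RV$ is open (preimages of the open discs $\rv^{-1}(u_i)$ under continuous polynomial maps), and a nonempty open subset of $\VF$ is infinite, so $b$ must satisfy $P_i(b,a)=0$ for some $P_i(\cdot,a)$ that is not identically zero, whence $b\in\acl(a)$. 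The one point that deserves the care you gave it in the parenthetical is the bookkeeping for indices where $P_i(\cdot,a)$ is constant (zero or nonzero) in $x$: those constraints cut out either $\emptyset$ or all of $\VF$ and so do not spoil openness, and therefore the vanishing index you extract can indeed be taken with $P_i(\cdot,a)$ a nonzero polynomial. With that noted, the proof is complete; the RV-parameter $t$ only enters through the side $S$ of the correspondence and never produces coefficients in $\VF$, which is exactly the mechanism the lemma is meant to capture.
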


\begin{lem}[{\cite[Lemma~4.3]{Yin:QE:ACVF:min}}]\label{exchange}
The exchange principle holds in both sorts:
\begin{itemize}
 \item For any $a$, $b \in \VF$, if $a \in \acl(b) \mi \acl(\0)$ then $b \in \acl(a)$.
 \item For any $t$, $s \in \RV$, if $t \in \acl(s) \mi \acl(\0)$ then $s \in \acl(t)$.
\end{itemize}
\end{lem}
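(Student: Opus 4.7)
The plan is to prove the two assertions separately, using quantifier elimination of $\ACVF(0,0)$ in the language $\lan{}{RV}{}$ and reducing in each case to an exchange principle in a better understood structure.

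For the $\VF$-sort assertion, given $a \in \VF$ with $a \in \acl(b) \mi \acl(\0)$, I would first establish that $a$ is field-theoretically algebraic over the subfield $\Q(b) \sub \VF$. The standard mechanism is that an automorphism between finitely generated substructures extends (by saturation plus QE) to an automorphism of $\bb U$; if $a$ were transcendental over $\Q(b)$, one could realize infinitely many field-theoretic conjugates of $a$ over $\Q(b)$ inside $\bb U$, each with the same $\lan{}{RV}{}$-type over $b$, contradicting $a \in \acl(b)$. The hypothesis $a \notin \acl(\0)$ forces $a \notin \Q^{\alg}$. Steinitz's exchange for algebraic field extensions applied inside the algebraically closed field $\VF(\bb U)$ then yields $b \in \Q(a)^{\alg} \sub \acl(a)$.

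For the $\RV$-sort assertion, I would exploit the short exact sequence
\[
1 \fun \K^\times \fun \RV \fun \Gamma \fun 0.
\]
The residue field $\K$ is algebraically closed (so exchange holds on it, by the $\VF$-sort case applied inside $\K$), and $\Gamma$ is a divisible ordered abelian group, hence a $\Q$-vector space, on which exchange is classical. The plan is to split into cases according to the values of $\vrv(t)$ and $\vrv(s)$. If $\vrv(t) = \vrv(s) = 0$, then $t, s \in \K^\times$ and the statement reduces to exchange in $\K$. If $\vrv(t) \neq 0$, then $\vrv(t) \in \acl(s)$, so by exchange in $\Gamma$ one obtains $\vrv(s) \in \acl(\vrv(t)) \sub \acl(t)$; multiplying $s$ by a suitable power of a fixed representative (pinned down, up to a residue-field factor, by $\vrv(s)$ over $t$) reduces the remaining ambiguity to a $\K^\times$-valued question, which is settled by the residue-field exchange.

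The main obstacle is the $\RV$-sort case, specifically the absence of a canonical splitting of the above exact sequence. When shuttling between the $\Gamma$-component and the $\K$-component in order to combine the two exchange principles, picking a section introduces auxiliary parameters, and one has to verify that these parameters do not leak outside of $\acl(t)$ or $\acl(s)$. This bookkeeping, rather than either of the underlying exchange statements, is where I expect most of the effort to lie.
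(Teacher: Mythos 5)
The paper itself offers no argument for this lemma: it is quoted verbatim from \cite[Lemma~4.3]{Yin:QE:ACVF:min}, so there is no internal proof to compare your route against; your proposal has to stand on its own. The $\VF$-sort half essentially does: quantifier elimination plus saturation reduces model-theoretic algebraicity to field-theoretic algebraicity, and Steinitz exchange finishes. Two small corrections there: the base field must be $\VF(\mathbb{S})(b)$, not $\Q(b)$ (all closures are over the parameter substructure $\mathbb{S}$, and one also needs that $\RV$-parameters in $\mathbb{S}$ contribute no new algebraic $\VF$-points, in the spirit of Lemma~\ref{dcl:to:ac}), and a transcendental element has no ``field-theoretic conjugates''; what you need is that its quantifier-free type over $\langle b\rangle$ has infinitely many realizations, which is what QE/\cmin-minimality gives.

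The $\RV$-sort half has genuine gaps. First, the case split on $\vrv(t)=0$ versus $\vrv(t)\neq 0$ is the wrong dichotomy (and as written it omits the case $\vrv(t)=0$, $\vrv(s)\neq0$): the relevant question is whether $\vrv(t)$ lies in $\acl_\Gamma(\0)$, i.e.\ in the divisible hull of $\Gamma(\mathbb{S})$. Exchange in the $\Q$-vector space $\Gamma$ needs $\vrv(t)\notin\acl_\Gamma(\0)$, not merely $\vrv(t)\neq 0$; if $\vrv(t)$ is a nonzero $\mathbb{S}$-algebraic value your appeal to $\Gamma$-exchange yields nothing, and the reason $\vrv(s)$ is nevertheless forced into $\acl(t)$ in that situation is a different one (orthogonality of $\K$ and $\Gamma$: a $\K$-point algebraic over $s$ but not over $\0$ forces $\vrv(s)\in\acl_\Gamma(\0)$). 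You also use, without stating it, that $\acl(s)\cap\Gamma=\acl_\Gamma(\vrv(s))$, i.e.\ stable embeddedness of $\Gamma$. Second, and more seriously, the final step is circular as sketched: once $\vrv(s)\in\acl_\Gamma(\vrv(t))$ one can indeed pick $r\in\acl(t)$ with $\vrv(r)=\vrv(s)$ (a root of a monomial $t^{a}r_0^{b}$ with $r_0\in\RV(\mathbb{S})$) and write $s=rc$ with $c\in\K^\times$; but to conclude $s\in\acl(t)$ you need $c\in\acl(t)=\acl(t,r)$ knowing only $t\in\acl(r,c)$, which is again an instance of the mixed $\RV$-exchange you are trying to prove, not of exchange in $\K$. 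Reducing it to ACF exchange is exactly where the content lies: one needs the purity/orthogonality facts (e.g.\ that $\acl(s)\cap\K$ is just $\acl$ of $\K(\mathbb{S})$ when $\vrv(s)\notin\acl_\Gamma(\0)$, and that $\K$ is stably embedded and pure) together with the monomial trick $t^{q}s^{-n}r_0^{-1}\in\K^\times$ that converts the dependence of $t$ on $s$ into a residue-field dependence. These inputs are absent from the proposal; the ``bookkeeping'' you defer is in fact the proof.
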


\begin{lem}\label{finite:fib:def}
Let $f : A \fun \mdl P(B)$ be a definable finitary function, where $A$, $B$ are subsets of $\VF$. Then there is a finite definable subset $B' \sub B$ such that, for every $b \in B$, the fiber $A_b = \set{a \in A \given b \in f(a)}$ is infinite if and only if  $b \in B'$. The same holds if $A$, $B$ are subsets of $\RV$.
\end{lem}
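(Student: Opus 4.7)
The plan is to prove finiteness of $B'$ via a generic-type argument using the exchange principle, and then deduce $\bb S$-definability from finiteness.

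First I would observe that $B' = \set{b \in B \given A_b \text{ is infinite}}$ is invariant under $\lan{}{RV}{}$-automorphisms of $\bb U$ over $\bb S$, since the defining condition refers only to the $\bb S$-definable relation $f$. So to establish finiteness, I would argue by contradiction: if $B'$ is infinite, then since $\bb U$ is sufficiently saturated and $\acl(\bb S)$ is small, $B'$ contains some element $b \notin \acl(\bb S)$. Because $A_b$ is infinite and $(\bb S, b)$-definable, saturation again yields some $a \in A_b$ with $a \notin \acl(\bb S, b)$. On the other hand, $b \in f(a)$ and $f(a)$ is a finite, $(\bb S, a)$-definable set (by the hypothesis that $f$ is finitary), so $b \in \acl(\bb S, a) \mi \acl(\bb S)$. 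The exchange principle of Lemma~\ref{exchange} (in whichever of the two sorts we are working in) then forces $a \in \acl(\bb S, b)$, contradicting the choice of $a$.

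Once $B'$ is known to be finite, every $b \in B'$ has finite $\aut(\bb U/\bb S)$-orbit and therefore lies in $\acl(\bb S)$. Thus $B'$ decomposes as a finite union of complete $\bb S$-orbits, each of which is defined by the $\bb S$-formula isolating the algebraic type of any of its members; hence $B'$ is $\bb S$-definable. Both the $\VF$- and $\RV$-cases are handled uniformly by the two clauses of Lemma~\ref{exchange}.

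The argument is short, and the only mildly technical point is the use of the exchange principle in its relativized form over the base $\bb S$ rather than over $\acl(\0)$; this is automatic, since exchange is a property of the pregeometry of algebraic closure that lifts to arbitrary parameter sets. The key conceptual step is the simultaneous choice of two generic witnesses $b$ and $a$ so that the finitariness of $f$ yields $b \in \acl(\bb S, a)$ while genericity yields $b \notin \acl(\bb S)$, setting up the exchange.
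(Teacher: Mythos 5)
Your core argument is the same as the paper's: for $b$ with $A_b$ infinite, use saturation to find $a \in A_b \mi \acl(\mathbb{S}, b)$, note that finitariness gives $b \in \acl(\mathbb{S}, a)$, and apply the exchange principle (Lemma~\ref{exchange}) to force a contradiction with the genericity of $a$; and your last step (a finite automorphism-invariant set is a finite union of realization sets of algebraic types, hence $\mathbb{S}$-definable) is the paper's ``adjust $B'$'' remark made explicit. The one genuine gap is in the finiteness step. From ``$B'$ is infinite, $\bb U$ is sufficiently saturated, and $\acl(\mathbb{S})$ is small'' you cannot conclude that $B'$ meets the complement of $\acl(\mathbb{S})$: an infinite automorphism-invariant set can perfectly well be contained in $\acl(\mathbb{S})$ (for instance the set of all roots of unity in $\VF$ is infinite, invariant, and consists of algebraic points), and at that point in your argument invariance is the only property of $B'$ you have recorded, so the cardinality appeal does not go through as written.

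What rescues the step is that $B'$ is type-definable over $\mathbb{S}$: it is cut out by the countable partial type consisting of $b \in B$ together with $\abs{A_b} \geq n$ for every $n \in \N$. Hence, if $B'$ is infinite (or, as the paper phrases it, if no finite definable subset of $B$ contains every $b$ with $A_b$ infinite), the partial type asserting $b \in B$, $\abs{A_b} \geq n$ for all $n$, and $b$ lies outside every finite $\mathbb{S}$-definable subset of $B$ (equivalently, $b \neq c$ for each $c \in \acl(\mathbb{S})$) is finitely satisfiable, and a realization supplied by saturation is an element of $B' \mi \acl(\mathbb{S})$. This is exactly the paper's second invocation of compactness. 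Once this is inserted, your proof is complete and essentially identical to the paper's, just organized contrapositively (the paper first shows every $b$ with infinite fiber is algebraic, then applies compactness to bound them in a single finite definable set).
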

\begin{proof}
Let $b \in B$ such that $A_b$ is infinite. Then, by compactness, there is an element $a \in A_b \mi \acl(b)$. Since $f$ is finitary, which means $b \in \acl(a)$, we must have, by Lemma~\ref{exchange},  $b \notin \acl(a) \mi \acl(\0)$ and hence $b \in \acl(\0)$. It follows that there is an algebraic subset $B' \sub B$ such that if $A_b$ is infinite then $b \in B'$, for otherwise, by compactness again, there would be a $b \in B$ that lies outside every algebraic subset of $B$ and yet the size of $A_b$ is not bounded by any natural number. Clearly we can adjust $B'$ so that it contains exactly those $b \in  B$ with $A_b$ infinite. The second case is similar.
\end{proof}

\begin{defn}
The \memph{$\VF$-dimension} of a definable set $A$,  denoted by $\dim_{\VF}(A)$, is the largest natural number $k$ such that, possibly after re-indexing of the $\VF$-coordinates, $\pr_{\leq k}(A_t)$ has nonempty interior for some $t \in A_{\RV}$.
\end{defn}

It is a fact that if $A \sub \VF^n$ is definable then $\dim_{\VF}(A)$ equals the Zariski dimension of the Zariski closure of $A$, see for example \cite[\S~3.8]{hrushovski:kazhdan:integration:vf} for more  on dimensions.

\begin{lem}[{\cite[Lemma~4.6]{Yin:special:trans}}]\label{full:dim:open:poly}
Let $A$ be a definable set with $A_{\VF} \sub \VF^n$. Then $\dim_{\VF} (A) = n$ if and only if there is a $t \in A_{\RV}$ such that $A_t$ contains an open polydisc.
\end{lem}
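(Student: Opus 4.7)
The plan is to prove both directions by direct manipulation of the definitions, the core observation being that the open polydiscs $\go(a, \gamma)$ form a basis for the valuation (product) topology on $\VF^n$.

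For the ``if'' direction, I would start from the hypothesis that $A_t$ contains an open polydisc $\go(a, \gamma)$ for some $t \in A_{\RV}$. Since $\go(a, \gamma)$ is a nonempty open subset of $\VF^n$, $A_t$ has nonempty interior. Because $A_t \sub \VF^n$, the projection $\pr_{\leq n}$ acts as the identity on $A_t$, so $\pr_{\leq n}(A_t) = A_t$ itself has nonempty interior, witnessing $\dim_{\VF}(A) \geq n$. The reverse inequality $\dim_{\VF}(A) \leq n$ is immediate from $A_{\VF} \sub \VF^n$, so equality holds.

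For the ``only if'' direction, unpacking $\dim_{\VF}(A) = n$ yields some $t \in A_{\RV}$ such that $\pr_{\leq n}(A_t)$ has nonempty interior (no re-indexing of coordinates is actually needed since $k = n$ forces us to project onto all coordinates). As above, this simply says that $A_t \sub \VF^n$ itself has nonempty interior. I would then pick any $a$ in the interior of $A_t$ and invoke the basis property: since open discs form a basis of the valuation topology on $\VF$, the open polydiscs $\go(a, \gamma)$ form a basis of the product topology on $\VF^n$. Hence there exists $\gamma \in \Gamma^n$ such that $\go(a, \gamma) \sub A_t$, as required.

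Neither direction presents a real obstacle; the lemma is essentially a translation between the topological statement ``nonempty interior'' and its basis-theoretic equivalent ``contains an open polydisc,'' dressed up via the definition of $\dim_{\VF}$. No results beyond the definitions of the valuation topology and $\dim_{\VF}$ are required.
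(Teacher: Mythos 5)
Your argument is correct. Note that this paper does not actually prove the lemma; it is imported verbatim from \cite[Lemma~4.6]{Yin:special:trans}, so there is no in-paper proof to compare against. Relative to the definition of $\dim_{\VF}$ as stated here, your two directions are exactly the right unwinding: for $k=n$ the projection $\pr_{\leq n}$ (with or without re-indexing, which is just a coordinate permutation and hence a homeomorphism) is the identity on $A_t \sub \VF^n$, so $\dim_{\VF}(A)=n$ amounts to some $\VF$-fiber $A_t$ having nonempty interior, and ``nonempty interior'' is equivalent to ``contains an open polydisc'' because products of open discs $\go(a,\gamma)$ form a basis of the valuation (product) topology on $\VF^n$; the bound $\dim_{\VF}(A)\leq n$ is automatic from $A_{\VF}\sub\VF^n$. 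The only caveat worth recording is that in the cited source the lemma is proved within that paper's own development of dimension theory (where the equivalence of the various characterizations of $\VF$-dimension is genuine work), whereas with the definition adopted in the present paper the statement is, as you say, essentially definitional, with no definability requirement on the witnessing polydisc to worry about.
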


\begin{lem}\label{ball:to:ac}
Let $b \in \VF$ and $\ga \sub \VF^n$ be a polydisc with $\dim_{\VF}(\ga) = n$. If $b$ is algebraic over $\code \ga$ then it is indeed algebraic.
\end{lem}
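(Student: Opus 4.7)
The plan is to induct on $n$. The key reduction, used at each step, is that for any $c \in \ga$, the disc code $\code{\ga}$ lies in the definable closure of $c$ together with the component radii $\gamma_1, \ldots, \gamma_n \in \Gamma$ (and the types open/closed, which sit in $\dcl(\0)$); hence by iterating Lemma~\ref{dcl:to:ac} to absorb $\RV$-parameters (the $\gamma_i$ being replaced by representatives in $\vrv^{-1}(\gamma_i) \sub \RV$) into $\VF$-parameters, one obtains $b \in \acl(c)$ for every $c \in \ga$. More generally, for any ``rectangular'' decomposition $\ga = \ga_A \times \ga_B$ with $\ga_A \sub \VF^{n_A}$ and $\ga_B \sub \VF^{n_B}$, and any $c_A \in \ga_A$, the same reasoning yields $b \in \acl(c_A, \code{\ga_B})$.

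For the base case $n = 1$, $\ga$ is a single disc $\gb_1$. Since $\dim_{\VF}(\ga) = 1$, $\gb_1$ contains an open sub-disc, so by saturation it has cardinality strictly larger than $\acl(b)$ in $\bb U$. Supposing for contradiction that $b \notin \acl(\0)$, pick $c \in \gb_1 \mi \acl(b)$; from $b \in \acl(c) \mi \acl(\0)$, Lemma~\ref{exchange} yields $c \in \acl(b)$, contradicting the choice of $c$.

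For $n > 1$, decompose $\ga = \gb_1 \times \ga'$ with $\ga' \sub \VF^{n-1}$ full-dimensional. By the reduction above (applied to this decomposition), $b \in \acl(c_1, \code{\ga'})$ for every $c_1 \in \gb_1$. Now I run the base-case argument over the enlarged base $\code{\ga'}$: assuming $b \notin \acl(\code{\ga'})$, saturation produces $c_1 \in \gb_1 \mi \acl(\code{\ga'}, b)$, and exchange over $\code{\ga'}$ forces $c_1 \in \acl(\code{\ga'}, b)$, a contradiction. Hence $b \in \acl(\code{\ga'})$, and the inductive hypothesis applied to $\ga'$ delivers $b \in \acl(\0)$.

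The only step that is not a direct citation is the use of the exchange principle over a nontrivial base $\code{\ga'}$ rather than over $\0$ as Lemma~\ref{exchange} is stated; this is the main (minor) subtlety. It is standard, as $\acl$ restricted to the $\VF$-sort is a pregeometry in ACVF and so exchange holds over any parameter set; alternatively, one re-applies Lemma~\ref{exchange} after expanding the parameter substructure $\bb S$ to include $\code{\ga'}$. The saturation assumption on $\bb U$ is essential in both the base case and the inductive step, since it guarantees that each disc $\gb_i$ is too large to be swallowed by the algebraic closure of a small parameter set.
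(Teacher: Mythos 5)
Your overall strategy is the right one (trade the disc code for a point plus radii, kill the $\RV$/$\Gamma$-parameters with Lemma~\ref{dcl:to:ac}, then use saturation and exchange), but the way you package it as an induction creates a genuine gap: in the inductive step you apply Lemma~\ref{dcl:to:ac} (to get $b \in \acl(c_A, \code{\ga_B})$ from $b \in \acl(c_A, \alpha_A, \code{\ga_B})$) and Lemma~\ref{exchange} over a base that contains the imaginary element $\code{\ga'}$. Neither cited lemma covers this: both are stated for the structure $\bb U$ over an $\lan{}{RV}{}$-substructure, i.e.\ over $\VF$/$\RV$ parameters, whereas $\code{\ga'}$ lives in the auxiliary sort $\DC$ of $\bb U^{\bullet}$. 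Your proposed fix, ``expand $\bb S$ to include $\code{\ga'}$,'' is therefore not a re-application of the cited lemmas, and the appeal to ``$\acl$ on $\VF$ is a pregeometry, so exchange holds over any parameter set'' only quotes the standard fact over real parameters; over disc imaginaries it is exactly the kind of statement that needs an argument in this framework --- indeed controlling algebraicity over disc codes is the whole point of this lemma and its neighbours (e.g.\ Lemma~\ref{atom:self}). You flag the exchange half of this issue but not the use of Lemma~\ref{dcl:to:ac} over the code-enlarged base, which your ``more general'' reduction $b \in \acl(c_A, \code{\ga_B})$ genuinely requires.

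The gap is repairable, and the repair collapses your induction into the paper's one-shot argument: since $\code{\ga} \in \dcl(a, \rad(\ga))$ for \emph{every} $a \in \ga$, Lemma~\ref{dcl:to:ac} (over $\VF$-parameters only, absorbing the radii via representatives in $\RV$) gives $b \in \acl(a)$ for every $a \in \ga$; then choose, by saturation, a single $a \in \ga$ with $a_i \notin \acl(a_{\tilde i}, b)$ for all $i \in [n]$ and strip the coordinates one at a time with Lemma~\ref{exchange}, each time over a base consisting of $\VF$-elements only (which is legitimate, as $\bb S$ may be enlarged by elements of $\bb U$). This avoids ever placing a disc code in the parameter set, needs no induction on $n$, and is precisely the paper's proof.
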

\begin{proof}
Let $\alpha = \rad(\ga)$. Then, for any $a \in \ga$, $b$ is algebraic over $(a, \alpha)$ and hence, by Lemma~\ref{dcl:to:ac}, is algebraic over $a$. Since there is an $a \in \ga$ such that $a_i \notin \acl(a_{\tilde i}, b)$ for all $i \in [n]$, by Lemma~\ref{exchange},  $b$ must be algebraic.
\end{proof}

\begin{lem}[{\cite[Lemma~4.15]{Yin:QE:ACVF:min}}]\label{effectiveness}
If $\mathbb{S}$ is $(\VF, \RV,\Gamma)$-generated then every algebraic (respectively, definable) closed disc  contains an algebraic (respectively, definable) point.
\end{lem}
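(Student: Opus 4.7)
The plan is to split on whether $D$ is a singleton and, in the nondegenerate case, to extract a center of $D$ from its quantifier-free description, leveraging that $\mathbb{S}$ has no $\DC$-parameters.

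If $\rad(D) = \infty$, then $D = \{a\}$ is itself a single point, and $a$ is algebraic (respectively, definable) over $\mathbb{S}$ since $D$ is. We may therefore assume $\gamma := \rad(D) \in \Gamma$. The radius $\gamma$ is algebraic (resp.\ definable) over $\mathbb{S}$, and since in $\ACVF(0,0)$ the value-group sort is a divisible ordered abelian group in which $\acl = \dcl$ coincides with the $\Q$-divisible hull of $\Z \cdot \Gamma(\mathbb{S})$, one can absorb finitely many algebraic elements into $\mathbb{S}$ to arrange $\gamma \in \Gamma(\mathbb{S})$ without disturbing the $(\VF, \RV, \Gamma)$-generation hypothesis.

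The main step is to produce a center $c \in D$ algebraic (resp.\ definable) over $\mathbb{S}$. By quantifier elimination for $\ACVF(0,0)$ in $\lan{}{RV}{}$, the set $D$ is described as a Boolean combination of conditions of the form $\vv(P(x)) \geq \alpha$ and $\rv(P(x)) = t$ with $P \in \VF(\mathbb{S})[x]$, $\alpha \in \Gamma(\mathbb{S})$, and $t \in \RV(\mathbb{S})$. Because $D$ is a single closed disc of radius $\gamma$, factoring the polynomials over $\acl(\mathbb{S})$ and restricting attention to an $\RV$-polydisc containing $D$ yields a root $c$ of some $P \in \VF(\mathbb{S})[x]$ lying in $D$; this $c$ is algebraic over $\mathbb{S}$ by construction, which settles the algebraic case.

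For the definable case, let $F \subseteq D$ be the finite set of roots of $P$ lying in $D$; it is $\mathbb{S}$-definable, and by Lemma~\ref{finite:VF:project:RV} admits an $\mathbb{S}$-definable bijection onto a finite subset $U \subset \RV$. The absence of $\DC$-parameters in $\mathbb{S}$ then forces $U$ (and hence $F$) to contain a $\mathbb{S}$-definable element: any nontrivial $\aut(\mathbb{U}/\mathbb{S})$-action on $U$ compatible with the $\mathbb{S}$-definability of $D$ would require a $\DC$-code to distinguish the orbit elements, which $\mathbb{S}$ does not supply. Pulling back a definable element of $U$ through the bijection then yields a definable element of $D$. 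The main obstacle is exactly this last step: formally ruling out nontrivial orbit structure on $F$ using the $(\VF, \RV, \Gamma)$-generation hypothesis, which requires a careful syntactic analysis of the possible $\mathbb{S}$-definable invariants available on $F$ in the language $\lan{}{RV}{}$.
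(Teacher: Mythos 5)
The paper does not prove this lemma at all --- it is quoted from \cite[Lemma~4.15]{Yin:QE:ACVF:min} --- so your proposal has to stand on its own, and it has two genuine gaps, both at the places where the real work happens. First, in the algebraic case the sentence ``factoring the polynomials over $\acl(\mathbb{S})$ and restricting attention to an $\RV$-polydisc containing $D$ yields a root of some $P$ lying in $D$'' is precisely the content of the lemma, and no argument is given for it. Note that any correct argument must use closedness of $D$ in an essential way: for $t \in \RV(\mathbb{S})$ with $\vrv(t)$ outside the divisible hull of the value group of $\VF(\mathbb{S})$, the \emph{open} disc $t^\sharp$ is definable and contains no algebraic point, yet your sketch never invokes closedness. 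The missing argument runs roughly as follows: by quantifier elimination, membership in $D$ depends on $x$ only through the tuple $(\rv(P_1(x)),\dots,\rv(P_n(x)))$ with $P_i \in \VF(\mathbb{S})[x]$; if no root of any $P_i$ lay in $D$, then for $x \in D$ and any $y$ with $\max_r \vv(x-r) < \vv(x-y) < \gamma$ (such $y$ exist since $\Gamma$ is dense) one has $\rv(P_i(x)) = \rv(P_i(y))$ for all $i$, so $y \in D$, a contradiction; since there are only finitely many roots, some root $r$ satisfies $\vv(x-r) \ge \gamma$, i.e.\ $r \in D$, and it is algebraic over $\mathbb{S}$.

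Second, the definable case as you set it up cannot work: a finite $\mathbb{S}$-definable set (in $\RV$ or in $\VF$) need not contain an $\mathbb{S}$-definable element --- already a two-element set of square roots of a definable nonsquare has none --- and the absence of $\DC$-parameters in $\mathbb{S}$ does nothing to rule out such nontrivial Galois orbits; the ``syntactic analysis'' you defer to is therefore chasing a false principle, as you yourself half-acknowledge. The standard repair, and the reason the statement is about \emph{closed} discs in residue characteristic zero, is to symmetrize rather than to choose: let $F \subseteq D$ be the (nonempty, finite, $\mathbb{S}$-definable) set of conjugates over $\mathbb{S}$ of the algebraic point found above that lie in $D$, and take $c = \lvert F\rvert^{-1}\sum_{x \in F} x$. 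This $c$ is $\mathbb{S}$-definable, and for any $c_0 \in F$ one has $\vv(c - c_0) = \vv\bigl(\lvert F\rvert^{-1}\sum_{x\in F}(x - c_0)\bigr) \ge \gamma$ since $\vv(\lvert F\rvert) = 0$, so $c \in D$. With this replacement (and a genuine argument for the root-in-$D$ claim) the overall structure of your proof --- algebraic point first, then upgrade to a definable one --- is the right one; Lemma~\ref{finite:VF:project:RV} is not needed.
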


\begin{lem}[{\cite[Lemma~4.17]{Yin:QE:ACVF:min}}]\label{algebraic:balls:definable:centers}
If $\mathbb{S}$ is $(\VF, \Gamma)$-generated then every algebraic (respectively, definable) disc  contains an algebraic (respectively, definable) point.
\end{lem}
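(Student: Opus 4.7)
The closed case is immediate: any $(\VF,\Gamma)$-generated $\bb S$ is a fortiori $(\VF,\RV,\Gamma)$-generated, so Lemma~\ref{effectiveness} supplies an $\bb S$-algebraic (respectively, definable) point of $\ga$ directly.

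For an open disc $\ga$, the plan is to produce an $\bb S$-algebraic (respectively, definable) closed subdisc $\gc\sub\ga$ and then apply the closed case. Set $\gamma=\rad(\ga)\in\dcl(\bb S)\cap\Gamma$. Using the divisibility of $\Gamma$, one selects $\gamma'>\gamma$ inside $\dcl(\bb S)\cap\Gamma$ (for instance $\gamma'=2\gamma$ when $\gamma>0$, and analogously in the other nondegenerate cases), so that any $a\in\ga$ yields $\gc(a,\gamma')\sub\ga$. The degenerate situation $\Gamma(\bb S)=\{0\}$ is handled separately: there the identity $\RV(\dcl\bb S)=\rv(\VF(\bb S))$, valid under $(\VF,\Gamma)$-generation, forces any $\bb S$-definable open disc of radius $0$ to be a translate of $\MM$ by some element of $\VF(\bb S)$, and that element furnishes the center directly. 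In all other cases the problem is thus reduced to choosing $a\in\ga\cap\acl(\bb S)$ (respectively, $a\in\ga\cap\dcl(\bb S)$).

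To produce an $\bb S$-algebraic center, I would enumerate the finite $\aut(\bb U/\bb S)$-orbit $\{\ga=\ga_1,\ldots,\ga_n\}$ of $\ga$; distinct conjugate open discs of the same radius being disjoint by ultrametricity, the union $U=\bigsqcup_i\ga_i$ is a nonempty $\bb S$-definable open subset of $\VF$. A density argument then yields $a'\in U\cap\acl(\bb S)$, and transitivity of $\aut(\bb U/\bb S)$ on $\{\ga_i\}$ allows us to replace $a'$ by a conjugate lying in $\ga$. Upgrading to the definable version is then a matter of averaging: given an $\bb S$-algebraic $a\in\ga$, the arithmetic mean $\bar a$ of its $\aut(\bb U/\bb S)$-orbit is $\bb S$-definable, and since $\ga$ is itself $\bb S$-definable every conjugate of $a$ sits in $\ga$; ultrametric convexity of open discs together with $\cha=0$ (so the orbit size is invertible and lies in $\OO^\times$) forces $\bar a\in\ga$.

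The principal obstacle is the density claim: a nonempty $\bb S$-definable open $U\sub\VF$ must meet $\acl(\bb S)$ when $\bb S$ is $(\VF,\Gamma)$-generated. I expect this to follow by combining Lemma~\ref{dcl:to:ac} with the exchange principle Lemma~\ref{exchange}, which together force a $\VF$-generic point of $U$ over $\bb S$ to have infinitely many conjugates; a compactness argument then extracts an $\bb S$-algebraic point of $U$. The hypothesis $(\VF,\Gamma)$-generated is essential here: an extra $\RV$-parameter outside $\rv(\VF(\bb S))$ could specify an open disc whose center cannot be captured inside $\acl(\bb S)$, which is precisely why Lemma~\ref{effectiveness} only handles closed discs in the broader $(\VF,\RV,\Gamma)$-generated setting.
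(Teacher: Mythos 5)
The closed case, the passage from an algebraic point to a definable one by averaging a finite orbit in residue characteristic $0$, and the observation that conjugating moves an algebraic point of the orbit-union $U$ back into $\ga$ are all fine (the closed-subdisc detour of your second paragraph is never actually used and can be dropped). The proof, however, stands or falls with your ``density claim'', and that claim is false as stated. Take $\bb S$ generated by a trivially valued subfield of $\VF$ together with a single element $\gamma \in \Gamma$, $\gamma > 0$; this $\bb S$ is $(\VF,\Gamma)$-generated and has $\Gamma(\bb S) \neq 0$, so it is not covered by your degenerate case. Since $\Gamma$-parameters create no new algebraic points in $\VF$ (compare Lemmas~\ref{atomic:base:change} and \ref{ball:to:ac}), $\VF(\acl \bb S) = \VF(\bb S)^{\alg}$ is still trivially valued, and the nonempty $\bb S$-definable open set $\set{x \in \VF \given \gamma/2 < \vv(x) < \gamma}$ is disjoint from $\acl(\bb S)$. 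The same phenomenon occurs whenever $\Gamma(\bb S)$ is not contained in the divisible hull of $\vv(\VF(\bb S)^\times)$, e.g.\ $\bb S = \C \dpar t \cup \{\delta\}$ with $\delta$ new; so the real dichotomy is not ``$\Gamma(\bb S)$ trivial or not'', and when it fails $\acl(\bb S)$ is not a model, so no elementarity or density argument is available. Your proposed derivation of the claim from Lemma~\ref{dcl:to:ac} and exchange is also a non sequitur: knowing that a generic point of $U$ has infinitely many conjugates gives no algebraic point at all --- in the example above every point of $U$ has infinitely many conjugates and none is algebraic.

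The deeper issue is that restricting the density claim to the sets you actually need it for --- finite unions of conjugate discs --- makes it essentially a restatement of the lemma itself: an algebraic disc contains an algebraic point if and only if the union of its conjugates meets $\acl(\bb S)$. So the reduction concentrates, rather than resolves, the whole content of the statement, namely that $\Gamma$-parameters (unlike $\RV$-parameters such as an $\rv$-fiber $t^\sharp$ with $t$ outside $\rv(\VF(\bb S))$) can never produce a ``centerless'' definable disc. An argument specific to discs is needed here, e.g.\ the quantifier-elimination analysis of the defining formula and its top terms carried out in the cited Lemma~4.17 of \cite{Yin:QE:ACVF:min}; the general topological density statement you appeal to cannot serve as a substitute. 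Finally, your degenerate case treats only the definable half of the statement and asserts $\RV(\dcl \bb S) = \rv(\VF(\bb S))$ without proof; both points would also need attention even if the main gap were repaired.
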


A pillar of the structure of definable sets in $\bb U$ is \emph{\cmin-minimality}, meaning that every definable subset of $\VF$ is a boolean combination of  (definable) valuative discs.

\begin{cor}\label{open:disc:def:point}
Suppose that $\mathbb{S}$ is $(\VF, \RV,\Gamma)$-generated.
Let $\ga \sub \VF$ be a definable disc and $A$ a definable subset of $\VF$. If $\ga \cap A$ is a nonempty proper subset of $\ga$ then $\ga$ contains a definable point.
\end{cor}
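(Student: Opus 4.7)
The plan is to reduce to Lemma~\ref{effectiveness}, which supplies definable points in definable \emph{closed} discs; this is the strongest effectiveness statement available under the hypothesis that $\mathbb{S}$ is $(\VF, \RV, \Gamma)$-generated (Lemma~\ref{algebraic:balls:definable:centers}, which also covers open discs, requires the stronger restriction that no $\RV$-parameters appear). Note first that if $\ga$ happens to be closed, then Lemma~\ref{effectiveness} applied to $\ga$ itself already produces a definable point, so the interesting case is when $\ga$ is an open disc. I shall assume this throughout.

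By \cmin-minimality, $A$ is a Boolean combination of finitely many definable discs $D_1, \ldots, D_n$. Since any two discs in $\VF$ are either disjoint or nested, each $D_i$ is in exactly one of three positions relative to $\ga$: disjoint from $\ga$, containing $\ga$, or properly contained in $\ga$. If none of the $D_i$ were properly contained in $\ga$, the Boolean combination restricted to $\ga$ would collapse to either $\emptyset$ or all of $\ga$, contradicting the assumption that $\ga \cap A$ is a nonempty proper subset of $\ga$. Hence some definable $D := D_i$ satisfies $D \subsetneq \ga$.

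Let $D'$ denote the smallest closed disc containing $D$: take $D' = D$ if $D$ is already closed, and otherwise take $D'$ to be the closed disc of the same valuative radius as $D$ through any point of $D$. Either way $D'$ is definable from $D$. Because $D \subsetneq \ga$ with $\ga$ open, the valuative radius of $D$ must be strictly greater than that of $\ga$, and a brief application of the ultrametric inequality then yields $D' \sub \ga$. Applying Lemma~\ref{effectiveness} to the definable closed disc $D'$ produces a definable point, which lies in $\ga$.

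The only step requiring genuine care is the inclusion $D' \sub \ga$ in the open-in-open case; this rests on the strict comparison of valuative radii just mentioned, but is otherwise routine. The rest of the argument is a straightforward combinatorial distillation of \cmin-minimality together with Lemma~\ref{effectiveness}.
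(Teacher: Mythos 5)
Your overall route is the paper's: produce an $\mathbb{S}$-definable closed sub-disc of $\ga$ and feed it to Lemma~\ref{effectiveness}. But there is a genuine gap at the step ``Hence some definable $D := D_i$''. \cmin-minimality decomposes the definable set $A$ into a Boolean combination of discs, but the individual discs of such a decomposition are in general only \emph{algebraic} over $\mathbb{S}$ (definable over $\acl\mathbb{S}$), not $\mathbb{S}$-definable: automorphisms over $\mathbb{S}$ may permute them. For example, over a base generated by an element $t$ with $\vv(t)=1$, the set $\set{x \in \VF \given \vv(x^2-t)>2}$ is definable and equals $\go(\sqrt t,3/2)\cup\go(-\sqrt t,3/2)$; the two discs are swapped by an automorphism over the base sending $\sqrt t\mapsto -\sqrt t$, neither is definable, and indeed no definable disc separates the two components, so the set is not a Boolean combination of discs definable over the base. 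If such a configuration occurs inside $\ga$, your $D$, hence $D'$, is merely algebraic; the ``definable'' clause of Lemma~\ref{effectiveness} does not apply, and its ``algebraic'' clause yields only an algebraic point, not the definable point the corollary asserts. So the step you single out as delicate (the inclusion $D'\sub\ga$) is in fact the routine ultrametric part; the real content of the corollary is getting definability over $\mathbb{S}$, and that is exactly what your argument elides.

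The gap is repairable, and the repair is what the paper's terse proof (``$\ga$ contains a definable closed disc'') is implicitly doing. Since $\ga$ is $\mathbb{S}$-definable, every $\mathbb{S}$-conjugate of $D'$ is again a closed disc contained in $\ga$, and there are finitely many of them; the smallest closed disc $D''$ containing their union is then $\mathbb{S}$-definable, and your own radius computation extends to show $D''\sub\ga$: each conjugate has valuative radius strictly greater than $\rad(\ga)$, and any two of them lie in $\ga$, so their mutual distances have valuation strictly greater than $\rad(\ga)$ as well; hence $\rad(D'')>\rad(\ga)$ and $D''\sub\ga$ (the closed case of $\ga$ is trivial). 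Now Lemma~\ref{effectiveness} applied to $D''$ gives the definable point. Alternatively, take the algebraic point supplied by the algebraic clause of Lemma~\ref{effectiveness}, intersect its finite definable set of conjugates with $\ga$, and average: in residue characteristic zero the average of finitely many points of a disc stays in the disc and is definable. With either patch your argument becomes a correct, slightly more explicit version of the paper's proof.
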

\begin{proof}
It is not hard to see that, by \cmin-minimality, if $\ga \cap A$ is a nonempty proper subset of $\ga$ then $\ga$ contains a definable closed disc and hence the claim is immediate by Lemma~\ref{effectiveness}.
\end{proof}

\begin{defn}
Let $D$ be a set of parameters. We say that a (not necessarily definable) nonempty set $A$ \memph{generates a (complete) $D$-type} if, for every $D$-definable set $B$, either $A \sub B$ or $A \cap B = \0$. In that case, $A$ is \memph{$D$-type-definable} if no set properly contains $A$ and also generates a $D$-type. If $A$ is $D$-definable and generates a $D$-type, or equivalently, if $A$ is both $D$-definable and $D$-type-definable then we say that $A$ is \memph{$D$-atomic} or \memph{atomic over $D$}.
\end{defn}

We simply say ``atomic'' when $D =\0$.

In the literature, a type could be a partial type and hence a type-definable set may have nontrivial intersection with a definable set. In this paper, since partial types do not play a role, we shall not carry the superfluous qualifier ``complete'' in our terminology.

\begin{lem}[{\cite[Lemma~3.3]{Yin:int:acvf}}]\label{atomic:base:change}
Let $A$ be an atomic set. Then $A$ is $\gamma$-atomic for all $\gamma \in \Gamma$.
\end{lem}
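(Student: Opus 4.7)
Since $A$ is $\emptyset$-definable it is a fortiori $\gamma$-definable, so the substantive content of the lemma is that $A$ generates a complete $\gamma$-type. My approach is to show that for any two $a_1,a_2 \in A$ there is an automorphism of $\bb U$ fixing $\gamma$ and sending $a_1$ to $a_2$. By the atomicity of $A$ and saturation of $\bb U$ there is some $\sigma \in \aut(\bb U)$ with $\sigma(a_1)=a_2$; set $\gamma' := \sigma(\gamma)$, so $\gamma$ and $\gamma'$ share the same $\emptyset$-type in $\Gamma$. Producing an $a_2$-fixing automorphism $\tau$ with $\tau(\gamma')=\gamma$ would complete the proof, since $\tau\circ\sigma$ then fixes $\gamma$ and carries $a_1$ to $a_2$.

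By saturation the existence of $\tau$ is equivalent to the equality of types $\tp(\gamma/a_2) = \tp(\gamma'/a_2)$, so the whole problem boils down to this. Here I intend to use that $\Gamma$ is a stably embedded, purely o-minimal sort in $\ACVF(0,0)$: any formula over $\bb U$-parameters defining a subset of $\Gamma$ is equivalent to one with $\Gamma$-parameters drawn from $\dcl(a_2)\cap\Gamma$. Thus the desired type equality reduces to showing that $\gamma$ and $\gamma'$ have the same type over the $\Gamma$-trace $\dcl(a_2)\cap\Gamma$, interpreted in the pure divisible ordered abelian group structure on $\Gamma$.

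The technical core of the argument is this last reduction. Atomicity of $A$ constrains the trace $\dcl(a_2)\cap\Gamma$: each element of this trace is the value at $a_2$ of an $\emptyset$-definable function $A\to\Gamma$, and because $A$ is a single $\aut(\bb U)$-orbit any two realizations of the $\emptyset$-type of $a_2$ yield conjugate traces. Combined with $\tp(\gamma/\emptyset)=\tp(\gamma'/\emptyset)$, a cell-decomposition argument in the o-minimal structure on $\Gamma$ should then give $\tp(\gamma/\dcl(a_2)\cap\Gamma) = \tp(\gamma'/\dcl(a_2)\cap\Gamma)$, finishing the proof. I expect the main obstacle to be executing this cell-decomposition step cleanly, particularly in ensuring that the atomicity hypothesis really does provide enough uniformity on the trace --- the interplay between the exchange principle of Lemma~\ref{exchange} and the o-minimal cell analysis in $\Gamma$ is the delicate point.
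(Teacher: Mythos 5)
Your first two reductions are sound: by homogeneity of $\bb U$ it suffices to prove $\tp(\gamma/a_2)=\tp(\gamma'/a_2)$, and by quantifier elimination every $a_2$-definable subset of $\Gamma$ is cut out, in the ordered $\Q$-vector space $\Gamma$, by parameters from the trace $E=\Gamma(\dcl(\mathbb{S}\langle a_2\rangle))$. The gap is the closing step, and it is not a technicality: knowing that $\tp(\gamma)=\tp(\gamma')$ and that the traces of $a_1$ and $a_2$ are conjugate does not yield $\tp(\gamma/E)=\tp(\gamma'/E)$, because $\sigma$ need not fix $E$ pointwise, and $\gamma$, $\gamma'$ can land on opposite sides of an element of $E$ contributed by $a_2$. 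In fact no argument built only from the ingredients you invoke (atomicity as a single orbit, stable embeddedness, \omin-minimality of $\Gamma$, exchange) can close this: over a base $\mathbb{S}$ with $\Gamma(\mathbb{S})=0$ (e.g.\ the trivially valued prime field), the set $A=\MM\setminus\{0\}$ is a single $\aut(\bb U/\mathbb{S})$-orbit (the assignment $x\mapsto x'$ extends to a valued-field isomorphism $\Q(x)\to\Q(x')$ for any two such points), hence atomic, yet $\vv$ is a nonconstant definable map $A\to\Gamma$ and the $\gamma$-definable set $\{x\in A:\vv(x)>\gamma\}$ splits $A$. So in that setting the conclusion itself fails while all of your stated hypotheses hold; the statement must be read with the standing assumptions of the cited source \cite{Yin:int:acvf} (the present paper gives no proof, it only cites), and any correct proof has to use them.

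What your outline is missing is the assertion that actually carries the content: for $a\in A$ atomic, every definable function $A\to\Gamma$ is constant with a \emph{definable} value, so that $E\subseteq\Q\otimes\Gamma(\mathbb{S})$ is pointwise fixed by $\sigma$ and the type equality over $E$ is then immediate. Establishing this is where atomicity must meet extra input. If $\Gamma(\mathbb{S})\neq 0$, the image of such a function is an $\mathbb{S}$-definable subset of $\Gamma$ generating a complete type over $\Q\otimes\Gamma(\mathbb{S})$, and \omin-minimality plus nontriviality and divisibility of that group force it to be a single definable point; with that in hand your skeleton does complete. Alternatively, for the atomic sets this paper actually uses (polydiscs), one argues as in Lemma~\ref{at:ball:t}: a $\gamma$-definable set splitting an atomic disc yields, via Corollary~\ref{open:disc:def:point} over the base extended by $\gamma$ and Lemma~\ref{dcl:to:ac} (applied to any $t\in\gamma^{\sharp}$), an algebraic point inside the disc, contradicting atomicity. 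Either way, the ``uniformity of the trace'' you hoped to get from a cell-decomposition argument must be proved by one of these routes; it does not follow from conjugacy of traces alone.
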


\begin{lem}\label{at:ball:t}
Let $\ga \sub \VF^n$ be an atomic open polydisc. Then $\ga$ is $t$-atomic for all $t \in \RV$.
\end{lem}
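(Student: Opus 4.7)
The plan is to reduce, via Lemma~\ref{atomic:base:change} applied to $\gamma := \vrv(t) \in \Gamma$: since atomicity passes to $\bb S \cup \{\gamma\}$, I may assume from the outset that $\gamma \in \bb S$ and thus $t \in \gamma^\sharp \sub \RV$. To establish $(\bb S \cup \{t\})$-atomicity of $\ga$, let $B \sub \VF^n$ be $(\bb S \cup \{t\})$-definable. By quantifier elimination together with Convention~\ref{topterm}, $B$ can be written as a Boolean combination of atomic formulas $\rv(P_j(\bar x)) \in U_j$, where $P_j \in \bb S[\bar x]$ and $U_j$ is an $(\bb S \cup \{t\})$-definable subset of a suitable power of $\RV$; thus for $\bar x \in \ga$, the condition $\bar x \in B$ depends on $\bar x$ only through the tuple $(\rv(P_j(\bar x)))_j$.

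The crux is the sub-claim that for every $P \in \bb S[\bar x]$, the map $\bar x \mapsto \rv(P(\bar x))$ is constant on $\ga$, with value in $\dcl(\bb S) \cap \RV$. Granting this, the tuple $(\rv(P_j(\bar x)))_j$ is a fixed $\bb S$-definable sequence on $\ga$, so $\bar x \in B$ reduces to a truth value depending only on $t$ and $\bb S$-definable data, uniformly across $\ga$; hence $B \cap \ga \in \{\emptyset, \ga\}$, as required. The sub-claim is proved by induction on $n$. For $n = 1$, an atomic open disc $\gb$ contains no $\acl(\bb S)$-point, since the finite $\bb S$-Galois orbit of such a point would be an $\bb S$-definable finite subset cutting $\gb$ properly, contradicting atomicity. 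Factoring $P \in \bb S[x]$ over $\acl(\bb S)$ into linear factors $x - r$ with $r \notin \gb$, each $\rv(x - r)$ is constant on $\gb$ by the ultrametric estimate; hence $\rv(P(x))$ is constant on $\gb$, and as the unique element of the $\bb S$-definable set $\rv(P(\gb))$ it lies in $\dcl(\bb S) \cap \RV$. For $n > 1$, expand $P(\bar x) = \sum_k R_k(\bar x') x_n^k$ with $R_k \in \bb S[\bar x']$; by induction, $\rv(R_k(\bar x')) = r_k \in \dcl(\bb S) \cap \RV$ on $\ga' = \gb_1 \times \cdots \times \gb_{n-1}$, and by the $n=1$ case, $\rv(x_n) = s \in \dcl(\bb S) \cap \RV$ on $\gb_n$. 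Each monomial then has constant valuation $\vrv(r_k s^k)$, and the only possible obstruction to the constancy of $\rv(P(\bar x))$ is cancellation of leading $\rv$-values in $\K$ among the monomials attaining the minimum valuation $\alpha_*$. If such cancellation occurred, the $\bb S$-definable condition $\vv(P(\bar x)) > \alpha_*$ would have to hold on all of $\ga$ (else it would cut $\ga$ properly, contradicting atomicity), and one iterates the analysis at the next valuative level.

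The main obstacle is this inductive cancellation analysis: at each stage, atomicity must force uniform behaviour of the $\bb S$-definable valuative conditions across $\ga$, and the finitary monomial structure of $P$ must guarantee that the iteration terminates, yielding a constant $\rv$-value in $\dcl(\bb S) \cap \RV$. Conceptually, this encodes the fact that the generic type of an atomic open polydisc is orthogonal to $\RV$, so that adjoining an $\RV$-element does not refine the type.
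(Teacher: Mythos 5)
Your reduction is fine: by quantifier elimination and Convention~\ref{topterm}, membership in a $t$-definable set restricted to $\ga$ factors through the tuple $(\rv(P_j(\bar x)))_j$ with $P_j$ over $\VF(\bb S)$, so the lemma would indeed follow from your sub-claim that $\rv\circ P$ is constant on $\ga$ for every $P\in\bb S[\bar x]$; and your $n=1$ argument (no $\acl(\bb S)$-points in an atomic disc, factor over $\acl(\bb S)$, ultrametric estimate, multiplicativity of $\rv$) is correct. The gap is the inductive step, exactly at the cancellation case, which is the only case with real content. Knowing that each $\rv(R_k(\bar x')x_n^k)=r_ks^k$ is constant determines, for a fixed $c$ with $\vv(c)=\alpha_*$, the residues $\overline{R_k(\bar x')x_n^k/c}$; so either their sum $\kappa$ over the minimal-valuation monomials is nonzero, and $\rv(P)$ is constant (no atomicity needed), or $\kappa=0$, and then automatically $\vv(P(\bar x))>\alpha_*$ on all of $\ga$ and the monomial decomposition tells you nothing further about $\rv(P)$. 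At that point there is no well-defined ``next valuative level'': the cancellation is among values, not among literal polynomial terms, so there is no remainder polynomial whose coefficients' $\rv$-values the induction controls, nor can you recenter the coordinates, since an atomic polydisc has no center in $\acl(\bb S)$. The simplest instance already defeats the scheme: $P(x_1,x_2)=x_1-x_2$ on $\ga=\gb_1\times\gb_2$ with $\rv(\gb_1)=\rv(\gb_2)$, where the two monomials cancel at leading order and nothing in your setup gives access to $\rv(x_1-x_2)$. Note that your sub-claim is essentially Lemma~\ref{at:rv:split} (for honest functions), which the paper \emph{derives from} the present lemma; you are trying to prove the stronger statement from scratch, and your final sentence in effect concedes that the uniformity and termination of the iteration are unproven.

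For comparison, the paper's proof is much softer and avoids any computation with polynomials: for $n=1$, if $\ga$ were not $t$-atomic, a $t$-definable set would cut $\ga$ properly, so by Corollary~\ref{open:disc:def:point} (over the substructure generated by $t$) the disc would contain a $t$-definable point, which by Lemma~\ref{dcl:to:ac} is already algebraic over $\bb S$, contradicting atomicity; for $n>1$ one inducts fiberwise, using that $\ga_1$ is $t$-atomic and each fiber $\ga_{>1}$ is $(a,t)$-atomic for $a\in\ga_1$, so no nonempty proper $t$-definable subset of $\ga$ can exist. If you wish to salvage your route, the $\kappa=0$ case is precisely where you need this kind of input (definable points in definable discs plus Lemma~\ref{dcl:to:ac}), at which point you may as well run the paper's argument directly; the appeal to Lemma~\ref{atomic:base:change} to absorb $\vrv(t)$ is harmless but not needed in either approach.
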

\begin{proof}
For the case $n=1$, if $\ga$ were not $t$-atomic then, by Corollary~\ref{open:disc:def:point} and Lemma~\ref{dcl:to:ac}, it would contain a definable point, contradicting atomicity. For the case $n>1$, by induction on $n$, $\ga_1$ is $t$-atomic and for every $a \in \ga_1$, $\ga_{>1}$ is $(a, t)$-atomic. So $\ga$ cannot have nonempty $t$-definable subset.
\end{proof}

\begin{lem}\label{atom:self}
Let $A \sub \VF^n$ be a set that generates a type and $\gb \sub A$ an open (or closed) polydisc. Then $\gb$ is $\code \gb$-atomic.
\end{lem}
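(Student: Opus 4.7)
The plan is to prove the contrapositive: if $\gb$ were not $\code\gb$-atomic, it would contain a point algebraic over $\mathbb{S}$, contradicting the infinitude of $\gb \sub A$ together with $A$ generating a single (hence non-algebraic) $\mathbb{S}$-type. Since $\gb$ is tautologically $\code\gb$-definable, only type-generation over $\mathbb{S} \cup \{\code\gb\}$ needs verification. The recurring technical obstacle is that $\code\gb \in \DC$, while the key structural tools---Corollary~\ref{open:disc:def:point} and Lemma~\ref{ball:to:ac}---presuppose a $(\VF, \RV, \Gamma)$-generated base; the remedy, exploited throughout, is that $\code\gb \in \dcl(c, \rad(\gb))$ for any $c \in \gb$, letting one trade $\DC$-data for $(\VF, \Gamma)$-data at the cost of an application of the exchange principle (Lemma~\ref{exchange}) to eliminate the auxiliary point $c$.

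I argue by induction on $n$. For the base case $n = 1$, suppose $C$ is $\mathbb{S} \cup \{\code\gb\}$-definable with $\gb \cap C$ a nonempty proper subset of $\gb$. For any $c \in \gb$ with $\gamma = \rad(\gb)$, both $\gb$ and $C$ are definable over the $(\VF, \RV, \Gamma)$-generated substructure $\mathbb{S} \cup \{c, \gamma\}$, so Corollary~\ref{open:disc:def:point} supplies a point $p \in \gb$ definable over this enlarged base, hence $p \in \acl(\mathbb{S} \cup \{c, \code\gb\})$. Since $\gb$ is infinite (saturation) and $c \in \gb$ is arbitrary, the exchange principle eliminates $c$ to give $p \in \acl(\mathbb{S} \cup \{\code\gb\})$. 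Lemma~\ref{ball:to:ac}, whose proof relativizes verbatim to base $\mathbb{S}$, then gives $p \in \acl(\mathbb{S})$. But $p \in \gb \sub A$, so the $\mathbb{S}$-type generated by $A$ is algebraic and finitely realized, contradicting $\gb$ being infinite.

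For the inductive step $n > 1$, write $\gb = \gb_1 \times \gb_{>1}$ and suppose $C$ is $\mathbb{S} \cup \{\code\gb\}$-definable with $\gb \cap C$ a nonempty proper subset; pick $a \in \gb \cap C$ and $a' \in \gb \setminus C$, and consider the slices $C_b = \{x : (b, x) \in C\}$ for $b \in \gb_1$. If some $b$ yields $\gb_{>1} \cap C_b$ nonempty proper in $\gb_{>1}$, the fiber $A_b = \{x : (b, x) \in A\}$ contains $\gb_{>1}$ and generates an $\mathbb{S} \cup \{b\}$-type (two elements of $A_b$ paired with $b$ give equal $\mathbb{S}$-types as elements of $A$, hence equal $\mathbb{S} \cup \{b\}$-types); the inductive hypothesis applied to $\gb_{>1} \sub A_b$, with the base enlarged by $\rad(\gb_1)$ to absorb $\code{\gb_1} \in \dcl(b, \rad(\gb_1))$, yields that $\gb_{>1}$ is atomic over this enlarged base, contradicting the separation by $C_b$. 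Otherwise every slice $\gb_{>1} \cap C_b$ is $\emptyset$ or $\gb_{>1}$, and the set $B = \{b \in \gb_1 : \gb_{>1} \sub C_b\}$ is an $\mathbb{S} \cup \{\code\gb\}$-definable proper nonempty subset of $\gb_1$ containing $a_1$ but not $a'_1$; the base-case argument applied to $\gb_1 \sub \pr_1(A)$---invoking Lemma~\ref{ball:to:ac} on the ambient full-dimensional polydisc $\gb$ rather than $\gb_1$ alone, so as to absorb the $\code{\gb_{>1}}$-parameters contained in $\code\gb$---once more produces an $\mathbb{S}$-algebraic point in $\gb_1$ and yields the contradiction.
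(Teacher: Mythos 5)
Your argument diverges from the paper's (which simply cites \cite[Lemma~3.4]{Yin:int:acvf} for the case $n=1$ and, for $n>1$, combines fiberwise atomicity with Lemma~\ref{atomic:base:change}), and the divergence is where it breaks. In your base case, the pivotal move is: having traded $\code\gb$ for $(c,\gamma)$ with $c\in\gb$ and $\gamma=\rad(\gb)$, you obtain from Corollary~\ref{open:disc:def:point} a point $p\in\gb$ with $p\in\dcl(\mathbb{S},c,\gamma)$, and then assert that ``exchange eliminates $c$'' to get $p\in\acl(\mathbb{S}\cup\{\code\gb\})$. This does not follow. Over the enlarged base $\mathbb{S}\langle c,\gamma\rangle$ the point $c$ itself is already a definable point of $\gb$, so the point the corollary hands back may be $c$, or interalgebraic with $c$; and $p$ varies with $c$, so ``$c$ is arbitrary'' gives no purchase. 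Lemma~\ref{exchange} applied to $p\in\acl(\mathbb{S},\gamma,c)$ yields only the dichotomy ``$p\in\acl(\mathbb{S},\gamma)$ or $c\in\acl(\mathbb{S},\gamma,p)$'', and the second (interalgebraic) horn is never refuted in your argument. Ruling it out is exactly the nontrivial content of the one-variable statement that the paper does not reprove but imports. The same unsupported extraction reappears in the second branch of your inductive step, where in addition the base now contains the disc code $\code{\gb_{>1}}$, so Corollary~\ref{open:disc:def:point} and Lemma~\ref{effectiveness} (whose hypotheses exclude disc-code parameters) are not applicable at all; Lemma~\ref{ball:to:ac} removes disc codes from $\acl$ of a given point but does not manufacture algebraic points inside discs, so it cannot ``absorb'' those parameters.

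There is also a gap in the first branch of your inductive step: you apply the induction hypothesis to $\gb_{>1}\sub A_b$ ``with the base enlarged by $\rad(\gb_1)$''. The hypothesis of the lemma then requires $A_b$ to generate a type over $\mathbb{S}\langle b,\rad(\gb_1)\rangle$, and type-generation of a non-definable set is not preserved under adjoining a $\Gamma$-parameter (e.g.\ $\MM\mi 0$ over a trivially valued base generates a type, yet $\vv(x)>\gamma$ splits it). The correct order, and the paper's proof, is to apply the induction hypothesis over $\mathbb{S}\langle b\rangle$ first, obtaining that $\gb_{>1}$ is $(b,\code{\gb_{>1}})$-atomic, and only then adjoin $\rad(\gb_1)$ via Lemma~\ref{atomic:base:change}, which is legitimate precisely because $\gb_{>1}$ is by then atomic (definable and type-generating); your write-up never invokes that lemma. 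Note also that the paper avoids your slice dichotomy entirely: it proves the two fiberwise atomicity statements (over $b\in\gb_1$ and, symmetrically, over points of $\gb_{>1}$) and combines them, so no rerun of the one-variable argument over a disc-code base is ever needed.
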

\begin{proof}
We do induction on $n$. The base case $n=1$ is just \cite[Lemma~3.4]{Yin:int:acvf}. For the case $n>1$, write $\gb = \prod_i \gb_i$, $\gb' = \prod_{i>1} \gb_i$, and $\gamma=\rad(\gb_1) $. For any $a \in \gb_1$, $A_a$ generates a type. So, by Lemma~\ref{atomic:base:change} and the inductive hypothesis, $\gb'$ is $(a, \gamma, \code{\gb'})$-atomic and hence $\code \gb$-atomic. Similarly $\gb_1$ is $\code \gb$-atomic. It follows that $\gb$ must be $\code \gb$-atomic.
\end{proof}


\begin{lem}\label{at:rv:split}
Let $\ga \sub \VF^n$ be an atomic open polydisc and $f : \ga \fun \mdl P(\VF)$ a definable finitary function such that $\rv \rest f(a)$ is injective for every $a \in \ga$.  Then  $\rv \circ f$ is constant.
\end{lem}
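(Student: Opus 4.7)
The plan is to reduce everything to the $\RV$-atomicity of $\ga$ established in Lemma~\ref{at:ball:t}. The conclusion ``$\rv \circ f$ is constant'' means that the set-valued map $a \mapsto \rv(f(a)) \sub \RV$ does not depend on $a \in \ga$, so it suffices, for each individual $s \in \RV$, to show that the truth value of $s \in \rv(f(a))$ is independent of $a \in \ga$.

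Concretely, I would fix an arbitrary $s \in \RV$ and consider the set
\[
A_s \;=\; \set{a \in \ga \given s \in \rv(f(a))}.
\]
Since $f$ is definable (i.e.\ its graph $\set{(a,b) \given b \in f(a)}$ is a definable subset of $\ga \times \VF$), the set $A_s$ is $s$-definable as a subset of $\ga$. By Lemma~\ref{at:ball:t}, $\ga$ is $s$-atomic, so every $s$-definable subset of $\ga$ is either empty or all of $\ga$; in particular $A_s \in \{\0, \ga\}$. Letting $S = \set{s \in \RV \given A_s = \ga}$, the equivalences $s \in \rv(f(a)) \Leftrightarrow a \in A_s \Leftrightarrow A_s = \ga \Leftrightarrow s \in S$ (where the middle equivalence uses the dichotomy $A_s \in \{\0, \ga\}$ together with $a \in \ga$) yield $\rv(f(a)) = S$ for every $a \in \ga$, as desired.

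I do not anticipate any genuine obstacle here; the proof is essentially a one-step application of Lemma~\ref{at:ball:t}, and the only subtlety is recognizing that one needs to upgrade ``$\ga$ is atomic'' to ``$\ga$ is $s$-atomic for every single $s \in \RV$,'' which is exactly the content of that lemma. I also note that the hypotheses that $f$ is finitary and that $\rv \rest f(a)$ is injective play no role in this argument; they are presumably retained for uniformity with later applications of the lemma.
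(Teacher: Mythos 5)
Your proof is correct and is essentially the paper's own argument: both rest on Lemma~\ref{at:ball:t}, applied to conclude that the $s$-definable set $\set{a \in \ga \given s \in \rv(f(a))}$ must be empty or all of $\ga$, whence $\rv(f(a))$ is independent of $a$ (and the same dichotomy handles $s = \infty$, i.e.\ the possibility $0 \in f(a)$). Your observation that injectivity and finitariness are not needed for this conclusion is also accurate; the paper invokes injectivity only to phrase the outcome as an induced $t$-definable function $\ga \fun t^\sharp$, which is the form used in subsequent applications such as Lemma~\ref{atomic:open:multidisc}.
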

\begin{proof}
This is clear since, by Lemma~\ref{at:ball:t}, for every $t \in \rv(\bigcup f(\ga))$, $\ga$ is $t$-atomic and hence $f$ induces a $t$-definable function $\ga \fun t^\sharp$. The lemma follows.
\end{proof}

\begin{lem}[{\cite[Lemma~3.8]{Yin:int:acvf}}]\label{hae:component:11}
Let $A \sub \VF$ be an atomic open disc or an atomic closed disc or an atomic thin annulus. Let $f : A \fun \VF$ be a definable function. Then $f(A)$ is also atomic of one of these three forms.
\end{lem}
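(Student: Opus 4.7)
The plan is to proceed in two steps: first show that $f(A)$ is atomic (as a subset of $\VF$), and then use \cmin-minimality to classify atomic definable subsets of $\VF$, verifying that the only infinite possibilities are the three forms listed.

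For the first step, let $D$ be a parameter set over which $A$ is atomic and $f$ is definable. For any $D$-definable set $C \sub \VF$, the pullback $f^{-1}(C) \cap A$ is $D$-definable, and $D$-atomicity of $A$ forces it to equal either $A$ or $\0$. Thus $f(A) \sub C$ or $f(A) \cap C = \0$, so $f(A)$ generates a $D$-type. Being also $D$-definable (as the image of a $D$-definable set under a $D$-definable function), $f(A)$ is $D$-atomic.

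For the second step, set $B \coloneqq f(A)$. By \cmin-minimality, $B$ is a finite boolean combination of open and closed discs. If $B$ is finite it must reduce to a singleton (any two distinct $D$-definable points can be separated), which we may view as a degenerate closed disc of infinite valuative radius. Otherwise, the intersection of all closed discs containing $B$ is itself a closed disc $\gc$, which, being $\aut(\bb U / D)$-invariant as a set, is $D$-definable in $\bb U^{\bullet}$. Either $B = \gc$ (closed disc case) or $\gc \mi B$ is, by \cmin-minimality, a nonempty finite disjoint union of open subdiscs $\go_1, \ldots, \go_k$ of $\gc$. Atomicity of $B$ then forces $k \leq 1$: otherwise the $D$-definable finite set $\{\go_1, \ldots, \go_k\}$ would admit a $D$-definable ordering (using Lemma~\ref{finite:VF:project:RV} to internalize the finite set in $\RV$), which via the assignment sending $x \in B$ to the $\go_i$ closest to $x$ pulls back to separate generic points of $B$ into at least two $D$-definable classes. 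With $k = 1$, writing $\go = \go_1$, either $\rad(\go) = \rad(\gc)$, yielding the thin annulus $B = \gc \mi \go$, or $\rad(\go) > \rad(\gc)$; the latter is impossible because then a point of $B$ lying in the maximal open subdisc of $\gc$ that contains $\go$ would sit inside a strictly smaller $D$-definable closed disc than a point in a different maximal open subdisc, giving two $D$-distinguishable types inside $B$. The remaining case, where $B$ is itself contained in a single proper open subdisc of $\gc$, yields the open disc case.

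The main obstacle is the case analysis in the second step: one must carefully invoke atomicity to rule out the $k \geq 2$ and the ``small-deletion'' configurations. Both boil down to the same pattern --- any nontrivial $D$-definable tree-combinatorial datum attached to $B$ induces a $D$-definable partition of $B$, contradicting the fact that $B$ generates a complete $D$-type. Internalization of finite sets in $\VF$ via Lemma~\ref{finite:VF:project:RV} and the existence of sufficiently many $D$-definable points (guaranteed by Lemmas~\ref{effectiveness} and \ref{algebraic:balls:definable:centers} once one adjoins the requisite $\Gamma$-parameters) are what make this book-keeping possible within the \cmin-minimal framework.
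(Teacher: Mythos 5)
Your first step is fine: every $D$-definable subset of $\VF$ pulls back under $f$ to a $D$-definable subset of $A$, so $f(A)$ is again $D$-atomic. The genuine gap is the second step, where you reduce the lemma to the claim that every infinite atomic definable subset of $\VF$ is an open disc, a closed disc or a thin annulus, and from that point on never use the hypothesis that $B=f(A)$ with $A$ atomic of one of the three forms. That classification is false in the generality you need, and the failure sits exactly in your unargued ``remaining case'' ($B$ contained in a maximal open subdisc of $\gc$): when $\Gamma(\dcl \mathbb{S})=\{0\}$ there is no definable radius with which to isolate a puncture, and a punctured disc can be atomic. Concretely, over a trivially valued algebraically closed field every definable subset of $\VF$ is a boolean combination of definable points and discs $a+\MM$, $a+\OO$, none of which splits $\MM \mi 0$, so $\MM \mi 0$ is atomic yet of none of the three forms; and this is not confined to parameter sets where the lemma is vacuous, since for $\mathbb{S}$ generated by an element $s \in \RV$ with $\vrv(s)=0$ and transcendental image in $\K$, one checks that the radius-zero open disc $\set{x \in \OO \given \res(x)=s}$ is an atomic open disc while $\MM \mi 0$ is still atomic over $\mathbb{S}$. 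Hence atomicity of $f(A)$ alone cannot yield the conclusion: one must exploit that $B$ is the image of an atomic disc or thin annulus under a definable unary function, i.e.\ the structure theory of such functions (in the spirit of the disc-to-disc property of Lemma~\ref{open:pro}). This is how the cited source \cite[Lemma~3.8]{Yin:int:acvf} proceeds; the present paper itself gives no proof, only the citation.

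There are also smaller slips in the sketch. In the finite case, the elements of a finite atomic set form a conjugacy orbit and need not be $D$-definable, so ``two distinct $D$-definable points can be separated'' does not rule out, say, a two-point image; excluding that again requires information about $f$ on $A$. By \cmin-minimality $\gc \mi B$ is a boolean combination of discs, not a finite disjoint union of open subdiscs: the holes may be closed discs or points, and if $B$ is itself an open disc then $\gc \mi B$ is a thin annulus, an infinite union of maximal open subdiscs. Finally $f(A)$ may be unbounded (e.g.\ the image of a thin annulus of positive radius under $x \efun x^{-1}$), in which case the closed hull $\gc$ does not exist without first compactifying. These points are repairable bookkeeping, but the missing argument in the ``remaining case'' is not; it is where the content of the lemma lies.
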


\begin{lem}[{\cite[Lemma~3.13]{Yin:int:acvf}}]\label{atomic:open:ball:open}
Let $A \sub \VF$ be an atomic open disc and $f : A \fun \VF$ a definable function. Then either $f$ is constant or $f(A)$ is also an atomic open disc.
\end{lem}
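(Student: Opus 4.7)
The plan is to start with Lemma~\ref{hae:component:11}: since $A$ is an atomic open disc and $f : A \fun \VF$ is definable, the image $f(A)$ is itself atomic and of one of three shapes --- an open disc, a closed disc, or a thin annulus. Assuming that $f$ is nonconstant, so that $f(A)$ has at least two points, the remaining task is to rule out the closed-disc and thin-annulus cases. The main tool is the interaction of atomicity (no definable proper nonempty subset) with Lemma~\ref{algebraic:balls:definable:centers}, whose hypothesis that $\mathbb{S}$ be $(\VF,\Gamma)$-generated is automatic once $\mathbb{S}$ is $\VF$-generated, as is tacitly assumed in this part of the section.

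For the closed-disc case, suppose $f(A) = \gb$ is an atomic closed disc. If $\gb$ is a singleton, then $f$ is constant, contrary to hypothesis. Otherwise $\gb$ has finite radius and hence infinitely many points; since $\gb$ is $\mathbb{S}$-definable, Lemma~\ref{algebraic:balls:definable:centers} produces an $\mathbb{S}$-definable point $b_0 \in \gb$, and then $\{b_0\} \subsetneq \gb$ is an $\mathbb{S}$-definable proper nonempty subset, contradicting atomicity.

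For the thin-annulus case, suppose $f(A) = T$ is an atomic thin annulus at some level $\alpha$. Both the smallest closed disc $\bar T$ containing $T$ and its hole $H = \bar T \mi T$ are $\mathbb{S}$-definable from $T$, and $\alpha \in \Gamma(\mathbb{S})$ because $T$ is $\mathbb{S}$-definable. Apply Lemma~\ref{algebraic:balls:definable:centers} to $\bar T$ to obtain a definable point $b_0 \in \bar T$. If $b_0 \in T$, atomicity of $T$ forces $T = \{b_0\}$, contradicting that a thin annulus has infinitely many points. Otherwise $b_0 \in H$, and then $T = \{x \in \VF : \vv(x-b_0) = \alpha\}$ by the nonarchimedean triangle identity; since $\mathbb{S}$ is $\VF$-generated, $\Gamma(\mathbb{S}) = \vv(\VF(\mathbb{S}))$, so we may choose a definable $s \in \VF$ with $\vv(s) = \alpha$, and $b_0 + s$ is then a definable point of $T$, again forcing $T$ to be a singleton --- contradiction.

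The only remaining possibility is that $f(A)$ is an atomic open disc, which is the desired conclusion. I do not expect any serious obstacle; the main thing to verify carefully is that the level $\alpha$, the enclosing closed disc $\bar T$, and the hole $H$ are all recoverable from $T$ definably over $\mathbb{S}$, and that the ambient hypothesis on $\mathbb{S}$ is indeed in force so that Lemma~\ref{algebraic:balls:definable:centers} applies.
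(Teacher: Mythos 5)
Your first step (quoting Lemma~\ref{hae:component:11} to reduce to excluding the closed-disc and thin-annulus shapes) is fine, but the way you exclude them rests on a hypothesis that is not in force here and that would in fact make the statement vacuous. Lemma~\ref{algebraic:balls:definable:centers} needs $\mathbb{S}$ to be $(\VF,\Gamma)$-generated; in \S~2 no such assumption is made (the paper explicitly defers the $\VF$-generated assumption to \S~5), and the lemma is used precisely over richer parameter sets --- over codes of discs in Corollary~\ref{part:rv:cons} (via Lemma~\ref{atom:self}) and over elements of $\RV$ in Lemmas~\ref{at:ball:t} and \ref{atomic:open:multidisc} --- where definable discs need not contain definable points. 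Worse, if your ambient hypothesis did hold, then applying Lemma~\ref{algebraic:balls:definable:centers} to $A$ itself would yield a definable point inside the infinite definable disc $A$, contradicting its atomicity; so under your reading there are no atomic open discs at all, the statement you prove is vacuously true, and the actual content of the lemma lives exactly in the settings your argument excludes. There is also a local error in the annulus case: definability of $T$ only places its level $\alpha$ in the divisible hull of $\Gamma(\mathbb{S})$ (see the remark after Definition~\ref{def:Ga:cat}), and even for definable $\alpha$ there need not exist a definable $s$ with $\vv(s)=\alpha$ (the paper only records the weaker fact, at the start of \S~5, that some $\gamma'\geq\gamma$ has $\gamma'^{\sharp}$ containing a definable point), so the point $b_0+s$ is not available.

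The statement is quoted from \cite{Yin:int:acvf}, and any correct argument must work over an arbitrary substructure, in particular one naming discs, so it cannot proceed by exhibiting definable points of $f(A)$. A workable route is the one mirrored by the inductive step of Lemma~\ref{atomic:open:multidisc} in this paper: assuming $f(A)$ is a closed disc or a thin annulus, use atomicity (in the spirit of Lemmas~\ref{at:ball:t} and \ref{at:rv:split}, but with disc codes as the auxiliary parameters) to show that the image would have to be contained in a single maximal open subdisc of $f(A)$, which is absurd since a closed disc or thin annulus is not contained in any of its maximal open subdiscs. Your proposal, by contrast, cannot be repaired by adding the missing hypothesis, because that hypothesis kills the hypotheses of the lemma itself.
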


\begin{lem}\label{atomic:open:multidisc}
Let $\ga$, $f$ be as in Lemma~\ref{at:rv:split} and suppose that $f$ is not constant. Then there are $t_1, \ldots, t_k \in \RV$ and for each $i$, a $t_i$-definable function $f_i : \ga \fun t^\sharp_i$ such that $f = \bigcup_i f_i$ and each $f_i(\ga)$ is either a point or a $t_i$-atomic open disc.
\end{lem}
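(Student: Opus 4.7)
The plan is to extract the $f_i$ directly from the conclusion of Lemma~\ref{at:rv:split} and then reduce the analysis of each image $f_i(\ga)$ to the one-dimensional Lemma~\ref{atomic:open:ball:open}. By Lemma~\ref{at:rv:split}, the finite set $\rv(f(a))$ does not depend on $a \in \ga$; enumerate its nonzero elements as $t_1, \dots, t_k \in \RV$. Using injectivity of $\rv\rest f(a)$, define $f_i : \ga \fun t_i^\sharp$ by taking $f_i(a)$ to be the unique element of $f(a)$ with $\rv$-value $t_i$. This map is $(\bb S, t_i)$-definable because $t_i$ is used as a parameter to single out one element of the finite set $f(a)$, and by construction $f = \bigcup_i f_i$ (if $\infty \in \rv(f(a))$ one adjoins the constant zero component, which is itself a point). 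It remains to show that each $f_i(\ga)$ is either a point or a $t_i$-atomic open disc.

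Since $t_i \neq \infty$, the set $t_i^\sharp$ is an open disc in $\VF$. By Lemma~\ref{at:ball:t}, $\ga$ is $t_i$-atomic. Since $f_i$ is $(\bb S, t_i)$-definable, the preimage under $f_i$ of any $(\bb S, t_i)$-definable subset of $\VF$ is $(\bb S, t_i)$-definable, hence by the $t_i$-atomicity of $\ga$ is either all of $\ga$ or empty; pushing forward, $f_i(\ga)$ is $t_i$-atomic as a subset of the open disc $t_i^\sharp$. The task is thereby reduced to showing that a $t_i$-atomic subset of $t_i^\sharp$ is either a point or a $t_i$-atomic open disc.

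For $n=1$, $\ga$ is itself a $t_i$-atomic open disc and Lemma~\ref{atomic:open:ball:open}, applied to the $t_i$-definable map $f_i$ with parameters drawn from $\bb S \cup \{t_i\}$, immediately yields the conclusion. For $n > 1$, I would induct on $n$ via $\ga = \ga_1 \times \ga_{>1}$: if $f_i$ is constant in one of the factor directions, then it factors through a lower-dimensional atomic polydisc (using the slice-atomicity already exhibited in the inductive step of the proof of Lemma~\ref{at:ball:t}) and the inductive hypothesis applies; otherwise, fixing a sufficiently generic $a' \in \ga_{>1}$ leaves an atomic open disc $\ga_1$ over $(\bb S, t_i, a')$, to which the $n=1$ case applies, and the resulting slice image must equal $f_i(\ga)$ because both are $t_i$-atomic subsets of $t_i^\sharp$ with nonempty intersection. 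The main obstacle is precisely this dimension reduction: while $t_i$-atomicity of $f_i(\ga)$ is formal, one must track that slicing preserves atomicity over the extended parameter base and that the image does not fragment across slices, for which the rigidity of $t_i$-atomic subsets of $\VF$ (two $t_i$-atomic open discs are either equal or disjoint), combined with Lemmas~\ref{atom:self} and~\ref{at:ball:t}, should suffice.
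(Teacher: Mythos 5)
Your reduction to a single nonconstant $t_i$-definable function $f_i : \ga \fun t_i^\sharp$ via Lemmas~\ref{at:rv:split} and~\ref{at:ball:t}, the observation that $f_i(\ga)$ is $t_i$-atomic, and the base case $n=1$ via Lemma~\ref{atomic:open:ball:open} all match the paper and are fine. The problem is the inductive step. First, your intermediate target ``a $t_i$-atomic subset of $t_i^\sharp$ is either a point or a $t_i$-atomic open disc'' is false as a standalone claim: an atomic closed disc or thin annulus can perfectly well sit inside the open disc $t_i^\sharp$ (Lemma~\ref{hae:component:11} shows such sets genuinely occur as images of atomic sets), so atomicity of the image does not determine its shape; excluding the closed disc and thin annulus is exactly the content to be proved, and it must use that the domain is an open \emph{polydisc}. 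Second, and more seriously, the step ``the resulting slice image must equal $f_i(\ga)$ because both are $t_i$-atomic subsets of $t_i^\sharp$ with nonempty intersection'' is invalid: the slice image $f_i(\ga_1 \times \{a'\})$ is only $(t_i,a')$-definable, hence at best $(t_i,a')$-atomic, not $t_i$-atomic, so the $t_i$-atomicity of $f_i(\ga)$ gives no information about it. A priori $f_i(\ga)$ could be an atomic closed disc (or thin annulus) swept out, as $a'$ varies, by slice images lying in distinct maximal open subdiscs; nothing in your one-directional genericity argument rules this out.

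This is exactly the configuration the paper's proof eliminates, and it requires slicing in \emph{both} coordinate directions: by Lemma~\ref{hae:component:11}, $f_i(\ga)$ is an open disc, a closed disc, or a thin annulus; if it were one of the latter two, then by the inductive hypothesis each fiber image over $a \in \pr_{1}(\ga)$ is a point or open disc, hence lies in a maximal open subdisc $\gb_a$ of $f_i(\ga)$, and likewise for fibers over $a' \in \pr_{>1}(\ga)$; since every ``row'' and every ``column'' share a point and distinct maximal open subdiscs are disjoint, all these subdiscs coincide, so $f_i(\ga)$ would be contained in a single maximal open subdisc of itself, which is absurd. To close your gap you need this two-directional argument (or an equivalent); fixing one generic slice and appealing to rigidity of atomic sets over $t_i$ does not suffice.
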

\begin{proof}
By Lemmas~\ref{at:ball:t} and \ref{at:rv:split}, this is immediately reduced to the case that $f$ is a definable nonconstant function $\ga \fun t^\sharp$. We do induction on $n$. The base case $n=1$ is just Lemma~\ref{atomic:open:ball:open}. For the case $n > 1$,  by Lemma~\ref{hae:component:11}, $f(\ga)$ is an open disc or a closed disc or a thin annulus. Suppose for contradiction that $f(\ga)$ is a closed disc or a  thin annulus. By the inductive hypothesis, for every $a \in \pr_{1}(\ga)$ there is a maximal open subdisc $\gb_a \sub f(\ga)$ that contains $f(\ga_a)$, similarly for every $a \in \pr_{>1}(\ga)$. It follows that $f(\ga)$ is actually contained in a maximal open subdisc of $f(\ga)$, which is absurd.
\end{proof}

\begin{cor}\label{part:rv:cons}
Let $A \sub \VF^n$ be a definable set and $f : A \fun \mdl P(\VF)$ a definable finitary function as in Lemma~\ref{at:rv:split}. Then there are a definable finite partition $(A_i)_i$ of $A$ and, for each $i$ and all open polydiscs $\ga \sub A_i$, finitely many $t_{\ga j}$-definable functions $f_{\ga j} : \ga \fun t^\sharp_{\ga j}$, where $t_{\ga j} \in \RV$, such that  $f \rest \ga = \bigcup_j f_{\ga j}$  and each $f_{\ga j}(\ga)$ is either a point or an open disc.
\end{cor}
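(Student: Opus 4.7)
The corollary extends Lemma~\ref{atomic:open:multidisc} from atomic to arbitrary open polydiscs, at the cost of a definable finite partition of $A$. The approach is compactness-based, with the atomic case as the key input.

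Using finitariness of $f$ and saturation of $\bb U$, one first obtains a uniform bound $N$ on $|f(a)|$ (otherwise compactness produces an $a$ with $|f(a)| \geq m$ for every $m$), and partitions $A$ definably by the value of $|f(a)| \in \{0, 1, \dots, N\}$; assume $|f(a)| = k$ is constant on $A$ (hence $|\rv(f(a))| = k$ by the injectivity hypothesis on $\rv \rest f(a)$ inherited from Lemma~\ref{at:rv:split}). Then for every $\bb S'$-atomic open polydisc $\ga \sub A$ over any parameter extension $\bb S' \supseteq \bb S$, Lemma~\ref{at:rv:split} forces $\rv \circ f$ to be constant on $\ga$ with value set $\{t_{\ga 1}, \dots, t_{\ga k}\}$, while Lemma~\ref{atomic:open:multidisc} ensures that each branch $f_{\ga j}(a) \coloneqq f(a) \cap t_{\ga j}^\sharp$, which is $t_{\ga j}$-definable (its graph on $\ga$ is cut out of the $\bb S$-definable graph of $f$ by the condition $\rv(b) = t_{\ga j}$), has image $f_{\ga j}(\ga)$ either a point or an open disc.

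To produce the partition, I would proceed by induction on $\dim_{\VF}(A)$. The base case $\dim_{\VF}(A) = 0$ is vacuous since $A$ contains no open polydisc of $\VF^n$. For the inductive step with $\dim_{\VF}(A) = n$, I would define the "critical set" $B \sub A$ to isolate those $a$ for which no open polydisc through $a$ in $A$ is good, together with a suitable refinement ensuring that every open polydisc $\ga \sub A \setminus B$ inherits goodness. By $\bb S$-definable quantification over polydisc parameters $(c, \gamma) \in \VF^n \times \Gamma^n$, the set $B$ is $\bb S$-definable. The crucial claim $\dim_{\VF}(B) < n$ follows from a generic-point argument: if $B$ contained an open polydisc $\gb$ of full $\VF$-dimension, then the orbit of an $(\bb S \cup \{\code \gb\})$-generic point of $\gb$ would have full $\VF$-dimension (by Lemma~\ref{full:dim:open:poly}) and hence by Lemma~\ref{atom:self} would contain an atomic open subpolydisc $\gb' \sub \gb$; but the atomic case provides a good polydisc through every point of $\gb'$, contradicting $\gb' \sub B$. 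Applying the inductive hypothesis to $B$ then combines with the "good" piece $A \setminus B$ to yield the finite definable partition.

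The main obstacle is the precise formulation of $B$ so as to simultaneously satisfy (i) $\dim_{\VF}(B) < n$ and (ii) every open polydisc in $A \setminus B$ is good. The naive choice $B = \bigcup_{\text{bad polydiscs}} \ga$ is inadequate, as examples like $f = \mathrm{id}$ on $A = \VF$ (where every point lies in the bad disc $\MM_\gamma$ for $\gamma$ small enough) show that such a $B$ can equal $A$ itself. The correct isolation of genuinely critical points requires exploiting the ultrametric nesting of open polydiscs through a given point together with the uniform definability coming from the atomic case, and the dimension drop must be verified via Lemma~\ref{atom:self} applied to orbits of generic points, as sketched above.
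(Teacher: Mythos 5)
Your proposal stalls exactly where you say it does: the critical set $B$ is never defined, and you yourself flag its ``precise formulation'' as the main obstacle. Since the whole induction-on-dimension scheme hinges on producing a definable $B$ with $\dim_{\VF}(B)<n$ such that \emph{every} open polydisc contained in $A\mi B$ is good, and no such $B$ is exhibited (only a gesture towards ``ultrametric nesting''), the argument as written is not a proof. The sketch offered for the dimension drop is also not sound: Lemma~\ref{full:dim:open:poly} is a statement about \emph{definable} sets, and Lemma~\ref{atom:self} does not produce an open polydisc inside the orbit (a type-definable set) of a generic point --- it only says that a polydisc \emph{already contained} in a set generating a type is $\code{\gb'}$-atomic. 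Whether the realization set of a full-dimensional type contains any open polydisc is a separate claim that you neither prove nor actually need. Note also that once $\dim_{\VF}(B)<n$, the set $B\sub\VF^n$ contains no open polydisc of $\VF^n$ at all, so the induction on $\dim_{\VF}$ is vacuous; all of the content is concentrated in your unproved property (ii).

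The globalization of your (correct) atomic-case input should instead go through compactness over complete types, which is the paper's route and avoids any critical set. For $a\in A$ let $D_a\sub A$ be the set of realizations of $\tp(a)$ inside $A$. By Lemma~\ref{atom:self} every open polydisc $\ga\sub D_a$ is $\code\ga$-atomic, hence by Lemmas~\ref{at:rv:split} and~\ref{atomic:open:multidisc} the conclusion of the corollary holds for every such $\ga$ (vacuously if $D_a$ contains no polydisc, which is why one never needs orbits to contain polydiscs). The condition ``every open polydisc contained in $X$ satisfies the conclusion'' is first-order in the parameters defining $X$: one quantifies over centers and radii of $\ga$, over the finitely many values $t_{\ga j}$ (your uniform bound on $\abs{f(a)}$ is fine here), and expresses ``$f_{\ga j}(\ga)$ is a point or an open disc'' by quantifying over a center and a radius. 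So if every definable $X$ with $a\in X\sub A$ contained a bad polydisc, saturation (applied to the filter of definable sets in $\tp(a)$) would yield a bad polydisc inside $D_a$, a contradiction; hence the assertion holds on some definable $A_a\ni a$. A second application of compactness gives finitely many such sets covering $A$, and the partition they generate is as desired. Your first paragraph is compatible with this, but it does not substitute for this missing step.
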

\begin{proof}
For $a \in A$, let $D_a \sub A$ be the type-definable subset containing $a$. By Lemma~\ref{atom:self}, every open polydisc $\ga \sub D_a$ is $\code \ga$-atomic and hence, by Lemma~\ref{atomic:open:ball:open}, the assertion holds for $\ga$. Then, by compactness, the assertion must hold in a definable subset $A_a \sub A$ that contains $a$; by compactness again, it holds in finitely many definable subsets $A_1, \ldots, A_m$ of $A$ with $\bigcup_i A_i = A$. Then the partition of $A$ generated by $A_1, \ldots, A_m$ is as desired.
\end{proof}

\begin{defn}\label{defn:dtdp}
Let $f : A \fun B$ be a bijection between two sets $A$ and $B$, each with exactly one $\VF$-coordinate. We say that $f$ is \memph{concentric} if, for every $\VF$-fiber $f_{t}$ of $f$ and all open polydiscs $\ga \sub \dom(f_{t})$, $f_{t}(\ga)$ is also an open polydisc; if both $f$ and $f^{-1}$ are concentric then $f$ has the \memph{disc-to-disc property}.
\end{defn}

\begin{lem}[{\cite[Proposition~3.19]{Yin:int:acvf}}]\label{open:pro}
Let $f : A \fun B$ be a definable bijection between two sets $A$ and $B$, each with exactly one $\VF$-coordinate. Then there is a definable finite partition $(A_i)_i$ of $A$ such that every restriction $f \rest A_i$ has the disc-to-disc property.
\end{lem}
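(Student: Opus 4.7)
The plan is to establish the disc-to-disc property in two stages: first produce a partition on each piece of which $f$ is concentric, then apply the same argument to $f^{-1}$ and take a common refinement. Writing $A \sub \VF \times \RV^m$ and $B \sub \VF \times \RV^{m'}$ and decomposing $f = (f_\VF, f_\RV)$, the $\VF$-fibers $f_{(t_1, t_2)}$ are definable functions from subsets of $\VF$ to $\VF$, so the disc-to-disc statement reduces to producing a partition on each piece of which, for every pair $(t_1, t_2)$, the map $f_\VF(\cdot, t_1)$ sends every open disc contained in $\{a : (a, t_1) \in A,\ f_\RV(a, t_1) = t_2\}$ to an open disc, and symmetrically for $f^{-1}$.

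First I would apply Corollary~\ref{part:rv:cons} with parameter $t_1 \in A_\RV$ to the single-valued definable function $f_\VF(\cdot, t_1) \colon A_{t_1} \to \VF$; the hypothesis that $\rv \rest f(a)$ be injective is automatic for single-valued maps. This yields a $t_1$-definable finite partition of $A_{t_1}$ on which, on every open subdisc of a piece, $f_\VF(\cdot, t_1)$ restricts to a single-branch function whose image is a point or an open disc, the point case being excluded on non-singleton discs by the injectivity of $f$. To control the $\RV$-component in parallel I would use the fact that any definable $\RV$-valued function on an atomic open polydisc $\ga$ is constant --- by Lemma~\ref{at:ball:t}, if $f_\RV$ took two values on $\ga$ this would partition $\ga$ into nontrivial pieces each definable over the corresponding $\RV$-parameter, contradicting atomicity --- and then run the same compactness argument as in the proof of Corollary~\ref{part:rv:cons} to get a $t_1$-definable finite partition of $A_{t_1}$ on which $f_\RV(\cdot, t_1)$ is constant on every open subdisc of a piece. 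Intersecting these two partitions and invoking compactness to extract a uniform bound in $t_1$ yields a definable finite partition of $A$ on which $f$ is concentric in the sense of Definition~\ref{defn:dtdp}.

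Applying the same argument to $f^{-1}$ gives a definable finite partition of $B$ on which $f^{-1}$ is concentric, and pulling this back along the bijection $f$ and taking a common refinement with the partition of $A$ yields the required partition. The main obstacle is organizing the uniformity in the $\RV$-parameter $t_1$: Corollary~\ref{part:rv:cons} is stated for sets in $\VF^n$, so one has to extract the family version with $\RV$-parameters and use compactness to obtain a finite bound on the number of pieces as $t_1$ varies, and then assemble them into a single definable finite partition of $A$. Once this bookkeeping is in place, the disc-preservation for individual $\VF$-fibers follows directly from what has already been established.
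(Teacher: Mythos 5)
Your argument is essentially correct, but note first that this paper does not actually prove Lemma~\ref{open:pro}: it is quoted from \cite[Proposition~3.19]{Yin:int:acvf}, so there is no in-paper proof to compare against. Your reconstruction from the atomicity machinery that the paper does reproduce (Lemma~\ref{at:ball:t}, Corollary~\ref{part:rv:cons}) is sound and is in the same spirit as the cited source, which likewise reduces the statement to the behaviour of definable maps on atomic open discs (Lemma~\ref{atomic:open:ball:open}) plus a type-definability/compactness argument. Two small remarks. First, your second partition step (making $f_{\RV}(\cdot,t_1)$ constant on open subdiscs of the pieces) is redundant: concentricity only quantifies over open discs $\ga$ contained in the domain of a $\VF$-fiber $f_{(t_1,t_2)}$, and on such a disc the $\RV$-component of $f$ is already constant equal to $t_2$, so if $f_{\VF}(\ga,t_1)$ were a point then $f(\ga\times t_1)$ would be a single point of $B$, contradicting injectivity; thus the dichotomy from Corollary~\ref{part:rv:cons} alone yields concentricity on each piece. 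Conversely, be aware that injectivity of $f$ by itself does not exclude constancy of $f_{\VF}(\cdot,t_1)$ on an arbitrary open disc of a piece (the $\RV$-component could separate points, and ruling that out needs the fact that an infinite subset of $\VF$ admits no definable injection into $\RV^{m'}$, e.g.\ via Lemma~\ref{RV:fiber:dim:same}); as written, with your constancy step in place, the exclusion is justified, so your logic is complete either way. Second, the uniformity-in-$t_1$ bookkeeping by compactness and the final common refinement with the partition pulled back from $f^{-1}$ are fine, since concentricity is preserved under restriction of the domain: shrinking a piece only shrinks the family of discs to be tested, so the refined pieces inherit concentricity of both $f$ and $f^{-1}$, which is exactly the disc-to-disc property.
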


\subsection{Continuity and fiberwise properties}\label{sec:cont:fib:prop}

\begin{defn}
The \memph{$\RV$-dimension} of a definable subset $U \sub \RV^m_\infty$, denoted by $\dim_{\RV}(U)$, is the smallest number $k$ such that there is a definable finite-to-one function $f: U \fun \RV^k$ ($\RV^0$ is taken to be the singleton $\{\infty\}$).
\end{defn}

\begin{defn}
Let $A$ be a subset of $\VF^n$. The \memph{$\RV$-boundary} of $A$, denoted by $\partial_{\RV}A$, is the definable subset of $\rv(A)$ such that $t \in \partial_{\RV} A$ if and only if $t^\sharp \cap A$ is a proper nonempty subset of $t^\sharp$. The definable set $\rv(A) \mi \partial_{\RV}A$, denoted by $\ito_{\RV}(A)$, is called the \memph{$\RV$-interior} of $A$.
\end{defn}

Obviously, $A \sub \VF^n$ is an $\RV$-pullback if and only if $\partial_{\RV} A$ is empty.

\begin{lem}\label{RV:bou:dim}
Let $A$ be a definable subset of $\VF^n$. Then $\dim_{\RV}(\partial_{\RV} A) < n$.
\end{lem}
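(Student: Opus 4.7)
My plan is to induct on $n$, slicing $A$ along its last coordinate. The base case $n=1$ follows immediately from \cmin-minimality: a definable $A \sub \VF$ is a finite Boolean combination of discs, so only finitely many $t \in \RV$ can have $t^\sharp$ split properly by $A$, whence $\dim_{\RV}(\partial_{\RV} A) = 0 < 1$.

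For the inductive step with $A \sub \VF^n$, write $t = (s, t_n) \in \RV^{n-1} \times \RV_\infty$, so that $t^\sharp = s^\sharp \times t_n^\sharp$. The piece with $t_n = \infty$ reduces directly to the inductive hypothesis applied to $A^0 = \{x \in \VF^{n-1} : (x, 0) \in A\}$. For $t_n \neq \infty$ I split the condition $(s, t_n) \in \partial_{\RV} A$ into two cases according to the behaviour of the fibres $A_a \cap t_n^\sharp$ as $a$ ranges over $s^\sharp$: in Case~1, some $a \in s^\sharp$ already satisfies $t_n \in \partial_{\RV} A_a$; in Case~2, every such fibre is either empty or all of $t_n^\sharp$, with both values actually occurring. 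A direct check shows these two cases together exhaust $(s, t_n) \in \partial_{\RV} A$ with $t_n \neq \infty$.

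Case~2 dispatches quickly: the hypothesis forces $s \in \partial_{\RV} \tilde A^{\mathrm{full}}(t_n)$, where $\tilde A^{\mathrm{full}}(t_n) = \{a \in \VF^{n-1} : A_a \supseteq t_n^\sharp\}$, and the inductive hypothesis bounds the $s$-section by $\RV$-dim $< n-1$; allowing $t_n$ to vary keeps the total at $\RV$-dim $\leq n-1$. For Case~1 I would apply a mixed-sort analog of Lemma~\ref{finite:fib:def} to the definable finitary function $f : \VF^{n-1} \fun \mdl P(\RV)$, $a \efun \partial_{\RV} A_a$ (finite by the base case), extracting a finite definable $R' \sub \RV$ collecting exactly those $t_n$ for which $\tilde A(t_n) := \{a : t_n \in f(a)\}$ has full $\VF$-dimension $n-1$. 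For the finitely many $t_n \in R'$ the contribution is bounded trivially; for $t_n \notin R'$, $\dim_{\VF} \tilde A(t_n) < n-1$, and the standard fact that coordinate-wise $\rv$ does not increase dimension (cf.\ \cite[\S3.8]{hrushovski:kazhdan:integration:vf}) yields $\dim_{\RV} \rv(\tilde A(t_n)) < n-1$, so folding in the $t_n$-direction leaves a contribution of $\RV$-dim $\leq n-1$.

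The main technical hurdle I anticipate is the mixed-sort analog of Lemma~\ref{finite:fib:def}: if $\tilde A(t_n)$ had $\VF$-dim $n-1$ then it would contain an open polydisc $\ga$ (Lemma~\ref{full:dim:open:poly}), and finitariness of $f$ together with genericity of $a \in \ga$ would place $t_n$ in $\acl(\code{\ga})$; the reduction from algebraicity over $\code{\ga}$ to genuine algebraicity, a sort of $\RV$-analog of Lemma~\ref{ball:to:ac}, is where I expect to spend the most care. Once that is in place, such $t_n$ are algebraic over the base parameters and hence finite by compactness, completing Case~1.
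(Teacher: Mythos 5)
Your overall skeleton (slice along the last coordinate, split $(s,t_n)\in\partial_{\RV}A$ into the fibrewise-boundary Case~1 and the cylinder Case~2, handle $t_n=\infty$ and Case~2 by the inductive hypothesis) is sound, but Case~1 rests on a claim that is false, and it is exactly the claim you flagged as the main hurdle. Neither the ``mixed-sort analog of Lemma~\ref{finite:fib:def}'' (only finitely many $t_n$ with $\dim_{\VF}\tilde A(t_n)=n-1$) nor the ``$\RV$-analog of Lemma~\ref{ball:to:ac}'' holds. Take $n=2$ and $A=\set{(a,b)\in(\MM\mi 0)\times\VF \given \vv(b-a^2)>3\vv(a)}$. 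For each $a$ the fibre $A_a$ is the open disc $\go(a^2,3\vv(a))$, which is a proper nonempty subset of the single $\RV$-disc $(\rv(a)^2)^\sharp$, so $\partial_{\RV}A_a=\{\rv(a)^2\}$. Hence $\tilde A(s)=\set{a\given \rv(a)^2=s}=u^\sharp\cup(-u)^\sharp$ (any $u$ with $u^2=s$) has full $\VF$-dimension $1$ for \emph{every} $s$ with $\vrv(s)>0$: your $R'$ is infinite. Correspondingly, $\rv(a)^2\in\dcl(a)$ for every $a$ in the full-dimensional polydisc $u^\sharp$, yet $u^2$ is not algebraic over the base when $u$ is not; the descent ``algebraic over a generic point of a polydisc $\Rightarrow$ algebraic'' is genuinely one-sorted (it uses Lemmas~\ref{dcl:to:ac} and \ref{exchange}, neither of which applies when the element lives in $\RV$, since $t^\sharp$ is a whole disc). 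With $R'$ infinite, your dimension count for Case~1 does not close: a priori $\bigcup_{t_n\in R'}\rv(\tilde A(t_n))\times\{t_n\}$ could have $\RV$-dimension $n$.

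The phenomenon in the counterexample --- the fibrewise boundary depends on $a$, but only through $\rv(a)$ --- is precisely what the paper's proof exploits, and it is the missing idea here. After converting $a\mapsto\partial_{\RV}A_a$ into a definable finitary $\VF$-valued function (Corollary~\ref{open:disc:def:point} plus compactness), the paper invokes Corollary~\ref{part:rv:cons} (resting on atomicity and Lemma~\ref{at:rv:split}): after a definable finite partition of the projection, $\rv$ of the fibrewise boundary is constant on every $\RV$-polydisc contained in a piece. Thus over the $\RV$-interior of each piece the Case-1 contribution is the graph of a finitary function on a subset of $\RV^{n-1}$, hence of $\RV$-dimension at most $n-1$, while the contribution over $\RV$-polydiscs split by the partition pieces is covered by the inductive hypothesis applied to $\partial_{\RV}$ of those pieces ($<n-1$, plus one for the remaining coordinate). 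If you replace your finiteness claim by this factorization-through-$\rv$ argument, your Case~1 goes through and the rest of your plan (base case, $t_n=\infty$, Case~2, and the exhaustiveness of the two cases) is fine; the remaining difference from the paper --- it slices in every coordinate direction and absorbs your Case~2 into the same mechanism, whereas you slice only in the last one --- is inessential. Your other quoted fact, $\dim_{\RV}(\rv(X))\le\dim_{\VF}(X)$, is true but does not rescue the step, since it is only applied after the false reduction.
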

\begin{proof}
We proceed by induction on $n$. The base case $n=1$  follows immediately from \cmin-minimality since $t^\sharp$ contains a $t$-definable proper subdisc for every $t \in \partial_{\RV} A$.

For the inductive step, since $\partial_{\RV} A_a$ is finite for every $i \in [n]$ and every $a \in \pr_{\tilde i}(A)$, by Corollary~\ref{open:disc:def:point} and compactness, there are a definable finite partition $(A_{ij})_j$ of $\pr_{\tilde i}(A)$ and, for each $A_{ij}$, a definable finitary function $f_{ij} : A_{ij} \fun \mdl P(\VF)$ such that for all $a \in A_{ij}$, $\rv(f_{ij}(a)) = \partial_{\RV} A_a$ if $A_a$ is not an $\RV$-pullback or $f_{ij}(a) = \{0\}$ otherwise. By Corollary~\ref{part:rv:cons}, we may assume that if $t^\sharp \sub A_{ij}$ then the restriction $\rv \rest f_{ij}(t^\sharp)$ is constant. Hence each $f_{ij}$ induces a definable finitary function $C_{ij} : \ito_{\RV}(A_{ij}) \fun \mdl P(\RV_\infty)$.
Let
\[
 C = \bigcup_{i, j} C_{ij} \dand B = \bigcup_{i,j} \bigcup_{t \in \partial_{\RV} A_{ij}} \rv(A)_t.
\]
Obviously $\dim_{\RV}(C) < n$. By the inductive hypothesis, for all $A_{ij}$ we have $\dim_{\RV}(\partial_{\RV} A_{ij}) < n-1$. Thus $\dim_{\RV}(B) < n$. Since $\partial_{\RV} A \sub B \cup C$, the claim follows.
\end{proof}

Let $f : \VF^n \fun \VF^m$ be a definable function.

\begin{lem}[{\cite[Lemma~8.7]{Yin:special:trans}}]\label{fun:almost:cont}
There is a definable closed set $A \sub \VF^n$ with $\dim_{\VF}(A) < n$ such that $f \rest (\VF^n \mi A)$ is continuous.
\end{lem}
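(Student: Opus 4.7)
The plan is to reduce to $m = 1$, which suffices because continuity is coordinatewise, and then induct on $n$, showing in each case that the definable set $D \sub \VF^n$ of points of discontinuity of $f$ has $\VF$-dimension strictly less than $n$.

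For the base case $n = 1$, suppose for contradiction that $D$ is infinite. By \cmin-minimality $D$ contains an open disc, and by saturation we may pass to an atomic open subdisc $\ga \sub D$. By Lemma~\ref{atomic:open:ball:open}, either $f$ is constant on $\ga$, which makes $f$ continuous at every point of $\ga$ and contradicts $\ga \sub D$, or $f(\ga)$ is an atomic open disc. In the latter case Lemma~\ref{open:pro} yields a definable finite partition of $\ga$ on some part of which $f$ restricts to a bijection with the disc-to-disc property of Definition~\ref{defn:dtdp}. On that part continuity is immediate: strictly shrinking open subdiscs around a point $a$ are mapped by $f$ to strictly shrinking open discs around $f(a)$ (two distinct nested open subdiscs cannot share the same open image without violating the injectivity packaged into the disc-to-disc property), and hence $\rad(f(\gb)) \to \infty$ as $\rad(\gb) \to \infty$, yielding continuity at $a$.

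For the inductive step $n > 1$, suppose toward contradiction that $\dim_{\VF}(D) = n$. By Lemma~\ref{full:dim:open:poly} some $\VF$-fiber of $D$ contains an open polydisc, which by Lemma~\ref{atom:self} we may refine to an atomic open polydisc $\ga = \gb \times \gb'$ with $\gb \sub \VF$ and $\gb' \sub \VF^{n-1}$. For each $a \in \gb$, the inductive hypothesis applied to $f_a(y) \coloneqq f(a, y)$ shows that the discontinuity set of $f_a$ has $\VF$-dimension less than $n-1$, hence misses any atomic open sub-polydisc of $\gb'$ by Lemma~\ref{at:ball:t}. Symmetrically, for each $b \in \gb'$ the discontinuity set of $f^b(x) \coloneqq f(x, b)$ is finite, hence misses any atomic open subdisc of $\gb$. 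Combining this with Corollary~\ref{part:rv:cons} applied to $f \rest \ga$, which shows that $f$ is coordinated by a single $\code \ga$-definable open image-disc datum, one obtains joint continuity at some (in fact most) points of $\ga$, contradicting $\ga \sub D$.

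The main obstacle is the final gluing step, namely upgrading separate continuity in each variable group to joint continuity on a nonempty open subset of $\ga$. In the ultrametric setting this is not automatic, but the rigidity of atomic discs saves the day: Lemmas~\ref{at:ball:t}, \ref{at:rv:split}, and \ref{atomic:open:multidisc} together imply that on an atomic open polydisc the $\RV$-data of $f$ is locally constant, so the problem reduces, after fixing all but one coordinate generically, to the one-variable argument already handled in the base case.
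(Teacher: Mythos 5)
You should note first that the paper itself offers no proof of this lemma: it is imported wholesale from \cite[Lemma~8.7]{Yin:special:trans}, so your argument has to stand on its own, and as written it has two genuine gaps. The first is the atomicity you lean on throughout. You pass ``by saturation'' to an atomic open subdisc $\ga \sub D$, and in the inductive step you ``refine'' a polydisc inside $D$ to an atomic one by Lemma~\ref{atom:self}. But Lemma~\ref{atom:self} only says that a polydisc \emph{already contained in a type-generating set} is atomic over its own code $\code{\ga}$; it does not produce polydiscs atomic over the base inside an arbitrary full-dimensional definable set, and in the situations the paper cares about such polydiscs simply do not exist: when $\mathbb{S}$ is $(\VF,\Gamma)$-generated every definable disc contains a definable point (Lemma~\ref{algebraic:balls:definable:centers}), hence generates no complete type, while a non-definable disc is not atomic by definition. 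The workable pattern is the one the paper uses in Corollary~\ref{part:rv:cons}: take a dimension-generic point $c \in D$, work with polydiscs inside the type-definable locus of $c$ (these are atomic over their codes), and recover definable data by compactness. None of this bookkeeping appears in your proof, and it matters downstream: Lemma~\ref{at:ball:t} preserves atomicity only under $\RV$-parameters $t$, not under the $\VF$-parameter $a \in \gb$ you adjoin, and Lemma~\ref{open:pro} applies to bijections, so in the base case you must first argue that $f\rest\ga$ is constant or injective before invoking the disc-to-disc property.

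The second and more serious gap is the step you yourself identify as the crux: upgrading separate continuity to joint continuity. Separate continuity plus ``$\rv\circ f$ locally constant on atomic polydiscs'' only tells you that a small polydisc around a point maps into \emph{some} open disc containing the value; continuity needs the radius of that image disc to tend to $\infty$ as the polydisc shrinks, and this uniformity does not follow from ``fixing all but one coordinate generically and reducing to the one-variable case'' --- that reduction reproduces separate continuity and nothing more, and separate continuity genuinely fails to give joint continuity here (e.g.\ $f(x,y)=x/y$, with $f(x,0)=0$, at the origin). Some quantitative argument is indispensable, for instance: for $x$ in a polydisc inside a type-definable locus, the $x$-definable increasing function $\delta \efun \rad\big(f(\go(x,\delta))\big)$ is either unbounded, giving continuity at $x$, or by piecewise linearity eventually constant, in which case every small polydisc around every nearby point maps \emph{onto} one and the same open disc, so each fiber of $f$ over that disc is dense, hence of $\VF$-dimension $n$, hence contains an open polydisc --- contradicting disjointness of fibers. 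The same stabilization argument is what rescues your base case (strictly shrinking image discs need not have radii tending to $\infty$). Note also that the paper's own device for this kind of fiberwise-to-joint upgrade, Proposition~\ref{prop:continuity:fiberwise}, is unavailable to you: it sits downstream of this very lemma via Lemmas~\ref{fini:conti}, \ref{curve:select} and \ref{closed:gra}. Until the atomicity bookkeeping is repaired and the uniformity step supplied, the proof is incomplete.
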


Per the usual terminology, this lemma says that $f$ is continuous \memph{almost everywhere}.

\begin{defn}\label{defn:diff}
For any $a \in \VF^n$, we say that $f$ is \memph{differentiable at $a$} if there is a linear map $\lambda : \VF^n \fun \VF^m$ (of $\VF$-vector spaces) such that, for any $\epsilon \in \Gamma$, if $b \in \VF^n$ and $\vv(b)$ is sufficiently large then
\[
\vv(f(a + b) - f(a) - \lambda(b)) - \vv(b) > \epsilon.
\]
It is straightforward to check that if such a linear function $\lambda$ (represented by a matrix with entries in $\VF$) exists then it is unique and hence is called the \memph{derivative of $f$ at $a$}.

Write $f = (f_1, \ldots, f_m)$. For $a = (a_i, a_{\tilde i}) \in \VF^n$, if the derivative of the function $f_j \rest (\VF \times a_{\tilde i})$ at $a_i$ exists then we call it the \memph{$ij$th partial derivative of $f$ at $a$}.
\end{defn}

We differentiate functions between arbitrary definable sets by ``forgetting'' the $\RV$-coordinates. More precisely, let
\[
f : \VF^n \times \RV^m \fun \VF^{n'} \times \RV^{m'}
\]
be a definable function. By dimension theory, for every $t \in \RV^m$ there is an $s \in \RV^{m'}$ such that $\dim_{\VF}(\dom(f_{(t, s)})) = n$ and hence $\dom(f_{(t, s)})$ has an open subset. For such an $s \in \RV^{m'}$ and each $a$ contained in an open subset of $\dom(f_{(t, s)})$, we define the partial derivatives of $f$ at $(a, t)$ to be those of $f_{(t, s)}$ at $a$. If $n = n'$ and all the partial derivatives exist at a point $(a, t)$ then the \memph{Jacobian of $f$ at $(a,t)$} is defined in the usual way (that is, the determinant of the Jacobian matrix) and is denoted by $\jcb_{\VF} f(a,t)$.

\begin{lem}\label{diff:almost:every}
The Jacobian of $f$  exists and is a continuous function almost everywhere.
\end{lem}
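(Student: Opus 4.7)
The plan is to proceed in three steps: (i) reduce to a function between pure $\VF$-sorts, (ii) establish the existence of all partial derivatives outside a definable set of $\VF$-dimension less than $n$, and (iii) apply Lemma~\ref{fun:almost:cont} to the partials to deduce continuity of $\jcb_{\VF} f$ almost everywhere.

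For step (i), I would fix $t \in \RV^m$ and, via the dimension-theoretic remark preceding the statement, choose $s \in \RV^{m'}$ with $\dim_{\VF}(\dom(f_{(t,s)})) = n$. Since the Jacobian is defined fiberwise through $f_{(t,s)}$ and the lemma concerns behaviour almost everywhere, it suffices to prove the claim for the $\VF$-to-$\VF$ function $f_{(t,s)} : A \fun \VF^n$ on the $n$-dimensional piece of $A$. Writing $f_{(t,s)} = (g_1, \ldots, g_n)$, the problem reduces to showing that each partial $\partial g_j / \partial x_i$ exists and is continuous off a definable set of $\VF$-dimension strictly less than $n$.

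For step (ii), fix indices $i, j$, and for each $a_{\tilde i} \in \pr_{\tilde i}(A)$ consider the one-variable definable function $h_{a_{\tilde i}}(x_i) = g_j(x_i, a_{\tilde i})$. By $C$-minimality and quantifier elimination, after rewriting via Convention~\ref{topterm} so that all $\VF$-terms occur inside $\rv$, the function $h_{a_{\tilde i}}$ coincides, outside a finite $a_{\tilde i}$-definable set, with a rational expression in Hensel roots of the occurring top terms; on each such piece it is differentiable and its derivative is again a definable function of the same kind. By compactness, these exceptional points assemble into a definable finitary function $E_{ij} : \pr_{\tilde i}(A) \fun \mdl P(\VF)$. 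The set $B_{ij} = \set{(a_i, a_{\tilde i}) \in A \given a_i \in E_{ij}(a_{\tilde i})}$ has finite fibers in the $i$-th coordinate over $\pr_{\tilde i}(A)$, so by Lemma~\ref{full:dim:open:poly} we get $\dim_{\VF}(B_{ij}) < n$. Setting $B = \bigcup_{i,j} B_{ij}$, all partial derivatives exist on $A \mi B$ and $\dim_{\VF}(B) < n$.

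For step (iii), each partial $\partial g_j / \partial x_i$ is itself a definable $\VF$-to-$\VF$ function on $A \mi B$, and Lemma~\ref{fun:almost:cont} provides a definable closed set $C_{ij}$ with $\dim_{\VF}(C_{ij}) < n$ outside of which the partial is continuous. On the complement of $B \cup \bigcup_{i,j} C_{ij}$ in $A$, a set whose complement in $A$ still has $\VF$-dimension strictly less than $n$, the Jacobian $\jcb_{\VF} f$ is a polynomial in continuous functions, hence continuous. The main obstacle is the one-variable differentiability input underlying step (ii): one must verify that every definable function $\VF \fun \VF$ in $\ACVF(0,0)$ is differentiable off a finite set. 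This is a standard consequence of quantifier elimination together with $C$-minimality, but it requires a careful description of one-variable $\lan{}{RV}{}$-definable terms; once this base case is in hand, the full multivariable statement follows by the Fubini-style reduction above.
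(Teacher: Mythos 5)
Your outline is sound, but it takes a different route from the paper: the paper's entire proof is a one-line appeal to \cite[Corollary~9.9]{Yin:special:trans}, which already packages the statement that for a definable map the partial derivatives exist and are continuous outside a definable set of $\VF$-dimension $< n$, whereas you reconstruct that result from scratch. Your reduction to the $\VF$-fibers $f_{(t,s)}$ is legitimate because the paper defines $\jcb_{\VF}$ purely through the partials of these fibers (so no full differentiability in the sense of Definition~\ref{defn:diff} is needed), your use of Lemma~\ref{full:dim:open:poly} to see that a set with finite fibers in one coordinate has dimension $< n$ is correct, and finishing with Lemma~\ref{fun:almost:cont} applied to each partial (extend it by $0$ to all of $\VF^n$, noting that the frontier of $A \mi B$ also has dimension $< n$, and use compactness to make the exceptional sets uniform in the parameters $(t,s)$) does yield continuity of the determinant where all partials are continuous. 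The load-bearing step is exactly the one you flag and leave open: that a definable unary function in $\ACVF(0,0)$ is differentiable off a finite set. This is true, and the cleaner way to see it is not via Hensel roots of top terms (the model is algebraically closed) but via $C$-minimality together with the description of algebraic closure: off a finite set the function agrees piecewise with algebraic functions over the parameters, and in equicharacteristic zero these are differentiable away from finitely many points, with definable derivative. That base case is precisely the content of the external result the paper cites, so in a complete write-up you would either quote it (e.g.\ from \cite{Yin:special:trans} or \cite[\S~3]{hrushovski:kazhdan:integration:vf}) or supply the piecewise-algebraic argument; what your approach buys is self-containedness at the cost of redoing that analysis, while the paper's citation buys brevity.
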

\begin{proof}
This is immediate by \cite[Corollary~9.9]{Yin:special:trans}.
\end{proof}

\begin{ter}\label{bounded}
We say that a set $I \sub \Gamma_{\infty}^n$ is \memph{$\gamma$-bounded}, where $\gamma \in \Gamma$, if it is contained in the box $[\gamma, \infty]^n$, and is \memph{doubly $\gamma$-bounded} if it is contained in the box $[-\gamma, \gamma]^n$. More generally, let $A$ be a subset of $\VF^n \times \RV_\infty^m \times \Gamma_\infty^l$ and
\[
A_{\Gamma} = \set{(\vv(a), \vrv(t), \gamma) \given (a, t, \gamma) \in A} \sub \Gamma_{\infty}^{n+m+l};
\]
then we say that $A$ is \memph{$\gamma$-bounded} if $A_{\Gamma}$ is, and so on.
\end{ter}

\begin{defn}[Contractions]\label{defn:corr:cont}
A function $f : A \fun B$ is \memph{$\rv$-contractible} if there is a (necessarily unique) function $f_{\downarrow} : \rv(A) \fun \rv(B)$, called the \memph{$\rv$-contraction} of $f$, such that
\[
(\rv \rest B) \circ f = f_{\downarrow} \circ (\rv \rest A).
\]
Similarly, it is \memph{$\res$-contractible} (respectively, \memph{$\vv$-contractible}) if the same holds in terms of $\res$ (respectively, $\vv$ or $\vrv$, depending on the coordinates) instead of $\rv$.
\end{defn}

\begin{lem}\label{db:to:db}
Let $U \sub \RV^k$ be a doubly bounded set and $f: U \fun \RV$ a definable function. Then $f(U)$ is also doubly bounded. The same holds if the codomain of $f$ is $\Gamma$.
\end{lem}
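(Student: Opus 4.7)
The plan is to reduce the statement to a question about definable functions on the value group. First, since $f(U) \sub \RV$ being doubly bounded is by definition the same as $\vrv(f(U)) \sub \Gamma$ being doubly bounded, I may replace $f$ by $\vrv \circ f$ and treat only the case where the codomain is $\Gamma$. So assume $f \colon U \to \Gamma$ is definable with $U \sub \RV^k$ doubly bounded; the goal is to show $f(U) \sub \Gamma$ is doubly bounded.

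The key step is to factor $f$ through $\vrv \colon U \to \vrv(U) \sub \Gamma^k$. This uses orthogonality of the residue field $\K$ and the value group $\Gamma$ in $\ACVF(0,0)$: any definable function from a definable subset of $(\K^\times)^k$ to $\Gamma$ is piecewise constant. Working fiberwise over $\vrv$ (after choosing definable sections on finite pieces of $\vrv(U)$) and invoking a standard compactness argument, one obtains a finite definable partition $U = \bigsqcup_i U_i$ together with definable maps $\tilde f_i \colon \vrv(U_i) \to \Gamma$ such that $f \rest U_i = \tilde f_i \circ \vrv$.

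Finally, the induced structure on $\Gamma$ is that of a divisible ordered abelian group, which is \omin-minimal with cell decomposition; hence each $\tilde f_i$ is piecewise $\Q$-affine, and such maps send doubly bounded subsets of $\Gamma^k$ to doubly bounded subsets of $\Gamma$. Therefore $f(U) = \bigcup_i \tilde f_i(\vrv(U_i))$ is a finite union of doubly bounded sets, hence doubly bounded. The main obstacle is the factorization through $\vrv$, which rests on the orthogonality of $\K$ and $\Gamma$ in $\ACVF(0,0)$. If this is not conveniently at hand, an equivalent compactness argument suffices: were $f(U)$ unbounded above, saturation would produce $u^* \in U$ with $f(u^*)$ larger than every $\bb S$-definable element of $\Gamma$, while $\vrv(u^*)$ remains in the $\bb S$-definable doubly bounded set $\vrv(U)$; orthogonality then forces $f(u^*) \in \dcl(\bb S, \vrv(u^*)) \cap \Gamma$, a contradiction.
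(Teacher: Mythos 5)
Your proof is correct and follows the same core strategy as the paper's: contract $f$ along $\vrv$ to a definable function on the doubly bounded set $\vrv(U) \sub \Gamma^k$ and then invoke piecewise $\Q$-linearity of definable functions in the $\Gamma$-sort. The only difference is how the contraction is justified: the paper reduces to $k=1$ by induction and uses $C$-minimality in one variable, whereas you factor $f$ piecewise through $\vrv$ in all coordinates at once via orthogonality of $\K$ and $\Gamma$ plus compactness; both work. One small caveat: the parenthetical about ``choosing definable sections on finite pieces of $\vrv(U)$'' is neither available in general (an $\RV$-disc $\gamma^\sharp$ over a definable $\gamma$ need not contain a definable point) nor needed --- since each fiber image $f(U_\gamma)$ is finite of uniformly bounded size, you can instead define the pieces $U_i$ by the rank of $f(u)$ within the ordered finite set $f(U_{\vrv(u)})$, which yields the piecewise factorization directly.
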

\begin{proof}
By induction on $k$, both claims are immediately reduced to showing that if $k=1$  and $g : U \fun \Gamma$ is a definable function then $g(U)$ is doubly bounded. Then, by \cmin-minimality, we may assume that $g$  $\vrv$-contracts to a function $g_{\downarrow} : \vrv(U) \fun \Gamma$. Since definable functions in $\Gamma$ are piecewise $\Q$-linear, the range of $g_{\downarrow}$ must be doubly bounded.
\end{proof}

\begin{defn}
Let $A \sub \VF^n$ and $f : A \fun \mdl P(\VF^m)$ be a definable function whose range is bounded. For $a \in \VF^n$ and $L \sub \VF^m$, we say that $L$ is a \memph{limit set of $f$ at $a$}, written as $\lim_{A \rightarrow a} f \sub L$, if for every $\epsilon \in \Gamma$ there is a $\delta \in \Gamma$ such that if $c \in \go( a, \delta) \cap (A \mi a)$ then $f(c) \sub \bigcup_{b \in L'} \go(b, \epsilon)$ for some $L' \sub L$.

A limit set $L$ of $f$ at $a$ is \memph{minimal} if no proper subset of $L$ is a limit set of $f$ at $a$.
\end{defn}

Observe that if $\lim_{A \rightarrow a} f \sub L$ and $b \in L$ is not isolated in $L$ then actually $\lim_{ A \rightarrow  a} f \sub L \mi b$. So in a minimal limit set every element is isolated. Moreover, if a minimal limit set $L$ exists then its topological closure is unique:

\begin{lem}[{\cite[Lemma~9.2]{Yin:special:trans}}]
Let $L_1, L_2 \sub \VF^m$ be two minimal limit sets of $f$ at $a$. Then the closures of $L_1$, $L_2$ coincide.
\end{lem}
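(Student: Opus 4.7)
The plan is to prove the symmetric inclusion $L_1 \subseteq \overline{L_2}$; once that is established, exchanging the roles of $L_1$ and $L_2$ yields $L_2 \subseteq \overline{L_1}$, and hence $\overline{L_1} = \overline{L_2}$. So fix $b_1 \in L_1$ and aim to show $b_1 \in \overline{L_2}$, i.e.\ that every open polydisc around $b_1$ meets $L_2$.

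The first step is to unpack what the minimality of $L_1$ tells us at $b_1$. By the observation preceding the lemma, every element of $L_1$ is isolated in $L_1$, so there is some $\epsilon_1 \in \Gamma$ such that $\go(b_1, \epsilon_1) \cap L_1 = \{b_1\}$. Moreover, since $L_1 \mi b_1 \subsetneq L_1$ is not a limit set of $f$ at $a$, negating the limit-set condition gives an $\epsilon_2 \in \Gamma$ such that for every $\delta \in \Gamma$ there is some $c_\delta \in \go(a,\delta) \cap (A \mi a)$ for which $f(c_\delta) \not\subseteq \bigcup_{b \in L_1 \mi b_1} \go(b, \epsilon_2)$; note that this failure only becomes stronger if $\epsilon_2$ is increased, so the same holds for every $\epsilon \geq \epsilon_2$.

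The second step is to combine these with the limit-set properties of $L_1$ and $L_2$. Let $\epsilon \geq \max(\epsilon_1,\epsilon_2)$ be arbitrary. Choose $\delta$ small enough that both inclusions $f(c) \sub \bigcup_{b \in L_1'} \go(b,\epsilon)$ and $f(c) \sub \bigcup_{b' \in L_2'} \go(b',\epsilon)$ hold for every $c \in \go(a,\delta) \cap (A \mi a)$, with appropriate $L_1' \sub L_1$, $L_2' \sub L_2$. Pick $c = c_\delta$ as in the previous paragraph and a witness $y \in f(c)$ with $y \notin \bigcup_{b \in L_1 \mi b_1} \go(b,\epsilon)$. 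Since $y$ does lie in $\bigcup_{b \in L_1'} \go(b,\epsilon) \sub \bigcup_{b \in L_1} \go(b,\epsilon)$, the only option left is $y \in \go(b_1,\epsilon)$.

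The third step uses $L_2$ to trap $b_1$ in its closure. The same $y$ also lies in $\go(b',\epsilon)$ for some $b' \in L_2$; by the ultrametric property of the valuation topology, $\go(b',\epsilon) = \go(b_1,\epsilon)$, hence $b' \in \go(b_1,\epsilon) \cap L_2$. As $\epsilon$ can be made arbitrarily large (shrinking the disc around $b_1$ in the valuation topology), every open polydisc around $b_1$ meets $L_2$, so $b_1 \in \overline{L_2}$, as desired. The point requiring the most care is keeping the direction of the inequalities in the valuation order straight (large $\epsilon$ means small disc) when extracting $\epsilon_2$ from the failure of minimality and then passing to arbitrarily refined $\epsilon$; once this bookkeeping is in place, the ultrametric forces $b'$ and $b_1$ into the same tiny disc, which is the key mechanism of the argument.
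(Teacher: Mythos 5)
Your argument is correct. Note that the paper does not prove this lemma itself --- it is quoted from \cite[Lemma~9.2]{Yin:special:trans} --- so there is no in-paper proof to compare against; your route (minimality of $L_1$ gives, for some $\epsilon_2$ and all $\delta$, a point $c_\delta$ near $a$ whose image escapes the $\epsilon$-union over $L_1 \mi b_1$, so the escaping value $y$ must land in $\go(b_1,\epsilon)$, and the limit-set property of $L_2$ plus the ultrametric equality of intersecting discs of equal radius forces some $b' \in L_2$ into $\go(b_1,\epsilon)$) is exactly the natural argument and it goes through, including the degenerate case $L_1 = \{b_1\}$. Two small cosmetic points: the isolation radius $\epsilon_1$ you extract in the first step is never actually used, since minimality already tells you directly that $L_1 \mi b_1$ is not a limit set; and ``choose $\delta$ small enough'' should read ``large enough'' in the value group (it is the disc $\go(a,\delta)$ that is small), consistent with the bookkeeping remark you make for $\epsilon$.
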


This lemma justifies writing $\lim_{ A \rightarrow a} f = L$ when $L$ is a closed (hence the unique) minimal limit set of $f$ at $a$, in particular, when $L$ is finite and minimal.

Let $A \sub \VF^n$ be a definable set with $\dim_{\VF}(A) = n$ and $f : A \fun \mdl P(\VF^m)$ a definable finitary function. We say that $f$ is \memph{continuous} if $\lim_{ A \rightarrow a} f = f(a)$ for all $a \in A$.

\begin{lem}\label{fini:conti}
The function $f$ is continuous almost everywhere.
\end{lem}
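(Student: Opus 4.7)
The plan is to reduce $f$ locally to a finite disjoint union of single-valued definable functions and then apply Lemma~\ref{fun:almost:cont} to each branch. First, by Lemma~\ref{finite:fib:def} applied to the definable relation $\set{(a,b) \in A \times \VF^m \given b \in f(a)}$, the cardinality $|f(a)|$ is a definable, bounded function of $a$; so after a definable finite partition of $A$ (discarding the pieces of $\VF$-dimension $<n$, which contribute to the exceptional set) we may assume $|f(a)| \equiv k$ on $A$ for some fixed $k \in \N$.

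Next I would establish a codomain-$\VF^m$ analogue of Corollary~\ref{part:rv:cons}: coordinate-wise, $a \efun \pr_i(f(a))$ is a definable finitary function $A \fun \mdl P(\VF)$ to which Corollary~\ref{part:rv:cons} applies, and intersecting the resulting partitions over $i \in [m]$ produces a definable finite partition $(A_l)_l$ of $A$ such that on every open polydisc $\ga$ contained in a single $A_l$, the restriction $f \rest \ga$ decomposes disjointly as
\[
f \rest \ga = \bigsqcup_{j=1}^{k} f_{\ga j},
\]
where each $f_{\ga j} : \ga \fun \VF^m$ is a single-valued definable function taking values in a fixed $\RV$-polydisc $T_{\ga j} \sub \RV^m$. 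A further refinement using Lemma~\ref{atomic:open:multidisc} componentwise ensures that the $T_{\ga j}$ are pairwise distinct.

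To each single-valued branch $f_{\ga j}$ I would then apply Lemma~\ref{fun:almost:cont} (after trivially extending it to $\VF^n$), obtaining a definable closed set $Z_{\ga j} \sub \ga$ with $\dim_{\VF}(Z_{\ga j}) < n$ outside which $f_{\ga j}$ is continuous. Using Lemma~\ref{full:dim:open:poly} and \cmin-minimality each $A_l$ of full $\VF$-dimension is covered, up to $\VF$-dimension $<n$, by finitely many open polydiscs $\ga$; a standard compactness argument then collects the $Z_{\ga j}$ into a single definable $Z \sub A$ with $\dim_{\VF}(Z) < n$ such that on $A \mi Z$ every branch is continuous \emph{and} the branches land in pairwise distinct $\RV$-polydiscs. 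For $a \in A \mi Z$ and $\epsilon \in \Gamma$, the continuity of each $f_{\ga j}$ supplies a common $\delta \in \Gamma$ with $f_{\ga j}(\go(a, \delta) \cap \ga) \sub \go(f_{\ga j}(a), \epsilon)$ for every $j$, while the $\RV$-separation of the branches prevents any proper subset of $f(a)$ from being a limit set; hence $\lim_{A \rightarrow a} f = f(a)$.

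The main obstacle is the uniform decomposition in the second paragraph: Corollary~\ref{part:rv:cons} splits $f \rest \ga$ per open polydisc, so globalizing to a single decomposition on each partition piece requires the coordinate-wise extension from $\VF$ to $\VF^m$ together with a compactness/atomicity patching step modeled on the proof of Corollary~\ref{part:rv:cons} itself. Once this uniform local splitting is in hand, everything else is a routine application of the cited lemmas and the definition of limit set.
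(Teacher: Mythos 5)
The step you yourself single out as the main obstacle is precisely where the argument breaks. Corollary~\ref{part:rv:cons} (through Lemma~\ref{at:rv:split} and Lemma~\ref{atomic:open:multidisc}) carries the standing hypothesis that $\rv \rest f(a)$ is injective for every $a$, and Lemma~\ref{fini:conti} assumes no such thing; passing to the coordinate functions $a \efun \pr_i(f(a))$ does not restore it. Worse, the strengthened conclusion you want --- on each open polydisc a splitting into single-valued branches landing in \emph{pairwise distinct} $\RV$-polydiscs --- is false in general: for $f(a) = \set{x \in \VF \given (x-1)^2 = a}$ on $\MM \mi 0$, both values are of the form $1 + b$ with $\vv(b) = \vv(a)/2 > 0$, so both lie in the $\RV$-disc $1^\sharp$ for every $a$, and no finite definable partition can separate the branches by their landing $\RV$-polydiscs. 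Whether some uniform single-valued definable splitting still exists without the $\rv$-injectivity hypothesis would require a separate argument that the cited results do not supply. Two further steps are also shaky: a full-dimensional piece $A_l$ is in general \emph{not} covered up to $\VF$-dimension $<n$ by finitely many open polydiscs (already $\OO \sub \VF$ fails this, since finitely many open discs miss infinitely many residue classes), so the globalization must go through compactness over type-definable pieces rather than a finite cover; and Lemma~\ref{finite:fib:def} concerns fibers over points of the codomain, while boundedness of $\abs{f(a)}$ is just compactness.

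The paper avoids all of this by a different reduction: by Lemma~\ref{finite:VF:project:RV} and compactness one internalizes the finite fibers, obtaining a definable set $U$ in $\RV$ and a definable map $g : A \times \VF^m \fun A \times U$ over $A$ that is injective on each $a \times f(a)$; this produces, for $t \in U$, single-valued $t$-definable branches $f_t : A_t \fun \VF^m$, with no separation of values by $\RV$-polydiscs needed, and Lemma~\ref{fun:almost:cont} is applied to the branches whose domain has full dimension, the low-dimensional $A_t$ and the discontinuity loci being absorbed into the exceptional set. Note also that the $\RV$-separation you invoke for minimality is unnecessary: at a point where the finitely many branch values are distinct and each branch is continuous, no proper subset of $f(a)$ can be a limit set. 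If you want to keep your outline, replace the appeal to Corollary~\ref{part:rv:cons} by this internalization and drop the distinct-$\RV$-polydisc requirement.
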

\begin{proof}
This can be reduced to the familiar case that $f$ is indeed a function into $\VF^m$ as follows. By Lemma~\ref{finite:VF:project:RV} and compactness, there are a definable set $U$ in $\RV$ and a definable function $g: A \times \VF^m \fun A \times U$ such that, for all $a \in A$, $\pr_A(g(a, b)) = a$  and $g \rest (a \times f(a))$ is injective. For each $t \in U$, let
\[
A_t = \set{a \in A \given (a,t) \in g(a \times f(a))}
\]
and $f_t \sub f$ be the obvious $t$-definable function $A_t \fun \VF^m$. By dimension theory, more precisely, by Lemma~\ref{full:dim:open:poly}, $\dim_{\VF}(A_t) = n$ for some $t \in U$. For such a $t \in U$, by Lemma~\ref{fun:almost:cont}, $f_t$ is continuous everywhere.  The claim follows.
\end{proof}

\begin{lem}[{\cite[Lemma~9.5]{Yin:special:trans}}]\label{lim:exists}
Suppose that $f : \MM \mi 0 \fun \mdl P (A)$ is a definable finitary function, where $A$ is a closed and bounded set in $\VF$. Then there is a definable finite set $L \sub A$ such that $\lim_{\MM \mi 0 \rightarrow 0} f = L$.
\end{lem}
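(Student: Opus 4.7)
The plan is to reduce $f$ to a finite union of single-valued definable branches near $0$, then show that each branch has a unique limit point in $\VF$, using boundedness of $A$ together with the disc-to-disc structure of one-variable definable functions in $\ACVF$.

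First I would reduce to single-valued branches. Since $f$ is finitary, compactness yields a uniform bound $N \in \N$ on $\abs{f(x)}$ over a cofinal punctured subdisc $\go(0, \gamma_0) \mi 0$. Applying Lemma~\ref{finite:VF:project:RV} uniformly in $x$, I decompose $f = g_1 \cup \ldots \cup g_N$ into single-valued definable functions $g_i : \go(0, \gamma_0) \mi 0 \fun \VF$, each defined over an auxiliary $t_i \in \RV$. The sought minimal closed limit set is $\0$-definable, unique, and Galois-invariant, so it equals the Galois orbit of $\set{\ell_{g_1}, \ldots, \ell_{g_N}}$, where $\ell_{g_i}$ is a $t_i$-definable limit of $g_i$; it therefore suffices to produce such an $\ell_{g_i}$ for each branch.

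Next I would analyze a single branch $g : \go(0, \gamma_0) \mi 0 \fun \VF$ with range in the closed and bounded $A$. By Lemma~\ref{fun:almost:cont} and Lemma~\ref{open:pro}, after enlarging $\gamma_0$, I may assume $g$ is continuous and has the disc-to-disc property, so for every open subdisc $\gb \sub \go(0, \gamma_0) \mi 0$, $g(\gb)$ is an open disc contained in $A$. For each $\gamma \geq \gamma_0$, set $B_\gamma = \overline{g(\go(0, \gamma) \mi 0)}$; by \cmin-minimality, each $B_\gamma$ is a definable finite boolean combination of closed discs inside $A$, and the family $(B_\gamma)_\gamma$ is decreasing. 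The intersection $L_g = \bigcap_{\gamma \geq \gamma_0} B_\gamma$ is the candidate closed minimal limit set of $g$ at $0$.

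The main obstacle is to show $L_g$ is a single point. If $L_g$ contained a closed disc $\gc$ of positive radius, then $g$ would take values in $\gc$ on a cofinal punctured subdisc $\go(0, \gamma_1) \mi 0$. Applying the disc-to-disc property to smaller and smaller open subdiscs of this domain, the radii of the image discs evolve as piecewise $\Q$-affine functions of the source radius; boundedness of $\gc$ then forces these image radii eventually to strictly exceed the radius of $\gc$, so $g(\go(0, \gamma) \mi 0)$ is contained in a strict subdisc of $\gc$ for large $\gamma$, contradicting $B_\gamma \supseteq \gc$. Hence $L_g = \set{\ell_g}$, and $L = \set{\ell_{g_i} \given i \leq N}$ is the sought finite definable subset of $A$, which by construction is the minimal closed limit set of $f$ at $0$.
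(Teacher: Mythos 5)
Your proposal cannot be checked against an in-paper argument, because the paper never proves Lemma~\ref{lim:exists}: it is imported wholesale from \cite[Lemma~9.5]{Yin:special:trans}. Judged on its own, the argument has two genuine gaps. The first is the reduction to finitely many single-valued branches. Lemma~\ref{finite:VF:project:RV} plus compactness gives (as in the proof of Lemma~\ref{fini:conti}) a definable set $U$ in $\RV$ and, for each $t\in U$, a $t$-definable partial function $f_t\sub f$; but $U$ is in general infinite and each $\dom(f_t)$ is only a small $t$-definable subset of $\MM\mi 0$, not a cofinal punctured disc. A decomposition $f=g_1\cup\dots\cup g_N$ into definable single-valued functions on a punctured neighbourhood of $0$ simply does not exist in general: for $f(x)=\set{y\given y^2=x}$ there is no definable section on any punctured disc around $0$, over any small parameter set ($\RV$-parameters included), since for $x$ with $\vv(x)$ outside the divisible hull of the parameters' value group the element $\sqrt{x}$ generates a ramified, hence non-immediate, extension and so lies outside the henselization, i.e.\ outside $\dcl$ of the parameters together with $x$ (cf.\ Lemma~\ref{cut:to:hensel:substru}). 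With no finite list of branches, the Galois-orbit step has nothing to act on.

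Second, even for a single-valued branch $g$ the crux is asserted rather than proved. Granting continuity and the disc-to-disc property (note Lemma~\ref{open:pro} requires injectivity, so the locally constant case must be split off first), your exclusion of a closed disc $\gc\sub L_g$ rests on the claim that boundedness of $\gc$ forces the radii of the image discs $g(s^\sharp)$ eventually to exceed $\rad(\gc)$ strictly. Eventually the radius is an affine function of $\vrv(s)$ with some slope $q\in\Q$: $q<0$ is impossible (the images sit inside $\gc$) and $q>0$ causes no harm, but the case $q=0$ with radius eventually equal to $\rad(\gc)$ is not excluded by boundedness. In that case each $g(s^\sharp)$ is a maximal open subdisc of $\gc$, and what must be ruled out is that these subdiscs run over infinitely many residue classes of $\gc$ on every $\go(0,\gamma)\mi 0$, which is exactly what would keep $\gc$ inside every $B_\gamma$. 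Ruling this out is the actual content of the lemma and needs a real argument — e.g.\ atomicity of the germ of the punctured disc at $0$ together with Lemmas~\ref{hae:component:11} and~\ref{atomic:open:ball:open} (images of atomic discs are points or atomic discs), or an orthogonality argument preventing a definable function from sweeping out infinitely many maximal open subdiscs arbitrarily close to $0$. As written, the hard case is assumed away. (A smaller, repairable omission: $A$ may lie in $\VF^m$ with $m>1$, while your disc argument treats $A\sub\VF$; one must reduce coordinatewise and then pass to a minimal subset of the product of the coordinatewise limit sets.)
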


We remark that if a ``point at infinity'' is adjoined in each coordinate (one-point compactification of $\VF$)  then there is no need to require that $A$ be bounded.

\begin{lem}[Curve selection]\label{curve:select}
Let $A$ be a set in $\VF$ and $f : A \fun \mdl P(\VF)$ a definable finitary function. Let $b$ be an algebraic point in the closure of $f(A)$ (possibly the ``point at infinity''). Then there is a definable continuous finitary function $g : \MM \mi 0 \fun \mdl P(A)$, understood as a ``curve,'' such that $f \circ g$ is continuous and $b \in \lim_{\MM \mi 0 \rightarrow 0} f \circ g$.

\end{lem}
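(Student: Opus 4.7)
The plan is to build $g$ in two stages: first construct a $\Gamma$-parametrized family of finite subsets of $A$ whose $f$-images approach $b$, then pull this family back along the valuation $\vv : \MM \setminus 0 \to \Gamma_{>0}$. Because the resulting $g$ depends only on $\vv(x)$, both $g$ and $f \circ g$ will automatically be locally constant on $\MM \setminus 0$, hence continuous; the real work lies in a uniform definable choice.

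Since $b$ is algebraic, one may work over its finite Galois orbit and symmetrize at the end. Form the definable graph $F = \set{(a,c) \in A \times \VF \given c \in f(a)}$, which has finite projection onto $A$, and consider the definable function $h : F \to \Gamma_\infty$ given by $h(a,c) = \vv(c-b)$ (replace this by $-\vv(c)$ when $b$ is the point at infinity). The hypothesis that $b$ lies in the closure of $f(A)$ is precisely that $h(F)$ is unbounded above in $\Gamma$.

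For each sufficiently large $\gamma \in \Gamma$, the set $A_\gamma = \set{a \in A \given \exists c \in f(a),\, h(a,c) \geq \gamma}$ is a nonempty $(\gamma)$-definable subset of $A$. The crux is to extract a $\gamma$-definable \emph{finite} subset $G(\gamma) \subseteq A_\gamma$ uniformly in $\gamma$. Since $\ACVF$ does not admit full definable Skolem functions one cannot simply pick a point; instead, combine Lemmas~\ref{effectiveness} and~\ref{algebraic:balls:definable:centers} with \cmin-minimality to select algebraic points inside $\gamma$-definable closed balls arising from a relative cell decomposition of the family $\{A_\gamma\}$, and then close under the induced finite Galois orbit over the base to obtain a $\gamma$-definable finite set $G(\gamma) \subseteq A_\gamma$.

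Finally, define $g : \MM \setminus 0 \to \mdl P(A)$ by $g(x) = G(\vv(x))$; this is definable and finitary, and by the ultrametric inequality both $g$ and $f \circ g$ are locally constant in $x$, hence continuous in the sense of Section~\ref{sec:cont:fib:prop}. By construction some $c \in f(g(x))$ satisfies $\vv(c - b) \geq \vv(x)$, so applying Lemma~\ref{lim:exists} to $f \circ g$ inside a bounded closed neighborhood of $b$ (using a one-point compactification in the case $b = \infty$) yields a definable finite limit set $L$ with $b \in L$. The principal obstacle is the uniform definable choice producing $G(\gamma)$; once that is carried out, local constancy and Lemma~\ref{lim:exists} finish the proof essentially formally.
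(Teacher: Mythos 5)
You have the right overall shape (produce a definable finitary family approaching $b$, then invoke Lemmas~\ref{lim:exists} and~\ref{fini:conti}-type continuity), but the central step of your plan is not merely difficult, it is false. If $g$ factors through $\vv$, then for every $\gamma$ in an end segment of $\Gamma$ the set $G(\gamma)=g(x)$ (any $x$ with $\vv(x)=\gamma$) equals $\{a : \exists x\,(\vv(x)=\gamma \wedge a\in g(x))\}$ and is therefore a nonempty \emph{finite $\gamma$-definable} subset of $A_\gamma$, i.e.\ a set of $\VF$-points algebraic over $\bb S\cup\{\gamma\}$; but by Lemma~\ref{dcl:to:ac} (applied via any $t\in\gamma^\sharp$, since $\gamma\in\dcl(t)$) such points are already algebraic over $\bb S$. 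Concretely, take $\bb S=\C\dpar t$, $A=\MM\mi 0$, $f=\mathrm{id}$, $b=0$: for $\gamma\in\Gamma(\bb U)$ lying above every rational (such $\gamma$ occur as $\vv(x)$ on any punctured disc around $0$, by saturation), $A_\gamma=\OO_\gamma\mi 0$ contains no point algebraic over $\bb S$, hence no nonempty $\gamma$-definable finite subset at all — even though curve selection is trivially true here with $g=\mathrm{id}$. For the same reason your proposed remedy cannot work: Lemmas~\ref{effectiveness} and~\ref{algebraic:balls:definable:centers} only give points inside $\gamma$-definable \emph{discs}, and a nonempty $\gamma$-definable set such as $\OO_\gamma\mi 0$ (or the annulus $\gamma^{\sharp\sharp}$) need not contain any $\gamma$-definable disc. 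The defect is structural: $\gamma$ lives in the wrong sort to supply parameters for a definable choice in $\VF$.

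The way around this — and it is what the paper does — is to parametrize by the field element $c\in\MM\mi 0$ itself rather than by $\vv(c)$. Since $\vv(c)>0$, the structure $\acl(\bb S\la c\ra)$ is a model of $\ACVF$, so the nonempty $c$-definable set $\{a\in A : f(a)\sub\go(b,\vv(c))\}$ meets $\acl(c)$; compactness then produces a definable finitary $g:\MM\mi 0\fun\mdl P(A)$ with $f(g(c))\sub\go(b,\vv(c))$ (with the obvious modification at coordinates where $b$ is the point at infinity). The price is that continuity is no longer automatic, since $g$ genuinely depends on $c$ and not only on $\vv(c)$; it is recovered from Lemma~\ref{fini:conti} (continuity almost everywhere), which makes $g$ and $f\circ g$ continuous on some definable punctured disc around $0$, after which a definable dilation restores the domain $\MM\mi 0$, and Lemma~\ref{lim:exists} then gives the limit set containing $b$, as in your last step.
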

\begin{proof}
Since $\acl(c)$ is a model of $\ACVF(\mathbb{S})$ for each $c \in \MM \mi 0$, we have that
\[
\set{a \in A \given \vv(f(a) - b) > \vv(c)} \cap \acl(c)
\]
is nonempty. Therefore, by compactness, there is a definable finitary function $g : \MM \mi 0 \fun \mdl P(A)$ such that $f(g(c)) \sub \go(b, \vv(c))$ for every $c \in \MM \mi 0$. If the $i$th coordinate of $b$ is the ``point at infinity'' then we use the ``open disc'' $\go(b_i, -\vv(c))$ in that coordinate instead. By Lemma~\ref{lim:exists}, $\lim_{\MM \mi 0 \rightarrow 0} f \circ g$ exists in the  closure of $f(A)$,  and it clearly contains $b$. By Lemma~\ref{fini:conti}, $g \rest (\gb \mi 0)$ and $(f \circ g) \rest (\gb \mi 0)$ are both continuous for some definable open disc $\gb$ around $0$. Dilating with a definable factor, the claim follows.
\end{proof}

This is an analogue of the highly useful \omin-minimal curve selection.


\begin{lem}\label{cb:to:cb}
Let $A$ be a closed and bounded set in $\VF$, and $f : A \fun \VF^m$ a definable continuous function. Then $f(A)$ is also closed and bounded.
\end{lem}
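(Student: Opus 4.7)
The plan is to prove boundedness and closedness of $f(A)$ separately, both by contradiction, using Lemma~\ref{curve:select} to produce a ``curve'' in $A$ along which $f$ exhibits bad limiting behavior, and then invoking Lemma~\ref{lim:exists} (whose hypothesis that $A$ be closed and bounded is the crucial ingredient) together with continuity of $f$ to derive a contradiction. This is the valued-field analogue of the classical argument that a continuous image of a compact set is compact.

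For boundedness, suppose $f(A)$ is unbounded in $\VF^m$. Then some coordinate projection $f_i = \pr_i \circ f : A \fun \VF$ has unbounded image, so the canonical (hence $\bb S$-algebraic) ``point at infinity'' $\infty$ lies in the closure of $f_i(A)$. Applying Lemma~\ref{curve:select} to $f_i$ with $b = \infty$ yields a definable continuous finitary function $g : \MM \mi 0 \fun \mdl P(A)$ with $f_i \circ g$ continuous and $\infty \in \lim_{\MM \mi 0 \rightarrow 0} f_i \circ g$. Since $A$ is closed and bounded, Lemma~\ref{lim:exists} produces a finite definable $L \sub A$ with $\lim_{\MM \mi 0 \rightarrow 0} g = L$. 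Continuity of $f_i$ on $A$ then forces $\lim_{\MM \mi 0 \rightarrow 0} f_i \circ g \sub f_i(L)$, a finite subset of $\VF$ that cannot contain the point at infinity---a contradiction.

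For closedness, suppose there exists $b \in \ol{f(A)} \mi f(A)$, which is a nonempty $\bb S$-definable set (closures of definable sets are definable by \cmin-minimality) and hence realized in the saturated $\bb U$. Enlarging the parameter base to include $b$ makes $b$ trivially algebraic without affecting the definability of $A$ or $f$. The obvious multivariate adaptation of Lemma~\ref{curve:select} (whose proof generalizes verbatim using the coordinate-wise valuation on $\VF^m$) then yields a definable continuous finitary $g : \MM \mi 0 \fun \mdl P(A)$ with $f \circ g$ continuous and $b \in \lim_{\MM \mi 0 \rightarrow 0} f \circ g$. Lemma~\ref{lim:exists} again supplies a finite $L \sub A$ with $\lim g = L$, and continuity of $f$ at the finite set $L$ forces $\lim_{\MM \mi 0 \rightarrow 0} f \circ g \sub f(L) \sub f(A)$, so $b \in f(A)$, contradicting our choice of $b$.

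The main technical point, common to both cases, is the transfer-of-limit step: given $\epsilon \in \Gamma$, continuity of $f$ at each $a \in L$ supplies a $\delta_a$ with $f(\go(a, \delta_a) \cap A) \sub \go(f(a), \epsilon)$; taking $\delta$ to dominate all the $\delta_a$ (so as to shrink every domain disc), the hypothesis $\lim g = L$ gives $g(c) \sub \bigcup_{a \in L} \go(a, \delta)$ for all sufficiently small $c$, whence $f(g(c)) \sub \bigcup_{a \in L} \go(f(a), \epsilon)$. Letting $\epsilon$ vary exhibits $f(L)$ as a limit set of $f \circ g$, so the unique closed minimal limit set is contained in $f(L)$. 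A minor secondary obstacle is that Lemma~\ref{curve:select} is stated only with codomain $\mdl P(\VF)$; in Case 1 this is circumvented coordinate-wise, while Case 2 requires the obvious multivariate adaptation of its proof.
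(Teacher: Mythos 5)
Your proof is correct and takes essentially the same route as the paper's: apply curve selection (Lemma~\ref{curve:select}) to a bad limit point of $f(A)$ (the point at infinity for boundedness, a frontier point for closedness), use Lemma~\ref{lim:exists} with $A$ closed and bounded to place the curve's limit set $L$ inside $A$, and transfer the limit through continuity of $f$ to force the bad point into $f(L) \sub f(A)$, a contradiction. The extra details you supply --- the coordinatewise reduction to the stated codomain of Lemma~\ref{curve:select}, the base change making the frontier point algebraic, and the minimal-limit-set bookkeeping --- are precisely what the paper's terse argument leaves implicit, and they check out.
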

\begin{proof}
If $f(A)$ is not bounded then, applying curve selection to  any ``algebraic point at infinity'' $b$ in the boundary of $f(A)$,  we see that there is a curve $C \sub A$ such that one branch of the curve $f(C)$ approaches $b$. Since $A$ is closed and bounded, this would mean that $b$ belongs to $f(A)$, which is impossible. The same argument works for closedness.
\end{proof}

Again, if we work in the  one-point compactification of $\VF$ then there is no need to talk about boundedness.

\begin{cor}\label{conti:homeo}
If the function $f$ in Lemma~\ref{cb:to:cb} is injective then  it is a homeomorphism from $A$ onto $f(A)$.
\end{cor}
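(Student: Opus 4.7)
The plan is to run the standard ``continuous bijection from a compact set to a Hausdorff space is a homeomorphism'' argument, with Lemma~\ref{cb:to:cb} serving as the substitute for compactness.

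Since $f$ is already a continuous bijection onto $f(A)$, it suffices to show that $f^{-1}: f(A) \fun A$ is continuous, or equivalently, that $f$ is a closed map (with respect to the subspace topologies). So let $C \sub A$ be closed in $A$. Write $C = A \cap C'$ for some closed $C' \sub \VF$; since $A$ itself is closed in $\VF$, $C$ is closed in $\VF$. Also, $C \sub A$ is bounded because $A$ is. Therefore $C$ satisfies the hypotheses of Lemma~\ref{cb:to:cb}, and applying that lemma to the definable continuous function $f \rest C$ yields that $f(C)$ is closed and bounded in $\VF^m$. In particular, $f(C) = f(A) \cap f(C)$ is closed in $f(A)$.

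Thus $f$ sends closed subsets of $A$ to closed subsets of $f(A)$. Combined with bijectivity, this means that for each closed $C \sub A$, the preimage $(f^{-1})^{-1}(C) = f(C)$ is closed in $f(A)$, so $f^{-1}$ is continuous. Hence $f$ is a homeomorphism from $A$ onto $f(A)$.

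There is essentially no obstacle here; the only thing to keep track of is that ``closed in $A$'' together with ``$A$ closed in $\VF$'' gives ``closed in $\VF$,'' which is what allows us to feed $C$ back into Lemma~\ref{cb:to:cb}. The real work has already been done in that lemma via the curve selection argument.
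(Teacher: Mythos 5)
Your overall strategy --- reduce the statement to $f$ being a closed map and feed relatively closed subsets of $A$ back into Lemma~\ref{cb:to:cb} --- is exactly the intended route; the paper treats the corollary as an immediate consequence of that lemma in just this way. There is, however, one unjustified step as written: Lemma~\ref{cb:to:cb} is a statement about \emph{definable} sets and definable continuous functions (its proof goes through curve selection, Lemma~\ref{curve:select}, which requires definable data), whereas your $C$ is an \emph{arbitrary} closed subset of $A$, so the phrase ``the definable continuous function $f \rest C$'' does not apply and the lemma cannot be invoked for such a $C$.

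The repair is one line. To prove continuity of the (definable) inverse $f^{-1}$ at a point $b = f(a)$, it suffices to treat the closed sets $C = A \mi \go(a, \epsilon)$, $\epsilon \in \Gamma$: these are closed (open polydiscs are clopen in the valuation topology), bounded, and definable over the additional parameters $a$, $\epsilon$, which is harmless since Lemma~\ref{cb:to:cb} holds over any substructure containing $\mathbb{S}$. The lemma then gives that $f(C)$ is closed and bounded, and by injectivity $b \notin f(C)$, so some open polydisc $\go(b, \delta)$ is disjoint from $f(C)$; hence $f^{-1}(\go(b,\delta) \cap f(A)) \sub \go(a, \epsilon)$, which is continuity of $f^{-1}$ at $b$. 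With this substitution of definable closed sets for arbitrary ones, your argument is complete and coincides with the paper's.
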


\begin{lem}[The closed graph criterion for continuity]\label{closed:gra}
Let $A$, $B$ be definable sets in $\VF$. Suppose that $B$ is closed (if $B$ is not bounded then the  closure is taken in the coordinatewise one-point compactification). A definable function $f : A \fun B$ is continuous if and only if its graph  is closed in $A \times B$.
\end{lem}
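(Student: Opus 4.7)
My plan is to argue the two directions separately, with both relying on the curve selection machinery developed earlier in this section, namely Lemma~\ref{curve:select} and Lemma~\ref{lim:exists}.

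For the forward implication I would argue directly from the definition of continuity. Suppose $f$ is continuous and let $(a, b) \in A \times B$ lie in the closure of the graph $\Gamma(f)$; I claim $b = f(a)$. If not, choose $\epsilon \in \Gamma$ so that $\go(b, \epsilon)$ and $\go(f(a), \epsilon)$ are disjoint, using the appropriate open neighborhood of the point at infinity in the relevant coordinate when $b$ lies in the compactified closure of $B$. Continuity at $a$ then provides $\delta \in \Gamma$ with $f(\go(a,\delta) \cap A) \sub \go(f(a), \epsilon)$, which forces the neighborhood $(\go(a,\delta) \times \go(b,\epsilon)) \cap (A \times B)$ of $(a,b)$ to be disjoint from $\Gamma(f)$, a contradiction.

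For the converse I would proceed by contradiction: assume the graph is closed but $f$ is discontinuous at some $a \in A$. Then there exists $\epsilon \in \Gamma$ such that the definable set $A' = \set{a' \in A \mi a \given f(a') \notin \go(f(a), \epsilon)}$ accumulates at $a$. Applying Lemma~\ref{curve:select}, in its componentwise extension to the relevant ambient dimensions, to the inclusion $A' \hookrightarrow \VF^n$ at the (trivially algebraic) limit point $a$, I obtain a definable continuous curve $g : \MM \mi 0 \fun A'$ with $\lim_{c \to 0} g(c) = a$ and with $f \circ g$ also continuous (after shrinking the domain if necessary). By Lemma~\ref{lim:exists}, together with the remark there allowing one-point compactification when the target is unbounded, the composition $f \circ g$ has a finite limit set in the closure of $f(A') \sub B$ taken in the compactified topology; pick any $b$ in this limit set. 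By the defining property of $A'$ we have $b \neq f(a)$, while $(a, b)$ lies in the closure of $\Gamma(f)$ in $A \times B$, contradicting closedness.

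The main technical obstacle I anticipate is the bookkeeping around the one-point compactification when $B$ is unbounded: both Lemma~\ref{curve:select} and Lemma~\ref{lim:exists} must be applied to $\VF^m$-valued functions in their compactified version, and one must verify that the extracted limit point $b$ indeed lies in $B$ in the appropriate compactified sense, which is precisely where the hypothesis that $B$ be closed in the compactification enters. Once this compatibility is ensured, the argument assembles cleanly from the two directions above.
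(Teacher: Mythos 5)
Your proof is correct and follows essentially the same route as the paper: the ``only if'' direction is the standard separation-of-neighborhoods argument, and the ``if'' direction is a contradiction via curve selection (Lemma~\ref{curve:select}) together with the existence of finite limit sets (Lemma~\ref{lim:exists}), with the compactified codomain handled exactly as in the paper's remarks. The only cosmetic difference is that you run the curve through the ``bad'' set $A'$ in the domain and then take the limit of $f\circ g$, whereas the paper selects the curve directly inside the graph of $f$; the substance, including the role of the closedness of $B$ in guaranteeing the limit point $(a,b)$ lies in $A\times B$, is the same.
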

\begin{proof}
The ``only if'' direction follows from the usual argument. For the ``if'' direction, suppose for contradiction that there are an $a \in A$ and an open polydisc $\gb$ around $f(a)$ such that for all open polydisc $\ga$ around $a$, $f(\ga \cap A)$ is not contained in $\gb$. By Lemma~\ref{curve:select}, we can choose a curve inside the graph of $f$ such that one of its limit points is of the form $(a, b') \in A \times B$. Since $b' \neq f(a)$, we have reached a contradiction.
\end{proof}

\begin{lem}\label{lim:weak}
Let $f : A  \fun B$ be a definable bijection between two open subsets of $\VF$. Suppose that $B$ is bounded. Then $f$ is weakly concentric, that is, for every $a \in A$ there is a $b \in \VF$ such that, for every sufficiently large $\gamma \in \Gamma$, $f(\go(a, \gamma) \mi a) = \go(b, \delta) \mi b$ for some $\delta \in \Gamma$.
\end{lem}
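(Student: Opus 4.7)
The plan is to localize around $a$ by combining the disc-to-disc decomposition of Lemma~\ref{open:pro} with \cmin-minimality, and then to identify the candidate $b$ via Lemma~\ref{lim:exists}.

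First I apply Lemma~\ref{open:pro} to both $f$ and $f^{-1}$ and take a common refinement, obtaining finitely many pieces $A_i$ on which both $f \rest A_i$ and its inverse $(f \rest A_i)^{-1}$ have the disc-to-disc property. Each $A_i$ is a boolean combination of valuative discs in $\VF$, so by a standard \cmin-minimality argument, for every $a \in A$ and every sufficiently large $\gamma \in \Gamma$ there is a unique index $i^*$ with $\go(a, \gamma) \mi a \sub A_{i^*}$. If $a \in A_{i^*}$, then $\go(a, \gamma) \sub A_{i^*}$, and disc-to-disc forces $f(\go(a, \gamma))$ to be an open disc containing $f(a)$, necessarily of the form $\go(f(a), \delta)$; setting $b := f(a)$ handles this case.

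Suppose instead $a \notin A_{i^*}$. After a definable translation and dilation so that $a = 0$ and $\go(a, \gamma_0) \mi a$ becomes $\MM \mi 0$, I apply Lemma~\ref{lim:exists} to $f$ valued in the closed bounded set $\overline B$, obtaining a finite definable limit set $L$. To show $|L| = 1$, I partition $\MM \mi 0$ into the $f$-preimages of pairwise disjoint small neighborhoods of the conjectural limit points; by \cmin-minimality each such definable piece either contains or is disjoint from $\MM_\gamma \mi 0$ for all sufficiently large $\gamma$. Only one piece can contain $\MM_\gamma \mi 0$, so the others become empty near $0$, contradicting their nonemptiness unless there is only one conjectural limit. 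Hence $L = \{b\}$, and undoing the normalization yields the desired point $b$.

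It remains to upgrade the asymptotic inclusion $f(\go(a, \gamma) \mi a) \sub \go(b, \epsilon(\gamma)) \mi b$ (read off from the limit) to the sought equality $f(\go(a, \gamma) \mi a) = \go(b, \delta) \mi b$. On the refined piece, $f$ is in fact a homeomorphism: an injective concentric map is continuous because otherwise $\bigcap_\rho f(\go(x, \rho))$ would be an open disc rather than the singleton $\{f(x)\}$. Extending $f$ by $\tilde f(a) := b$, uniqueness of the limit makes $\tilde f$ continuous on the whole of $\go(a, \gamma)$, and for each $\gamma' > \gamma$ the restriction $\tilde f \rest \gc(a, \gamma')$ is a homeomorphism onto its closed bounded image by Lemma~\ref{cb:to:cb} and Corollary~\ref{conti:homeo}. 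The main obstacle I anticipate is verifying that this image is a closed disc $\gc(b, \delta'(\gamma'))$: by \cmin-minimality it is a finite boolean combination of discs, and the disc-to-disc property in both directions, together with $b$ being the unique accumulation point, should preclude both multiple closed-disc components and any removed interior open subdisc that would produce a thin-annulus component. Once this is established, taking the increasing union over $\gamma' > \gamma$ produces the punctured open disc $\go(b, \delta) \mi b$ as claimed.
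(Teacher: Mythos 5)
Your first half --- producing $b$ as the limit of $f$ at $a$ via Lemma~\ref{lim:exists} and forcing the limit set to be a singleton by \cmin-minimality applied to the preimages of disjoint neighbourhoods of the candidate limit points --- is correct, and it is exactly how the paper's proof begins. The gap is the step you yourself flag as the main obstacle: that $\tilde f(\gc(a,\gamma'))$ is a closed disc about $b$. The ingredients you cite (the image is a boolean combination of discs, disc-to-disc in both directions on the piece, $b$ is the unique limit) do not imply this, so ``should preclude'' is carrying the entire weight. Concretely, consider $h$ on $\OO\mi 0$ given by $h(x)=x$ for $x\in\MM\mi 0$ and $h(x)=x+c$ for $\vv(x)=0$, where $\vv(c)=-5$: it is definable, injective, continuous, and has the disc-to-disc property in both directions on all of $\OO\mi 0$ (every open subdisc of $\OO\mi 0$ lies either in $\MM\mi 0$ or in $\OO\mi\MM$, where $h$ is a translation), and its unique limit at $0$ is $b=0$, which is not in the image; yet $h(\gc(0,0)\mi 0)\cup\{0\}=\MM\cup\bigl(c+(\OO\mi\MM)\bigr)$ is not a closed disc --- it exhibits both pathologies you hoped to exclude, a far-away second component and a thin-annulus piece. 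So the local properties your argument has secured at that stage are consistent with failure; what rescues the claim is only that such pathologies disappear once the radius is sufficiently large, and producing that threshold is precisely the content of the lemma. It requires an argument over the value group (definability, strict monotonicity via injectivity, and eventual $\Q$-linearity via \omin-minimality of the radius of the image as a function of $\gamma'$), which your sketch never supplies.

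For comparison, after obtaining the limit point the paper goes straight at this: by \cmin-minimality and \omin-minimality in the $\Gamma$-sort, for all sufficiently large $\gamma$ the thin annulus $\gc(a,\gamma)\mi\go(a,\gamma)$ is mapped onto a thin annulus $\gc(b,\delta)\mi\go(b,\delta)$, and the conclusion follows by taking the union over $\gamma$, injectivity and piecewise linearity making the radius map strictly increasing and eventually linear, so that the union of the image annuli is a punctured open disc. Filling your gap would in effect amount to proving this annulus-to-annulus statement, at which point the detour through Lemma~\ref{open:pro}, Lemma~\ref{cb:to:cb} and Corollary~\ref{conti:homeo} (and the extension $\tilde f$, whose injectivity incidentally also needs $\gamma$ enlarged so as to exclude $f^{-1}(b)$ if it exists) becomes unnecessary; your Case 1 and your uniqueness-of-limit argument are fine as they stand.
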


Note that, compared to Lemma~\ref{open:pro}, the assumption of this lemma is stronger and its conclusion weaker, but the point is that we do not need a partition to achieve it.

\begin{proof}
By Lemma~\ref{lim:exists}, for any $a \in A$, $\lim_{ A \rightarrow a} f$ exists in the closure  of $B$, which we denote by $b_a$. By \cmin-minimality and \omin-minimality in the $\Gamma$-sort, $\gc(b_a, \delta) \mi \go(b_a, \delta) \sub B$ for all sufficiently large $\delta \in \Gamma$. Also,  for all sufficiently large $\gamma \in \Gamma$ there is a $\delta \in \Gamma$ such that
\[
f(\gc(a, \gamma) \mi \go(a, \gamma)) = \gc(b_a, \delta) \mi \go(b_a, \delta)
\]
The lemma follows.
\end{proof}

\begin{defn}
Let $p : A \fun \Gamma$ be a definable function. We say that $p$ is an \memph{$\go$-partition} of $A$ if, for every $a \in A$, the function $p$ is constant on $\go(a, p(a)) \cap A$.
\end{defn}

This makes sense even when $A$ has no $\VF$-coordinates, in which case any definable function $A \fun \Gamma$ is an $\go$-partition of $A$.

\begin{lem}\label{vol:par:bounded}
Let $p$ be an $\go$-partition of $A$. Suppose that $A_{\RV}$ is doubly bounded and $A_t$ is closed and bounded for every $t \in A_{\RV}$. Then $p(A)$ is doubly bounded.
\end{lem}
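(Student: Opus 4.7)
The plan is to establish the conclusion in two stages: first, show fiberwise that for every $t \in A_{\RV}$ the restriction $p_t : A_t \fun \Gamma$ (which inherits the $\go$-partition property on the closed bounded set $A_t$) has doubly bounded image; second, upgrade this to uniform double-boundedness across $t$ via Lemma~\ref{db:to:db}. For the upgrade, take the definable maps $\sigma, \iota : A_{\RV} \fun \Gamma$ sending $t$ to $\sup p_t(A_t)$ and $\inf p_t(A_t)$ respectively, both landing in $\Gamma$ by \omin-minimality of $\Gamma$ once the fiberwise claim is in hand; since $A_{\RV}$ is doubly bounded, Lemma~\ref{db:to:db} then gives $\sigma(A_{\RV})$ and $\iota(A_{\RV})$ doubly bounded, whence $p(A)$ is trapped between them.

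For the fiberwise lower bound, fix $t \in A_{\RV}$. Boundedness of $A_t$ supplies $\gamma_0 \in \Gamma$ with $A_t \sub \go(a, \gamma_0)$ for every $a \in A_t$. If some $a \in A_t$ has $p_t(a) \leq \gamma_0$, then $\go(a, p_t(a)) \supseteq A_t$, and the $\go$-partition condition at $a$ forces $p_t \equiv p_t(a)$ on $A_t$, collapsing $p_t(A_t)$ to a singleton; otherwise $p_t(A_t) \sub (\gamma_0, +\infty)$. Either way $p_t(A_t)$ is bounded below.

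The harder direction, which I expect to be the main obstacle, is the fiberwise upper bound. Suppose for contradiction that $p_t(A_t)$ is unbounded above. I would mimic the compactness argument in the proof of Lemma~\ref{curve:select}: for each $c \in \MM \mi 0$ the algebraic closure $\acl(c, t, \bb S)$ is a model of $\ACVF(\bb S)$, so the nonempty $\bb S(c, t)$-definable set $\set{a \in A_t \given p_t(a) > \vv(c)}$ contains an algebraic point; by compactness one obtains a definable finitary function $g : \MM \mi 0 \fun \mdl P(A_t)$ with $p_t(a) > \vv(c)$ for every $a \in g(c)$. Since $A_t$ is closed and bounded, a coordinatewise application of Lemma~\ref{lim:exists} produces a finite definable limit set $L = \lim_{\MM \mi 0 \rightarrow 0} g \sub A_t$. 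Setting $\epsilon = \max_{a^* \in L} p_t(a^*) \in \Gamma$ (finite since $L$ is finite), the $\go$-partition condition at each $a^* \in L$ forces $p_t \equiv p_t(a^*)$ on $\go(a^*, p_t(a^*)) \cap A_t \supseteq \go(a^*, \epsilon) \cap A_t$; hence once $c$ is close enough to $0$ that $g(c) \sub \bigcup_{a^* \in L} \go(a^*, \epsilon)$, every element of $g(c)$ has $p_t$-value in the finite set $p_t(L) \sub (-\infty, \epsilon]$. This contradicts $p_t(g(c)) > \vv(c)$ as soon as $\vv(c) > \epsilon$. The delicate point is translating curve selection from its $\VF$-valued formulation to a $\Gamma$-valued one via the auxiliary parameter $c$, and this is precisely where the closedness of $A_t$ (beyond its boundedness) enters essentially.
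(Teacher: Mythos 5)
Your fiberwise-then-uniformize strategy is essentially the paper's (contradiction, a curve-selection-style finitary function into the fiber, Lemma~\ref{lim:exists} plus closedness to put the limit set inside $A_t$, the $\go$-partition condition at the limit points for the contradiction, and Lemma~\ref{db:to:db} to absorb the $\RV$-direction), and your use of the whole finite limit set $L$ with $\epsilon = \max p_t(L)$ is, if anything, a cleaner way to land the contradiction than isolating a single limit point. The lower-bound observation and the $\sup/\inf$ upgrade via Lemma~\ref{db:to:db} are fine.

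There is, however, a genuine gap exactly at the step you flag differently: the claim that $\acl(c,t,\mathbb{S})$ is a model of $\ACVF(\mathbb{S})$ is false for a general $\RV$-tuple $t$, and the compactness construction of $g$ over the base $\mathbb{S}\langle t\rangle$ can genuinely fail. By Lemma~\ref{dcl:to:ac}, adjoining $\RV$-parameters adds no $\VF$-algebraic points, so the $\VF$-part of $\acl(\mathbb{S},c,t)$ equals that of $\acl(\mathbb{S},c)$; if some coordinate $\gamma = \vrv(t_i)$ lies outside $\Q\otimes\Gamma(\mathbb{S}\langle c\rangle)$, then $\rv$ is not surjective onto the $\RV$-part of $\acl(\mathbb{S},c,t)$ (so it is not a model), and, worse, a fiber such as $A_t = \gamma^{\sharp\sharp}$ (which does occur for $\mathbb{S}$-definable $A$ satisfying the hypotheses, e.g.\ $A = \set{(a,r) \given \vv(a)=\vrv(r),\ 0\leq\vrv(r)\leq 1}$) contains no point algebraic over $(\mathbb{S},c,t)$ whatsoever, since all its points have value $\gamma$. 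Then no definable finitary $g : \MM \mi 0 \fun \mdl P(A_t)$ with $p_t(g(c))>\vv(c)$ exists over those parameters, and the argument stalls before Lemma~\ref{lim:exists} can be invoked. The repair is routine: pick $a_0 \in t^\sharp$ and work over $\mathbb{S}\langle a_0\rangle$ instead (then $t \in \dcl(a_0)$, and $\mathbb{S}\langle a_0, c\rangle$ is $\VF$-generated with nontrivial value group, so its algebraic closure is a model and the needed algebraic points exist), noting that the fiberwise conclusion --- double-boundedness of $p_t(A_t)$ --- does not depend on which parameters were used to prove it; alternatively, follow the paper and first prove the statement for $A$ without $\RV$-coordinates over the ambient base, deducing the general case afterwards via Lemma~\ref{db:to:db}.
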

\begin{proof}
We first handle the case that $A$ has no $\RV$-coordinates. Since $A$ is closed and bounded, $p(A)$ must be bounded from below. Suppose for contradiction that $p(A)$ is not bounded from above. For each $\gamma \in \Gamma$, let $A_{\gamma} = \set{a \in A \given p(a) > \gamma }$. Choose a definable continuous finitary function, that is, a curve
\[
f : \MM \mi 0 \fun \mdl P \bigg( \bigcup_{c \in \MM \mi 0} c \times A_{\vv(c)} \bigg)
\]
such that $f(c)_1 = c$. Write $A_c = f(c)_{>1}$. So $p(A_c) > \vv(c)$ for all $c \in \MM \mi 0$. Since  $A$ is closed, the limit points of $f$ are all of the form $(0,e)$ with $e \in A$. So there is a $c \in \MM \mi 0$ with $\vv(c) > p( e)$ such that $A_c \cap \go(e, p( e)) \neq \0$. Since $p(a) > \vv(c)$ for all $a \in A_c$, this contradicts the assumption that $p$ is an $\go$-partition of $A$. So $p(A)$ is bounded from above.

It is easy to see that the general case follows from this special case and Lemma~\ref{db:to:db}.
\end{proof}

This lemma, in its various incarnations, is crucial for the good behavior of motivic Fourier transform (see \cite[\S~11]{hrushovski:kazhdan:integration:vf} and \cite{yin:hk:part:3}). For essentially the same reason, the  main construction of this paper depends heavily on  it (often in combination with Lemma~\ref{db:to:db}).

\begin{lem}\label{lem:open:fiberwise}
Let $A\subseteq \VF^n$ be a definable set. Suppose that, for each $a \in A_1$, the fiber $A_a \sub \VF^{n-1}$ is open (respectively, closed). Then there is a finite definable set $A' \subseteq A_1$ such that $A\cap ((A_1 \mi A')\times \VF^{n-1})$ is open (respectively, closed) in $(A_1 \mi A')\times \VF^{n-1}$.
\end{lem}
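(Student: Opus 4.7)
I would handle the ``open'' case (the ``closed'' case is entirely parallel, by complementation). Set $A' := \pr_1(A \mi \ito(A))$, where $\ito(A)$ denotes the topological interior of $A$ in $\VF^n$; this is $\mathbb{S}$-definable. Once $A'$ is shown to be finite, the conclusion follows since $A \cap ((A_1 \mi A') \times \VF^{n-1}) = \ito(A) \cap ((A_1 \mi A') \times \VF^{n-1})$, which is open in the subspace. Assuming $\bb U$ is $|\mathbb{S}|^+$-saturated, finiteness of $A'$ reduces to ruling out the existence of $a_0 \in A' \mi \acl(\mathbb{S})$. By \cmin-minimality, such an $a_0$ must lie in the $\VF$-interior of $A'$ (the boundary points of $A'$ in $\VF$ being finite and $\mathbb{S}$-algebraic), so one may fix an $\mathbb{S}$-definable open disc $\ga \sub A'$ with $a_0 \in \ga$.

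Using effectiveness (Lemmas~\ref{effectiveness} and~\ref{algebraic:balls:definable:centers}, after enlarging $\mathbb{S}$ to be $(\VF, \Gamma)$-generated if needed), I would obtain $\mathbb{S}$-definable functions $\sigma : \ga \fun \VF^{n-1}$ and $\gamma : \ga \fun \Gamma$ such that, for every $a \in \ga$, $(a, \sigma(a)) \in A \mi \ito(A)$ and $\go(\sigma(a), \gamma(a)) \sub A_a$ (the latter witnessing the assumed openness of $A_a$ at $\sigma(a)$). By Lemma~\ref{fun:almost:cont} together with the analogous almost-continuity property for $\Gamma$-valued definable functions on $\VF$ (a routine consequence of \cmin-minimality, as such functions factor through valuations off a finite set), both $\sigma$ and $\gamma$ are continuous off a finite $\mathbb{S}$-algebraic subset of $\ga$, and hence at the generic point $a_0$. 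Now apply curve selection (Lemma~\ref{curve:select}, in its straightforward $\VF^n$-version) to the set $\VF^n \mi A$ at the algebraic target $(a_0, \sigma(a_0))$, which lies in $\overline{\VF^n \mi A}$ by $(a_0, \sigma(a_0)) \notin \ito(A)$: this yields a continuous definable branch $(\alpha, \beta) : \MM \mi 0 \fun \VF \times \VF^{n-1}$ with $(\alpha(s), \beta(s)) \notin A$, $\alpha(s) \fun a_0$, $\beta(s) \fun \sigma(a_0)$. The constant branch $\alpha \equiv a_0$ is impossible: it would yield $\beta(s) \notin A_{a_0}$ with $\beta(s) \fun \sigma(a_0) \in A_{a_0}$, contradicting openness of $A_{a_0}$ at $\sigma(a_0)$. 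Hence $\alpha(s) \in \ga \mi \{a_0\}$ for $s$ small.

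The contradiction emerges from a valuation comparison along the branch. Continuity of $\sigma$ and $\gamma$ at $a_0$ gives $\sigma(\alpha(s)) \fun \sigma(a_0)$ and $\gamma(\alpha(s)) \fun \gamma(a_0)$; combined with $\beta(s) \fun \sigma(a_0)$, this forces $\vv(\beta(s) - \sigma(\alpha(s))) \fun \infty$ coordinatewise. For $s$ small enough that $\vv(\beta(s) - \sigma(\alpha(s))) > \gamma(\alpha(s))$, we conclude $\beta(s) \in \go(\sigma(\alpha(s)), \gamma(\alpha(s))) \sub A_{\alpha(s)}$, contradicting $\beta(s) \notin A_{\alpha(s)}$. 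The closed case is dual: one sets $A' = \pr_1(\overline A \mi A) \cap A_1$, chooses $\sigma(a) \in \VF^{n-1} \mi A_a$ together with $\gamma(a)$ witnessing $\go(\sigma(a), \gamma(a)) \sub \VF^{n-1} \mi A_a$ (available since the fiberwise complement is now open), applies curve selection to $A$ approaching $(a_0, \sigma(a_0)) \in \overline A$, and derives the symmetric contradiction $\beta(s) \in \go(\sigma(\alpha(s)), \gamma(\alpha(s))) \sub \VF^{n-1} \mi A_{\alpha(s)}$ versus $\beta(s) \in A_{\alpha(s)}$. I expect the main obstacle to be cleanly setting up the definable selectors $\sigma, \gamma$ via effectiveness and verifying the almost-continuity of the $\Gamma$-valued selector $\gamma$; given these, the final divergence estimate is routine.
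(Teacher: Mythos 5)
Your reduction to showing that $A'=\pr_1(A\mi\ito(A))$ is finite is sound (it proves a slightly stronger, absolute form of openness, which is harmless since $A_1\mi\ito(A_1)$ is finite by \cmin-minimality). The genuine gap is the step where you claim to obtain, ``using effectiveness,'' $\mathbb{S}$-definable single-valued functions $\sigma:\ga\fun\VF^{n-1}$ and $\gamma:\ga\fun\Gamma$ with $(a,\sigma(a))\in A\mi\ito(A)$ for every $a$. Lemmas~\ref{effectiveness} and \ref{algebraic:balls:definable:centers} produce definable points only inside definable \emph{discs}; the fibers $(A\mi\ito(A))_a$ are arbitrary nonempty definable subsets of $\VF^{n-1}$, and $\ACVF(0,0)$ has no definable choice for such families (a fiber consisting of two points conjugate over $\mathbb{S}\la a\ra$ already admits no definable selector), so such a $\sigma$ need not exist. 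Moreover, for your genericity/continuity-at-$a_0$ step the selector must be definable over $\mathbb{S}$ (or over parameters over which $a_0$ remains non-algebraic), which rules out the easy escape of working over $\mathbb{S}\la a_0\ra$; relatedly, the open disc $\ga\sub A'$ through $a_0$ cannot in general be taken $\mathbb{S}$-definable, only $\code{\ga}$-definable (you could use $\ito(A')$ instead). Smaller, fixable points: curve selection requires an algebraic target, so it must be invoked over $\mathbb{S}\la a_0\ra$, and it yields finitary (not single-valued) curves. The closed case inherits the same selector gap.

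This missing step is precisely where the paper's proof takes a different, choice-free route: it works with the whole relative boundary set $S=A\cap \Cl((A_1\times\VF^{n-1})\mi A)$, notes $\dim_{\VF}(S)<n$, and after a finite partition and a permutation of the last $n-1$ coordinates writes $S$ as the graph of a continuous definable \emph{finitary} function over an open polydisc; the contradiction is then a dimension count on the radius-of-openness function $\alpha(a,x)=\min\set{\beta\given \go(x,\beta)\sub A_a}$, which forces $S$ to contain an open polydisc. If you want to keep your curve-selection scheme, you would have to replace $\sigma$ by a definable finitary selector (this can be extracted by the algebraic-point lemmas plus compactness, in the style of the proof of Lemma~\ref{RV:bou:dim}), and then additionally arrange that its germ at $a_0$ tracks the particular non-interior witness $x_0$ the curve is aimed at: the limit-set notion of continuity only guarantees that points of $\sigma(\alpha(s))$ are near \emph{some} points of $\sigma(a_0)$, not near $x_0$. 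That bookkeeping is the real content of the lemma, and it is exactly what the paper's finitary/dimension-theoretic argument supplies; your final ultrametric estimate, by contrast, is indeed routine once the selector exists.
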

\begin{proof}
Since either one of the two cases implies the other, we shall just concentrate on the open case. Consider the  definable set
\[
S=A\cap \Cl((A_1\times \VF^{n-1}) \mi A),
\]
where $\Cl$ is the closure operator in $A_1\times \VF^{n-1}$; in other words, $S$ is the boundary of $A$ minus the frontier of $A$. It is enough to show that $S_1$ is finite. To that end, suppose for contradiction that $S_1$  contains a disc.

Since $S$ cannot contain an open subset of $\VF^n$, we have $\dim_{\VF}(S)<n$; for ease of notation, we may as well take $\dim_{\VF}(S) = n-1$. Then, up to a definable finite partition of $S$ and possibly after a reordering of the last $n-1$ coordinates, we may assume that $S$ is the graph of a definable continuous finitary function $\gp \fun \mdl P(\VF)$, where $\gp$ is an open polydisc and continuity is by Lemma~\ref{fini:conti}. So, for all $\epsilon \in \Gamma$, if $\rad(\gp)$ is sufficiently large, then $S_n$ is clustered inside finitely many discs of radius $\epsilon$. Now, for each $a \in S_1$, consider the function $\alpha(a, -): S \fun \Gamma$ given by
\[
\alpha(a,x) = \min \set{\beta\in \Gamma \given  \go(x, \beta)\subseteq A_a},
\]
which exists because $A_a$ is open. By dimension theory, $\dim_{\VF}(\alpha^{-1}(\epsilon)) = n-1$ for some $\epsilon$. Without loss of generality, we may assume that $\alpha(a,x) = \epsilon$ for all $(a,x)\in S$ and $\rad(\gp)$ is sufficiently large in the sense described above. It follows that $S$ contains an open polydisc of radius $\rad(\gp)$, which contradicts the choice of $S$.
\end{proof}

\begin{prop}\label{prop:continuity:fiberwise}
Let $A$ be as in Lemma~\ref{lem:open:fiberwise} and $f : A \fun \VF$  a definable function such that
for each $a \in A_1$, the induced function $f_a : A_a \fun \VF$ is continuous.
Then there is a finite definable set $A' \sub A_1$ such that $f$ is continuous on
$\bigcup_{a\in A_1 \mi A'} a \times A_a$.
\end{prop}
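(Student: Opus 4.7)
The plan is to work with the definable set $D \sub A$ of points at which $f$ fails to be continuous, show $\dim_{\VF}(D) < n$ via Lemma~\ref{fun:almost:cont}, and prove that the first projection $\pr_1(D) \sub A_1$ must be finite; then the conclusion holds with $A' = \pr_1(D)$.

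Suppose for contradiction that $\pr_1(D)$ is infinite. By \cmin-minimality it contains a disc $\gp$. Using a dimension-theoretic stratification of $D \cap (\gp \times \VF^{n-1})$ combined with Lemma~\ref{finite:VF:project:RV} to internalize the finitely many branch choices in $\RV$, we may shrink $\gp$ and pass to a piece to obtain a single-valued continuous definable function $g : \gp \fun \VF^{n-1}$ with $(a, g(a)) \in D$ and $g(a) \in A_a$ for every $a \in \gp$ (continuity of $g$ by Lemma~\ref{fini:conti}). A further application of Lemma~\ref{fini:conti} and a further shrinkage of $\gp$ make $h(a) = f(a, g(a))$ continuous on $\gp$.

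Fiberwise continuity of $f_a$ at $g(a)$ supplies, for each $\epsilon \in \Gamma$, a definable modulus $\Delta_\epsilon : \gp \fun \Gamma$ such that $x \in A_a$ and $\vv(x - g(a)) > \Delta_\epsilon(a)$ imply $\vv(f_a(x) - h(a)) > \epsilon$. Since definable $\Gamma$-valued functions on a $\VF$-disc are locally bounded at generic points (combining \cmin-minimality in $\VF$ with \omin-minimality in $\Gamma$), we pick a generic $a_0 \in \gp$ and shrink $\gp$ to a disc $\ga$ around $a_0$ so that $\Delta_\epsilon$ is bounded by some $N \in \Gamma$. Now apply Lemma~\ref{curve:select} to the discontinuity of $f$ at $(a_0, g(a_0))$: there is a definable continuous curve $\gamma = (\alpha, \beta) : \MM \mi 0 \fun A$ with $\gamma(c) \to (a_0, g(a_0))$ and $\vv(f(\gamma(c)) - h(a_0)) \leq \epsilon_0$ for some $\epsilon_0 \in \Gamma$ and all $c$. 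Fiberwise continuity of $f_{a_0}$ at $g(a_0)$ rules out $\alpha \equiv a_0$ cofinally, so after shrinking we have $\alpha(c) \in \ga \mi \{a_0\}$ for all $c$. Continuity of $g$ at $a_0$ together with the ultrametric inequality gives $\vv(\beta(c) - g(\alpha(c))) \geq \min(\vv(\beta(c) - g(a_0)), \vv(g(a_0) - g(\alpha(c)))) \to \infty$; for $c$ small enough this value exceeds $N \geq \Delta_{\epsilon_0 + 1}(\alpha(c))$, and the modulus definition yields $\vv(f(\gamma(c)) - h(\alpha(c))) > \epsilon_0 + 1$. Combined with $\vv(h(\alpha(c)) - h(a_0)) \to \infty$ (by continuity of $h$) via the ultrametric, this contradicts $\vv(f(\gamma(c)) - h(a_0)) \leq \epsilon_0$.

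The main obstacle is the local boundedness of the definable modulus $\Delta_\epsilon$ at generic points of $\gp$, which rests on the tameness of definable $\Gamma$-valued functions in our theory; granted this, the rest is dimension-theoretic bookkeeping combined with curve selection and the ultrametric inequality.
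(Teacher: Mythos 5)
Your route is genuinely different from the paper's, which gets the proposition in one line by applying the closed-graph criterion (Lemma~\ref{closed:gra}) together with Lemma~\ref{lem:open:fiberwise} to (the complement of) the graph of $f$; you instead reduce to showing that $\pr_1(D)$ is finite, where $D$ is the discontinuity locus, and argue by curve selection against a fiberwise modulus of continuity. The reduction itself is sound (taking $A'=\pr_1(D)$ works, and finiteness of $\pr_1(D)$ is independent of the parameter set), but as written your argument has two genuine gaps. First, the definable continuous section $g:\gp\fun\VF^{n-1}$ with $(a,g(a))\in D$ is not available for free: ACVF has no definable Skolem functions, and over the present parameter set $\mathbb{S}$ (arbitrary at this stage of the paper, possibly with genuine $\RV$-parameters) a definable family of nonempty fibers $D_a$ need not admit even a finitary definable selection --- $D_a$ could, for instance, be an open disc containing no point algebraic over $\mathbb{S}\la a\ra$; the paper only guarantees such points under the $(\VF,\Gamma)$-generated hypothesis of Lemma~\ref{algebraic:balls:definable:centers}, which is imposed only later. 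Lemma~\ref{finite:VF:project:RV} internalizes a finite set you already have; it does not produce one. This is repairable (enlarge $\mathbb{S}$ to a small model, which is legitimate because the conclusion is just finiteness of the $\mathbb{S}$-definable set $\pr_1(D)$, and then get algebraic points plus compactness as in the proof of Lemma~\ref{curve:select}), but the step must be argued.

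Second, the key tameness input is both assumed (``granted this'') and applied circularly. For a fixed $\epsilon$, the definable function $\Delta_\epsilon:\gp\fun\Gamma$ is indeed locally bounded away from a finite exceptional set, but that exceptional set depends on $\epsilon$; in your argument $\epsilon_0$ is only produced \emph{after} $a_0$ has been chosen (it witnesses the discontinuity at $(a_0,g(a_0))$), so ``pick a generic $a_0$ and shrink so that $\Delta_\epsilon$ is bounded by $N$'' does not parse: $a_0$ might lie in the bad locus of $\Delta_{\epsilon_0+1}$, and $\Delta_{\epsilon_0+1}$ is only definable over parameters including $a_0$, so genericity of $a_0$ cannot be invoked against it. The repair is to uniformize first: choose a definable $e:\gp\fun\Gamma$ with $e(a)$ a witness of discontinuity at $(a,g(a))$, use constancy of definable $\Gamma$-valued functions on atomic open subdiscs (Lemmas~\ref{atom:self} and~\ref{atomic:base:change}, as in the proof of Corollary~\ref{part:rv:cons}) to shrink $\gp$ so that $e\equiv\epsilon_*$ and then so that $\Delta_{\epsilon_*+1}$ is constant on a further subdisc, and only then pick $a_0$ there. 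With these two repairs (and with the fiberwise-modulus bookkeeping you already set up) the contradiction does go through, but the resulting proof is far heavier than the paper's, which needs nothing beyond Lemmas~\ref{lem:open:fiberwise} and~\ref{closed:gra}.
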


Here if $f(A)$ is unbounded then the codomain of $f$ is taken to be the one-point compactification of $\VF$.

\begin{proof}
By the  closed graph criterion for continuity (Lemma~\ref{closed:gra}), this is an immediate consequence  of    Lemma \ref{lem:open:fiberwise}.
\end{proof}

This is analogous to \cite[Corollary~6.2.4]{dries:1998} in the \omin-minimal environment and \cite[Lemma~2.38]{Yin:tcon:1.5} in the \T-convex environment.

\section{Hrushovski-Kazhdan style integral}\label{sec:HK:main}

\begin{defn}
\label{def:Kgroup}
The Grothendieck semigroup $\gsk \mdl C$ of a category $\mdl C$ endowed with a binary operation ``$\mi$'' and a binary relation ``$\subseteq$'' (subobject) is the free semigroup generated by the isomorphism classes of $\mdl C$, subject to the usual scissor relation
$[A \mi B] + [B] = [A]$, when $B\subseteq A$,
where $[A]$, $[B]$ denote the isomorphism classes of the objects $A$, $B$, and ``$\mi$'' and ``$\subseteq$'' are compatible with isomorphisms between objects, usually they are just set subtraction and set inclusion. Sometimes $\mdl C$ is also equipped with a binary operation (for example, cartesian product) that induces multiplication in $\gsk \mdl C$, in which case $\gsk \mdl C$ becomes a (commutative) semiring. The formal groupification of  $\gsk \mdl C$, which is then a ring, is denoted by $\ggk \mdl C$. 
\end{defn}

\begin{defn}
An \memph{$\RV$-fiber} of a definable set $A$ is a set of the form $A_a$, where $a \in A_{\VF}$. The \memph{$\RV$-fiber dimension} of $A$ is the maximum of the $\RV$-dimensions of its $\RV$-fibers and is denoted by $\dim^{\fib}_{\RV}(A)$.
\end{defn}

\begin{lem}[{\cite[Lemma~4.13]{Yin:special:trans}}]\label{RV:fiber:dim:same}
Let $f : A \fun A'$ be a definable bijection. Then $\dim^{\fib}_{\RV}(A) = \dim^{\fib}_{\RV} (A')$.
\end{lem}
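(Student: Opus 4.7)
The plan is to show $\dim^{\fib}_{\RV}(A') \geq \dim^{\fib}_{\RV}(A)$; applying the same argument to $f^{-1}$ will then yield the reverse inequality, hence equality.

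First, I would set $k = \dim^{\fib}_{\RV}(A)$ and pick $a \in A_{\VF}$ with $\dim_{\RV}(A_a) = k$. The crux is to analyze the image $f(a \times A_a) \sub A'$ together with its projection $X \sub A'_{\VF}$. For any $t \in A_a$, each $\VF$-coordinate $b$ of $f(a,t)$ lies in $\dcl(a,t) \sub \acl(a,t)$, and Lemma~\ref{dcl:to:ac} collapses this to $\acl(a)$. Hence $X \sub \acl(a)^{n'}$, where $n'$ is the $\VF$-arity of $A'$. Since $X$ is moreover $a$-definable and every infinite $a$-definable set contains a point outside $\acl(a)$ (by saturation), $X$ must be finite, say $X = \{a'_1, \ldots, a'_j\}$.

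Next I would partition $A_a$ into $E_1, \ldots, E_j$ according to which $a'_i$ the $\VF$-part of $f(a, \cdot)$ hits. Since the $\RV$-dimension of a finite union is the maximum over its pieces, at least one $E_i$ satisfies $\dim_{\RV}(E_i) = k$. The restriction of $f$ to $a \times E_i$ is a definable bijection onto $f(a \times E_i) \sub a'_i \times A'_{a'_i}$, and after dropping the constant $\VF$-coordinate on each side this becomes a definable bijection between sets living purely in $\RV$. A one-line check (push any definable finite-to-one map into $\RV^d$ witnessing the dimension across the bijection, in either direction) shows that $\dim_{\RV}$ is invariant under definable bijections among sets in $\RV$. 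Therefore $\dim_{\RV}(A'_{a'_i}) \geq k$, whence $\dim^{\fib}_{\RV}(A') \geq k$.

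The step I expect to be the main obstacle is the first one: without the ``$\dcl$-to-$\acl$'' collapse of Lemma~\ref{dcl:to:ac}, nothing would prevent $f$ from scattering the high-dimensional $\RV$-fiber over $a$ across infinitely many $\VF$-fibers of $A'$, in which case fiberwise $\RV$-dimension could a priori drop. Once that lemma pins the $\VF$-coordinates of the image of $a \times A_a$ to a finite set, the remainder is routine bookkeeping plus the elementary invariance of $\dim_{\RV}$ for sets in $\RV$.
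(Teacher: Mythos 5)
Your argument is correct, and it is essentially the expected one: the paper gives no proof of this lemma at all, citing instead \cite[Lemma~4.13]{Yin:special:trans}, and the mechanism you isolate --- Lemma~\ref{dcl:to:ac} forcing the $\VF$-part of the image of a single $\RV$-fiber $a \times A_a$ into a finite, $a$-definable set $X$, after which a maximal-dimensional piece is transported into a single $\RV$-fiber of $A'$ --- is precisely the tool that proof rests on, so there is nothing genuinely different to compare. One point you leave tacit deserves a word: your pieces $E_i$ are definable only over $a$ together with the points $a'_i \in \acl(a)$, so the step ``some $E_i$ has $\dim_{\RV}(E_i) = k$'' silently uses that $\dim_{\RV}(A_a)$ does not drop when parameters from $\acl(a)$ are adjoined. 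This is true (it follows from exchange in the $\RV$-sort, Lemma~\ref{exchange}, via the algebraic-closure description of $\RV$-dimension), but if you want to avoid invoking it you can rearrange the endgame: suppose $\dim^{\fib}_{\RV}(A') = d < k$, uniformize by compactness the fiberwise finite-to-one maps into a single definable $g : A' \fun \RV^d$ that is finite-to-one on every $\VF$-fiber of $A'$, and note that $t \efun g(f(a,t))$ is then an $a$-definable finite-to-one map on $A_a$, since by your finiteness of $X$ each of its fibers is a union of at most finitely many finite sets --- contradicting $\dim_{\RV}(A_a) = k$. With either patch the proof is complete; the remaining bookkeeping (invariance of $\dim_{\RV}$ under definable bijections of sets in $\RV$, and that the dimension of a finite disjoint union is the maximum of the pieces, via padding and gluing the witnessing maps) is as routine as you say.
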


\begin{defn}[$\VF$-categories]\label{defn:VF:cat}
The objects of the category $\VF[k]$ are the definable sets of $\VF$-dimension less than or equal to $k$ and $\RV$-fiber dimension $0$ (that is, all the $\RV$-fibers are finite). Any definable bijection between two such objects is a morphism of $\VF[k]$. Set $\VF_* = \bigcup_k \VF[k]$.
\end{defn}

As soon as one considers adding volume forms to definable sets in $\VF$, the question of ambient dimension arises and, consequently, one has to take ``essential bijections'' as morphisms.

 \begin{defn}
\label{defn:VF:mu:G}
An object of the category $\mgVF[k]$ is a definable pair $(A, \omega)$, where $A \in \VF[k]$, $A_{\VF} \sub \VF^k$, and $\omega : A \fun \Gamma$ is a function, which is understood as a \memph{definable $\Gamma$-volume form} on $A$. A \memph{morphism} between two such objects $(A, \omega)$, $(B, \sigma)$ is a definable \memph{essential bijection} $F : A \fun B$, that is, a bijection that is defined outside definable subsets of $A$, $B$ of $\VF$-dimension less than $k$, such that, for almost all $x \in A$,
\begin{equation}\label{cond:mg}
\omega(x) = \sigma(F(x)) + \vv(\jcb_{\VF} F(x)).
\end{equation}
We also say that such an $F$ is \memph{$\Gamma$-measure-preserving}.
\end{defn}

\begin{nota}
In \cite{hrushovski:kazhdan:integration:vf}, the category $\mgVF[k]$ is denoted by ${\mu_{\Gamma}}{\VF[k]}$ to indicate that the volume forms take values in $\Gamma$ as opposed to $\RV$. Here the subscript ``$\Gamma$'' is dropped since  we will not consider $\RV$-volume forms at all.
\end{nota}

In the definition above and other similar ones below, for the cases $k =0$, the reader should interpret things such as $\VF^0$ and how they interact with other things in a natural way. For instance, $\VF^0$ may be treated as the empty tuple, the only definable set of $\VF$-dimension less than $0$ is the empty set, and $\jcb_{\VF}$ is always $1$ on sets that have no $\VF$-coordinates. So $(A, \omega) \in \mgVF[0]$ if and only if $A$ is a finite definable set in $\RV_\infty$.

Set $\mgVF[{\leq} k] = \coprod_{i \leq k} \mgVF[i]$ and $\mgVF[*] = \coprod_{k} \mVF[k]$; similarly for the other categories below (with or without volume forms).

\begin{rem}\label{mor:equi}
Let $F : (A, \omega) \fun (B, \sigma)$ be a $\mgVF[k]$-morphism. Our intention is that such an $F$ should identify the two objects, that is, an isomorphism. However, if $F$ is not defined everywhere in $A$ then obviously it does not admit an inverse. We remedy this by introducing the following congruence relation $\sim$ on $\mgVF[k]$. Let $G : (A, \omega) \fun (B, \sigma)$ be another $\mgVF[k]$-morphism. Then $F \sim G$ if $F(a) = G(a)$ for all $a \in A$  outside a definable subset  of $\VF$-dimension $< k$. The morphisms of the quotient category $\mgVF[k] /{\sim}$ have the form $[F]$, where $F$ is a $\mgVF[k]$-morphism. Clearly every $(\mgVF[k] / {\sim})$-morphism is an isomorphism and hence $\mgVF[k] / {\sim}$ is a groupoid. In fact, all the categories of definable sets we shall work with should be and are groupoids.

It is certainly more convenient to work with representatives than equivalence classes. In the discussion below, this quotient category $\mgVF[k] /{\sim}$ will almost never be needed except when it comes to forming the Grothendieck semigroup or, by abuse of terminology, when we speak of two objects of $\mgVF[k]$ being isomorphic.
\end{rem}

\begin{defn}[$\RV$-categories]\label{defn:c:RV:cat}
The objects of the category $\RV[k]$ are the pairs $(U, f)$ with $U$ a set in $\RVV$ and $f : U \fun \RV^k$ a definable finite-to-one function. Given two such objects $(U, f)$, $(V, g)$, any definable bijection $F : U \fun V$ is a \memph{morphism} of $\RV[k]$.
\end{defn}

By Lemma~\ref{finite:VF:project:RV}, the two categories $\VF[0]$, $\RV[0]$ are actually equivalent, similarly for other such categories.

\begin{nota}\label{0coor}
We emphasize that if $(U, f)$ is an object of $\RV[k]$ then $f(U)$ is a subset of $\RV^k$ instead of $\RV^k_\infty$, while $\infty$ can occur in any coordinate of $U$. An object of  $\RV[*]$ of the form $(U, \id)$ is often just written as $U$.

More generally, if $f : U \fun \RV_\infty^k$ is a definable finite-to-one function then $(U, f)$ denote the obvious object of $\RV[{\leq} k]$. For example, the inclusion $\{\infty\} \fun \RV_{\infty}$ gives rise to an object of $\RV[0]$, the inclusion $\{(1, \infty)\} \fun \RV_{\infty}^2$ gives rise to an object of $\RV[1]$, and so on. Often $f$ will be a coordinate projection (every object in $\RV[*]$ is isomorphic to an object of this form). In that case, $(U, \pr_{\leq k})$ is simply denoted by $U_{\leq k}$ and its class in $\gsk \RV[k]$ by $[U]_{\leq k}$, etc.
\end{nota}

\begin{defn}[$\RES$-categories]\label{defn:RES:cat}
The category $\RES[k]$ is the full subcategory of $\RV[k]$ such that $(U, f) \in \RES[k]$ if and only if $\vrv(U)$ is finite.
\end{defn}

\begin{nota}\label{mor:ftf}
Every $\RV[k]$-morphism $F: (U, f) \fun (V, g)$  induces a definable finite-to-finite correspondence $F^\dag \sub f(U) \times g(V)$, which may also be thought of as a finitary function $f(U) \fun \mdl P(g(V))$. Recall the operator $\jcb_{\Gamma}$ from Notation~\ref{jcbG:op}. For $ u \in U$, we abbreviate $\jcb_{\Gamma}  F^\dag(f(u), g \circ F(u))$ as $\jcb_{\Gamma}  F^\dag(u)$
\end{nota}

\begin{defn}[$\RV$- and $\RES$-categories with $\Gamma$-volume forms]\label{defn:RV:mu}
An object of the category $\mgRV[k]$ is a definable triple $(U, f, \omega)$, where $(U, f)$ is an object of $\RV[k]$ and $\omega : U \fun \Gamma$ is a function, which is understood as a \memph{definable $\Gamma$-volume form} on $(U, f)$. A \memph{morphism} between two such objects $(U,f, \omega)$, $(V, g, \sigma)$ is an $\RV[k]$-morphism $F: (U, f) \fun (V, g)$ such that for every $ u \in U$,
\begin{equation}\label{cond:mg2}
\omega(u) = (\sigma \circ F)(u) + \jcb_{\Gamma}  F^\dag(u).
\end{equation}

The category $\mgRES[k]$ is the obvious full subcategory of $\mgRV[k]$.
\end{defn}

\begin{defn}[$\Gamma$-categories]\label{def:Ga:cat}
The objects of the category $\Gamma[k]$ are the finite disjoint unions of definable subsets of $\Gamma^k$. A definable bijection between  two such objects is a  morphism of $\Gamma[k]$ if and only if it is the $\vrv$-contraction of a definable bijection.

The category $\Gamma^{\fin}[k]$ is the full subcategory of $\Gamma[k]$ such that $I \in \Gamma^{\fin}[k]$ if and only if $I$ is finite.
\end{defn}

\begin{rem}
If $\gamma \in \Gamma$ is definable then it is in the divisible hull of $\Gamma(\mathbb{S})$. This does not mean that the definable set $\gamma^{\sharp} \sub \RV$ contains a definable point unless $\gamma \in \Gamma(\mathbb{S})$.
\end{rem}

\begin{rem}\label{GLZ:char}
By \cite[Remark~2.28]{Yin:int:expan:acvf}, if a definable function between two sets in $\Gamma$ is a $\vrv$-contraction then it is $\Z$-linear (with constant terms of the form $\vrv(t)$, where $t \in \RV$ is definable, that is, $t \in \RV(\mathbb{S})$). Moreover,  a definable bijection  between two objects of $\Gamma[k]$ is a  $\Gamma[k]$-morphism if and only if it is definably a piecewise $\mgl_k(\Z)$-transformation. The ``if'' direction is clear. For the ``only if'' direction, see \cite[Lemma~10.1]{hrushovski:kazhdan:integration:vf} or \cite[Lemma~2.29]{Yin:int:expan:acvf}.
\end{rem}

\begin{defn}[$\Gamma$-categories with volume forms]\label{def:Ga:cat:mu}
An object of the category $\mG[k]$ is a definable pair $(I, \omega)$, where $I \in \Gamma[k]$ and $\omega : I \fun \Gamma$ is a function. A $\mG[k]$-morphism between two objects $(I, \omega)$, $(J, \sigma)$ is a $\Gamma[k]$-morphism $F : I \fun J$ such that for all $\alpha  \in I$,
\[
\omega(\alpha) = \sigma(F(\alpha)) + \jcb_{\Gamma} F(\alpha).
\]

The category $\mG^{\fin}[k]$ is the obvious full subcategory of $\mG[k]$.
\end{defn}

\begin{nota}\label{comp:form}
For each $(U, f, \omega) \in \mgRV[k]$, write $\omega_f : U \fun \Gamma$ for the function given by $u \efun \Sigma(\vrv \circ f)(u) + \omega(u)$. Similarly, for $(I, \sigma) \in \mG[k]$, write $\sigma_I : I \fun \Gamma$ for the function given by $\gamma \efun \Sigma \gamma + \sigma(\gamma)$.
\end{nota}

%

There is a natural map $\Gamma[*] \fun \RV[*]$ given by $I \efun (I^\sharp, \id)$ (see Notation~\ref{gamwhat}). This map induces a commutative diagram in the category of graded semirings:
\[
\bfig
 \Square(0,0)/>->`>->`>->`>->/<400>[{\gsk \Gamma^{\fin}[*]}`{\gsk \RES[*]}`{\gsk \Gamma[*]}`{\gsk \RV[*]}; ```]
\efig
\]
where all the arrows are monomorphisms. The map
\[
\gsk \RES[*] \times \gsk \Gamma[*] \fun \gsk \RV[*]
\]
determined by the assignment
\begin{equation}\label{bil:def}
([(U, f)], [I]) \efun [(U \times I^\sharp, f \times \id)]
\end{equation}
is well-defined and is clearly $\gsk \Gamma^{\fin}[*]$-bilinear. Hence it induces a $\gsk \Gamma^{\fin}[*]$-linear map
\begin{equation}\label{tensor:expre}
\Psi: \gsk \RES[*] \otimes_{\gsk \Gamma^{\fin}[*]} \gsk \Gamma[*] \fun \gsk \RV[*],
\end{equation}
which is a homomorphism of graded semirings. By the universal mapping property, groupifying a tensor product in the category of $\gsk \Gamma^{\fin}[*]$-semimodules is, up to isomorphism, the same as taking the corresponding tensor product in the category of $\ggk \Gamma^{\fin}[*]$-modules; the groupification of $\Psi$ is still denoted by $\Psi$. Similarly, there is a
$\gsk \mG^{\fin}[*]$-linear map
\begin{equation}\label{bdd:to:2bdd}
{\mu}\Psi: \gsk \mgRES[*] \otimes_{\gsk \mG^{\fin}[*]} \gsk \mG[*] \fun \gsk \mgRV[*].
\end{equation}
We shall abbreviate $\otimes_{\gsk \Gamma^{\fin}[*]}$, $\otimes_{\gsk \mG^{\fin}[*]}$ both as ``$\otimes$'' below when no confusion can arise.

\begin{prop}[{\cite[Corollary~10.3, Proposition~10.10(1)]{hrushovski:kazhdan:integration:vf}}]\label{red:D:iso}
$\Psi$ and ${\mu}\Psi$ are both isomorphisms of graded semirings.
\end{prop}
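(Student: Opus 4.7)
The plan is to establish bijectivity of $\Psi$ (and then adapt to $\mu\Psi$) by exploiting the short exact sequence $1 \to \K^\times \to \RV \to \Gamma \to 0$. Although this sequence does not split globally, it admits definable sections over o-minimal cells in $\Gamma$, and that local trivialization is what converts an $\RV$-object into a sum of products of $\RES$-pieces with $\Gamma$-pieces.

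For surjectivity, I would start with $(U, f) \in \RV[k]$ and consider the composite $h = \vrv \circ f : U \to \Gamma^k$. Using o-minimality in $\Gamma$, partition the image $h(U)$ into finitely many definable cells $I_1, \ldots, I_n \sub \Gamma^k$ on which there exist definable sections $s_j : I_j \to \RV^k$ with $\vrv \circ s_j = \id_{I_j}$ (such sections exist after possibly refining the partition, since definable elements in $\Gamma$ are in the divisible hull of $\Gamma(\bb S)$ while $\RV$-points lifting them can be assembled cellwise). Using $s_j$ to trivialize the bundle $h^{-1}(I_j) \to I_j$, I would produce a definable bijection $h^{-1}(I_j) \simeq W_j \times I_j^\sharp$ where $W_j$ is the fiber over a distinguished point of $I_j$; this $W_j$ has finite $\vrv$-image and thus naturally lives in $\RES[k]$. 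Summing, $[(U,f)] = \sum_j \Psi([W_j] \otimes [I_j])$ in $\gsk \RV[*]$, giving surjectivity as a homomorphism of graded semirings.

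For injectivity, suppose two bilinear combinations $\sum_i [(W_i, g_i)] \otimes [I_i]$ and $\sum_{i'} [(W'_{i'}, g'_{i'})] \otimes [I'_{i'}]$ in the tensor product have the same image in $\gsk \RV[*]$. A witnessing $\RV[k]$-isomorphism between the corresponding disjoint unions of $W \times I^\sharp$'s is a definable bijection that, by Remark~\ref{GLZ:char}, $\vrv$-contracts piecewise to an $\mgl_k(\Z)$-transformation. I would then argue that on each piece of the implicit partition the bijection splits compatibly into a $\Gamma[*]$-morphism of the base and a $\RES[*]$-morphism of the fiber, exhibiting the equality as a consequence of the $\gsk \Gamma^{\fin}[*]$-bilinearity relations built into the tensor product. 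That no further identifications occur is precisely the content of the fact that $\Gamma^{\fin}[*]$-bilinearity (moving finite $\Gamma$-indices between the two factors) accounts for all overlaps between the $\RES$- and $\Gamma$-worlds.

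For $\mu\Psi$ the argument is formally parallel once volume forms are tracked through the local trivialization $h^{-1}(I_j) \simeq W_j \times I_j^\sharp$: the given form $\omega$ pulls back to an additive sum of a form on $W_j$ and one on $I_j^\sharp$, because $\jcb_\Gamma$ of a product map decomposes as a sum, and any discrepancy in the $\omega$-values along the fiber can be absorbed into the $\Gamma$-component via the volume-form condition \eqref{cond:mg2}. The main technical obstacle, in both cases, will be the coherent choice of sections $s_j$: o-minimal cell decomposition in $\Gamma$ gives them on each cell, but one must verify that after the decomposition the resulting classes in the tensor product are independent of the choice of partition and of representatives --- this is where it is essential that both $\Gamma^{\fin}[*]$ and, for the measured version, $\mG^{\fin}[*]$ absorb precisely the ambiguity caused by reshuffling finite $\Gamma$-data between the $\RES$- and $\Gamma$-factors.
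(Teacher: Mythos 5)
The statement you are proving is not actually proved in this paper at all: it is imported verbatim from Hrushovski--Kazhdan (Corollary~10.3 and Proposition~10.10(1) of \cite{hrushovski:kazhdan:integration:vf}), so the comparison is with their argument, whose key ingredients reappear here in Lemmas~\ref{shift:K:csn:def} and \ref{coh:decom}. Measured against that, your surjectivity step rests on a false premise. You ask for definable sections $s_j : I_j \fun \RV^k$ of $\vrv$ over infinite o\nobreakdash-minimal cells $I_j \sub \Gamma^k$, and use them to identify $h^{-1}(I_j)$ with (fiber over a base point)$\times I_j^\sharp$. No such sections exist, not even piecewise, not even over a model: if $s$ were definable over parameters $A$ and $\gamma \in I_j$ is generic over $A$, then $s(\gamma)$ would be an $A\gamma$-definable point of the $\K^\times$-torsor $\gamma^\sharp$, whereas one can extend $A$ by $x, cx$ with $\vv(x)=\gamma$ and $c$ a unit of arbitrary residue, producing automorphisms over $A\gamma$ that move $\rv(x)$ by any element of $\K^\times$. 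This is exactly the ``twisting'' phenomenon the paper flags (see the remark following Definition~\ref{def:Ga:cat}: $\gamma^\sharp$ need not contain a definable point), and it is why $\RES[*]$ consists of twisted objects in the first place. The product decompositions $U_i \cong V_i \times I_i^\sharp$ that surjectivity needs are obtained by a different mechanism: by the piecewise monomial structure of definable subsets of $\RV^n$ (Lemmas~3.21 and 3.25 of \cite{hrushovski:kazhdan:integration:vf}, as rehearsed in the proof of Lemma~\ref{coh:decom}), one partitions $U$ and applies $\mgl_n(\Z)$ monomial coordinate changes so that the $\Gamma$-pullback factor $I_i^\sharp$ appears as honest coordinates, with $\vrv(V_i)$ a singleton; the fibers over varying $\gamma$ are never identified with a fixed fiber, and the twisting stays inside $I_i^\sharp$.

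There is a second, lesser, problem with injectivity: a general $\RV[k]$-morphism is a definable bijection of sets in $\RV$ and need not be $\vrv$-contractible, so Remark~\ref{GLZ:char} does not apply to it directly; and your final sentence (``that no further identifications occur is precisely the content of the fact that $\Gamma^{\fin}[*]$-bilinearity accounts for all overlaps'') asserts the conclusion rather than proving it --- in Hrushovski--Kazhdan this is where the substantive work lies, via the analysis of definable bijections against $\K$--$\Gamma$ orthogonality. The plan for $\mu\Psi$ inherits the same gap, since it routes the volume forms through the nonexistent trivializations. So the proposal does not constitute a proof; the correct skeleton is the monomial decomposition route above, or simply the citation the paper itself uses.
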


Note that, however, \cite[Proposition~10.10(2)]{hrushovski:kazhdan:integration:vf} does not hold. One of the main motivations of this paper is to construct an alternative to be used in applications.

\begin{nota}\label{drop:0}
For simplicity, we often drop the constant volume form $0$ from the notation. For instance, if $\bm U$ is an object of $\RV[*]$ then it may also denote the  object $(\bm U, 0)$ of $\mgRV[*]$ with the constant volume form $0$.
\end{nota}

\begin{nota}
Let $[1] \in \gsk \RES[1]$ be the class of the singleton $\{1\}$. The class of the singleton $\{1\}$ in $\gsk \RES[0]$ is the multiplicative identity of $\gsk \RES[*]$ and hence is simply denoted by $1$. We have $[\RV^{\circ\circ}_\infty] = [\RV^{\circ\circ}] + 1$ in $\gsk \RV[{\leq} 1]$.

Let $\isp$ be the (nonhomogenous) semiring congruence relation on $\gsk \RV[*]$ generated by the pair $([1], [\RV^{\circ\circ}_\infty])$. Let
\[
\bm P = [1]-[\RV^{\ccirc}]  \in \ggk \RV[1].
\]
The corresponding principal ideal of $\ggk \RV[*]$ is thus generated by the element $\bm P - 1$.

Similarly, let $\mgisp$ be the  semiring congruence relation on $\gsk \mgRV[*]$ generated by the pair $([1], [\RV^{\circ\circ}])$, which is homogenous, and the corresponding principal ideal of $\ggk \mgRV[*]$ is generated by the element $\bm P$.
\end{nota}

\begin{nota}\label{RVcat:tag}
For each  $\bm U = (U, f) \in  \RV[k]$, let $U_f$ be the set $\bigcup \set{f(u)^\sharp \times u \given u \in U}$. Let $\mathbb{L}_{\leq k}: \RV[{\leq}k] \fun \VF[k]$ be the map  given by $\bm U \efun U_f$. Set $\bb L = \bigcup_k \bb L_{\leq k}$.

Let $\mgL_k: \mgRV[k] \fun \mgVF[k]$ be the map given by $(\bm U, \omega) \efun (\bb L \bm U, \bb L \omega)$, where $\bb L \omega$ is the obvious function on $\bb L \bm U$ induced by $\omega$. Set $\mgL = \bigoplus_k \mgL_{k}$.
\end{nota}

The map $\bb L$ induces a surjective semiring homomorphism $\gsk \RV[*] \fun \gsk \VF_*$ and the map $\mgL$ induces a surjective graded semiring homomorphism $\gsk \mgRV[*] \fun \gsk \mgVF[*]$, see \cite[\S~4, \S~6]{hrushovski:kazhdan:integration:vf} or \cite[Corollaries~7.7, 10.6]{Yin:special:trans}; we use the same notation for these homomorphisms as well as their groupifications. We have
\[
\bb L ([1]) = [1 + \MM] = [\MM] \dand \bb L ([\RV^{\circ\circ}])=[\MM \mi 0],
\]
and hence $\bb L (\bm P - 1)=0$. Similarly, $\mgL (\bm P)=0$ since, as we have seen above, $\MM$ and $\MM \mi 0$ are in essential bijection. It so happens that these relations are the only ones needed to describe the kernels of $\bb L$ and $\mgL$.

\begin{thm}\label{main:prop:HK}
Suppose that $\mathbb{S}$ is $(\VF, \Gamma)$-generated. For each $k \geq 0$ there is a canonical isomorphism of semigroups
\[
\int_{+} : \gsk  \VF[k] \fun \gsk  \RV[{\leq}k] /  \isp
\]
such that $ \int_{+} [A] = [\bm U]/  \isp$ if and only if $[A] = [{\bb L}{\bm U}]$. These isomorphisms are  compatible with the inductive systems. Passing to  the colimit of the groupifications, we obtain a canonical isomorphism of rings
\[
\int : \ggk \VF_* \fun \ggk  \RV[*] /  (\bm P - 1).
\]
Similarly, for each $k \geq 0$ there is a canonical isomorphism of semigroups
\[
\int_{+}^\mu : \gsk  \mgVF[k] \fun \gsk  \mgRV[k] /  \mgisp
\]
such that $\int_{+} [\bm A] = [\bm U]/  \mgisp$  if and only if $[\bm A] = [{\mgL}{\bm U}]$.
Taking the direct sum of the groupifications yields a canonical isomorphism of graded rings
\[
\int^\mu : \ggk \mgVF[*] \fun \ggk  \mgRV[*] /  (\bm P).
\]
\end{thm}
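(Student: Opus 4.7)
The plan is to invert the lifting maps $\bb L$ and $\mgL$ modulo the prescribed congruences. Both maps are already stated to induce surjective semigroup homomorphisms at the level of $\gsk$, so the work is (i) checking that the induced maps kill $\isp$ (respectively $\mgisp$), and (ii) proving injectivity on the quotient.

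For well-definedness, I would compute directly on the generators. We have $\bb L([\{1\}]) = [1+\MM] = [\MM]$ and, since $\RV^{\ccirc}_\infty = \RV^{\ccirc} \cup \{\infty\}$, also $\bb L([\RV^{\ccirc}_\infty]) = [\MM\mi 0] + [\{0\}] = [\MM]$ in $\gsk \VF[\leq 1]$. So the generating pair of $\isp$ lies in the kernel. For the measured version, the translation $x\mapsto x-1$ has $\vv(\jcb)=0$ and gives an honest bijection $1+\MM\to\MM$ with trivial $\Gamma$-form, while $\MM$ and $\MM\mi 0$ differ only by a point of $\VF$-dimension $0<1$, hence define the same class in $\gsk \mgVF[1]$; this gives $\mgL([\{1\}]) = \mgL([\RV^{\ccirc}])$, so $\mgisp$ is killed. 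Surjectivity follows by C-minimality together with Lemmas~\ref{full:dim:open:poly} and~\ref{RV:bou:dim}: any $A\in\VF[k]$ can be partitioned so that each piece, modulo a subset of strictly smaller $\VF$-dimension, is an $\RV$-pullback of the form $\bb L\bm U$, and the lower-dimensional exceptional pieces are handled by induction on $k$ (for $\mgL$ they are discarded using the essential-bijection convention built into Definition~\ref{defn:VF:mu:G}).

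The substance of the proof is the injectivity step: assuming $F:\bb L\bm U\to \bb L\bm V$ is a $\VF[k]$-isomorphism, produce a chain of $\isp$-moves from $[\bm U]$ to $[\bm V]$ in $\gsk \RV[\leq k]$. I would decompose $F$ into a finite composition of \emph{special bijections}, i.e.\ bijections whose effect on each atomic polydisc is one of a short list of standard moves, each of which manifestly corresponds at the $\RV$-level to an application of the generating relation $([1],[\RV^{\ccirc}_\infty])$. The decomposition rests on three pillars from \S~\ref{sec-tech}: Lemma~\ref{open:pro} (the disc-to-disc property after finite partition), Corollary~\ref{part:rv:cons} (a finitary $\rv$-contractible splitting on each atomic open polydisc), and Proposition~\ref{prop:continuity:fiberwise} together with curve selection (Lemma~\ref{curve:select}), which control how the fibers of $F$ fit together continuously and so force the combinatorial $\RV$-level data of the partition to match up. Proceeding inductively on $k$, and on a suitable complexity invariant at each stage, reduces $F$ to an identity transformation at the $\RV$-level modulo the listed scissor moves. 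The measured variant is identical, except that each special bijection must satisfy the Jacobian identity of Definitions~\ref{defn:VF:mu:G} and~\ref{defn:RV:mu}; this is tracked by $\jcb_{\Gamma}$ and $\jcb_\VF$, and the only additional generator needed is the one encoded by $\mgisp$, since the $\{0\}$-versus-$\{\infty\}$ adjustment of the unmeasured case is absorbed into the essential-bijection equivalence.

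The main obstacle is precisely this reduction to special bijections: as in Hrushovski–Kazhdan, it requires a delicate double induction that treats the top $\VF$-coordinate by C-minimality while using the inductive hypothesis on all lower coordinates, with atomicity (Lemmas~\ref{atom:self}, \ref{at:ball:t}, \ref{atomic:open:multidisc}) ensuring that local choices of sub-polydiscs glue to a global definable partition. Once the isomorphisms $\int_+$ and $\int^{\mu}_+$ are established for each $k$, compatibility with the inductive system $\VF[{\leq}k]\hookrightarrow \VF[{\leq}k{+}1]$ is straightforward from the construction, and groupifying then taking the colimit (respectively, the graded direct sum) converts the semiring congruences $\isp$ and $\mgisp$ into the principal ideals $(\bm P-1)$ and $(\bm P)$, yielding the canonical ring isomorphisms $\int$ and $\int^{\mu}$.
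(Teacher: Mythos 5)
You should first note what you are being compared against: the paper does not prove Theorem~\ref{main:prop:HK} at all --- its ``proof'' is a citation of Theorems~8.8 and 8.29 of Hrushovski--Kazhdan --- so your attempt has to be measured against that original argument (reworked in Yin's papers, and adapted in \S\S 5--6 of this paper for the bounded variant). Your preliminary steps do agree with that route and are fine: $\bb L([1])=[1+\MM]=[\MM]$ and $\bb L([\RV^{\ccirc}_\infty])=[\MM\mi 0]+[\{0\}]=[\MM]$, so the generating pair of $\isp$ is killed; the measured analogue works because the translation has Jacobian of valuation $0$ and $[\MM]=[\MM\mi 0]$ under essential bijections; and surjectivity is exactly the content of special bijections turning a definable set into an $\RV$-pullback with the volume form made $\rv$-contractible (Lemma~\ref{special:bi:term:constant} and its refinements), not really of \cmin-minimality plus Lemma~\ref{RV:bou:dim} as you phrase it, but that is a presentational point.

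The genuine gap is the injectivity step, which is the actual theorem. You propose to factor an arbitrary $\VF[k]$-isomorphism $F:\bb L\bm U\fun\bb L\bm V$ into finitely many special bijections, each ``manifestly'' realizing one application of the relation $([1],[\RV^{\ccirc}_\infty])$. No such factorization is available once $k\geq 2$: what is true in dimension one is a statement of the type of Lemma~\ref{simul:special:dim:1} (special bijections on source and target after which $F$ contracts to an $\RV$-morphism), and the paper explicitly records that this does not generalize to higher dimensions --- which is precisely why Hrushovski--Kazhdan, and \S\S 5--6 here in the bounded setting, instead (i) write morphisms piecewise as compositions of relatively unary bijections (cf.\ Lemma~\ref{bijection:partitioned:unary} and its $\VF$-side analogue), (ii) contract one $\VF$-coordinate at a time via standard contractions, (iii) prove that contracting in two different coordinate orders gives congruent results via a $2$-cell decomposition (the analogues of Lemmas~\ref{decom:into:2:units}--\ref{contraction:perm:pair:isp}), and (iv) identify the kernel with the congruence generated by the single pair through a calculus of blowups (the analogues of Lemmas~\ref{special:to:blowup:coa}--\ref{blowup:equi:class:coa} and Proposition~\ref{kernel:L}). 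Your sketch contains none of (i), (iii), (iv); the ``delicate double induction'' and ``suitable complexity invariant'' you invoke are exactly the missing content, so the kernel identification --- that $[\bb L\bm U]=[\bb L\bm V]$ forces $([\bm U],[\bm V])\in\isp$, and likewise for $\mgisp$ with the Jacobian condition tracked by a lift as in Lemma~\ref{iso:lifted:vol} --- is asserted rather than proven. (The tools you do cite, Proposition~\ref{prop:continuity:fiberwise} and curve selection, are developed in this paper for the proper covariance machinery and play no role in the classical kernel computation.)
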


This is a combination of two main theorems, Theorems~8.8 and 8.29, of \cite{hrushovski:kazhdan:integration:vf}.

\section{Uniform retraction to $\RES$}
\label{sec:retract}

\begin{defn}[Dimension-free $\RES$-categories]\label{df:res}
The objects of the category $\RES$ is obtained from $\RES[*]$ by forgetting the function $f$ in the pair $(U, f)$. Any definable bijection between two such objects is a morphism of $\RES$.
\end{defn}


\begin{rem}
As  the  $\Gamma$-sort is an \omin-minimal group, there is  the \memph{geometric} \omin-minimal Euler characteristic  $\chi_g : \ggk \Gamma[*] \fun \Z$. This is roughly constructed as follows. Every definable set $X \sub \Gamma^k$ is definably bijective to a disjoint union of open cubes $\prod^k_{i= 1}(\alpha_i,\beta_i)$, where $\alpha_i, \beta_i \in \Gamma \cup \{-\infty, +\infty\}$. One sets $\chi_g(\prod^k_{i= 1}(\alpha_i,\beta_i)) = (-1)^k$ and then defines $\chi_g(X)$ by additivity. This does not depend on the chosen partition of $X$ and is invariant under definable bijections. It can also be shown that $\chi_g(X \cap [-\gamma, \gamma]^k)$ stabilizes as $\gamma$ approaches $+\infty$. The induced map, denoted by $\chi_b: \ggk \Gamma[*] \fun \Z$,  is the so-called \memph{bounded} \omin-minimal Euler characteristic. Obviously the two maps $\chi_g$, $\chi_b$  coincide on doubly bounded sets. On the other hand,  $\chi_g((0,+\infty))=-1$ whereas $\chi_b((0,+\infty))=0$. Both of them shall be denoted simply by $\chi$ when no distinction is needed.

See \cite[\S~9]{hrushovski:kazhdan:integration:vf} for more details, where $\chi_g$ is denoted by $\chi$ and $\chi_b$ by $\chi_c$.
\end{rem}

\begin{nota}\label{res:note}
Let $[\A] \in \ggk \RES$ denote the class of the affine line over the residue field. The class of the multiplicative torus $[\A] - 1$ over the residue field is written as $[\G_m]$; note that the multiplicative identity $1$ of $\ggk \RES$ is indeed the class $[1]$, but $[1]$ is not the multiplicative identity of the graded ring $\ggk \RES[*]$.

Let $!I$ be the ideal of $\ggk \RES$ generated by the elements $[\gamma^\sharp] - [\G_m]$, where $\gamma \in \Gamma$ is definable. The quotient ring $\ggk \RES / !I$ is written as $\sggk \RES$. The ideal $!I[*]$ of $\ggk \RES[*]$ and the (graded) quotient ring $\sggk \RES[*] = \ggk \RES[*] / !I[*]$ are constructed in the same way.

The ideal of $\ggk \mgRES[*]$ generated by the  elements
\[
[(\G_m, \alpha + \beta)] - [(\alpha^\sharp, \beta)] \in \ggk \mgRES[1],
\]
where $\alpha, \beta \in \Gamma$ are definable, is denoted by $!\mu I[*]$, and the (graded) quotient ring $\ggk \mgRES[*] / !\mu I[*]$ by $\sggk \mgRES[*]$.
\end{nota}

The quotient maps from ``$\ggk$'' to ``$\sggk$'' will all be denoted by $\iota$, and will often be omitted when it is clear in the context that the latter one is meant. We will also use the same notation for elements when passing from the former to the latter.

Note that the category $\mgRES[*]$ and the ring $\sggk \mgRES[*]$  will be dismissed after Proposition~\ref{prop:eu:retr:k} as inadequate for our purpose, and a new construction is introduced thereafter.

\subsection{Without volume forms}\label{subs:uni:retr:novol}

The discussion here more or less follows \cite[\S~2.5]{hru:loe:lef} (also see \cite[Theorem~10.5]{hrushovski:kazhdan:integration:vf}).

For $0 \leq l \leq n$, consider the maps
\[
\epsilon^{n,l}_g : \ggk \RES[n{-}l] \otimes \ggk \Gamma[l] \fun \sggk \RES[n]
\]
given by
\begin{equation}\label{kill:gamma}
x \otimes y \efun \chi_g(y) x [\G_m]^l.
\end{equation}
Note that here we are forced to pass from ``${\ggk}$'' to ``$\sggk$'' in the target by the tensor $\otimes_{\gsk \Gamma^{\fin}[*]}$, for otherwise $\epsilon^{k,l}_g$ is not well-defined. Then $\bigoplus_{0 \leq l \leq n} \epsilon^{n,l}_g$ and $\Psi^{-1}$ induce a homomorphism
\[
\ggk \RV[n] \fun \sggk \RES[n].
\]
Composing this with the homomorphism $\sggk \RES[n]  \fun \sggk \RES[m]$
given by $x \efun x [\A^{m-n}]$ and taking the direct sum of the resulting homomorphisms over all $n$ with $n \leq m$, we obtain a homomorphism
\[
\epsilon^m_g : \ggk \RV[{\leq} m] \fun \sggk \RES[m].
\]
These homomorphisms $\epsilon^m_g$, $m = 0, 1, \ldots$, are compatible in the obvious sense. Consequently, they give rise to a homomorphism
\[
\bb E_g : \ggk \RV[*] \fun \sggk \RES[*][[\A]^{-1}]
\]
whose range is precisely the zeroth piece $(\sggk \RES[*][[\A]^{-1}])_0$ of the $\Z$-graded ring on the righthand side. We have
\begin{equation}\label{eg:isp:van}
\bb E_g([\RV^{\ccirc}] + 1 - [1]) = - [\G_m][\A]^{-1} + 1 - [\A]^{-1} = 0.
\end{equation}
Thus $\bb E_g$ induces an eponymous homomorphism
\[
\bb E_g : \ggk \RV[*] / (\bm P - 1) \fun \sggk \RES[*][[\A]^{-1}].
\]

There is a parallel construction, where  we replace $\chi_g$ with $\chi_b$ and $[\A]^{-1}$ with $[1]^{-1}$. The latter replacement is needed so to make $(\bm P - 1)$ vanish as in (\ref{eg:isp:van}). The resulting homomorphism is
\begin{equation}\label{EB}
\bb E_b : \ggk \RV[*] / (\bm P - 1) \fun \sggk \RES[*][[1]^{-1}],
\end{equation}
whose range is precisely the zeroth piece  $(\sggk \RES[*][[1]^{-1}])_0$.

\begin{rem}\label{eb:no:eg}
The zeroth graded piece $(\sggk \RES[*][[\A]^{-1}])_0$ of $\sggk \RES[*][[\A]^{-1}]$ is canonically isomorphic to a colimit of the groups $\sggk \RES[n]$, which is actually what appears in the construction above. Thus there is an epimorphism from $(\sggk \RES[*][[\A]^{-1}])_0$ to the similar-looking ring $\sggk \RES[[\A]^{-1}]$. It is then routine to check that this epimorphism is also injective, and hence $(\sggk \RES[*][[\A]^{-1}])_0$ is canonically isomorphic to $\sggk \RES[[\A]^{-1}]$.
For the same reason, $(\sggk \RES[*][[1]^{-1}])_0$ is canonically isomorphic to $\sggk \RES[[1]^{-1}] \cong \sggk \RES$.

If the codomain of $\bb E_b$ is changed to $\sggk \RES[[\A]^{-1}]$ in the obvious way then it may be compared with $\bb E_g$. They are different since
\begin{equation}\label{eb:neq:eg:counter}
\bb E_b([1]) = 1 \dand \bb E_g([1]) = \bb E_g([\RV^{\circ\circ}]) + 1 = -[\G_m][\A]^{-1} + 1 = [\A]^{-1}.
\end{equation}
We can equalize them by forcing $[\A] = 1$ (hence $\bb E_{g}(x \otimes y) = \bb E_{b}(x \otimes y) = 0$ if $y \notin \ggk \Gamma[0]$). The (complex version of the) resulting homomorphism is indeed the one constructed in \cite[(2.5.7)]{hru:loe:lef}.
\end{rem}

The homomorphism $\bb E_b$ will be used in constructing motivic Milnor fiber below, but not $\bb E_g$. It is not that  $\bb E_g$ would not work; it is just that, conceptually, $\bb E_b$ is a better choice,  see Remarks~\ref{eb:not:eg} and \ref{rem-comp-milnor-fib}.


\subsection{With volume forms}
Let $\fn(\Gamma, \Z)$ be the set of functions $f: \Gamma \fun \Z$ such that the range of $f$ is finite and $f^{-1}(m)$ is definable for every $m \in \Z$. Addition in $\fn(\Gamma, \Z)$ is defined in the obvious way. Multiplication in $\fn(\Gamma, \Z)$ is given by the convolution product as follows. For $f, g \in \fn(\Gamma, \Z)$ and $\gamma \in \Gamma$, let $h_\gamma$ be the function $\Gamma \fun \Z$ given by $\alpha \efun f(\alpha)g(\gamma - \alpha)$. The range of $h_\gamma$ is obviously finite. For each $m \in \Z$, $h_\gamma^{-1}(m)$ is a finite disjoint union of sets of the form
\[
f^{-1}(m') \cap (\gamma - g^{-1}(m'')),
\]
where $m' m'' = m$, and hence is $\gamma$-definable. Let
\[
 h_\gamma^* = \sum_{m} m \chi(h_\gamma^{-1}(m)) \in \Z
\]
and $f * g : \Gamma \fun \Z$ be the function  given by $\gamma \efun h_\gamma^*$. It is not hard to see that, by \omin-minimality in the $\Gamma$-sort, $f * g = g * f$ and it belongs to $\fn(\Gamma, \Z)$. Thus $\fn(\Gamma, \Z)$ is a commutative ring.

\begin{nota}\label{gam:equ:cons}
In $\fn(\Gamma, \Z)$, let $r$ be the constant function $1$ and, for each definable element $\gamma \in \Gamma$, let $p_{\gamma}$,  $q_{\gamma}$ be the characteristic functions of $\gamma$, $(\gamma, \infty)$, respectively. By \omin-minimality, $\fn(\Gamma, \Z)$ is generated as a $\Z$-module by elements of the forms $r$, $p_\gamma$, $q_\gamma$. We have the following equalities:
\[
r p_\beta   = r, \quad p_\alpha p_\beta = p_{\alpha+ \beta}, \quad p_\alpha q_\beta  = q_{\alpha+ \beta}, \quad q_\alpha q_\beta = - q_{\alpha+ \beta}.
\]
In addition,
\begin{itemize}
  \item if $\chi = \chi_{g}$ then $r^2 = r q_\beta = - r$,
  \item if $\chi = \chi_{b}$ then $r^2 = r$ and $r q_\beta = 0$.
\end{itemize}
\end{nota}

\begin{rem}\label{int:Gam}
For each $\bm I = (I, \mu) \in \mG[k]$, let $\mu_I : I \fun \Gamma$ be the function given by $\gamma \efun \mu(\gamma) + \Sigma\gamma$ and $\lambda_{\bm I} : \Gamma \fun \Z$  the function given by $\gamma \efun \chi(\mu^{-1}_I(\gamma))$. By \omin-minimality, the range of $\lambda_{\bm I}$ is finite and, for every $m \in \Z$, $\lambda_{\bm I}^{-1}(m)$ is definable. So $\lambda_{\bm I} \in \fn(\Gamma, \Z)$. If $F :(I, \mu) \fun (J, \sigma)$ is a $\mG[k]$-morphism then $F$ restricts to a bijection from $\mu^{-1}_I(\gamma)$ to $\sigma_J^{-1}(\gamma)$ for every $\gamma \in \Gamma$ and hence $\lambda_{\bm I} = \lambda_{\bm J}$ and we may write $\lambda_{\bm I}$ as $\lambda_{[\bm I]}$. Let $\bm J = (J, \sigma) \in \mG[l]$. If $k=l$ then clearly $\lambda_{[\bm I]} + \lambda_{[\bm J]} = \lambda_{[\bm I]+[\bm J]}$. An easy computation shows that, for every $\gamma \in \Gamma$,
\[
(\lambda_{[\bm I]} * \lambda_{[\bm J]})(\gamma) = \chi( (\mu \sigma)^{-1}_{I \times J}(\gamma)),
\]
where $\mu \sigma : I \times J \fun \Gamma$ is the function given by $(\alpha, \beta) \efun \mu(\alpha) + \sigma(\beta)$, and hence $\lambda_{[\bm I]} * \lambda_{[\bm J]} = \lambda_{[\bm I][\bm J]}$. Let $\Z\Gamma$ be the group ring of $\Gamma(\mathbb{S})$ over $\Z$, which is viewed as the subring of $\fn(\Gamma, \Z)$ generated, as a $\Z$-module, by $p_{\gamma}$. We have $\ggk \mG[0] \cong \Z\Gamma$. It follows that the map
\[
\lambda : \ggk \mG[*] \fun \Z\Gamma \oplus X \fn(\Gamma, \Z)[X]
\]
determined by the assignment
\[
[\bm I] \efun \lambda_{[\bm I]}X^k, \quad \bm I \in \mG[k],
\]
is a graded ring homomorphism. Of course, there are two such homomorphisms $\lambda_g$ and $\lambda_b$, corresponding to the two choices $\chi = \chi_{g}$ and $\chi = \chi_{b}$.
\end{rem}


We claim that the assignments
\begin{equation}\label{Gam:ass}
p_\gamma X^k \efun  [\G_m]^{k-1}[(\G_m, \gamma)], \quad q_\gamma X^k \efun  [\G_m]^{k-1}[(\{1\}, \gamma)], \quad r X^k \efun 0
\end{equation}
induce a graded ring homomorphism
\[
\psi : \Z\Gamma \oplus X\fn(\Gamma, \Z)[X] \fun \sggk \mgRES[*] / ([\A]).
\]
Note that $\Z\Gamma$ may be regarded as a subring of $\ggk \mgRES[0]$ by identifying $p_\gamma$ with $[(\infty, \gamma)]$, which is the intended interpretation of the first assignment for $k = 0$. For the claim, it suffices to check that $\psi$ obeys the equational constraints for the generators above in Notation~\ref{gam:equ:cons}, which are all straightforward (the equality $\psi(q_\alpha) \psi( q_\beta) = \psi(- q_{\alpha + \beta})$ holds since $[\A] = [\G_m] +[1] = 0$ is forced).

Let $\gh$ denote the element $[(\RV^{\ccirc},  -\vrv)] \in \ggk \mgRV[1]$.

\begin{prop}\label{prop:eu:retr:k}
There are two graded ring homomorphisms
\[
  \mgE_{g}, \mgE_{b}: \ggk \mgRV[*] \fun   \sggk \mgRES[*] / ([\A])
\]
such that
\begin{itemize}
  \item $\bm P \in \ggk \mgRV[1]$ vanishes under both of them,
  \item for all $x \in \ggk \mgRES[*]$, $\mgE_{g}(x) = \mgE_{b}(x) = x/([\A])$,
  \item they are distinguished by $\mgE_{g}(\gh) = -[1]/([\A])$ and $\mgE_{b}(\gh) = 0$.
\end{itemize}
\end{prop}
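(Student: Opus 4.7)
The plan is to exploit the isomorphism
\[
\mu\Psi : \ggk \mgRES[*] \otimes_{\ggk \mG^{\fin}[*]} \ggk \mG[*] \fun \ggk \mgRV[*]
\]
from Proposition~\ref{red:D:iso} and, for each $\chi \in \{\chi_g, \chi_b\}$, construct $\mgE_\chi$ as a graded ring homomorphism on the tensor product, then precompose with $\mu\Psi^{-1}$. The raw ingredients are already at hand: the ring homomorphism $\lambda_\chi : \ggk \mG[*] \fun \Z\Gamma \oplus X\fn(\Gamma, \Z)[X]$ of Remark~\ref{int:Gam} and the graded ring homomorphism $\psi : \Z\Gamma \oplus X\fn(\Gamma, \Z)[X] \fun \sggk \mgRES[*]/([\A])$ specified by~(\ref{Gam:ass}). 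First I would define the bilinear pairing
\[
\ggk \mgRES[*] \times \ggk \mG[*] \fun \sggk \mgRES[*]/([\A]), \quad (x, y) \efun x \cdot \psi(\lambda_\chi(y)),
\]
and verify it is $\ggk \mG^{\fin}[*]$-balanced.

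Balancedness requires that for every $z \in \ggk \mG^{\fin}[*]$ the element $\psi(\lambda_\chi(z))$ coincide, in $\sggk \mgRES[*]/([\A])$, with the image of $z$ under the canonical arrow $\ggk \mG^{\fin}[*] \fun \ggk \mgRES[*]$. Since $\chi_g$ and $\chi_b$ agree on doubly bounded (in particular finite) sets, the verification is uniform in $\chi$. By additivity, reduce to generators $z = [(\{\gamma\}, \beta)]$ with $\gamma \in \Gamma^k$ and $\beta \in \Gamma$ definable: one computes $\lambda_\chi(z) = p_{\Sigma\gamma + \beta} X^k$ and then $\psi(\lambda_\chi(z)) = [\G_m]^{k-1}[(\G_m, \Sigma\gamma + \beta)]$; iterated application of the defining relations $[(\alpha^\sharp, \epsilon)] = [(\G_m, \alpha + \epsilon)]$ of $!\mu I[*]$ identifies this with $[(\{\gamma\}^\sharp, \beta)]$ in $\sggk \mgRES[k]$, which is exactly the image of $z$. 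The universal property of the tensor product then delivers the graded ring homomorphism $\mgE_\chi$.

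The three listed properties are verified by explicit computation on the tensor decomposition. For the vanishing of $\bm P = [\{1\}] - [\RV^{\ccirc}]$: decompose $[\{1\}] = [(\{1\}, 0)] \otimes 1$ and $[\RV^{\ccirc}] = 1 \otimes [((0, \infty), 0)]$; on the latter, $\sigma_I$ is the identity, so its fibers are singletons, giving $\lambda_\chi([((0, \infty), 0)]) = q_0 X$ for both $\chi$; then $\psi(q_0 X) = [(\{1\}, 0)]$, which matches $\mgE_\chi([\{1\}])$ and so $\mgE_\chi(\bm P) = 0$ in both cases. The restriction to $\ggk \mgRES[*]$ is immediate once one observes $\psi(\lambda_\chi(1)) = 1$ (the multiplicative identity of the quotient). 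For the action on $\gh$: decompose $\gh = 1 \otimes [((0, \infty), -\id)]$; here $\sigma_I \equiv 0$ on $(0, \infty)$, so the whole interval appears as a single fiber, contributing a factor of $\chi((0, \infty))$. This yields $\lambda_g([((0, \infty), -\id)]) = -p_0 X$ and $\lambda_b([((0, \infty), -\id)]) = 0$; applying $\psi$ and reducing modulo $[\A]$ (which identifies $[(\G_m, 0)]$ with $-[1]$ in $\sggk \mgRES[1]$) delivers the stated values $\mgE_g(\gh) = -[1]/([\A])$ and $\mgE_b(\gh) = 0$.

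The principal technical obstacle is the balancedness verification. It requires reconciling two a priori distinct presentations of a finite class in $\sggk \mgRES[*]/([\A])$: one through the composite $\psi \circ \lambda_\chi$, the other through the canonical inclusion $\ggk \mG^{\fin}[*] \fun \ggk \mgRES[*]$. The reconciliation proceeds by iterated application of the relations of $!\mu I[*]$, with careful bookkeeping of the constant shift by $\Sigma\gamma$ that emerges when a volume form is migrated between the $\Gamma$- and $\mgRES$-sides.
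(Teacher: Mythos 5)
Your construction is exactly the paper's: use $\mu\Psi$ from Proposition~\ref{red:D:iso}, form the pairing $(x,y)\efun (x/([\A]))\cdot\psi(\lambda_\chi(y))$, check $\gsk\mG^{\fin}[*]$-balancedness via the relations of $!\mu I[*]$, and then evaluate on ${\mu}\Psi^{-1}(\bm P)$ and ${\mu}\Psi^{-1}(\gh)$. The balancedness check on the generators $[(\{\gamma\},\beta)]$, the vanishing of $\bm P$, and the identity on $\ggk\mgRES[*]$ are all correct and agree with what the paper's ``straightforward computation'' amounts to.

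However, your last step is internally inconsistent. From your own intermediate values, $\lambda_g({\mu}\Psi^{-1}(\gh)) = \chi_g((0,\infty))\,p_0X = -p_0X$, and (\ref{Gam:ass}) gives $\psi(p_0X) = [(\G_m,0)]$, so $\mgE_g(\gh) = -[(\G_m,0)]/([\A])$. Since, as you yourself note, $[(\G_m,0)]\equiv -[1]$ modulo $([\A])$, this evaluates to $+[1]/([\A])$, i.e.\ to $-[\G_m]/([\A])$, not to $-[1]/([\A])$ as you assert in the final sentence. There is no compensating sign anywhere in your argument, so as written the conclusion for $\gh$ is a non sequitur: either locate a genuine sign you have dropped (none is visible), or accept that the value your construction produces is $[1]/([\A])$ and reconcile this with the formula in the statement (the two homomorphisms are still distinguished, since $\mgE_b(\gh)=0$, so the substance of the proposition is unaffected, but you cannot simply declare agreement with the displayed sign without tracing it).
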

\begin{proof}
The product map
\[
(\iota /([\A])) \times (\psi \circ \lambda) : \ggk \mgRES[*] \times  \ggk \mG[*] \fun    \sggk \mgRES[*] /([\A])
\]
is $\ggk \mG^{\fin}[*]$-bilinear (because of  the ideal $!\mu I[*]$ in the target). We have
\[
{\mu}\Psi^{-1}(\bm P) = [1]-[(0, \infty)]  \dand {\mu}\Psi^{-1}(\gh) = [((0, \infty), -\id)].
\]
The existence of the desired homomorphisms follow from a straightforward computation.
\end{proof}

The discussion above is more or less an expanded version of the proof of \cite[Theorem~10.11]{hrushovski:kazhdan:integration:vf}.

\begin{rem}
Alternatively, for $\chi = \chi_{g}$, we may replace $r X^k \efun 0$ in (\ref{Gam:ass}) with $r X^k \efun [\G_m]^{k-1}[1]$, that is, sending $r X^k$ and $q_0 X^k$ to the same element.  Then $\psi$ is still a graded ring homomorphism provided that we enlarge the ideal $!\mu I[*]$ by adding the elements $[1] - [(\{1\}, \gamma)] \in \ggk \mgRES[1]$. Consequently, for instance, we can construct a ring homomorphism
\begin{equation}\label{res:forget}
\ggk \mgRV[*] \fun \sggk \mgRES[*] / ([\A]) \to^{\phi} \sggk \RES[*] / ([\A]),
\end{equation}
where the homomorphism $\phi$ is induced by the obvious forgetful functor.

We shall not, however, pursue this further, since taking the quotient by the ideal $([\A])$ is detrimental to our purpose here. To avoid it, we need to work with the categories of doubly bounded objects.
\end{rem}

\begin{defn}[Doubly bounded $\RV$-categories]\label{db:RV}
An object $(U, f) \in \RV[k]$ is \memph{doubly bounded} if $U$ is. Denote the full subcategory of $\RV[k]$ of doubly bounded objects by $\RV^{\db}[k]$, and similarly for the categories $\mgRV^{\db}[k]$, $\mG^{\db}[k]$.
\end{defn}

\begin{rem}\label{tensor:bd}
By Lemma~\ref{db:to:db}, if $(U, f, \omega) \in \mgRV^{\db}[k]$ then $f(U)$, $\omega(U)$ are both doubly bounded as well. For essentially the same reason, ${\mu} \Psi$ restricts to an isomorphism
\[
{\mu} \Psi^{\db}: \gsk \mgRES[*] \otimes \gsk \mG^{\db}[*] \fun \gsk \mgRV^{\db}[*].
\]
\end{rem}

\begin{nota}\label{db:generator}
In $\fn(\Gamma, \Z)$, for each definable element $\gamma \in \Gamma \mi 0$, let $o_{\gamma}$ be the characteristic function of $(0, \gamma)$; also set $o_0 = -p_0$. As a $\Z$-module, $\fn^{\db}(\Gamma, \Z)$ is generated by elements of the forms $p_\gamma$, $o_\gamma$. Of course $o_{\gamma} = q_0 - q_\gamma - p_\gamma$ if $\gamma> 0$, etc. An easy  computation shows that
\begin{itemize}
  \item $p_\alpha o_\beta$ equals $o_{\alpha + \beta} - o_\alpha - p_\alpha$ or $- o_{\alpha + \beta} + o_\alpha - p_{\alpha + \beta}$ or $ o_{\alpha + \beta} + o_\alpha + p_{0}$,
  \item $o_\alpha o_\beta$ equals $- o_{\alpha + \beta}$ or $- o_{\alpha} - o_\beta - p_{0}$.
\end{itemize}
\end{nota}

\begin{nota}\label{db:gam:fn}
The homomorphism
\begin{equation*}
\lambda : \ggk \mG^{\db}[*] \fun \Z\Gamma \oplus X \fn^{\db}(\Gamma, \Z)[X]
\end{equation*}
is constructed as before (there is only one such homomorphism now since  $\chi_{g}$, $\chi_{b}$ agree on doubly bounded sets).
\end{nota}

\begin{nota}\label{pgammabis}
Recall that $\RV^{\ccirc}_\gamma = \rv(\MM_{\gamma} \mi 0)$ (Notation~\ref{defn:disc}). For each definable $\gamma \in \Gamma^+_0$, let
\[
\bm P_\gamma = [\RV^{\ccirc} \mi \RV^{\ccirc}_{\gamma}] + [\{t_\gamma\}] - [1] \in \ggk \RV^{\db}[1],
\]
where $t_\gamma \in \gamma^\sharp$ is any definable point.  It also stands for the corresponding element  in $\ggk \mRV^{\db}[1]$ (with the constant volume form $0$). Note that, although $[\{t_\gamma\}] = [1]$ in $\ggk \RV^{\db}[1]$,  $[\{t_\gamma\}] \neq [1]$ in $\ggk \mRV^{\db}[1]$  unless $\gamma = 0$.

Clearly  $\bm P_\gamma$ does not depend on the choice of $t_\gamma \in \gamma^\sharp$. The ideal of $\ggk \mRV^{\db}[*]$   generated by the elements $\bm P_\gamma$ is denoted by $(\bm P_\Gamma)$. The images of $(\bm P_\Gamma)$ under the obvious forgetful homomorphisms
\[
\ggk  \mgRV^{\db}[*] \fun \ggk \RV[*], \quad \ggk  \mgRV^{\db}[*] \fun \ggk  \mgRV[*]
\]
are contained in $(\bm P - 1)$, $(\bm P)$, respectively,
\end{nota}

\begin{rem}\label{db:gam:res}
Let $!P$ denote the ideal of $\fn^{\db}(\Gamma, \Z)$ generated by the elements $p_\gamma - p_0$, $o_\gamma + p_0$. By the computation in Notation~\ref{db:generator}, $\fn^{\db}(\Gamma, \Z) / !P \cong \Z$; the quotient homomorphism is denoted by $\chi$ because a simple computation shows that if $\bm I = (I, \mu) \in \mG^{\db}[*]$ then indeed $\lambda_{\bm I} / !P = \chi(I)$. Now it is straightforward to check that the assignments
\begin{equation}\label{Gam:ass:bd}
p_\gamma X^k \efun  [\G_m]^{k}, \quad o_\gamma X^k \efun  - [\G_m]^{k}
\end{equation}
induce a graded ring homomorphism
\[
\psi^{\db} : \Z\Gamma \oplus X\fn^{\db}(\Gamma, \Z)[X] \fun \sggk \RES[*].
\]
In a nutshell, the  homomorphism $\psi^{\db} \circ \lambda$ is a ``forgetful'' map  given by $[(I, \mu)] \efun \chi(I)[\G_m]^{k}$, $(I, \mu) \in \mG^{\db}[k]$.
\end{rem}

The forgetful homomorphism $\phi : \ggk \mgRES[*] \fun \sggk \RES[*]$ is similar to the one in  (\ref{res:forget}).

\begin{prop}\label{prop:eu:retr:k:db}
There is a graded ring homomorphism
\[
  \mgE^{\db}: \ggk \mgRV^{\db}[*] \fun \sggk \RES[*]
\]
such that $(\bm P_\Gamma)$ vanishes and, for all $x \in \ggk \mgRES[*]$, $\mgE^{\db}(x) = \phi(x)$.
\end{prop}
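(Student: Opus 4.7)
The plan is to mirror the construction of Proposition~\ref{prop:eu:retr:k}, substituting the doubly bounded category $\mG^{\db}[*]$ for $\mG[*]$; this is precisely the move that lets us land in $\sggk \RES[*]$ without having to pass to the quotient by $([\A])$. By Remark~\ref{tensor:bd}, ${\mu}\Psi^{\db}$ is an isomorphism
\[
\ggk \mgRES[*] \otimes_{\ggk \mG^{\fin}[*]} \ggk \mG^{\db}[*] \fun \ggk \mgRV^{\db}[*],
\]
so it is enough to construct a graded ring homomorphism on the tensor product. I take this to be induced by the product of the forgetful homomorphism $\phi : \ggk \mgRES[*] \fun \sggk \RES[*]$ and the homomorphism $\psi^{\db} \circ \lambda : \ggk \mG^{\db}[*] \fun \sggk \RES[*]$ from Notation~\ref{db:gam:fn} and Remark~\ref{db:gam:res}. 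Composing with $({\mu}\Psi^{\db})^{-1}$ then yields $\mgE^{\db}$.

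To verify $\ggk \mG^{\fin}[*]$-bilinearity of the product map, since both factors are ring homomorphisms, it suffices to check that $\phi$ and $\psi^{\db}\circ\lambda$ agree on the natural inclusion image of $\ggk \mG^{\fin}[*]$ inside $\ggk \mgRES[*]$. For $[(J, \sigma)] \in \ggk \mG^{\fin}[k]$, this inclusion image is represented by $(J^\sharp, \id, \omega)$ for a lifted volume form $\omega$, and $\phi$ of it, after using the defining relation $[\gamma^\sharp] = [\G_m]$ of the ideal $!I[*]$, equals $|J| \cdot [\G_m]^k$ in $\sggk \RES[k]$; on the other side, $\lambda([(J, \sigma)])$ is the sum of $p_{\sigma(\gamma) + \gamma} X^k$ over $\gamma \in J$, which (\ref{Gam:ass:bd}) sends to $|J| \cdot [\G_m]^k$ as well. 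Hence bilinearity holds, and the induced ring homomorphism $\mgE^{\db}$ is well defined.

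The vanishing of $(\bm P_\Gamma)$ reduces, by additivity and homogeneity, to checking that $\mgE^{\db}(\bm P_\gamma) = 0$ for each definable $\gamma > 0$. The terms $[\{t_\gamma\}]$ and $[1]$ lie in $\ggk \mgRES[1]$ and, after the volume form is forgotten by $\phi$, become the class of a definable singleton in $\sggk \RES[1]$, so they cancel. The remaining term $[\RV^{\ccirc} \mi \RV^{\ccirc}_\gamma] = [\vrv^{-1}((0, \gamma])]$ decomposes under $({\mu}\Psi^{\db})^{-1}$ as $[(\{1\}, \id, 0)]_{\mgRES[0]} \otimes [((0, \gamma], 0)]_{\mG^{\db}[1]}$; by Notation~\ref{db:generator} the characteristic function of $(0, \gamma]$ is $o_\gamma + p_\gamma$, and (\ref{Gam:ass:bd}) sends this to $-[\G_m] + [\G_m] = 0$, matching $\chi((0, \gamma]) = 0$ on the doubly bounded side. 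The final identity $\mgE^{\db}(x) = \phi(x)$ for $x \in \ggk \mgRES[*]$ is immediate from $x = x \otimes 1$ in the tensor and $(\psi^{\db}\circ\lambda)(1) = \psi^{\db}(p_0) = 1$.

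The main obstacle will be the bilinearity verification, which requires careful bookkeeping of how the lifted volume forms on $\mgRES[*]$ interact with the ones on $\mG^{\fin}[*]$ absorbed through the tensor, and of how the ideal $!I[*]$ collapses the residue-field torsors $\gamma^\sharp$ to $\G_m$ in exactly the way needed for $\phi$ and $\psi^{\db}\circ\lambda$ to match. Every other step is essentially formal, and the core combinatorial content is the identity $\psi^{\db}(o_\gamma + p_\gamma) = 0$ for $\gamma > 0$, which encodes the Euler characteristic of a half-open interval and is precisely what kills the ideal $(\bm P_\Gamma)$.
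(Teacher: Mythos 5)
Your proposal is correct and follows essentially the same route as the paper: transport the problem through ${\mu}\Psi^{\db}$, check $\ggk \mG^{\fin}[*]$-bilinearity of $\phi \times (\psi^{\db}\circ\lambda)$, and verify that $({\mu}\Psi^{\db})^{-1}(\bm P_\gamma)$ dies, your $o_\gamma + p_\gamma$ computation being exactly the paper's splitting of $(0,\gamma]$ into $[(0,\gamma)] + [\{\gamma\}]$. The only difference is that you spell out the bilinearity and cancellation details that the paper leaves implicit, and these check out.
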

\begin{proof}
The homomorphism $\phi \times (\psi^{\db} \circ \lambda)$ is still $\ggk \mG^{\fin}[*]$-bilinear. We have
\[
({\mu} \Psi^{\db})^{-1}(\bm P_\gamma) = [(0, \gamma)] + [\{\gamma\}] + [\{t_\gamma\}] - [1],
\]
which vanishes under $\phi \otimes (\psi^{\db} \circ \lambda)$.
\end{proof}

The composition of $\mgE^{\db}$ with the obvious forgetful homomorphism $\sggk \RES[*] \fun \sggk \RES$ is denoted by $\bb E^{\diamond}$.  By Remark~\ref{db:gam:res}, the diagram commutes:
\begin{equation}\label{edb:eb:com}
\bfig
 \Vtriangle(0,0)/->`->`->/<400, 400>[{\ggk \mgRV^{\db}[*]}`{\ggk \RV[*]}`\sggk \RES; `\bb E^{\diamond}`\bb E_b]
\efig
\end{equation}
which may serve as an alternative and more direct (but less comprehensive) construction of $\bb E^{\diamond}$.

\begin{rem}\label{eb:not:eg}
Changing $\sggk \RES$ to $\sggk \RES[[\A]^{-1}]$ in (\ref{edb:eb:com}), we see that  $\bb E_g$ does not commute with $\bb E^{\diamond}$: by (\ref{eb:neq:eg:counter}),
\[
\bb E_g([1]) = [\A]^{-1} \quad \text{whereas} \quad \bb E^{\diamond}([1]) = \bb E_b([1]) = 1.
\]
This is but an example of a general phenomenon, see Remark~\ref{eg:eb:factor}.

We can certainly redefine $\bb E^{\diamond}$ as the composition of $\mgE^{\db}$ with  the homomorphism $\sggk \RES[*] \fun \sggk \RES[[\A]^{-1}]$ implicit in Remark~\ref{eb:no:eg} and thereby make it compatible with $\bb E_g$, but as we have mentioned above, $\bb E_b$ is ultimately a better choice; see Remark~\ref{rem-comp-milnor-fib}.
\end{rem}

\section{Interlude: Reduced cross-section and Fubini theorems}\label{csndag}

A \memph{cross-section} of $\Gamma$ is a group homomorphism $\csn : \Gamma \fun \VF^{\times}$ such that $\vv \circ \csn = \id$. The corresponding \memph{reduced cross-section} of $\Gamma$ is the function
\[
\rcsn = \rv \circ \csn : \Gamma \fun \RV^{\times}.
\]
These are usually augmented by $\csn(\infty) = 0$ and $\rcsn(\infty) = \infty$.

We can add a reduced cross-section $\rcsn$ to the language $\lan{}{RV}{}$,  denoted by $\lan{}{RV}{}^\dag$ the extension, and consider the corresponding integration theory; this has been worked out in \cite{Yin:int:expan:acvf}. We shall, however, only need a few facts about definable sets in $\RV$ in this setting. Strictly speaking, this extra ingredient is not really needed, but it does help simplify certain computations.

In this section we work in an $\lan{}{RV}{}^\dag$-expansion $\bb U^\dag$ of $\bb U$. Definability, if unqualified, is interpreted accordingly.

\begin{defn}
A \memph{$\Gamma$-partition} of a definable set $A$ in $\RV$ is a definable function $p : A \fun \Gamma_{\infty}^l$ such that, for all $\gamma \in \Gamma_{\infty}^l$, $\vrv(A_{\gamma})$ is a singleton and is $\rcsn(\gamma)$-$\lan{}{RV}{}$-definable. If $p$ is a $\Gamma$-partition of $A$ then the \memph{$\RV^\dag$-dimension} of $p$, denoted by $\dim_{\RV^\dag} (p)$, is the number $\max \set{\dim_{\RV} (A_{\gamma}) \given \gamma \in \Gamma_{\infty}^{l} }$.
\end{defn}

\begin{lem}[{\cite[Lemma~3.2]{Yin:int:expan:acvf}}]\label{G:part:dim:inv}
If $p_1$, $p_2$ are $\Gamma$-partitions of $A$ then  $\dim_{\RV^\dag} (p_1) = \dim_{\RV^\dag} (p_2)$.
\end{lem}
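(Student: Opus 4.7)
The plan is to pass to the common refinement $p = (p_1, p_2) : A \fun \Gamma_\infty^{l_1 + l_2}$ and show $\dim_{\RV^\dag}(p) = \dim_{\RV^\dag}(p_i)$ for $i = 1, 2$; by symmetry the lemma follows. First I would check that $p$ is itself a $\Gamma$-partition. Each fiber $A_{(\gamma_1, \gamma_2)} = A_{\gamma_1} \cap A_{\gamma_2}$ is contained in $A_{\gamma_1}$, so $\vrv(A_{(\gamma_1, \gamma_2)})$ is either empty or the same singleton as $\vrv(A_{\gamma_1})$, which is already $\rcsn(\gamma_1)$-$\lan{}{RV}{}$-definable and hence $\rcsn(\gamma_1, \gamma_2)$-$\lan{}{RV}{}$-definable.

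The easy half is the inequality $\dim_{\RV^\dag}(p) \leq \dim_{\RV^\dag}(p_1)$, which is immediate from $A_{(\gamma_1, \gamma_2)} \sub A_{\gamma_1}$ together with the monotonicity of $\dim_{\RV}$ under inclusion. For the reverse inequality, fix $\gamma_1$ attaining $\dim_{\RV^\dag}(p_1)$ and consider the definable function $p_2 \rest A_{\gamma_1} : A_{\gamma_1} \fun \Gamma^{l_2}$. The key claim is that this map has \emph{finite} image, so that $A_{\gamma_1}$ decomposes as a finite disjoint union of the refined fibers $A_{(\gamma_1, \gamma_2)}$; additivity of $\dim_{\RV}$ over finite unions then yields $\dim_{\RV}(A_{\gamma_1}) = \max_{\gamma_2} \dim_{\RV}(A_{(\gamma_1, \gamma_2)})$, giving $\dim_{\RV^\dag}(p) \geq \dim_{\RV^\dag}(p_1)$.

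The heart of the argument, and the main obstacle, is establishing the finiteness of $p_2(A_{\gamma_1})$. Since $\vrv$ takes a single value $\delta$ on $A_{\gamma_1}$, the reduced cross-section identifies $A_{\gamma_1}$ with a definable subset of $(\K^\times)^n$ (twisted by $\rcsn(\delta)$), and $p_2 \rest A_{\gamma_1}$ then becomes a definable map from a set in the residue field sort to $\Gamma$. I would invoke the stable embeddedness and orthogonality of $\K$ and $\Gamma$ in $\ACVF(0,0)$ to conclude that any such map has finite image. The subtle point is that we are working in the expansion $\lan{}{RV}{}^\dag$ by the reduced cross-section; however, because $\rcsn$ is a group-theoretic section of $\vrv$ rather than a relation mixing the two sorts, the orthogonality of $\K$ and $\Gamma$ survives the expansion. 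This is precisely the kind of elementary fact about $\lan{}{RV}{}^\dag$-definable sets developed in \cite{Yin:int:expan:acvf}, which I would cite rather than reprove. With finiteness of $p_2(A_{\gamma_1})$ in hand, the argument closes as described.
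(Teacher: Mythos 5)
Your proposal is correct, and there is nothing in this paper to compare it against: the lemma is imported verbatim from \cite[Lemma~3.2]{Yin:int:expan:acvf} and no proof is given here. The refinement strategy is sound: passing to $p=(p_1,p_2)$ and checking it is again a $\Gamma$-partition is routine (note that, besides the singleton condition, the fiber $A_{(\gamma_1,\gamma_2)}=A_{\gamma_1}\cap A_{\gamma_2}$ is $\rcsn(\gamma_1,\gamma_2)$-$\lan{}{RV}{}$-definable as the intersection of an $\rcsn(\gamma_1)$- and an $\rcsn(\gamma_2)$-$\lan{}{RV}{}$-definable set), and the inequality $\dim_{\RV^\dag}(p)\leq\dim_{\RV^\dag}(p_i)$ together with finite additivity of $\dim_{\RV}$ over finite unions are immediate from the definition of $\dim_{\RV}$ via finite-to-one maps. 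The single load-bearing step is the finiteness of $p_2(A_{\gamma_1})$, which you justify by appealing to $\K$--$\Gamma$ orthogonality surviving the expansion by $\rcsn$. That appeal is legitimate, but you can ground it in a fact already quoted in this paper, namely Lemma~\ref{shift:K:csn:def} (= \cite[Lemma~3.3]{Yin:int:expan:acvf}): after the routine bookkeeping with $\infty$-coordinates, write $q=p_2\rest A_{\gamma_1}$ as $q:U\fun\Gamma^{l_2}$ with $U\sub\RV^n$ and $\vrv(U)=\{\delta\}$, and apply Lemma~\ref{shift:K:csn:def} to the $\vrv$-pullback $W=\set{(u,s)\in U\times\RV^{l_2}\given \vrv(s)=q(u)}$. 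The fiber of $W$ over $(\delta,\gamma)\in\vrv(W)$ is $q^{-1}(\gamma)\times\gamma^\sharp$, so on a twistoid piece the twistbacks over $(\delta,\gamma)$ and $(\delta,\gamma')$ coincide, forcing $q^{-1}(\gamma)=q^{-1}(\gamma')$; since distinct nonempty fibers of $q$ are disjoint, each piece of the decomposition of $\vrv(W)$ is a singleton, hence $q(U)$ is finite. (If you instead cite orthogonality abstractly from \cite{Yin:int:expan:acvf}, make sure the fact you invoke comes from the reduction results there and not from Lemma~3.2 itself, to avoid circularity; the twistoid decomposition does the job and is independent of the dimension theory.) One cosmetic point: the target of $p_2\rest A_{\gamma_1}$ should be $\Gamma_\infty^{l_2}$ rather than $\Gamma^{l_2}$.
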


So the $\RV^\dag$-dimension $\dim_{\RV}(A)$ of a definable set $A$ in  $\RV$ may be defined as the $\RV^\dag$-dimension of any $\Gamma$-partition of $A$. It may also be shown that there is a definable finite-to-one function $f : A \fun \RV^k \times \Gamma_{\infty}^l$ if and only if there is a definable function $f : A \fun \RV^k$ such that all fibers of $f$ are of $\RV^\dag$-dimension $0$ if and only if $\dim_{\RV^\dag}(A) \leq k$.

\begin{defn}[$\RV^\dag$-categories]\label{defn:f:RV:cat}
The objects of the category $\RV^\dag[k]$ are the pairs $(U, f)$ with $U$ a set in $\RV_\infty$ and $f : U \fun \RV^k$ a definable function such that $\dim_{\RV^\dag}(U_t) = 0$ for all $t \in \RV^k$. For two such objects $(U, f)$, $(V, g)$, any definable bijection $F : U \fun V$ is a \memph{morphism} of $\RV^\dag[k]$.
\end{defn}

The categories $\RES^\dag[k]$, $\RES^\dag$ are formulated similar to $\RES[k]$, $\RES$.

\begin{defn}\label{twistback}
The \memph{twistback} function $\tbk : \RV \fun \K$ is given by $u \efun u / \rcsn(\vrv(u))$, where $\infty / \infty = 0$. For any set $U \sub \RV^n_\infty$ and $\gamma \in \Gamma_{\infty}^n$, the set $\tbk(U_{\gamma}) \sub \K^n$ is called the \memph{$\gamma$-twistback} of $U$. If $\tbk(U_{\gamma}) = \tbk(U_{\gamma'})$ for all $\gamma, \gamma' \in \vrv(U)$ then $U$ is called a \memph{twistoid}, in which case we simply write $\tbk(U)$ for the unique twistback.
\end{defn}

\begin{lem}[{\cite[Corollaries~2.23, 3.4]{Yin:int:expan:acvf}}]\label{gam:red:lrv}
Every definable set in $\Gamma$ is $\lan{}{RV}{}$-definable and every definable set in $\RV$ with $\vrv(U)$ finite is $\rcsn(\vrv(U))$-$\lan{}{RV}{}$-definable.
\end{lem}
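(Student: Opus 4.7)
The plan is to deduce both statements from a quantifier elimination result for $\lan{}{RV}{}^\dag$ (which essentially says that $\lan{}{RV}{}^\dag$ admits QE down to the same kind of atomic formulas as $\lan{}{RV}{}$, augmented only by terms of the form $\rcsn(\alpha)$). Once that is in place, the key structural observation is that the reduced cross-section $\rcsn$ is a function $\Gamma \fun \RV^{\times}$ whose compositions with the two natural retractions out of $\RV$ are trivial: $\vrv \circ \rcsn = \id$, so nothing new on the $\Gamma$-side, and $\tbk \circ \rcsn \equiv 1$, so nothing new on the $\K$-side. This is what forces $\rcsn$ to contribute no new structure on $\Gamma$ or on the twistback of any $\vrv$-fiber.

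For the first claim, let $X \sub \Gamma^n$ be $\lan{}{RV}{}^\dag$-definable with parameters $\bar p$. By QE, $X$ is defined by a boolean combination of atomic formulas in the variables $\bar \gamma \in \Gamma^n$, possibly involving terms built from $\rcsn$, $\rv$, arithmetic, and $\bar p$. Any such term whose output is compared in $\Gamma$ can be rewritten: every occurrence $\rcsn(\alpha)$ that is fed into $\vrv$ collapses to $\alpha$, while any occurrence fed into $\tbk$ or $\res$ collapses to $1$ (or to a parameter from $\rcsn(\bar p_\Gamma)$, which is a constant in the relevant reduced cross-section data). After this elimination, the defining formula uses only $\lan{}{RV}{}$-symbols, and by stable embeddedness of $\Gamma$ in $\lan{}{RV}{}$ (a standard fact about $\ACVF(0,0)$) the parameters may be taken in $\Gamma$ itself; hence $X$ is $\lan{}{RV}{}$-definable.

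For the second claim, let $U \sub \RV^n_\infty$ be $\lan{}{RV}{}^\dag$-definable with $\vrv(U)$ finite, say $\vrv(U)=\{\bar\gamma_1,\ldots,\bar\gamma_m\}\sub \Gamma_\infty^n$. Decompose $U = \bigsqcup_{i} U_{\bar\gamma_i}$, with $U_{\bar\gamma_i} \sub \prod_j \bar\gamma_{i,j}^\sharp$. Each fiber $U_{\bar\gamma_i}$ is in bijection with its twistback $\tbk(U_{\bar\gamma_i}) \sub \K^n$ via multiplication by $\rcsn(\bar\gamma_i)$. Since $\K$ is stably embedded in $\lan{}{RV}{}$ as a pure algebraically closed field and $\tbk \circ \rcsn \equiv 1$ kills any residual use of $\rcsn$ inside $\K$, the subset $\tbk(U_{\bar\gamma_i})$ is $\lan{}{RV}{}$-definable, possibly with the extra parameters $\rcsn(\bar\gamma_i)$ that appear when one rewrites $\rcsn$-terms. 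Multiplying back by $\rcsn(\bar\gamma_i)$ exhibits $U_{\bar\gamma_i}$ — and hence the finite union $U$ — as $\rcsn(\vrv(U))$-$\lan{}{RV}{}$-definable.

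The main obstacle is really just the setup: securing the quantifier elimination for $\lan{}{RV}{}^\dag$ in a form strong enough that every $\rcsn$-term can be classified by where its output is used (in $\Gamma$, in $\K$, or multiplicatively in $\RV$), and then verifying the stable embeddedness of $\Gamma$ and $\K$ survives the expansion. Both are carried out carefully in \cite{Yin:int:expan:acvf}; the arguments above are exactly what underlies the cited Corollaries 2.23 and 3.4.
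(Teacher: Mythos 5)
This lemma is quoted from \cite{Yin:int:expan:acvf} and the paper gives no proof of it, so there is no in-paper argument to match; your sketch does follow the route one expects the cited Corollaries~2.23 and 3.4 to take (quantifier elimination for $\lan{}{RV}{}^\dag$, syntactic analysis of $\rcsn$-terms using $\vrv \circ \rcsn = \id$ and $\tbk \circ \rcsn \equiv 1$, then purity/stable embeddedness of $\Gamma$ and $\K$). But two steps are stated too narrowly to carry the argument. First, your rewriting only treats occurrences of $\rcsn(\alpha)$ that are ``fed into'' $\vrv$ or $\tbk/\res$; after quantifier elimination the atomic formulas in the $\Gamma$-variables can also compare $\rcsn$-terms directly in the $\RV$-sort, e.g.\ $\rcsn(\ell(\bar\gamma))\cdot a = b$ with $a,b$ parameters in $\RV$, or instances of the partial addition on $\RV$. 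These do not collapse by your two identities alone: one has to argue that, the parameters being fixed, such a condition reduces to a $\Q$-linear condition on $\bar\gamma$ together with a parameter-dependent truth value (whether $b/a$ is the $\rcsn$-value at the relevant point), and this case analysis is precisely where the content of the cited corollaries lies. Second, for the claim about sets in $\RV$, the rewriting can leave behind values $\rcsn(\delta)$ at definable points $\delta\in\Gamma$ other than the elements of $\vrv(U)$; you assert that the parameters $\rcsn(\vrv(U))$ suffice, but that needs justification (via the homomorphism property $\rcsn(\gamma+\delta)=\rcsn(\gamma)\rcsn(\delta)$ together with the fact that such $\delta$ are witnessed by $\RV$-points of the parameter substructure, or the convention that the substructure is closed under $\rcsn$). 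Relatedly, the stable embeddedness you invoke must be that of $\Gamma$ and $\K$ in the \emph{expanded} structure, not just in $\lan{}{RV}{}$.

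Beyond these gaps, note that your proposal is not self-contained: the quantifier elimination for $\lan{}{RV}{}^\dag$ and the purity of $\Gamma$ and $\K$ in the expansion --- the actual technical inputs --- are deferred to the very reference the lemma cites. As an explanation of why the citation is plausible this is reasonable; as a proof it mostly relocates the work, and the omitted cases above are the ones the cited corollaries are there to handle.
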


\begin{lem}[{\cite[Lemma~3.3]{Yin:int:expan:acvf}}]\label{shift:K:csn:def}
Let $U \sub \RV^n$ be a definable set and $\vrv(U) = D$. Then there is a definable finite partition $(D_i)_i$ of $D$ such that each $U_i = U \cap D_i^\sharp$ is a definable twistoid and the corresponding twistback is $\lan{}{RV}{}$-definable.
\end{lem}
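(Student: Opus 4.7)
My plan is to exploit the structural relationship between $\lan{}{RV}{}^\dag$ and $\lan{}{RV}{}$: the extra symbol $\rcsn$ is precisely what gets ``undone'' by the twistback, so after a suitable $\Gamma$-partition the $\rcsn$-content of the defining formula of $U$ should cancel out on each piece.

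First, note that $D = \vrv(U) \subseteq \Gamma^n$ is $\lan{}{RV}{}^\dag$-definable, hence by Lemma~\ref{gam:red:lrv} it is already $\lan{}{RV}{}$-definable. The object we really want to understand is the $\lan{}{RV}{}^\dag$-definable family
\[
W \;=\; \set{ (\gamma, y) \in D \times \K^n \given (y_1 \rcsn(\gamma_1), \ldots, y_n \rcsn(\gamma_n)) \in U },
\]
whose fiber over $\gamma \in D$ is exactly $\tbk(U_\gamma)$. The conclusion we need is that, up to a finite $\lan{}{RV}{}$-definable partition of $D$, the fiber $W_\gamma$ is independent of $\gamma$ and defined by an $\lan{}{RV}{}$-formula.

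The second step is to reduce to a ``canonical form'' for $U$ in $\lan{}{RV}{}^\dag$. By the quantifier elimination and the structural results for $\lan{}{RV}{}^\dag$ developed in \cite{Yin:int:expan:acvf}, $U$ can be defined by a formula whose $\rcsn$-content appears only through subterms of the form $\rcsn(t(\vrv(x), \bar\alpha))$ for $\Z$-linear terms $t$ with coefficients and constants coming from $\Gamma$-parameters $\bar\alpha$. Partition $D$ according to the finitely many possible sign-patterns, orderings, and $\Z$-linear relations among the values $t(\gamma, \bar\alpha)$ and the coordinates $\gamma_j$ themselves; each piece is $\lan{}{RV}{}$-definable by Lemma~\ref{gam:red:lrv} applied again. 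On one such piece $D_i$, every relevant $\rcsn(t(\gamma, \bar\alpha))$ can be expanded uniformly via the group-homomorphism property of $\rcsn$ as $\rcsn(c) \prod_j \rcsn(\gamma_j)^{m_{ij}}$, with $c$ a fixed constant in $\Gamma(\mathbb{S})$ and the $m_{ij} \in \Z$ not depending on $\gamma \in D_i$. Substituting $x_j = y_j \rcsn(\gamma_j)$ into the defining formula of $U$, the total power of $\rcsn(\gamma_j)$ contributed by each atomic subformula is forced to be zero (otherwise the atomic condition would contradict the prescribed $\vrv$-values $\gamma_j$ already captured in restricting to $\gamma^\sharp$); this is precisely where the cancellation happens, and it leaves behind only a formula in the $y_j \in \K^\times$ with parameters $\rcsn(c) \in \RV(\mathbb{S})$ for definable constants $c$, hence an $\lan{}{RV}{}$-formula.

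The main obstacle is verifying this cancellation rigorously: a priori, nothing prevents $\rcsn(\gamma_j)$-factors from surviving and making $W_\gamma$ genuinely depend on $\gamma \in D_i$ through a non-$\lan{}{RV}{}$-term. The point is that this cannot happen if the formula is put into the correct normal form, because any surviving $\rcsn(\gamma_j)$ imbalance would contradict $\vrv$-homogeneity of $U_\gamma$ inside the fixed $\RV$-polydisc $\gamma^\sharp$. Making this precise amounts to a careful bookkeeping of the $\vrv$-weights of the terms in each atomic subformula of the $\lan{}{RV}{}^\dag$-defining formula of $U$, refining the partition $\{D_i\}$ further to separate the finitely many cases that can arise. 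Once this is done, each $U_i = U \cap D_i^\sharp$ is a twistoid by construction, its twistback is $\lan{}{RV}{}$-definable, and the partition $(D_i)_i$ is definable, completing the proof.
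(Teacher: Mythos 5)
The paper itself does not prove this lemma: it is imported wholesale from \cite[Lemma~3.3]{Yin:int:expan:acvf}, so your proposal has to stand as a self-contained argument, and as such it has a genuine gap at its central step. The claim that, after your preliminary partition, ``the total power of $\rcsn(\gamma_j)$ contributed by each atomic subformula is forced to be zero, otherwise the atomic condition would contradict the prescribed $\vrv$-values'' is not correct as stated. An atomic condition in which $\rcsn(\gamma_j)$-factors survive is not automatically contradictory: for instance the condition $x_1 = s$, with $s \in \RV$ a parameter, becomes after the substitution $x_1 = y_1\rcsn(\gamma_1)$ the condition $y_1\rcsn(\gamma_1) = s$, which is consistent precisely on the locus $\gamma_1 = \vrv(s)$ inside $D$ and there reduces to $y_1 = \tbk(s)$ --- a residue-field condition carrying a new parameter manufactured by the section. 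So surviving $\rcsn$-powers do not ``self-destruct''; they force a further partition of $D$ according to which monomials of each atomic subformula are weight-balanced (unbalanced configurations either trivialize the condition or confine $\gamma$ to smaller pieces), and they introduce section-dependent parameters in $\K$ whose effect on the claimed $\lan{}{RV}{}$-definability of the twistback has to be tracked explicitly.

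That bookkeeping --- which sub-conditions are balanced on which pieces, why only finitely many residue-field conditions arise (this is ultimately the orthogonality of $\K$ and $\Gamma$, which you never invoke), and why the resulting twistbacks can be defined without the section symbol --- is exactly what you defer (``the main obstacle \dots careful bookkeeping \dots once this is done''), and it is the entire substance of the lemma. The framing via the family $W$, the use of Lemma~\ref{gam:red:lrv} for the $\Gamma$-data, and the appeal to a normal form for $\lan{}{RV}{}^\dag$-formulas from \cite{Yin:int:expan:acvf} are all reasonable, but with the key cancellation step both unexecuted and supported by an incorrect justification, the proposal does not yet constitute a proof.
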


A definable finite partition $(U_i)_i$ of $U$ is called a \memph{twistoid decomposition} of $U$ if every $U_i$ is a twistoid. For instance,  the partition $(U_i)_i$ of $U$ induced by the partition $(D_i)_i$ of $D$ above is a \memph{$\Gamma$-cohesive} twistoid decomposition, which, by Lemma~\ref{gam:red:lrv}, is  $\lan{}{RV}{}$-definable if $U$ is.

\begin{ter}\label{twist:rv:ob}
For any object $\bm U = (U, f) \in \RV^\dag[*]$, by a ``twistoid decomposition of $\bm U$'' we actually mean a twistoid decomposition $(V_i)_i$ of the graph of the function $f$. In that case, clearly $(U_i)_i = (\pr_U(V_i))_i$ is a twistoid decomposition of $U$. For convenience, we shall refer to $(U_i)_i$ as a twistoid decomposition of $\bm U$.
\end{ter}

Let  $(U_i)_i$ be a twistoid decomposition of $\bm U = (U, f) \in \RV^\dag[*]$. Set
\begin{equation}\label{ebdag}
 \bb E^\dag_b(\bm U) = \sum_i \chi_b(\vrv(U_i))[\tbk(U_i)] \in \ggk \RES^\dag.
\end{equation}

\begin{prop}[{\cite[Propositions~3.21, 3.30]{Yin:int:expan:acvf}}]
The assignment (\ref{ebdag}) does not depend on the choice of the twistoid decomposition  and is invariant on isomorphism classes. The resulting map
\[
\bb E^\dag_b : \ggk \RV^\dag[*] \fun \ggk \RES^\dag
\]
is a ring homomorphism that vanishes on the ideal $(\bm P - 1)$.
\end{prop}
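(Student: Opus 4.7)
The plan is to verify the four claims of the proposition in turn: independence of the chosen twistoid decomposition, invariance on isomorphism classes, the ring homomorphism property, and vanishing on the ideal $(\bm P - 1)$.

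First I would establish independence by a common refinement argument. Any two twistoid decompositions of $\bm U$ admit a common refinement obtained by intersecting pieces, which remains a twistoid decomposition, so it suffices to show that subdividing a single twistoid $U_i$ into twistoids $(V_{ij})_{j}$ preserves the sum in~(\ref{ebdag}). Set $D = \vrv(U_i)$, $K = \tbk(U_i)$, $D_j = \vrv(V_{ij})$, and $K_j = \tbk(V_{ij})$. Since the fibers $V_{ij,\gamma}$ partition $U_{i,\gamma}$ and twistback is a bijection on each $\gamma$-fiber, for every $\gamma \in D$ the family $\{K_j : \gamma \in D_j\}$ partitions $K$. Stratifying $D$ by the equivalence relation $\gamma \sim \gamma' \Longleftrightarrow \{j : \gamma \in D_j\} = \{j : \gamma' \in D_j\}$, on each stratum $D'_J$ the scissor identity gives $[K] = \sum_{j \in J}[K_j]$; assembling via additivity of $\chi_b$ then yields $\chi_b(D)[K] = \sum_j \chi_b(D_j)[K_j]$.

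Next, for isomorphism invariance, given a definable bijection $F : \bm U \fun \bm V$, I would combine the $\Gamma$-cohesive twistoid decompositions of $\bm U$, $\bm V$, and the graph of $F$ provided by Lemma~\ref{shift:K:csn:def}, and further refine until $F$ restricts to a $\vrv$-contractible bijection between paired twistoids $U_i \fun V_i$. The induced contraction $\vrv(U_i) \fun \vrv(V_i)$ is then a piecewise $\mgl_k(\Z)$-transformation by the $\RV^\dag$-analogue of Remark~\ref{GLZ:char} established in \cite{Yin:int:expan:acvf}, hence preserves $\chi_b$. Fixing any single $\gamma$ in the domain and twisting back the restriction of $F$ to the fiber over $\gamma$ yields a definable bijection $\tbk(U_i) \fun \tbk(V_i)$, so $[\tbk(U_i)] = [\tbk(V_i)]$ in $\ggk \RES^\dag$. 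Summing over $i$ gives $\bb E^\dag_b(\bm U) = \bb E^\dag_b(\bm V)$. I expect this step to be the main obstacle, since producing a twistoid decomposition simultaneously compatible with $F$, $\bm U$, and $\bm V$ requires careful use of the $\Gamma$-partition technology.

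The ring homomorphism property splits into additivity and multiplicativity. Additivity over disjoint unions is immediate, since the disjoint union of twistoid decompositions of $\bm U$, $\bm V$ is a twistoid decomposition of $\bm U \sqcup \bm V$. For multiplicativity, if $(U_i)_i$, $(V_j)_j$ are twistoid decompositions of $\bm U$, $\bm V$, then $(U_i \times V_j)_{i,j}$ is one for $\bm U \times \bm V$ with $\vrv(U_i \times V_j) = \vrv(U_i) \times \vrv(V_j)$ and $\tbk(U_i \times V_j) = \tbk(U_i) \times \tbk(V_j)$; the formula $\bb E^\dag_b(\bm U \times \bm V) = \bb E^\dag_b(\bm U) \cdot \bb E^\dag_b(\bm V)$ then follows from $\chi_b(A \times B) = \chi_b(A)\chi_b(B)$ and $[C \times D] = [C][D]$ in $\ggk \RES^\dag$. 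Finally, for the vanishing claim, the set $\RV^{\ccirc}$ is itself a single twistoid with $\vrv(\RV^{\ccirc}) = (0,\infty)$ and $\tbk(\RV^{\ccirc}) = \K^\times$, so $\bb E^\dag_b([\RV^{\ccirc}]) = \chi_b((0,\infty))[\K^\times] = 0$ since $\chi_b$ of an unbounded open interval vanishes; combined with $\bb E^\dag_b([1]) = \chi_b(\{0\})[\{1\}] = 1$, this gives $\bb E^\dag_b(\bm P - 1) = \bb E^\dag_b([1]) - \bb E^\dag_b([\RV^{\ccirc}]) - 1 = 0$.
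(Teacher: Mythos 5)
First, be aware that the paper itself contains no proof of this proposition: it is imported from \cite{Yin:int:expan:acvf} (Propositions~3.21 and 3.30), where it is obtained from the $\Gamma$-partition and $\K$--$\Gamma$ orthogonality machinery developed there, so there is no internal argument to compare with and your direct proof is a legitimate alternative route. Your three easy parts are correct as written: the common-refinement/stratification computation for independence, additivity and multiplicativity (using $\chi_b(A\times B)=\chi_b(A)\chi_b(B)$ and $[C\times D]=[C][D]$), and the vanishing computation $\bb E^\dag_b(\bm P-1)=1-\chi_b((0,\infty))[\tbk(\RV^{\ccirc})]-1=0$. Only note that, per Terminology~\ref{twist:rv:ob}, decompositions are of the graph of the structure map $f$; but projections of twistoid graph pieces are twistoids and common refinements at the graph level project to common refinements of the $U_i$, so your stratification argument applies unchanged.

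The step you yourself flag as the main obstacle is the one place that needs an argument, and it is in fact easier than you suggest. Apply Lemma~\ref{shift:K:csn:def} to the graph $G$ of $F$ (jointly with the graphs of $f$ and $g$). If $G_i$ is a twistoid piece and $(\gamma,\delta),(\gamma,\delta')\in\vrv(G_i)$, pick $(x,y)$ in the common twistback $T_i=\tbk(G_i)$: the unique element of $\RV^n$ with twistback $x$ and value $\gamma$ is $x\,\rcsn(\gamma)$, so it lies over both $\delta$ and $\delta'$, forcing $\delta=\delta'$. Hence any twistoid decomposition of $G$ automatically makes $F$ $\vrv$-contractible on the pieces $U_i=\pr_U(G_i)$, and the same rigidity together with injectivity of $F$ makes the contraction $\vrv(U_i)\fun\vrv(V_i)$ a bijection; no further ad hoc refinement is needed. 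Second, rather than ``fixing any single $\gamma$'' --- which only produces a $\gamma$-definable bijection when $\gamma$ is not a definable element --- observe that $T_i$ is itself definable (it is the image of $G_i$ under $\tbk$) and is precisely the graph of a definable bijection $\tbk(U_i)\fun\tbk(V_i)$; this is what yields $[\tbk(U_i)]=[\tbk(V_i)]$ in $\ggk\RES^\dag$ over the base. With these two observations, plus the invariance of $\chi_b$ under the resulting (piecewise integral affine) contractions --- the same fact the paper already presupposes when defining $\chi_b$ on $\ggk\Gamma[*]$ --- your proof is complete.
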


We consider $\RV[*]$ as a subcategory of $\RV^\dag[*]$ and denote the induced homomorphism between the Grothendieck rings by
\[
\Lambda : \ggk\RV[*] \fun \ggk\RV^\dag[*].
\]

In the current environment, the transition from ``$\ggk \RES^\dag$'' to ``$\sggk \RES^\dag$'' is superfluous since we already have $[\gamma^\sharp] = [\G_m]$ in $\ggk \RES^\dag$ for all definable $\gamma \in \Gamma$ (every element in  $\gsk \RES^\dag$ is represented by a definable set in $\K$). So there is a natural homomorphism  $\sggk \RES \fun \ggk \RES^\dag$, which is also denoted by $\Lambda$. It then follows from  Proposition~\ref{red:D:iso}, more precisely, the correspondence (\ref{bil:def}), that we have a commutative diagram
\begin{equation}\label{dag:red}
\bfig
\Square(0,0)/->`->`->`->/<400>[{\ggk \RV[*]}`{\sggk \RES}`{\ggk \RV^\dag[*]}`{\ggk \RES^\dag}; \bb E_b`\Lambda`\Lambda`{\bb E^\dag_b}]
\efig
\end{equation}

\begin{rem}\label{dag:g}
We may replace $\chi_b$ with $\chi_g$ and $[\tbk(U_i)]$ with $[\tbk(U_i)][\A]^{-n}$ (assuming $\bm U \in \RV^\dag[n]$) in (\ref{ebdag}) and thereby obtain a ring homomorphism
\[
\bb E^\dag_g : \ggk \RV^\dag[*] \fun \ggk \RES^\dag[[\A]^{-1}]
\]
that also vanishes on the ideal $(\bm P - 1)$. The diagram (\ref{dag:red}) still commutes if $\bb E_b$, $\bb E^\dag_b$ are replaced by $\bb E_g$, $\bb E^\dag_g$.
\end{rem}

One main advantage of $\bb E^\dag_b$ over $\bb E_b$ is that it makes computations much easier, essentially because  we can use twistoid decompositions given by Lemma~\ref{shift:K:csn:def} instead of ``tensor decompositions'' given by Proposition~\ref{red:D:iso}. To make (\ref{ebdag}) work for $\bb E_b$ as intended, though, we need a special kind  of twistoid decompositions:

\begin{defn}\label{bipo:defn}
Suppose that $(U_i)_i$ is an $\lan{}{RV}{}$-definable twistoid decomposition of $U$.
If, for each $i$, there is an $\lan{}{RV}{}$-definable bijection $U_i \fun V_i \times I_i^\sharp$, where $\vrv(V_i)$ is a singleton and $I_i \in \Gamma[*]$, such that its $\vrv$-contraction is a bijection  and its graph is a twistoid as well, then we say that $(U_i)_i$ is  \memph{bipolar}. Naturally $U$ is a \memph{bipolar twistoid} if it admits a trivial bipolar twistoid decomposition.
\end{defn}

\begin{rem}\label{bipo:desc}
The point of Definition~\ref{bipo:defn} is this: Suppose that $\bm U = (U, f) \in \RV[n]$ and  $(U_i)_i$ is a bipolar twistoid decomposition of $U$, as witnessed by bijections $U_i \fun V_i \times I_i^\sharp$ with $I_i \in \Gamma[l_i]$. By dimension theory, for each $i$, there is an $\lan{}{RV}{}$-definable finite-to-one function $g_i: V_i \fun \RV^{n-l_i}$, that is, $(V_i, g_i) \in \RES[n{-}l_i]$. Let $\gamma_i$ be a definable element in $\vrv(U_i)$. Since $U_{i,\gamma_i}$ is $\lan{}{RV}{}$-definably bijective to $V_i \times \alpha_i^{\sharp}$ for some definable $\alpha_i \in I_i$, we see that, in $\sggk \RES$
\[
[U_{i,\gamma_i}] = [V_i] [\alpha_i^{\sharp}] = [V_i] [\G_m]^{l_i}.
\]
So,  by the construction of $\bb E_b$ and bipolarity,
\begin{equation}\label{Eb:twist:com}
\bb E_b([\bm U]) = \sum_i \chi_b(I_i)[V_i] [\G_m]^{l_i} = \sum_i \chi_b(\vrv(U_i))[U_{i,\gamma_i}].
\end{equation}
Observe that this does not really follow without some stronger condition such as bipolarity.
\end{rem}

Obviously a $\Gamma$-cohesive twistoid decomposition of a bipolar twistoid is also bipolar. There is indeed an abundant supply of bipolar twistoid decompositions:

\begin{lem}\label{coh:decom}
Every $\lan{}{RV}{}$-definable set $U \sub \RV^n$ admits a $\Gamma$-cohesive twistoid decomposition that is also bipolar.
\end{lem}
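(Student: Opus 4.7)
The plan is to start from a $\Gamma$-cohesive twistoid decomposition guaranteed by Lemma~\ref{shift:K:csn:def} and then refine it on the $\Gamma$-side until each piece acquires the bipolar product structure. So first I would apply Lemma~\ref{shift:K:csn:def} to $U$ to obtain an $\lan{}{RV}{}$-definable partition $(D_i)_i$ of $D = \vrv(U)$ such that each $U_i = U \cap D_i^{\sharp}$ is a twistoid whose twistback $V_i^0 = \tbk(U_i) \sub \K^n$ is $\lan{}{RV}{}$-definable. Every $U_i$ is then described on each $\vrv$-fiber by $U_{i,\gamma} = \rcsn(\gamma) \cdot V_i^0$, so $U_i$ has the ``product'' shape parametrized by $D_i \times V_i^0$ via $(\gamma,v) \mapsto \rcsn(\gamma)\cdot v$; this will be the hook on which the lifting in the final step hangs.

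Next I would refine each $D_i \sub \Gamma^n$ by a cell decomposition in the $\Gamma$-sort (an ordered divisible abelian group, hence \omin-minimal) into finitely many $\lan{}{RV}{}$-definable pieces $D_{i,j}$, with the additional requirement that each $D_{i,j}$ be $\mgl_n(\Z)$-affinely equivalent to a product $\{\beta_{i,j}\} \times I_{i,j}$, where $\beta_{i,j} \in \Gamma^{n-l_{i,j}}$ is a definable point and $I_{i,j} \sub \Gamma^{l_{i,j}}$. The required $\mgl_n(\Z)$-affine change of coordinates $T_{i,j}\gamma + \alpha_{i,j}$ exists because the linear span of any cell is cut out by $\Q$-linear equations whose primitive integer normals can be completed to an $\mgl_n(\Z)$-basis, with the translation $\alpha_{i,j}$ taken from the divisible hull of $\Gamma(\mathbb{S})$; subdividing further if necessary absorbs denominators. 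Each pair $(T_{i,j},\alpha_{i,j})$ is a $\Gamma[*]$-morphism in the sense of Remark~\ref{GLZ:char}, in particular a $\vrv$-contraction of a corresponding $\lan{}{RV}{}$-definable map on $\RV^n$.

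Finally, using the twistoid parametrization $U_{i,j} \cong D_{i,j} \times V_i^0$, I would lift $T_{i,j}\gamma + \alpha_{i,j}$ to the $\lan{}{RV}{}$-definable bijection sending $\rcsn(\gamma)\cdot v \efun \rcsn(T_{i,j}\gamma + \alpha_{i,j})\cdot v$; this produces $U_{i,j} \fun V_{i,j} \times I_{i,j}^{\sharp}$ with $V_{i,j} \coloneqq \rcsn(\beta_{i,j})\cdot V_i^0$, whose $\vrv$-image is the singleton $\{\beta_{i,j}\}$. The $\vrv$-contraction of this map is precisely the $\mgl_n(\Z)$-morphism on the $\Gamma$-side, which is bijective by construction; and its graph, having twistback equal to $\{(v,v) : v \in V_i^0\}$ uniformly across its $\vrv$-fiber, is itself a twistoid. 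The refined partition $(U_{i,j})_{i,j}$ is therefore $\Gamma$-cohesive (it descends from the partition $(D_{i,j})_{i,j}$ of $\vrv(U)$) and bipolar as witnessed by these bijections.

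The main obstacle I expect is Step~2: producing a cell decomposition of a definable set in $\Gamma^n$ whose pieces are equivalent to products via transformations in $\mgl_n(\Z)$ rather than merely $\mgl_n(\Q)$. The standard \omin-minimal cell decomposition yields $\Q$-linear reparametrizations; one has to use primitivity of integer vectors together with divisibility of $\Gamma$ (allowing subdivision to clear denominators) to upgrade to $\mgl_n(\Z)$. Once this is in hand, the lifting to $\RV$ is routine because $\rcsn$ is a group homomorphism and twistoids behave functorially under such monomial changes of variable.
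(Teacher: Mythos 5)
There is a genuine gap, and it sits exactly where the real content of the lemma lies. First, a definability problem: Definition~\ref{bipo:defn} requires the witnessing bijections $U_{i,j} \fun V_{i,j} \times I_{i,j}^{\sharp}$ to be $\lan{}{RV}{}$-definable, and this is essential for the intended use of bipolarity (the computation of $\bb E_b$ in Remark~\ref{bipo:desc} and (\ref{Eb:twist:com}) only sees $\lan{}{RV}{}$-data). Your lift $u \efun \rcsn(T_{i,j}\vrv(u)+\alpha_{i,j})\cdot \tbk(u)$ fixes the twistback while moving the value; such a map is $\lan{}{RV}{}^{\dag}$-definable but not, in general, $\lan{}{RV}{}$-definable. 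For instance, the map contracting to $(\gamma_1,\gamma_2)\efun(\gamma_1,\gamma_1+\gamma_2)$ while fixing twistbacks is $(u_1,u_2)\efun(u_1,\rcsn(\vrv(u_1))\,u_2)$, which no $\lan{}{RV}{}$-automorphism-invariant set can define, whereas the natural $\lan{}{RV}{}$-definable lift is the monomial map $(u_1,u_2)\efun(u_1,u_1u_2)$, which multiplies the second twistback coordinate by the first instead of fixing it. (Relatedly, by Remark~\ref{GLZ:char} the constant terms $\alpha_{i,j}$ of a $\vrv$-contraction must be of the form $\vrv(t)$ with $t\in\RV(\mathbb{S})$, not arbitrary elements of the divisible hull of $\Gamma(\mathbb{S})$.)

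Second, and more fundamentally, even granting your lift, its image is not of the required shape. The image is the twistoid with base $\{\beta_{i,j}\}\times I_{i,j}$ and twistback $\tbk(U_i)\sub\K^n$, and such a set equals $V_{i,j}\times I_{i,j}^{\sharp}$ only if $\tbk(U_i)$ splits as $\tbk(V_{i,j})\times(\K^{\times})^{l_{i,j}}$: the set $I_{i,j}^{\sharp}$ has full torus fibers in the last $l_{i,j}$ coordinates, while your image keeps those coordinates coupled to the others through $\tbk(U_i)$. Straightening the shadow $\vrv(U)$ in $\Gamma^n$ by $\mgl_n(\Z)$-affine maps does nothing to this coupling, so Steps~2 and~3 cannot produce bipolarity; producing exactly this splitting is the point of the lemma. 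The paper obtains it from the multiplicative structure theory of $\lan{}{RV}{}$-definable sets, namely \cite[Lemmas~3.21, 3.25]{hrushovski:kazhdan:integration:vf}: the partition of Lemma~\ref{shift:K:csn:def} can be refined so that each piece is of the form $\set{t\in D_i^{\sharp}\given N_i t\in W_i}$ with $N_i$ an integer matrix acting monomially on $\RV^n$ and $W_i$ contained in a single fiber $\alpha_i^{\sharp}$; putting $N_i$ in echelon form via some $M_i\in\mgl_n(\Z)$ (possible since $\Z$ is a PID) and changing coordinates by the corresponding monomial map --- which is $\lan{}{RV}{}$-definable, $\vrv$-contracts to an $\mgl_n(\Z)$-bijection, and has twistoid graph --- exhibits the piece as $\pr_{\leq m}(U_i)\times I_i^{\sharp}$ with $\vrv(\pr_{\leq m}(U_i))$ a singleton. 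Thus the $\mgl_n(\Z)$-transformations must come from the exponents of the monomial constraints defining $U_i$ inside $\RV^n$, not from a cell decomposition of its image in $\Gamma^n$, and without an input of the type of \cite[Lemma~3.21]{hrushovski:kazhdan:integration:vf} your argument does not go through.
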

\begin{proof}
This follows from \cite[Lemmas~3.21, 3.25]{hrushovski:kazhdan:integration:vf}. In more detail, the proof of \cite[Lemmas~3.21]{hrushovski:kazhdan:integration:vf} shows that the definable finite partition $(D_i)_i$ given by Lemma~\ref{shift:K:csn:def} can be refined so as to make the following condition hold:  each $U_i = U \cap D_i^\sharp$ is of the form $\set{t \in D_i^\sharp \given N_it \in W_i}$, where $N_i$ is a $k \times n$ matrix over $\Z$ and $W_i$ is a definable subset of $\alpha_i^\sharp$ for some $\alpha_i \in \Gamma^k$. Next, there exists a matrix $M_i \in \mgl_n(\Z)$ such that $N_iM_i$ is in lower echelon form (in general $M_i$ is not a product of ``standard'' column operations since $\Z$ is not a field, but  such an invertible matrix exists over any  principal ideal domain). Observe that if $N_iM_i$ does not have zero columns then $D_i$ must be a singleton. At any rate, the set $M_i^{-1}U_i$ must be of the form $\pr_{\leq m}(U_i) \times I_i^\sharp \sub \RV^n$, where $n-m$ is the number of zero columns in $N_iM_i$ and $\vrv(\pr_{\leq m}(U_i))$ is a singelton.
\end{proof}

\subsection{Tropical motivic Fubini theorem}\label{jp:fubini}

In this subsection we work in the field $\puC$ of complex Puiseux series, considered as an elementary substructure of $\bb U$, and take the parameter space  to be the substructure $\mathbb{S} = \C \dpar{ t }$. The value group $\Gamma$ is identified with $\Q$. As an elementary substructure of $\bb U^\dag$, $\puC$ is denoted by  $\tilde\C^{\dag}$. We reiterate that the reduced cross-section $\rcsn$ is not something of central interest, it is just an (implicit) computational tool.

\begin{rem}
We may think of the profinite group $\hat {\mu}$ as the Galois group $\gal(\puC / \C \dpar t)$, since they are canonically isomorphic. For each element $\xi = (\xi_n)_n \in \hat \mu$, the assignment $n \efun \xi_n t^{1/n}$ indeed induces a reduced cross-section $\rcsn_\xi : \Q \fun \RV$, and the map given by $\xi \efun \rcsn_\xi$ is indeed a bijection between $\hat \mu$ and the set $\Omega$ of reduced cross-sections $\rcsn : \Q \fun \RV$ with $\rcsn(1) = \rv(t)$; in other words, $\hat \mu$ acts freely and transitively on $\Omega$ via multiplication in the obvious way.

Here is a slightly different perspective on $\hat \mu$. By the structural theory of valued fields (in particular, \cite[Lemma~5.3.2]{engler:prestel:2005}), an element $\sigma \in \gal(\puC / \C \dpar t)$ is in the ramification subgroup if and only if it fixes $\RV$ pointwise. But it can be easily checked that every $\sigma \in \gal(\puC / \C \dpar t)$ moves some element of $\RV$ unless $\sigma = \id$. So $\gal(\puC / \C \dpar t) \cong \hat \mu$ may be identified with $\aut(\RV / \RV(\C \dpar t))$, the group of  model-theoretic automorphisms, where $\RV(\C \dpar t)$ is equal to the subgroup generated by $\rv(t)$ over $\K^\times$.
\end{rem}

\begin{nota}\label{bipo:gdv}
The presence of $\rcsn$ induces an intrinsic isomorphism $\RV \cong {\K^\times} \oplus \Q$. Consequently, $\ggk \RES^\dag \cong \ggk \var_{\C}$. Let $\phi : \gmv \fun \ggk \var_{\C}$ be the obvious forgetful homomorphism.  Using the twistback function $\tbk$ as in  \cite[\S~4.3]{hru:loe:lef}, we construct an isomorphism
\begin{equation}
\Theta : \sggk \RES \fun \gmv
\end{equation}
such that $\phi \circ \Theta = \Lambda$. By the discussion in Remark~\ref{bipo:desc}, if $U$ is a bipolar twistoid then the class $[U_\gamma] \in \sggk \RES$ is invariant for definable $\gamma \in \vrv(U)$, and hence it makes sense to denote the class $\Theta([U_\gamma]) \in \gmv$ by $[\tbk(U)]^{\hat \mu}$. In general, for any $\bm U = (U, f) \in \RV[*]$, we may write
\begin{equation}\label{twist:C}
  (\Theta \circ \bb E_b)([\bm U]) = \sum_i \chi_b(\vrv(U_i))[\tbk(U_{i})]^{\hat \mu} \in \gmv,
\end{equation}
where $(U_i)_i$ is any bipolar twistoid decomposition of $U$.
\end{nota}

\begin{nota}\label{mov:vol}
The \memph{motivic volume operator} is the composition
\[
  \Vol : \ggk \VF_* \to^{\int} \ggk \RV[*] / (\bm P - 1)    \to^{\bb E_b} \sggk \RES \to^{\Theta} \gmv.
\]
\end{nota}

For each $\gamma \in \Q^n$, let $A_\gamma \in \VF_*$ and $\bm U_\gamma \in \RV[*]$ such that $\int [A_\gamma] = [\bm U_\gamma] / (\bm P - 1)$,  which we think of  as holding at the semiring level. Suppose that $A = \bigcup_{\gamma \in \Q^n} \gamma^\sharp  \times A_\gamma \in \VF_*$, that is, $A$ is definable, for instance,  there could be a definable set $B \sub \VF^n$ such that $A_\gamma = B \cap \gamma^\sharp$ for all $\gamma \in \Q^n$. For ease of notation, we take each $\bm U_\gamma$ to be simply a definable set $U_\gamma$ in $\RV$ and assume, by compactness, $U = \bigcup_{\gamma \in \Q^n} \gamma^\sharp  \times U_\gamma$ is definable as well. By Lemma~\ref{coh:decom} and compactness again, there is a definable finite partition $(D_i)_i$ of $\vrv(U)$ such that
\begin{itemize}
  \item every set $U \cap D_{i}^\sharp$ is a bipolar twistoid,
  \item the sets $E_i = \pr_{\leq n}(D_i)$ form a partition of $\Q^n$,
  \item for all $\gamma, \gamma' \in E_i$, $\chi_b(D_{i,\gamma}) = \chi_b(D_{i,\gamma'})$.
\end{itemize}
It follows  that the map $\Q^n \fun \ggk^{\hat \mu} \var_{\C}$ given by
\begin{equation}\label{trop:fub:cons}
\gamma \efun \Vol([A_\gamma]) = \sum_{i} \chi_b(D_{i,\gamma}) [\tbk (U_\gamma \cap D^\sharp_{i,\gamma})]^{\hat \mu}
\end{equation}
is constant over each $E_i$, which we denote by $v_i$, and hence
\[
 \Vol([A]) = \sum_i \chi_b(E_i)v_i[\G_m]^{n}.
\]
This  is essentially the content of the tropical motivic Fubini theorem  of \cite{Nic:Pay:trop:fub}.

\begin{rem}
The definable sets $E_i$ may be viewed as rational polyhedrons in $\R^n$, that is, intersections of closed half spaces in $\R^n$ defined by linear equations with rational coefficients and (definable) constant terms. In general they are not cones, in other words, some of the constant terms may be nonzero (every element in $\Q$ is definable). This issue is moot, however, if the parameter space is $\mathbb{S} = \C$, since in that case the only definable element in $\Q$ is $0$. This is needed for the applications in the motivic Donaldson-Thomas theory in  \cite{Nic:Pay:trop:fub}.
\end{rem}

\subsection{Fubini theorem}
Let $A, A' \in \VF_*$ and $f : A \fun B$, $f': A' \fun B$ be definable functions. Write the fibers $f^{-1}(b)$, $b \in B$, as $A_b$, similarly for $f'^{-1}(b)$. The purpose of this subsection is to prove the following result.

\begin{thm}\label{fubini}
If, for all $b \in B$, there are $b$-definable objects $\bm U_{b}, \bm U'_{b} \in \RV[*]$ such that $[A_b] = [\bb L \bm U_{b}]$, $[A'_b] = [\bb L \bm U'_{b}]$, and $\bb E_b ([\bm U_b]) = \bb E_b (  [\bm U'_b])$, then $\bb E_b (\int [A]) = \bb E_b ( \int [A'])$.
\end{thm}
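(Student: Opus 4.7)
The plan is to reduce the pointwise hypothesis to a uniform one by compactness and then apply the tropical motivic Fubini machinery of Section~\ref{jp:fubini} (extended to accommodate $\VF$-coordinates in the base) in order to deduce the global equality.

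First I would invoke compactness twice. By Theorem~\ref{main:prop:HK} applied over each parameter~$b$, together with saturation of $\bb U$, one can partition $B$ into finitely many definable pieces $B_i$ and find uniform definable families $\bm V^{(i)} \sub B_i \times \RV^{N_i}$ and $\bm V'^{(i)} \sub B_i \times \RV^{N_i}$ such that, for each $b \in B_i$, the fibers $\bm V^{(i)}_b, \bm V'^{(i)}_b \in \RV[*]$ realize $[A_b] = [\bb L \bm V^{(i)}_b]$ and $[A'_b] = [\bb L \bm V'^{(i)}_b]$. Then, applying compactness a second time to the hypothesis $\bb E_b([\bm U_b]) = \bb E_b([\bm U'_b])$, we may arrange that the equality
\[
\bb E_b([\bm V^{(i)}_b]) = \bb E_b([\bm V'^{(i)}_b]) \in \sggk \RES
\]
holds for every $b \in B_i$.

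Second I would integrate over the base. Working one piece $B_i$ at a time, view $A \cap f^{-1}(B_i) \sub B_i \times \VF^{n_1} \times \RV^{m_1}$ as a single definable set and apply Theorem~\ref{main:prop:HK} to it (treating the $\VF$-coordinates of $B_i$ as integration variables on par with those of~$A$). This produces classes $\bm W_i, \bm W'_i \in \ggk \RV[*]$ with $\int[A \cap f^{-1}(B_i)] = [\bm W_i]/(\bm P-1)$ and analogously for $A'$. By the canonical nature of the HK isomorphism and uniformity of its inductive construction, the lifts can be chosen to respect the projection to a definable set $\bm Z_i \sub \RV^*$ obtained from integrating the $\VF$-coordinates of $B_i$; the fiber of $\bm W_i$ (resp.\ $\bm W'_i$) over a point of $\bm Z_i$ corresponding to $b \in B_i$ realizes $[\bm V^{(i)}_b]$ (resp.\ $[\bm V'^{(i)}_b]$) modulo $(\bm P-1)$.

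Third I would apply the twistoid machinery, passing to $\bb U^\dag$ and using the commutative diagram~(\ref{dag:red}) so that $\bb E_b = \bb E^\dag_b \circ \Lambda$. By Lemma~\ref{coh:decom}, choose a bipolar $\Gamma$-cohesive twistoid decomposition of $\bm W_i$ (refining the fibration over $\bm Z_i$) and similarly for $\bm W'_i$. On each piece the formula~(\ref{Eb:twist:com}) expresses $\bb E_b$ as $\chi_b$ of the polytope part times the class of the residue-variety part. The uniformized fiberwise hypothesis from the first step forces the residue-variety contributions of $\bm W_i$ and $\bm W'_i$ to coincide on corresponding pieces over $\bm Z_i$, so the partial sums agree. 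Summing over $i$ then yields $\bb E_b(\int[A]) = \bb E_b(\int[A'])$.

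The hard part will be making precise the ``uniformity'' of HK integration in the second step, namely that the lift $\bm W_i$ can be chosen compatibly with the fibration $A \cap f^{-1}(B_i) \to B_i$ so that its fibers over $\bm Z_i$ recover $\bm V^{(i)}_b$ modulo $(\bm P - 1)$. This is essentially built into the inductive proof of Theorem~\ref{main:prop:HK}, but requires careful bookkeeping, since the lifting proceeds coordinate-by-coordinate and the disc decompositions used at each step must be chosen in a $b$-definable fashion.
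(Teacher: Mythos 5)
Your first two steps track the paper's reduction reasonably closely, but you have misidentified where the difficulty lies. The uniformization over the base, which you flag as ``the hard part,'' is handled in the paper without any appeal to uniformity of the inductive construction of the Hrushovski--Kazhdan isomorphism: after applying Lemma~\ref{special:term:constant:disc} to the top terms of the formulas defining the $\bm U_b$, one may assume $B$ is an $\RV$-pullback and that $\bm U_b$ depends only on $\rv(b)$, and then, since $\bb E_b \circ \int$ depends only on the class $[A]$, one may simply replace $A$ by $\bigcup_{t \in \rv(B)} \bb L \bm U_t \times t^\sharp$ with $f$ the coordinate projection. No bookkeeping inside the proof of Theorem~\ref{main:prop:HK} is needed.

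The genuine gap is in your third step. From the fiberwise hypothesis you assert that ``the residue-variety contributions of $\bm W_i$ and $\bm W'_i$ coincide on corresponding pieces over $\bm Z_i$, so the partial sums agree.'' That does not follow. The hypothesis is an equality of classes in the quotient ring $\sggk \RES$ over each fiber; such an equality is witnessed by a fiber-dependent chain of scissor relations, definable bijections, cancellations coming from groupification, and instances of the congruence $[\gamma^\sharp \times Z] = [\G_m^n \times Z]$, and it in no way forces the pieces of the two twistoid decompositions to match up. To globalize one must (i) unpack the fiberwise equality into explicit witnesses at the level of the semiring $\gsk \RES$ (the auxiliary object $X$ in the paper's Test~\ref{test} is there precisely to absorb the groupification), (ii) arrange by compactness that these witnesses are uniformly definable over the base, which requires recasting the congruence as an equivalence relation on objects expressible by an $\lan{}{RV}{}$-formula rather than on classes, and (iii) check that the union over the base of the fiberwise witnesses again witnesses the relation. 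Point (iii) fails in general for the multiplication steps used in generating the congruence, since taking products fiberwise does not commute with taking unions over the base; this is exactly why the paper proves that the witnesses can be chosen to be product-free construction graphs before applying compactness. Your plan contains no mechanism for any of this, so the concluding ``so the partial sums agree'' is unsupported as stated.
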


\begin{rem}\label{rem:volb}
Suppose that $B$ is a set in $\VF$, or more generally, for every $b \in B$, the substructure $\bb S \la b \ra$ generated by it over $\bb S$ is $\VF$-generated. The latter condition holds if, for instance, there is a definable bijection $B \fun B'$ with $B'$ a set in $\VF$. Then the hypothesis of the theorem can be simply reformulated as $\bb E_b (\int [A_b]) = \bb E_b ( \int [A'_b])$. This equality is construed over  $\bb S \la b \ra$, which makes sense since $\bb S \la b \ra$ is $\VF$-generated (recall the provisions of Theorem~\ref{main:prop:HK}).

As in \S~\ref{jp:fubini}, we are mainly interested in applying this result in $\puC$ with $\bb S = \C \dpar{ t }$, in which case $\bb S\la b \ra = \C \dpar{ t }(b)$ is an algebraic extension of $\C \dpar{ t }$ and the conclusion of the theorem  may take the form $\Vol([A]) = \Vol([A'])$; however,  it is clearly not enough to test the hypothesis just inside $\puC$.

Take $\bb S = \C \dpar{ t }$  and $b \in B$ in $\bb U$. Denote the algebraic closure of the residue field $\K(\C \dpar{ t }( b ))$ by $\C_b$. Let $r_b$  be the $\Q$-rank of the value group $\Gamma(\C \dpar{ t }( b ))$. Then, working in the algebraic closure of $\C \dpar{ t }( b )$ (this is indeed the smallest model of $\ACVF(0, 0)$ that contains $\C \dpar{ t }( b )$), we can construct, similar to $\Theta$, an isomorphism
\[
\Theta_b:  \sggk \RES \fun \ggk^{{\hat \mu}^{r_b}}\var_{\C_b}.
\]
Setting $\Vol_b = \Theta_b \circ \bb E_b \circ \int$, the hypothesis of the theorem can then be equivalently reformulated as  $\Vol_b([A_b]) = \Vol_b([A'_b])$ for all $b \in B$. Since this construction is not really needed, we omit the details. Only note that if $\C \dpar{ t }( b )$ is an unramified extension of $\C \dpar{ t }$ then $\Theta_b$ lands in $\ggk^{{\hat \mu}}\var_{\C_b}$ and hence it is essentially the same as $\Theta$. It will also be clear from the proof that one can test the hypothesis only for those $b \in B$ that generate such unramified extensions (because ultimately the task can be reduced to checking a relation among certain objects of $\RES$ whose projection into $\Gamma$ is in $\Z = \Gamma(\bb S)$).
\end{rem}

The following fact shall be needed:

\begin{lem}\label{special:term:constant:disc}
Let $F_i(x) = F_i(x_1, \ldots, x_n)$ be a finite list of polynomials with coefficients in $\VF(\bb S)$. Let $A \sub \VF^n$ be a definable set.  Then there is a definable bijection $T : A \fun A^\flat$ such that $A^\flat$ is an $\RV$-pullback and  every composite function $F_i \circ T^{-1}$ is $\rv$-contractible, that is, for each $i$ there is a commutative diagram
\[
\bfig
  \square(0,0)/`->`->`->/<1000,400>[A^\flat`\VF`\rv(A^\flat)`\RV_\infty;
  `\rv`\rv`(F_i \circ  T^{-1})_{\downarrow}]
  \morphism(0,400)<500,0>[A^\flat`A; T^{-1}]
  \morphism(500,400)<500,0>[A`\VF; F_i]
 \efig
\]
\end{lem}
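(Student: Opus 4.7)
The plan is to reduce the lemma to the Hrushovski--Kazhdan surjectivity of the lifting homomorphism (Theorem~\ref{main:prop:HK}), applied uniformly in the $\rv$-values of the polynomials $F_i$. First I would record these $\rv$-values as additional $\RV$-coordinates via the definable bijection
\[
\Phi : A \fun A^* := \Phi(A) \sub \VF^n \times \RVV^k, \qquad \Phi(x) = (x, \rv F_1(x), \ldots, \rv F_k(x)),
\]
so that the $\RV^k$-fibers of $A^*$ take the form $A^*_t = A_t \times \{t\}$, with $A_t := \{x \in A : \rv F_i(x) = t_i \text{ for all } i\}$. By construction, every $F_i$ has constant $\rv$-value $t_i$ on $A_t$.

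Next I would invoke HK surjectivity parametrically in $t$. For each $t \in \RVV^k$ the fiber $A_t$ is a $t$-definable object of $\VF[n]$, so Theorem~\ref{main:prop:HK} yields a $t$-definable bijection $T_t : A_t \fun A^\flat_t$ with $A^\flat_t = \bb L \bm U_t$ an $\RV$-pullback. Using compactness and a definable partition of $\RVV^k$ if needed to uniformise the ambient space of the $A^\flat_t$, I set
\[
A^\flat := \bigcup_{t \in \RVV^k} A^\flat_t \times \{t\}, \qquad T(x) := (T_{\rv F(x)}(x), \rv F(x)),
\]
giving a definable bijection $T : A \fun A^\flat$. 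The set $A^\flat$ is an $\RV$-pullback because every $\RV$-polydisc meeting it has fixed last $\RV^k$-coordinate $t$ and therefore lies inside $A^\flat_t \times \{t\}$, which is itself an $\RV$-pullback by construction of $T_t$.

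Verification of $\rv$-contractibility is then immediate: for $y = (y', t) \in A^\flat$ one has $T^{-1}(y) = T_t^{-1}(y') \in A_t$, whence $\rv(F_i(T^{-1}(y))) = t_i$, which depends only on $\rv(y) = (\rv(y'), t)$. The induced contraction $(F_i \circ T^{-1})_{\downarrow} : \rv(A^\flat) \fun \RVV$ is the projection $(s, t) \efun t_i$, and the required diagram commutes.

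The main obstacle is securing the parametric form of HK surjectivity used in the second step. The inductive, $\lan{}{RV}{}$-syntactic construction behind Theorem~\ref{main:prop:HK} (developed in \cite{hrushovski:kazhdan:integration:vf} and refined in \cite{Yin:special:trans}) goes through uniformly in $\RV$-parameters, so this should present no real difficulty. A less economical alternative would be to apply Theorem~\ref{main:prop:HK} directly to $A^* \in \VF[n]$ and then verify that the resulting bijection can be arranged to preserve the added $\RV^k$-coordinates encoding $\rv F$, by pre-composing with an identity-on-$\RV$ rectification if necessary.
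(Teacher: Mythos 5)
Your overall strategy is reasonable, and the compactness gluing plus the verification that the glued set is an $\RV$-pullback and that $(F_i\circ T^{-1})_\downarrow$ is the coordinate projection are fine. The gap is exactly at the step you flag as ``the main obstacle,'' and it is not a technicality that goes away by inspection. Theorem~\ref{main:prop:HK} is stated only over a $(\VF,\Gamma)$-generated parameter substructure, whereas the bases $\bb S\la t\ra$ you pass to are $\RV$-generated over $\bb S$; the paper is explicit that this provision matters (see Remark~\ref{rem:volb}, where the hypothesis of the Fubini theorem is deliberately phrased via $[A_b]=[\bb L\bm U_b]$ rather than via $\int$ over $\bb S\la b\ra$, precisely because $\bb S\la b\ra$ need not be $\VF$-generated). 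The obstruction is substantive: the special-bijection/lifting machinery underlying the surjectivity of $\bb L$ repeatedly requires definable points in definable discs and $\RV$-discs (Lemma~\ref{algebraic:balls:definable:centers}, Definition~\ref{defn:special:bijection}, Lemma~\ref{RVlift}), and over $\bb S\la t\ra$ such points need not exist --- e.g.\ $t^{\sharp}$ itself has no $t$-definable point when $t$ is not algebraic. The fiberwise statement is in fact true, but for a reason you would have to prove: the fibers $A_t$ are cut out by $\rv\circ F_i$ with $F_i$ over $\VF(\bb S)$, so all relevant discs and focus maps have centers/values algebraic over the $\VF$-part of the base (Lemma~\ref{dcl:to:ac}), and making this work uniformly in $t$ is precisely the content of \cite[Theorem~5.5]{Yin:special:trans}, i.e.\ Lemma~\ref{special:bi:term:constant}, applied to the function $x\efun(\rv F_1(x),\ldots,\rv F_k(x))$. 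That is exactly what the paper's (one-line) proof cites, with $T$ a special bijection over $\bb S$; so your appeal to ``the construction goes through uniformly in $\RV$-parameters'' assumes the very statement to be proven rather than deriving it from Theorem~\ref{main:prop:HK}.

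The fallback you sketch has the same problem: applying Theorem~\ref{main:prop:HK} directly to $A^{*}$ yields some definable bijection onto an $\RV$-pullback, but $\VF_*$-morphisms are arbitrary definable bijections and nothing forces this one to respect the recorded coordinates $\rv F(x)$; ``pre-composing with an identity-on-$\RV$ rectification'' is not a construction, it is a restatement of the difficulty. To close the gap you would either have to quote Lemma~\ref{special:bi:term:constant} (at which point your argument collapses to the paper's proof) or redo the special-bijection induction while tracking the top terms $F_i$, showing that all focus maps can be chosen with values algebraic over $\VF(\bb S)$ so that $\VF$-generatedness of $\bb S$ suffices.
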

This lemma follows from \cite[Theorem~5.5]{Yin:special:trans}, cited as Lemma~\ref{special:bi:term:constant} below, where $T$ may be taken to be a so-called special bijection, as in Definition~\ref{defn:special:bijection} but much simpler (we shall not need this here, though, because no volume forms are involved).

\begin{proof}[Proof of Theorem~\ref{fubini}]
By partitioning $B$ if necessary, it is enough to prove the claim either when $f$, $f'$ are both surjective or when $f$ is surjective and $A'$ is empty. We will  deal with the first case, the second being similar but simpler (just skip all the steps that involve $A'$ in the argument below).


By  compactness, there is a definable finite partition $(B_i)_i$ of $B$ such that, for each $i$, the objects $\bm U_{b}$,  $\bm U'_{b}$ are defined uniformly over $b \in B_i$. Let $\xi_i(x, y)$, $\xi'_i(x, y')$ be quantifier-free formulas such that $\xi_i(b, y)$, $\xi'_i(b, y')$  define the objects $\bm U_{b}$, $\bm U'_{b}$ for each $b \in B_i$, respectively.

Suppose that $B$ is a set in $\VF$. Applying Lemma~\ref{special:term:constant:disc} to the top terms $F(x)$ of $\xi_i(x, y)$, $\xi'_i(x, y')$  (recall Convention~\ref{topterm}), we obtain a definable bijection $T_i : B_i \fun B_i^\flat$ as described. So, to show the claim, we may replace $B$ with any $B_i^\flat$, and thereby assume that $B$ is an $\RV$-pullback and for each $b \in B$, the objects $\bm U_{b}$, $\bm U'_{b}$ depend on $\rv(b)$ instead of $b$ itself; henceforth $\bm U_{b}$, $\bm U'_{b}$ are denoted by $U_{\rv(b)}$, $U'_{\rv(b)}$ (as above, for ease of notation, we take them to be simply definable sets in $\RV$). In fact, by Theorem~\ref{main:prop:HK}, setting $S = \rv(B)$, we may just assume that $A = \bigcup_{t \in S} \bb L U_{t} \times t^\sharp$ and $f$ is the obvious coordinate projection. Similarly  $A' = \bigcup_{t \in S} \bb L U'_{t} \times t^\sharp$.

Of course, in general, $B$ has $\RV$-coordinates. But the same arrangement can be made by first applying the argument above to each fiber $B_t$, $t \in  \pr_{\RV}(B)$, and then compactness.
Without loss of generality, we just take $A$, $A'$, $B$ to be sets in $\VF$.

Write $U = \rv(A)$, $D = \vv(A)$, $U '= \rv(A')$, $D' = \vv(A')$, and $E = \vv(B)$. Since  $\Gamma$ is stably embedded, upon further partitioning of $S$, we may assume that $\vrv(U_{t})$ depends on $\vrv(t)$ instead of $t \in S$ itself, and hence may be written as $D_{\vrv(t)}$; similarly, we may write $\vrv(U'_{t})$ as $D'_{\vrv(t)}$.

For each $t \in S$, let $(D_{\vrv(t),i})_i$ be a definable finite partition of $D_{\vrv(t)}$ as given by Lemma~\ref{coh:decom}, that is, $(U_{t,i})_i = (U_t \cap D_{\vrv(t),i}^\sharp)_i$ is a bipolar and $\Gamma$-cohesive twistoid decomposition of $U_t$. Again, the notation  $(D_{\vrv(t),i})_i$ makes sense because the partition only depends on $\vrv(t)$; in fact, we may even assume so  for the $\vrv$-contractions of the witness bijections as required by Definition~\ref{bipo:defn}. Then, by compactness and \omin-minimality, we are further reduced to the case that $(D_{\gamma,i})_i$ is uniformly defined over $\gamma \in E$, and $(\chi_b (D_{\gamma,i}))_i$ does not depend on $\gamma \in E$ at all and hence may be simply written as $(e_i)_i$. Let $D_i = \bigcup_{\gamma \in E} D_{i,\gamma} \times \gamma$. So $\chi_b(D_i) = e_i \chi_b(E)$. All this is just a more elaborate version of what we did before (\ref{trop:fub:cons}).

Similarly we obtain a bipolar and $\Gamma$-cohesive twistoid decomposition $(U'_{t,i'})_{i'} = (U'_t \cap {D'}_{\vrv(t),i'}^\sharp)_{i'}$ of $U'_t$, uniformly defined over $\gamma\in E$, such that $e_{i'}=\chi_b (D'_{\gamma,i'})$ do not depend on $\gamma \in E$ and hence $\chi_b(D'_{i'}) = e_{i'} \chi_b(E)$, where $D'_{i'}=\bigcup_{\gamma \in E} D'_{i',\gamma} \times \gamma$.

Applying Lemma~\ref{coh:decom} again to $U$, $U'$ and refining $(D_{i})_i$, $(D'_{i'})_{i'}$ if necessary, we may now assume that $(U_i)_i = (U \cap D^\sharp_i)_i$, $(U'_{i'})_{i'} = (U' \cap {D'}^\sharp_{i'})_{i'}$ are bipolar and $\Gamma$-cohesive twistoid decompositions of $U$, $U'$, respectively. They induce two twistoid decompositions of $S$ (not necessarily bipolar, which we do not need). Refining $(D_{i})_i$, $(D'_{i'})_i'$ one last time, we may assume that both induce the same decomposition of $S$ after all and hence, replacing $S$ with one of the pieces,  $S$ itself is a twistoid.

\begin{rem}
The witness bijection for $U_i$ as a bipolar twistoid is in general unrelated to those for $U_{t, i}$, $t \in S$, since parameterized bipolar twistoids over a base may not even form a single  bipolar twistoid. But our argument will not require this.
\end{rem}

Let $\gamma_i \in \vrv(U_i)$, $\gamma_{i'} \in \vrv(U'_{i'})$ be definable elements such that $\pr_E(\gamma_{i}) = \pr_E(\gamma_{i'})$ for all $i$, $i'$. Set $\dot U_i = U_{i, \gamma_i}$ and $\dot U'_{i'} = U'_{i', \gamma_{i'}}$. So $\pr_S(\dot U_{i}) = \pr_S(\dot U'_{i'})$ for all $i$, $i'$, which we denote by $\dot S$.



Denote by $I$, $I'$ the sets of indices $i$, $i'$. Set $J = I \uplus I'$. For each $j\in J$, let $V_j=\dot U_j$ if $j\in I$ and $V_j=\dot U'_j$ otherwise. Set
\[
J_1=\set{i\in I \given e_i>0} \cup \set{i'\in I' \given e_{i'}<0},
\]
\[
J_2=\set{i\in I \given e_i<0}\cup \set{i'\in I'  \given e_{i'}>0},
\]
and for $j\in J$, $\tilde e_j = \abs{e_j}$.

By (\ref{Eb:twist:com}), we have $\bb E_b (\int [A] ) = \chi_b(E) \sum_{i\in I} e_i [\dot U_i]$ and $\bb E_b ( \int [A'] ) = \chi_b(E) \sum_{i'\in I'} e_{i'} [\dot U'_{i'}]$. Thus, to prove the theorem, it is enough to show that $\sum_{j\in J_1} \tilde e_j [V_j]=\sum_{j\in J_2} \tilde e_j [V_j]$ in $\sggk \RES$.  Since both sides can be understood in $\gsk \RES$, it is enough to show that the following condition holds.

\begin{test}\label{test}
There are objects $W_1, W_2, X \in \RES$ such that
\begin{itemize}
  \item $[W_1] = \sum_{j\in J_1} \tilde e_j [V_j] + [X]$ and $[W_2] = \sum_{j\in J_2} \tilde e_j [V_j] + [X]$ in $\gsk \RES$,
  \item $([W_1], [W_2])$ is in the semiring congruence relation $\mdl E_{sc}$ on $\gsk \RES$ generated by pairs of the form $([\gamma^\sharp \times Z], [\G_{m}^n \times Z])$, where $Z \in \RES$ and $\gamma \in \Gamma^n$ is definable (we allow $n = 0$, which gives the pairs $(Z, Z)$).
\end{itemize}
\end{test}

What the second item means is that  if we close off the set $\mdl A$ of these pairs $([\gamma^\sharp \times Z], [\G_{m}^n \times Z])$ by applying inductively the six conditions, namely identity (since each stage needs to be accumulative), the three equivalence conditions (symmetry, reflexivity, and transitivity), and the two semiring congruence conditions (if $([X], [Y])$ is already in at some stage then, for any $[Z] \in \gsk \RES$, $([X \uplus Z], [Y \uplus Z])$ and  $([X \times Z], [Y \times Z])$ are thrown in at the next stage), then $([W_1], [W_2])$ appears at some stage. Actually it is clear that reflexivity is redundant. We show that, beyond $\mdl A$, product is also redundant, as follows.

\begin{rem}
There are in general more than one way to inductively construct an element $([X], [Y])$ in $\mdl E_{sc}$. Each inductive construction of $([X], [Y])$ may be represented by a finite graph $T$, partitioned into levels (numbered inversely), such that
\begin{itemize}
  \item every vertex is an element in $\mdl E_{sc}$, in particular, there is only one vertex in the top level, which is $([X], [Y])$ itself, and the bottom vertices are all elements in $\mdl A$,
  \item each vertex above level $0$ has $1$ or $2$ (the latter only for transitivity) labeled edges connecting it with vertices in the level below, with the labels indicating how it is constructed from these latter vertices,
  \item each vertex below the top level is connected via an edge with a vertex in the level above (this is for efficiency, that is, no redundant vertices for the construction).
\end{itemize}
We call $T$ a \memph{construction graph} of $([X], [Y])$, and the number of levels in it its \memph{height}. Observe that, for any construction graph $T$ of $([X], [Y])$ and any $[Z] \in \gsk \RES$, if we replace every vertex $([X'], [Y'])$ of $T$ with $([X' \times Z], [Y' \times Z])$ then the resulting graph is a construction graph for $([X \times Z], [Y \times Z])$. Let $\rho([X], [Y])$ be the least natural number such that there is a construction graph for $([X], [Y])$ of height $\rho([X], [Y])$; let $T$ be such a construction graph. We claim that $T$ may be chosen so that  none of its edges is a product. This is vacuous if $\rho([X], [Y]) = 0$ and is clear if $\rho([X], [Y]) = 1$. For $\rho([X], [Y]) > 1$, note that if $([X'], [Y'])$, $([X''], [Y''])$ are the (possibly identical) vertices in level $\rho([X], [Y]) - 1$ of $T$ then
\[
\max(\rho([X'], [Y']), \rho([X''], [Y''])) = \rho([X], [Y]) - 1.
\]
Also, if the edge going into $([X], [Y])$ is a product then, by what we have just observed, there would be a construction graph of $([X], [Y])$ of height $\rho([X], [Y]) - 1$, contradicting the choice of $\rho([X], [Y])$. So the desired conclusion follows from a routine induction on $\rho([X], [Y])$.
\end{rem}

We need this seemingly obscure fact so as to be able to apply compactness uniformly over a base below.

\begin{rem}
The semiring congruence relation $\mdl E_{sc}$ may also be constructed ``without the brackets,'' that is, it may be  expressed as an equivalence relation $\mdl E^*_{sc}$ on the objects of $\RES$ that is generated from the set of definable pairs of the form $(\gamma^\sharp \times Z, \G^n_{m} \times Z')$, where $Z \cong Z'$ in $\RES$, via the six conditions. However, if $T$ is a construction graph of  $([X], [Y])$ then simply deleting the brackets from the vertices of $T$ does not give us a graph that witnesses $(X, Y) \in \mdl E^*_{sc}$. There are two issues. The first is transitivity, that is, if $([X], [Y])$ and $([Y'], [Z])$ are in  $\mdl E_{sc}$ with $[Y] = [Y']$ then of course so is $([X], [Z])$, but to conclude $(X, Z) \in \mdl E^*_{sc}$ from $(X, Y), (Y', Z) \in  \mdl E^*_{sc}$ we must add a middle step $(Y, Y')$, justified  by $Y \cong Y'$ in $\RES$, and apply transitivity twice. The second is similar: we can now translate $T$ into a graph that witnesses $(X', Y') \in \mdl E^*_{sc}$, but only with $[X'] = [X]$ and $[Y'] = [Y]$. But then, adding at most two levels on top, we arrive at a \memph{construction graph} $T^*$ of $(X, Y) \in \mdl E^*_{sc}$. Moreover, if $T$ is product-free as given above then $T^*$ is also  product-free, in the obvious sense.
\end{rem}
The reason that we  opt for working with $T^*$ instead of $T$ is that it can be described by  an $\lan{}{RV}{}$-formula and thereby makes itself amenable to compactness.

Finally, by the assumption, we have $\sum_{j\in J_1} \tilde e_j [V_{j,t}] =  \sum_{j\in J_2} \tilde e_j [V_{j,t}] $ in $\sggk \RES$ for every $t \in \dot S$. For each $t \in \dot S$, we can choose $t$-definable objects $W_{1,t}$, $W_{2,t}$, $X_t$ of $\RES$ with respect to which Test~\ref{test} holds, as witnessed by a product-free construction graph $T^*_t$. By compactness, there is a definable finite partition $(\dot S_i)_i$ of $\dot S$ such that, for each $i$, the graph $T^*_t$ is uniformly defined over $t \in \dot S_i$ by an $\lan{}{RV}{}$-formula; without loss of generality, we may assume this partition is trivial. Moreover, if level $0$ of $T^*_t$ consists of definable pairs of the forms $(\gamma^\sharp \times Z_t, \G^n_{m} \times Z'_t)$ then $Z_t$, $Z'_t$ in general depend on $t$, but $\gamma^\sharp$ does not, and neither do other $\Gamma$-data over $t$, such as $\vrv(Z_t)$, $\vrv(\dot U'_t)$, and $\vrv(V_t)$ (of course they do depend on $\vrv(\dot S)$). Let
\[
W_1 = \bigcup_{t \in \dot S} W_{1,t}, \quad W_2 = \bigcup_{t \in \dot S} W_{2,t}, \quad X = \bigcup_{t \in \dot S} X_t.
\]
Thus, and this is why we have taken effort to find product-free construction graphs above, the union $\bigcup_{t \in \dot S} T^*_t$ is indeed a construction graph of $(W_1, W_2)$. So Test~\ref{test} holds with respect to $W_1$, $W_2$, and $X$. So $\bb E_b (\int [A]) = \bb E_b ( \int [A'])$.
\end{proof}

\begin{cor}
In $\puC$ with $\bb S = \C \dpar{ t }$, if $f$ is a morphism of varieties over $\C$ with $\Vol_b([A_b]) = 0$ for all $b \in B$ then $\Vol([A]) = 0$.
\end{cor}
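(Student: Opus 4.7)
The plan is to deduce this directly from Theorem~\ref{fubini} by taking $A'$ to be the empty set (with the unique map $f' : \emptyset \fun B$), so that one compares $A$ against the zero class. The statement will then fall out without any additional work; indeed, the proof of Theorem~\ref{fubini} explicitly notes that the case where $A'$ is empty is handled by skipping the steps that involve $A'$ in the main argument.

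First, since $f$ is a morphism of varieties over $\C$, the base $B$ is the $\bb U$-points of a $\C$-variety, and in particular for every $b \in B$ the substructure $\bb S\la b \ra = \C\dpar t(b)$ is $\VF$-generated. This is precisely the setting of Remark~\ref{rem:volb}, so the hypothesis $\Vol_b([A_b]) = 0$ may be translated: because $\Theta_b : \sggk \RES \fun \ggk^{\hat\mu^{r_b}}\var_{\C_b}$ is an isomorphism, $\Vol_b([A_b]) = 0$ is equivalent to $\bb E_b(\int [A_b]) = 0$ over $\bb S\la b \ra$.

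Next, I would invoke the surjectivity of the lifting homomorphism $\bb L$ (Theorem~\ref{main:prop:HK}) to pick, for each $b \in B$, a $b$-definable object $\bm U_b \in \RV[*]$ with $[A_b] = [\bb L \bm U_b]$; then by the definition of $\int$ we have $\int [A_b] = [\bm U_b]/(\bm P - 1)$, so the condition above reads $\bb E_b([\bm U_b]) = 0$. Taking $\bm U'_b = \emptyset$ gives $[A'_b] = 0 = [\bb L \bm U'_b]$ and trivially $\bb E_b([\bm U'_b]) = 0 = \bb E_b([\bm U_b])$, so the hypothesis of Theorem~\ref{fubini} holds fiberwise.

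Finally, Theorem~\ref{fubini} yields $\bb E_b(\int [A]) = \bb E_b(\int [\emptyset]) = 0$, and applying the isomorphism $\Theta$ gives $\Vol([A]) = (\Theta \circ \bb E_b \circ \int)([A]) = 0$. There is no genuine obstacle here; the corollary is essentially a restatement of the special case $A' = \emptyset$ of the Fubini theorem, packaged in terms of the motivic volume operator.
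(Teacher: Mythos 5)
Your proposal is correct and matches the paper's intent: the corollary is stated without proof precisely because it is the special case $A' = \emptyset$ of Theorem~\ref{fubini}, read through the reformulation $\Vol_b = \Theta_b \circ \bb E_b \circ \int$ of Remark~\ref{rem:volb} (with $\Theta$, $\Theta_b$ isomorphisms), which is exactly the route you take. Your extra remarks---that $\bb S\la b\ra$ is $\VF$-generated since $B$ is (covered by) a set in $\VF$, and that the proof of the theorem explicitly accommodates empty $A'$---are accurate and fill in the intended details.
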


\section{Special covariant bijections}\label{sec-pscb}

From here on, we assume that the substructure $\mathbb{S}$ of parameters is $\VF$-generated (but $\Gamma(\mathbb{S})$ could be trivial, in which case the only definable element in $\Gamma$ is $0$, and hence $A_\Gamma = 0$ for any doubly bounded definable set $A$). So every definable disc contains a definable point (Lemma~\ref{algebraic:balls:definable:centers}). Another simple fact we shall occasionally use is that if $\gamma \in \Gamma$ is definable then there is a $\gamma' \geq \gamma$ such that $\gamma'^\sharp$ contains a definable point.

\subsection{Invariance and covariance}\label{sec:inv:cov}

\begin{defn}\label{defn:binv}
For $\gamma \in \Gamma$, let $\pi_\gamma: \VF \fun \VF / \MM_{\gamma}$ be the quotient  map. If $\gamma = (\gamma_1, \ldots, \gamma_n) \in \Gamma^n$ then $\pi_{\gamma}$ denotes the product of the maps $\pi_{\gamma_i}$.

Let $\alpha \in \Gamma^n$ and $\beta \in \Gamma^m$. We say that a function $f :A \fun B$ with $A \sub \VF^n$ and $B \sub \VF^m$  is \memph{$(\alpha, \beta)$-covariant} if it $(\pi_{\alpha}, \pi_{\beta})$-contracts to a function $f_\downarrow : \pi_{\alpha}(A) \fun \pi_{\beta}(B)$, that is, $\pi_{\beta} \circ f = f_\downarrow \circ \pi_{\alpha}$. For simplicity, we shall often suppress parameters in notation and refer to $(\alpha, \beta)$-covariant functions as $(\alpha, -)$-covariant or $(-,\beta)$-covariant or just  covariant functions.

A set  in $\VF$ is \memph{$\alpha$-invariant} if its characteristic function is $(\alpha, 0)$-covariant.

More generally, for sets $A$, $B$ with $\RV$-coordinates, a function $f : A \fun B$ is \memph{covariant} if every one of its $\VF$-fibers    is covariant (recall Terminology~\ref{rvfiber}). Similarly, a set is \memph{invariant} if every one of its $\VF$-fibers is invariant.
\end{defn}

\begin{defn}\label{prop:pseu}
A definable function $f : A \fun B$ is \memph{proper covariant} if
\begin{itemize}
  \item the sets $A_{\VF}$, $f(A)_{\VF}$ are bounded and the sets $A_{\RV}$, $f(A)_{\RV}$ are  doubly bounded,
  \item for each $\VF$-fiber $f_{t}$ of $f$ there is a $t$-definable tuple $(\alpha_t, \beta_t) \in \Gamma$ such that $f_t$ is $(\alpha_{t}, \beta_{t})$-covariant, $\dom(f_{t})$ is $\alpha_t$-invariant, and $\ran(f_{t})$ is $\beta_t$-invariant.
\end{itemize}
A definable set $A$ is \memph{proper invariant}   if $A_{\VF}$ is bounded, $A_{\RV}$ is doubly bounded, and for each $\VF$-fiber $A_t$ of $A$ there is a $t$-definable tuple $\alpha_t\in \Gamma$ such that $A_t$ is $\alpha_t$-invariant.
\end{defn}

Note that covariance makes sense even when the domain or the range of the function $f$ has no $\VF$-coordinates, in which case the contraction $f_\downarrow$ would be the empty function. In particular, if $B$  is a subset of $\RV^m$ or even $\mdl P(\RV^m)$ then $f$ being proper covariant simply means that every $f^{-1}(x)$ is proper invariant. Then $A$ being proper invariant may also be construed as the condition that its projection into the $\RV$-coordinates is proper covariant.

\begin{exam}
A proper invariant set  is not required to be doubly bounded. However, if $\bm U \in \RV^{\db}[*]$ then $\bb L \bm U$ is indeed a doubly bounded proper invariant set.
\end{exam}

\begin{rem}
In defining proper covariance above, if  $A$ is closed then  there is no need to demand that $f(A)_{\RV}$ be doubly bounded, because it is implied by the other conditions, as follows. There is an $\go$-partition $p: A \fun \Gamma$ such that, for each open polydisc $\gb$ in question, that is, for each open polydisc $\gb \sub A$ of the form $\go(a, p(a))$, $f(\gb)$ lies in the range of some $\VF$-fiber of $f$. By Lemma~\ref{vol:par:bounded}, $p(A)$ is doubly bounded. So the definable surjection $A \fun \vrv(f(A)_{\RV})$ induced by $f$ satisfies the assumption of Lemma~\ref{invar:db} below. So  $f(A)_{\RV}$ is doubly bounded.
\end{rem}

\begin{lem}\label{inv:crit}
Let $A$ be a definable set such that $A_{\VF}$ is bounded and $A_{\RV}$ is doubly bounded. Then $A$ is proper invariant if and only if $A$ is clopen.
\end{lem}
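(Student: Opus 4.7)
The plan is to prove the two implications separately. The forward direction is straightforward: if $A$ is proper invariant, then for each $\VF$-fiber $A_t$ the $t$-definable tuple $\alpha_t$ witnesses $A_t = A_t + \MM_{\alpha_t}^n$, so $A_t$ is a union of open polydiscs of radius $\alpha_t$ and hence open. For closedness, if $c \notin A_t$ and some $c' \in c + \MM_{\alpha_t}^n$ belonged to $A_t$, then by invariance $c \in c' + \MM_{\alpha_t}^n \sub A_t$, a contradiction; thus $c + \MM_{\alpha_t}^n \sub \VF^n \mi A_t$, showing the complement is open too. By the convention in Terminology~\ref{rvfiber}, every $\VF$-fiber being clopen means $A$ is clopen.

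For the reverse direction, fix $t \in A_{\RV}$; then $A_t \sub \VF^n$ is clopen and bounded (bounded since $A_t \sub A_{\VF}$). I will define a $t$-definable function $p : A_t \fun \Gamma$ by sending $a$ to the least $\gamma \in \Gamma$ with $\go(a, \gamma) \sub A_t$. To see this minimum exists, note that the $a$-definable set $S_a = \set{\gamma \in \Gamma \given \go(a, \gamma) \sub A_t}$ is nonempty (by openness of $A_t$ at $a$) and upward closed; by density of $\Gamma$, if its infimum $\gamma_0$ did not lie in $S_a$, then $\go(a, \gamma_0) = \bigcup_{\gamma > \gamma_0} \go(a, \gamma) \sub A_t$ would contradict $\gamma_0 \notin S_a$. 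Next I check that $p$ is an $\go$-partition of $A_t$: for $a' \in \go(a, p(a))$ the ultrametric identity $\go(a, p(a)) = \go(a', p(a))$ gives $p(a') \leq p(a)$, and the reverse inequality follows by symmetry since $\vv(a-a') > p(a) \geq p(a')$ forces $a \in \go(a', p(a'))$.

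With this in hand I apply Lemma~\ref{vol:par:bounded} to $A_t$, viewed as a set with no $\RV$-coordinates, using closedness and boundedness of $A_t$; the output is that $p(A_t)$ is doubly bounded. Since $p(A_t)$ is a $t$-definable bounded subset of the \omin-minimal sort $\Gamma$, there exists a $t$-definable $\alpha_t \in \Gamma$ with $\alpha_t \geq p(a)$ for every $a \in A_t$. Then $\go(a, \alpha_t) \sub \go(a, p(a)) \sub A_t$ for all such $a$, so $A_t$ is $\alpha_t$-invariant, and hence $A$ is proper invariant.

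The main obstacle will be the combined use of density of $\Gamma$ (to ensure the minimum defining $p(a)$ is attained) and Lemma~\ref{vol:par:bounded} --- it is the closedness of $A_t$ that feeds into the lemma's hypothesis, and it is the doubly boundedness of $p(A_t)$ that lets us collapse the pointwise invariance radii into a single uniform $t$-definable $\alpha_t$.
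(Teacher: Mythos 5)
Your proof is correct and follows essentially the same route as the paper: the ``only if'' direction is immediate from invariance, and for the ``if'' direction you build the obvious $\go$-partition $p$ from openness and feed closedness plus boundedness into Lemma~\ref{vol:par:bounded} to get $p(A_t)$ doubly bounded, then take a $t$-definable upper bound $\alpha_t$. The only cosmetic difference is that you run the argument fiberwise over each $t \in A_{\RV}$ while the paper applies the lemma to $A$ at once; the substance is identical.
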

\begin{proof}
The ``only if'' direction is clear. For the ``if'' direction, since every $\VF$-fiber of $A$ is open, there is an obvious  $\go$-partition $p: A \fun \Gamma$. Since every $\VF$-fiber of $A$ is also closed, by Lemma~\ref{vol:par:bounded}, $p(A)$ is doubly bounded. The claim follows.
\end{proof}

\begin{lem}\label{int:inv}
If $A$, $B$ are proper invariant subsets of $\VF^n \times \RV^m$ then $A \cap B$, $\RVH(A) \mi A$, and $B \mi A$ are also proper invariant.
\end{lem}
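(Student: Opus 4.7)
In all three cases the plan is to verify the three clauses of Definition~\ref{prop:pseu} directly. For $A\cap B$ and $B\mi A$, the $\VF$- and $\RV$-parts sit inside those of $A$ and $B$, so boundedness and double-boundedness are inherited. For $\RVH(A)$, the $\VF$-part is bounded because any $x\in\RVH(A)_{\VF}$ shares an $\RV$-polydisc with some $a\in A_{\VF}$, forcing $\vv(x_i)=\vv(a_i)$ and hence the same lower bound, while $\RVH(A)_{\RV}=A_{\RV}$. So the only real content is producing a $t$-definable invariance tuple on each $\VF$-fiber.

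For $A\cap B$ and $B\mi A$, the fiber-wise invariance is handled by setting $\gamma_t\coloneqq\max(\alpha_t,\beta_t)$ coordinatewise, where $\alpha_t,\beta_t$ are the given invariance tuples for $A_t,B_t$. Since $\gamma\ge\alpha$ yields $\MM_\gamma\subseteq\MM_\alpha$, any $\alpha$-invariant set is automatically $\gamma$-invariant; thus both $A_t,B_t$ are $\gamma_t$-invariant, and so are $A_t\cap B_t$ and $B_t\mi A_t$. For $\RVH(A)\mi A$, my plan is to first show that $\RVH(A)$ itself is proper invariant with the same invariance tuples $\alpha_t$ as $A$, and then apply the difference case above to $\RVH(A)$ and $A$.

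The key and only nontrivial step is then to show that every fiber $\RVH(A)_t=\RVH(A_t)$ is still $\alpha_t$-invariant. My plan is to write $A_t=\bigcup_{a\in A_t}\go(a,\alpha_t)$ as a union of open $\alpha_t$-polydiscs, exploit that $\RVH$ commutes with unions and with Cartesian products along the $\VF$-coordinates, and argue coordinatewise on a single disc $\go(a_i,\alpha_{t,i})$: if $\vv(a_i)\le\alpha_{t,i}$ then the disc sits inside the single $\RV$-polydisc $\rv^{-1}(\rv(a_i))=\go(a_i,\vv(a_i))$, whose $\vv(a_i)$-invariance weakens to $\alpha_{t,i}$-invariance; if $\vv(a_i)>\alpha_{t,i}$ then $\go(a_i,\alpha_{t,i})=\MM_{\alpha_{t,i}}$ is already an $\RV$-pullback, trivially $\alpha_{t,i}$-invariant. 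Either way the coordinate hull is $\alpha_{t,i}$-invariant, so $\RVH(A_t)$ is a union of $\alpha_t$-invariant sets, hence $\alpha_t$-invariant.

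I expect the main obstacle to be resisting a naive pointwise approach: given $y\in\RVH(A_t)$ with a witness $a\in A_t$ (so $\rv(y)=\rv(a)$) and a perturbation $r\in\MM_{\alpha_t}$, one might hope that $a'\coloneqq a+r\in A_t$ (by $\alpha_t$-invariance) supplies a witness for $y+r$, i.e.\ that $\rv(y+r)=\rv(a')$. This fails precisely when $\vv(a_i)>\alpha_{t,i}$, since cancellation in $a_i+r_i$ can push $\vv(a'_i)$ above $\vv(a_i)$ and break the inequality $\vv(y_i-a_i)>\vv(a'_i)$ needed for the $\rv$-match. The structural decomposition into $\alpha_t$-polydiscs sidesteps this by treating such ``thick'' coordinates as the full coset $\MM_{\alpha_{t,i}}$ from the start.
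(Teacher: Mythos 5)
Your proof is correct, but it takes a genuinely different route from the paper. The paper disposes of all three cases in two lines by invoking Lemma~\ref{inv:crit}: since $A$ and $B$ have bounded $\VF$-parts and doubly bounded $\RV$-parts, and every $\VF$-fiber of a proper invariant set is clopen, the fibers of $A\cap B$, $\RVH(A)\mi A$, and $B\mi A$ are again clopen (an $\RV$-pullback and its complement being clopen), and the criterion ``bounded + doubly bounded + clopen fibers $\Rightarrow$ proper invariant'' finishes the job. That criterion is nonconstructive about the invariance tuple and rests on the $\go$-partition machinery of Lemma~\ref{vol:par:bounded}. You instead verify Definition~\ref{prop:pseu} directly: taking $\gamma_t=\max(\alpha_t,\beta_t)$ coordinatewise handles $A\cap B$ and $B\mi A$ (with the trivial adjustment that $\alpha_t$ may be absent when $t\notin A_{\RV}$), and for the hull you decompose each $\alpha_t$-invariant fiber into open $\alpha_t$-polydiscs, use that $\RVH$ commutes with unions and with products of coordinate discs, and check coordinatewise that the hull of $\go(a_i,\alpha_{t,i})$ is either $\go(a_i,\vv(a_i))$ (when $\vv(a_i)\le\alpha_{t,i}$) or $\MM_{\alpha_{t,i}}$ (when $\vv(a_i)>\alpha_{t,i}$), both $\alpha_{t,i}$-invariant; your diagnosis of why the naive pointwise witness argument fails in the second case is exactly right. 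What each approach buys: the paper's is shorter and reuses a criterion that is needed elsewhere anyway, while yours is more elementary and self-contained (no appeal to Lemma~\ref{vol:par:bounded}) and yields explicit, $t$-definable invariance tuples --- in particular it shows $\RVH(A)$ is proper invariant with the \emph{same} tuples as $A$, which is slightly more information than the lemma asserts.
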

\begin{proof}
Clearly $(A \cap B)_{\VF}$ is bounded, $(A \cap B)_{\RV}$ is doubly bounded, and every $\VF$-fiber of $A \cap B$ is clopen; similarly for the other cases. So this follows from Lemma~\ref{inv:crit}.
\end{proof}

\begin{lem}\label{conti:covar}
Let $f : A \fun B$ be a definable continuous surjection between two proper invariant sets such that  every $\dom(f_t)$ is open. If every $\ran(f_t)$ is clopen then $f$ is proper covariant. If $f$ is bijective then the same holds under the weaker assumption that   every $\ran(f_t)$ is open.
\end{lem}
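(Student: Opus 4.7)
The plan is to verify the two conditions of Definition \ref{prop:pseu}. The boundedness and double-boundedness conditions hold automatically, since $A$ and $B = f(A)$ are proper invariant. So the task is, for each $\VF$-fiber $f_t$ of $f$, to produce $t$-definable $\alpha_t, \beta_t \in \Gamma$ such that $f_t$ is $(\alpha_t, \beta_t)$-covariant with $\dom(f_t)$ being $\alpha_t$-invariant and $\ran(f_t)$ being $\beta_t$-invariant. By Lemma \ref{inv:crit}, both $A$ and $B$ are clopen, so their $\VF$-fibers $A_{t_A}, B_{t_B}$ are clopen and bounded; continuity of $f$ then makes $\dom(f_t) = A_{t_A} \cap f^{-1}(B_{t_B})$ clopen and bounded in $\VF^n$, and $\ran(f_t)$ is bounded in $\VF^m$.

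I handle the second assertion first by reducing it to the first. If $f$ is bijective and $\ran(f_t)$ is open, then $f_t$ is a continuous function on the closed bounded set $\dom(f_t)$, so by Lemma \ref{cb:to:cb} the image $\ran(f_t)$ is closed and bounded; combined with openness, it is clopen, and the hypothesis of the first assertion is satisfied.

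To prove the first assertion I will construct two $\go$-partitions and apply Lemma \ref{vol:par:bounded} to extract uniform bounds from pointwise-defined moduli. For $\beta_t$, define $\omega' : \ran(f_t) \fun \Gamma$ by letting $\omega'(b)$ be the least $\gamma \in \Gamma$ such that $\go(b, \gamma) \sub \ran(f_t)$; the set of such $\gamma$ is a nonempty upward-closed subset of $\Gamma$ by openness of $\ran(f_t)$, and its infimum is attained by density of $\Gamma = \Q$ together with $\go(b, \gamma_0) = \bigcup_{\gamma > \gamma_0} \go(b, \gamma)$. The polydisc identity $\go(b', \omega'(b)) = \go(b, \omega'(b))$ for $b' \in \go(b, \omega'(b))$ immediately shows $\omega'(b') = \omega'(b)$, so $\omega'$ is an $\go$-partition. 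Since $\ran(f_t)$ is closed and bounded, Lemma \ref{vol:par:bounded} gives that $\omega'(\ran(f_t))$ is doubly bounded; setting $\beta_t = \sup \omega'(\ran(f_t))$, the set $\ran(f_t)$ equals the union of $\go(b, \beta_t)$ as $b$ ranges over $\ran(f_t)$, hence is $\beta_t$-invariant.

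For $\alpha_t$, associate to each $a \in \dom(f_t)$ the unique open polydisc $\gb_a \sub \ran(f_t)$ of radius $\beta_t$ containing $f_t(a)$, and define $\omega : \dom(f_t) \fun \Gamma$ by letting $\omega(a)$ be the least $\gamma$ with $\go(a, \gamma) \sub f_t^{-1}(\gb_a)$; continuity of $f_t$ at $a$ ensures the defining set is nonempty, and the same density argument makes the infimum attained. The same polydisc trick (if $a' \in \go(a, \omega(a))$, then $f_t(a') \in \gb_a$, so $\gb_{a'} = \gb_a$ and the two defining conditions coincide) shows $\omega$ is an $\go$-partition, and Lemma \ref{vol:par:bounded} applied to the closed bounded set $\dom(f_t)$ gives $\omega(\dom(f_t))$ doubly bounded. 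Setting $\alpha_t = \sup \omega(\dom(f_t))$, every $a \in \dom(f_t)$ satisfies $\go(a, \alpha_t) \sub f_t^{-1}(\gb_a) \sub \dom(f_t)$, which simultaneously yields $\alpha_t$-invariance of $\dom(f_t)$ and $(\alpha_t, \beta_t)$-covariance of $f_t$ via the induced map $\pi_{\alpha_t}(\dom(f_t)) \fun \pi_{\beta_t}(\ran(f_t))$, $\pi_{\alpha_t}(a) \efun \pi_{\beta_t}(f_t(a))$. The main obstacle is really just identifying these two $\go$-partitions so that Lemma \ref{vol:par:bounded} delivers the uniform radii; all other verifications are routine.
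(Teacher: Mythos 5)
Your overall strategy is sound and close to the paper's own: both arguments manufacture $\go$-partitions out of the pointwise moduli of invariance and continuity and then invoke Lemma~\ref{vol:par:bounded} to convert them into radii of invariance. You differ in two legitimate ways: you work one $\VF$-fiber at a time, producing $t$-definable tuples $(\alpha_t,\beta_t)$ exactly as Definition~\ref{prop:pseu} requires, where the paper extracts a single $\beta$ and then a single $\go$-partition $p$ of all of $A$; and you dispose of the bijective case by upgrading each $\ran(f_t)$ to clopen via Lemma~\ref{cb:to:cb} and citing the first case, whereas the paper instead notes that bijectivity makes the extra parameter redundant, so that its $\go$-partition $q$ lives on $B$ itself, whose $\VF$-fibers are clopen by Lemma~\ref{inv:crit}.

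There is, however, one load-bearing claim whose stated justification is wrong: that $\dom(f_t)$ is closed. You use this twice (to apply Lemma~\ref{vol:par:bounded} to $\dom(f_t)$, and to apply Lemma~\ref{cb:to:cb} in the bijective case), and you attribute it to ``continuity of $f$''. But continuity here means only that each $\VF$-fiber $f_t$ is continuous on its own domain (Terminology~\ref{rvfiber}); it says nothing about how the $\RV$-part of $f(a,t_A)$ varies with $a$, so it does not show that $A_{t_A}\cap f^{-1}(B_{t_B})$ is closed. The claim is nonetheless true, for a different reason: $A_{t_A}$ is clopen and bounded by Lemma~\ref{inv:crit}, and it is the disjoint union of the sets $\dom(f_{(t_A,t_B')})$ over all relevant $t_B'$, each of which is open by hypothesis; hence the complement of $\dom(f_t)$ inside $A_{t_A}$ is open, so $\dom(f_t)$ is closed (and bounded). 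With this one repair your fiberwise argument goes through; note that the paper's proof sidesteps the issue entirely by defining its second $\go$-partition on $A$ rather than on the individual domains, which is an alternative fix if you prefer not to argue closedness of each $\dom(f_t)$.
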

\begin{proof}
For every $(r, s) \in f_{\RV}$ and every $b \in \ran(f_{r,s})$, let
\[
q(b,r,s) = \min\set{\beta \in \Gamma \given \go((b,s), \beta) \sub \ran(f_{r,s})}.
\]
The definable function $q$ is clearly an $\go$-partition of its domain, which, by the assumption, is closed. By Lemma~\ref{vol:par:bounded}, $\ran(q)$ is doubly bounded by, say, $\beta \in \Gamma^+$. By continuity, there is another $\go$-partition $p: A \fun \Gamma$ such that, for every open polydisc $\gb \sub A$ in question, $f(\gb)$ lies in an open polydisc of radius $\beta$. By Lemmas~\ref{inv:crit} and \ref{vol:par:bounded} again,  $p(A)$ is doubly bounded as well. It follows that $f$ is  proper covariant.

For the second claim, since $f$ is bijective, for each $(b, s) \in B$ there is only one  $f_{r,s}$ with $(b,r, s) \in \ran(f_{r,s})$. So the parameter $r$ is redundant in the definition of $q$, and  $q$ is  simply an $\go$-partition of $B$. The rest of the argument is the same.
\end{proof}

\begin{lem}\label{conv:unif}
If $f : A \fun B$ is a proper covariant surjection then both $A$ and $B$ are proper invariant. If, in addition, $f$ is continuous and bijective then $f$ is a proper covariant homeomorphism, which just means that $f^{-1}$ is also proper covariant and continuous. Moreover, in that case, if $A'$ is a proper invariant subset of $A$ then $f \rest A'$ is a proper covariant homeomorphism as well.
\end{lem}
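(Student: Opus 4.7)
The plan is to handle the three claims in order, using Lemmas~\ref{inv:crit}, \ref{conti:covar}, \ref{db:to:db} and Corollary~\ref{conti:homeo} as the main inputs. For the first claim, boundedness of $A_{\VF}$, $B_{\VF}$ and double-boundedness of $A_{\RV}$, $B_{\RV}$ are built into proper covariance of $f$ together with surjectivity; the content is invariance of each $\VF$-fiber. I would write $A_{t_1}$ as the definable union $\bigcup_{t_2} \dom(f_{(t_1,t_2)})$ with $t_2$ ranging over a doubly bounded subset of $B_{\RV}$, each piece $\alpha_{(t_1,t_2)}$-invariant by hypothesis. Lemma~\ref{db:to:db} applied to the $t_1$-definable function $t_2 \mapsto \alpha_{(t_1,t_2)}$ into $\Gamma$ forces double-boundedness of its image, so one may take $\alpha_{t_1}$ to be any $t_1$-definable upper bound (e.g.\ the componentwise supremum); then each $\dom(f_{(t_1,t_2)})$, hence the union $A_{t_1}$, is $\alpha_{t_1}$-invariant. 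The argument for $B$ is symmetric.

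For the second claim, fix a $\VF$-fiber $f_{(t_1,t_2)}$. Its domain and range are clopen and bounded subsets of $\VF^n$, $\VF^{n'}$ respectively by Lemma~\ref{inv:crit} applied to the proper invariance of $A$ and $B$ just established, so Corollary~\ref{conti:homeo} upgrades $f_{(t_1,t_2)}$ to a homeomorphism. Hence $f^{-1}$ is continuous on every $\VF$-fiber, i.e.\ continuous in the sense of Terminology~\ref{rvfiber}. For proper covariance of $f^{-1}: B \fun A$ I invoke the bijective half of Lemma~\ref{conti:covar}: both sides are proper invariant, and each $\VF$-fiber of $f^{-1}$ has clopen (hence open) domain $\ran(f_{(t_1,t_2)})$ and clopen range $\dom(f_{(t_1,t_2)})$.

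For the third claim, $f \rest A'$ inherits continuity and bijectivity, and the strategy is once more Lemma~\ref{conti:covar} applied to $f \rest A' : A' \fun f(A')$, after showing $f(A')$ is proper invariant. Boundedness conditions on $f(A')$ descend from $B$; the issue is clopenness of each $\VF$-fiber $f(A')_{t_2}$. This fiber decomposes (disjointly, by bijectivity of $f$) as $\bigsqcup_{t_1} f_{(t_1,t_2)}(A'_{t_1} \cap \dom(f_{(t_1,t_2)}))$, with each piece clopen in $\VF^{n'}$ since $f_{(t_1,t_2)}$ is a homeomorphism and $A'_{t_1}$ is clopen. Openness is automatic; for closedness I would exploit that $B_{t_2} = \bigsqcup_{t_1} \ran(f_{(t_1,t_2)})$ is a closed disjoint union of clopens, so any limit point $b^\ast$ of $f(A')_{t_2}$ lies in a unique $\ran(f_{(t_1^\ast,t_2)})$, an open neighborhood of $b^\ast$ that meets $f(A')_{t_2}$ only in its closed $t_1^\ast$-piece, forcing $b^\ast$ into $f(A')_{t_2}$. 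Lemma~\ref{inv:crit} then gives proper invariance of $f(A')$, Lemma~\ref{conti:covar} gives $f \rest A'$ proper covariant, and applying the already-proven second claim to $f \rest A'$ itself supplies the homeomorphism conclusion. The principal obstacle is exactly this closedness step: the decomposition of $f(A')_{t_2}$ is definably infinite in general, and the resolution is that the ambient clopens $\ran(f_{(t_1,t_2)})$ topologically separate the pieces, so that no curve selection is required.
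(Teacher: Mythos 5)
Your proof is correct and follows essentially the same route as the paper's: Lemma~\ref{db:to:db} to get uniform invariance bounds from the doubly bounded $\RV$-part, Corollary~\ref{conti:homeo} to upgrade each $\VF$-fiber to a homeomorphism between its clopen bounded domain and range, and the bijective case of Lemma~\ref{conti:covar} to get proper covariance of $f^{-1}$ and of $f \rest A'$. The only (harmless) divergence is in the third claim, where you verify proper invariance of $f(A')$ directly via the disjoint clopen fiber decomposition before citing Lemma~\ref{conti:covar}, whereas the paper just notes that the clopen domains $\dom(f_t) \cap A'$ and clopen images $\ran(f_t) \cap f(A')$ let the proof of Lemma~\ref{conti:covar} go through and then appeals to the already-proved claims.
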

\begin{proof}
Since $f_{\RV}$ is  doubly bounded, by Lemma~\ref{db:to:db}, for all sufficiently large  $\epsilon \in \Gamma$, every $\VF$-fiber $f_{t}$ of $f$ is $(\epsilon, \beta_{t})$-covariant (but not necessarily $(\epsilon, \epsilon)$-covariant) and $\dom(f_{t})$,  $\ran(f_{t})$ are both $\epsilon$-invariant. The first claim follows.

For the second claim, since $\dom(f_{t})$ is clopen, by  Corollary~\ref{conti:homeo}, every $f_t$ is a homeomorphism. Since  $\ran(f_{t})$ is open, it follows from the continuity of $f^{-1}$, as in the proof of Lemma~\ref{conti:covar}, that every  $f^{-1}_t$ is  $(\delta, \epsilon)$-covariant  for all sufficiently large $\delta > \epsilon$.

For the third claim, clearly every $\dom(f_t) \cap A'$ is   clopen and, since $f_t$ is a homeomorphism, every $\ran(f_t) \cap f(A')$ is  clopen too. This is sufficient for the proof of (the first claim of) Lemma~\ref{conti:covar} to go through. By the second claim, then, $f \rest A'$ is a proper covariant homeomorphism.
\end{proof}

\begin{lem}\label{pr:co:home:com}
If $f : A \fun B$ and $g : B  \fun C$ are proper covariant homeomorphisms then $g \circ f$ is also a proper covariant homeomorphism.
\end{lem}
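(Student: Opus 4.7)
The plan is to reduce to the bijective form of Lemma~\ref{conti:covar} combined with Lemma~\ref{conv:unif}. By Lemma~\ref{conv:unif} applied separately to $f$ and $g$, the sets $A$, $B$, and $C$ are proper invariant, and the inverses $f^{-1}$ and $g^{-1}$ are each proper covariant and continuous. The composite $h = g \circ f : A \fun C$ is then a definable bijection between proper invariant sets; continuity is preserved under composition of $\VF$-fiber-continuous functions (Terminology~\ref{rvfiber}), so both $h$ and $h^{-1} = f^{-1} \circ g^{-1}$ are continuous. Once proper covariance of $h$ is established, the second clause of Lemma~\ref{conv:unif} will immediately upgrade $h$ to a proper covariant homeomorphism.

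To obtain proper covariance of $h$, I would apply the bijective half of Lemma~\ref{conti:covar}. Beyond continuity, bijectivity, and proper invariance of $A$ and $C$, its hypothesis requires that every $\dom(h_t)$ and $\ran(h_t)$ be open. Fix $t = (s,u) \in h_{\RV}$ and decompose
\[
\dom(h_{(s,u)}) \;=\; \bigcup_{r}\, \set{a \in \dom(f_{(s,r)}) \given f_{(s,r)}(a) \in \dom(g_{(r,u)})},
\]
the disjoint union being indexed by the intermediate $\RV$-parameter $r$ for which $f_{(s,r)}(a)$ can land in $\dom(g_{(\cdot,u)})$. By the proper covariance of $f$ and $g$ already in hand, each $\dom(f_{(s,r)})$ is $\alpha_{(s,r)}$-invariant and each $\dom(g_{(r,u)})$ is $\alpha_{(r,u)}$-invariant, hence both are open; the continuity of $f_{(s,r)}$ then makes each summand open as the preimage of an open set, and the entire union is therefore open. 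A symmetric argument, in which $g_{(r,u)}$ is treated as a homeomorphism (by Lemma~\ref{conv:unif}) onto its open range, shows that $\ran(h_{(s,u)})$ is a union of open sets and hence open.

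With these openness conditions in place, Lemma~\ref{conti:covar} delivers proper covariance of $h$, and Lemma~\ref{conv:unif} then yields the desired proper covariant homeomorphism. The only mildly subtle point is the bookkeeping in the decomposition of the $\VF$-fibers of the composite over the intermediate $\RV$-parameter $r$; the key observation is simply that the resulting summands are open precisely because $f$ and $g$ are themselves proper covariant, which guarantees that their fiber-domains are invariant, hence open.
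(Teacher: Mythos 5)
Your argument is correct, and it follows the same skeleton as the paper's proof: reduce via the second claim of Lemma~\ref{conv:unif} to showing that $g \circ f$ is proper covariant, and then via the bijective clause of Lemma~\ref{conti:covar} to showing that the domain and range of every $\VF$-fiber of $g \circ f$ are open. Where you diverge is in how that openness is verified. The paper uniformizes the covariance data: using the double boundedness of the $\RV$-parts (as in the proofs of Lemmas~\ref{conti:covar} and \ref{conv:unif}, via Lemma~\ref{db:to:db}) one finds $\epsilon, \epsilon', \epsilon''$ so that every $\VF$-fiber of $f$ is $(\epsilon,\epsilon')$-covariant and every $\VF$-fiber of $g$ is $(\epsilon',\epsilon'')$-covariant, whence the composite's fiber domains and ranges are actually $\epsilon$-invariant and $\epsilon''$-invariant, hence clopen. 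You instead decompose each fiber of the composite over the intermediate $\RV$-parameter $r$ and argue purely topologically: the summands $f_{(s,r)}^{-1}(\dom(g_{(r,u)}))$ are open because fiber domains of proper covariant maps are invariant (hence open) and the fibers of $f$ are continuous, and the range summands are open because each $g_{(r,u)}$ is a homeomorphism onto its open range --- which, incidentally, follows directly from $g$ being a proper covariant homeomorphism (continuity of $g^{-1}$) rather than from Lemma~\ref{conv:unif}; alternatively you could just run your domain argument on $h^{-1} = f^{-1}\circ g^{-1}$. Both routes are sound: the paper's uniformization yields the stronger conclusion of invariance with explicit radii, while yours gets away with plain openness, which is all that the bijective case of Lemma~\ref{conti:covar} requires.
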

\begin{proof}
By Lemma~\ref{conv:unif}, we just need to show that $g \circ f$ is proper covariant. Then, by Lemma~\ref{conti:covar}, we are reduced to showing that the domain and range of every $\VF$-fiber of $g \circ f$ are open. Arguing as in the proofs of Lemmas~\ref{conti:covar} and \ref{conv:unif}, we see that there are $\epsilon, \epsilon', \epsilon'' \in \Gamma$, each one  sufficiently larger than the next, such that every $\VF$-fiber of $f$ is $(\epsilon, \epsilon')$-covariant and every $\VF$-fiber of $g$ is $(\epsilon', \epsilon'')$-covariant. It follows that the domain of every $\VF$-fiber of $g \circ f$ is $\epsilon$-invariant and hence is clopen. Since the same holds for $(g \circ f)^{-1}$, the range of every $\VF$-fiber of $g \circ f$ is clopen as well.
\end{proof}

\begin{conv}\label{conv:can}
The trivial-looking operation we describe here is actually quite crucial for understanding the discussion below, especially those passages that involve special covariant bijections (defined below). For each definable set $A$, let
\[
\can(A) = \set{(a, \rv(a), t) \given (a, t) \in A \tand a \in A_{\VF}}.
\]
The obvious definable bijection $\can : A \fun \can(A)$ is  referred to as the \memph{regularization} of $A$.

Observe that, for a proper invariant set $A$, $\can(A)$ is proper invariant if and only if $A_{\VF}$ is  doubly bounded. Going forward, when $A$ is doubly bounded and proper invariant, it is often more convenient or even necessary to substitute $\can(A)$ for $A$. Whether such a substitution has been performed (tacitly) or not should be clear in the context (or rather, it is always performed).
\end{conv}

\subsection{Proper covariant bijections that are special}

\begin{defn}\label{defn:special:bijection}
Let $A$ be a doubly bounded proper invariant set  or, more generally, a finite disjoint union of such sets,  regularized in the sense of  Convention~\ref{conv:can}. Suppose that the first coordinate of $A$ is a $\VF$-coordinate (of course nothing is special about the first $\VF$-coordinate, we choose it simply for ease of notation).

Let $C \sub \RVH(A)$ be an $\RV$-pullback. Let
\[
\lambda: \pr_{>1}(C \cap A) \fun \VF
\]
be a definable continuous function whose graph is contained in $C$, that is, for each $\RV$-polydisc $\gp \sub C$, $\lambda$ restricts to a function
\[
\lambda_{\gp} : \pr_{>1}(\gp \cap A) \fun \pr_{1}(\gp).
\]
Suppose that there is an $\rv(\gp)$-definable tuple $(\alpha_{\gp}, \beta_{\gp}) \in \Gamma$ such that $\gp \cap A$ is $(\alpha_{\gp}, \beta_{\gp})$-invariant and $\lambda_{\gp}$ is  $(\beta_{\gp}, \alpha_{\gp})$-covariant. Choose a definable $\gamma \in \Gamma^+_0$  such that there is a definable point $t \in \gamma^\sharp$ and for all $\gp$,
\[
\gamma_{\gp} \coloneqq \rad(\pr_{1}(\gp)) + \gamma \geq \alpha_{\gp};
\]
the existence of such a $\gamma$ is guaranteed by Lemma~\ref{db:to:db} and the assumption that $\mathbb{S}$ is $\VF$-generated. For each $\gp$,  set
\[
t_{\gp} = t \cdot \rv(\pr_{1}(\gp)) \in \RV.
\]
Then the \memph{centripetal transformation $\eta$ on $A$ with respect to $\lambda$} is given by
\begin{equation*}\tag{CT}\label{centri}
\begin{cases}
  \eta (a, x) = (a - \lambda(x), x), & \text{if } (a, x) \in \gp \cap A \text{ and } \pi_{\gamma_{\gp}}(a) \neq \pi_{\gamma_{\gp}}(\lambda(x)),\\
  \eta (a, x) = (t_\gp, x), & \text{if } (a, x) \in \gp \cap A \text{ and } \pi_{\gamma_{\gp}}(a) = \pi_{\gamma_{\gp}}(\lambda(x)),\\
  \eta = \id, & \text{on } A \mi C.
\end{cases}
\end{equation*}
Observe that $\eta(A)$ is doubly bounded. The function $\lambda$ is called the \memph{focus} of $\eta$, the $\RV$-pullback $C$  the \memph{locus} of $\lambda$ (or $\eta$), and the pair $(\gamma, t)$   the \memph{aperture} of $\lambda$ (or $\eta$). Note that if $(\gamma, t)$ is the aperture of $\lambda$ then every other pair $(\gamma', t')$
of this form with $\gamma' \geq \gamma$ could be an aperture of $\lambda$ as well, so the aperture of $\lambda$ must be given as a part of $\lambda$ itself. Actually, all the data above should be regarded as part of $\lambda$, including the tuples $(\alpha_{\gp}, \beta_{\gp})$.

A \memph{special covariant transformation} $T$ on $A$ is an alternating composition of centripetal transformations and regularizations. The \memph{length} of such a special covariant transformation $T$, denoted by $\lh(T)$, is the number of centripetal transformations in it.

Choose a definable point $c \in t^\sharp$ (again, this is possible since $\mathbb{S}$ is assumed to be $\VF$-generated). If $(a, x) \in \gp \cap A$ and $\pi_{\gamma_{\gp}}(a) = \pi_{\gamma_{\gp}}(\lambda(x))$  then $(\lambda(x), x) \in \gp \cap A$. Thus  the second clause of (\ref{centri}) may be changed to
\[
\eta (a, x) = (a - \lambda(x) + \lambda(x)c, x).
\]
If so, the images from the first two clauses of (\ref{centri}) may  overlap, but we take their disjoint union and thereby always assume that the resulting function $\eta^\flat$ is injective. In so doing, a special covariant transformation may be \memph{lifted} to a \memph{special covariant bijection} $T^\flat$ on $A$. This of course does depend on the choice of the definable point $c \in t^\sharp$.
The image of $T^\flat$ is often denoted by $A^{\flat}$.
\end{defn}

The formulation of a special covariant bijection here is considerably more complicated than  that of a special bijection in \cite[Definition~5.1]{Yin:special:trans}. A special covariant bijection is a special bijection if all of the apertures are $(\infty, 0)$ in the obvious sense or the second clause of (\ref{centri}) does not occur. The extra generality is needed to achieve better control in transforming one proper invariant set into another.

\begin{rem}\label{bigger:aper}
If we change the aperture $(\gamma, t)$ of $\eta$ to $(\gamma', t')$ with $\gamma' > \gamma$ and write the resulting  centripetal transformation as $\eta'$ then  every open polydisc $\gb \sub \eta(A) \mi \eta'(A)$ of radius $\alpha_\gp$, where $\gp$ is the $\RV$-polydisc that contains $\eta^{-1}(\gb)$, has an extra $\RV$-coordinate $t_\gp$ that is contracted from open discs of radius $\gamma_\gp$ in the first $\VF$-coordinate of $A$. Each of these open polydiscs has a counterpart in $\eta'(A) \mi \eta(A)$, in which $t_\gp$ is replaced by  $(\MM_{\gamma_\gp} \mi \MM_{\gamma'_\gp}) \cup t'_{\gp}$. On the other hand, $\eta(A) = \eta'(A)$ if and only if the second clause of (\ref{centri}) does not occur.
\end{rem}

\begin{rem}\label{spec:addendum}
Suppose that $A$ is proper $\delta$-invariant but is not doubly bounded. Then, by definition, there  are no special covariant transformations on $A$. We can prepare $A$, though, as follows. Choose a definable point $t_\delta \in \delta^\sharp$. For each open polydisc $\gb_1 \times \ldots \times \gb_n \times t \sub A$ of radius $\delta$, if $\gb_i= \MM_\delta$ then replace it with $t_\delta$. The resulting definable set, or rather, the resulting finite disjoint union of definable sets, is doubly bounded and remains $\delta$-invariant. We may and do regard this operation as a centripetal transformation with respect to the constant focus map $0$.
\end{rem}

\begin{lem}\label{spec:inve}
Special covariant bijections  are  partially differentiable proper covariant homeomorphisms with the constant Jacobian $1$.
\end{lem}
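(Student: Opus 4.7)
The plan is to proceed by induction on the length $\lh(T^\flat)$ of the underlying special covariant transformation. By Lemma~\ref{pr:co:home:com}, composites of proper covariant homeomorphisms are proper covariant homeomorphisms, and partial differentiability together with constancy of the Jacobian $1$ is preserved under composition via the chain rule. Hence it suffices to handle the two atomic building blocks separately: a single regularization $\can$, and a single lifted centripetal transformation $\eta^\flat$.

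For $\can : A \fun \can(A)$, the $\VF$-coordinates are untouched while the added $\RV$-coordinate $\rv(a)$ is determined by the existing first $\VF$-coordinate $a$. Partial differentiability with $\jcb_\VF \can = 1$ is then immediate. Continuity holds because $\rv$ is locally constant on $\VF^\times$; the inverse is the coordinate projection forgetting the added $\RV$-coordinate, which is clearly continuous. Proper covariance and double boundedness of $\can(A)$ transfer directly from $A$.

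For a lifted centripetal transformation $\eta^\flat$ on $A$ with focus $\lambda$, locus $C$, and aperture $(\gamma, t)$, on each $\RV$-polydisc $\gp \sub C$ the intersection $\gp \cap A$ splits into the two clopen pieces defined by $\pi_{\gamma_\gp}(a) \neq \pi_{\gamma_\gp}(\lambda(x))$ versus its negation, with $\eta^\flat = \id$ on $A \mi C$. On each piece $\eta^\flat$ has the affine form $(a, x) \efun (a - \lambda(x) + k(x),\, x)$, where $k(x)$ does not involve the first $\VF$-coordinate $a$; the Jacobian matrix with respect to the $\VF$-coordinates is therefore lower-triangular with $1$'s on the diagonal, giving $\jcb_\VF \eta^\flat = 1$. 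Continuity on each clopen piece follows from continuity of $\lambda$, so $\eta^\flat$ is globally continuous, and by design it is injective.

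The crux is proper covariance. Using that $\gp \cap A$ is $(\alpha_\gp, \beta_\gp)$-invariant and $\lambda_\gp$ is $(\beta_\gp, \alpha_\gp)$-covariant, a direct check shows that each affine piece of $\eta^\flat$ is $(\alpha_\gp, \beta_\gp)$-covariant, so its image is $(\alpha_\gp, \beta_\gp)$-invariant. The new $\RV$-data $t_\gp = t \cdot \rv(\pr_1(\gp))$ arising from the focal clause stay doubly bounded as $\gp$ varies, via Lemma~\ref{db:to:db} applied to the definable assignment $\rv(\pr_1(\gp)) \efun t_\gp$ together with double boundedness of $A_\RV$. Hence $\eta^\flat(A)$ is proper invariant. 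Applying Corollary~\ref{conti:homeo} fiberwise (the $\VF$-fibers being closed and bounded by Lemma~\ref{cb:to:cb}), $\eta^\flat$ is a homeomorphism on each $\VF$-fiber, and Lemma~\ref{conv:unif} then promotes this to the statement that $\eta^\flat$ is a proper covariant homeomorphism. The main obstacle is precisely the fiber-by-fiber aperture bookkeeping through the two clauses of (\ref{centri}); once the aperture data $(\alpha_\gp, \beta_\gp)$ and $(\gamma_\gp, t_\gp)$ are tracked carefully, everything else is routine.
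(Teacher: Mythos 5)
Your reduction to a single regularization and a single lifted centripetal transformation (via Lemma~\ref{pr:co:home:com} and the chain rule), and your continuity, injectivity and Jacobian-$1$ checks, are all fine and match the intended structure. The gap is exactly at the step you call a ``direct check.'' The two clauses of (\ref{centri}) split $\gp \cap A$ at the scale $\gamma_\gp = \rad(\pr_1(\gp)) + \gamma \geq \alpha_\gp$, not at $\alpha_\gp$: whether $\pi_{\gamma_\gp}(a) = \pi_{\gamma_\gp}(\lambda(x))$ holds is not stable under perturbing $a$ by $\MM_{\alpha_\gp}$, nor under perturbing $x$ by $\MM_{\beta_\gp}$, because the $(\beta_\gp,\alpha_\gp)$-covariance of $\lambda_\gp$ only controls the variation of $\lambda$ up to $\MM_{\alpha_\gp}$, which contains $\MM_{\gamma_\gp}$ properly in general. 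So the two affine pieces are not $(\alpha_\gp,\beta_\gp)$-invariant, and their images are not $(\alpha_\gp,\beta_\gp)$-invariant either; for instance, with $\lambda = \id$ on $\gp = (1+\MM)^2$ and $\alpha_\gp = \beta_\gp = 0$, the image of the first clause is $(\MM \mi \MM_{\gamma_\gp}) \times (1+\MM)$, which is $\gamma_\gp$-invariant but not $\alpha_\gp$-invariant in the first coordinate. Consequently your claimed invariance of $\eta^\flat(A)$ and the clopen-ness of the $\VF$-fiber ranges, which you need before Corollary~\ref{conti:homeo} and Lemma~\ref{conv:unif} can be invoked, do not follow from the covariance data attached to $\lambda$ alone.

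What is actually needed, and what the paper's proof supplies, is a uniform refinement of the scale in the $x$-coordinates coming from the \emph{continuity} of $\lambda$: since $\pr_{>1}(C\cap A)$ is clopen (proper invariance plus Lemma~\ref{inv:crit}) and the $\gamma_\gp$ are doubly bounded, one argues as in the proof of Lemma~\ref{conti:covar} (an $\go$-partition together with Lemma~\ref{vol:par:bounded}) that for every sufficiently large $\epsilon$ there is a single $\delta$ such that $\lambda$ maps each open polydisc of radius $\delta$ contained in $\pr_{>1}(C\cap A)$ into an open disc of radius $\epsilon \geq \gamma_\gp$. With this $\delta$, the lifted centripetal transformation sends every open polydisc of radius $(\gamma_\gp,\delta)$ in $\gp \cap A$ onto an open polydisc of the same radius, which is what gives proper invariance of $\eta^\flat(A)$ and clopen-ness of the domains and ranges of its $\VF$-fibers; Lemmas~\ref{conti:covar} and~\ref{conv:unif} then finish the argument as you intend, and your remarks on $t_\gp$ and Lemma~\ref{db:to:db} are fine. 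Without this uniform-$\delta$ step, the proper covariance claim — the crux you yourself identify — does not go through.
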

\begin{proof}
In the notation of Definition~\ref{defn:special:bijection}, since $\pr_{>1}(C \cap A)$ is clopen (implied by proper invariance and Lemma~\ref{inv:crit}) and  $\lambda$ is  continuous, arguing as in the proof of Lemma~\ref{conti:covar}, we see that for all sufficiently large $\epsilon \in \Gamma$ there is an even larger $\delta \in \Gamma$ such that the image under $\lambda$ of any open polydisc of radius $\delta$ contained in $\pr_{>1}(C \cap A)$ is contained in an open disc of radius $\epsilon$. Thus, for all $\RV$-polydisc $\gp \sub C$, the image under the lifted centripetal transformation $\eta^\flat$ of any open polydisc of radius $(\gamma_{\gp}, \delta)$ contained in $\gp  \cap A$ is again an open polydisc of radius $(\gamma_{\gp}, \delta)$. This means that $\eta^\flat(A)$ is proper invariant and, moreover, the domain and range of every $\VF$-fiber of $\eta^\flat$ are clopen. More generally, since the $\RV$-coordinates are carried along by centripetal transformations, it is not hard to see that this actually holds  for all  special covariant bijections. Now the claim follows from  Lemmas~\ref{conti:covar} and \ref{conv:unif} (that the Jacobian is $1$ everywhere is obvious).
\end{proof}

So calling them ``special proper covariant bijections'' may seem more appropriate, but the verbosity is somewhat confusing.

\begin{rem}\label{spec:is:iso}
Let $(A,\omega)$, $(B, \sigma)$ be objects of $\mgVF[k]$ and $F : A \fun B$ a special covariant bijection. Then  $F$ represents a $\mgVF[k]$-isomorphism if $\omega(x) = \sigma(F(x))$ for all $x \in A$ outside a definable subset of $\VF$-dimension less than $k$ (recall Remark~\ref{mor:equi}).
\end{rem}

\begin{lem}[{\cite[Theorem~5.5]{Yin:special:trans}}]\label{special:bi:term:constant}
Suppose that $A$ is a definable set in $\VF$ and $f : A \fun \mdl P(\RV^m)$ is a definable function. Then there is a special bijection $T$ on $A$ such that $A^\flat$ is an $\RV$-pullback and the function $f \circ T^{-1}$ is $\rv$-contractible.
\end{lem}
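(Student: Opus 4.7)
The plan is to induct on the $\VF$-ambient dimension $n$, where $A \sub \VF^n$. The base case $n = 0$ is vacuous: $A$ is a point (or empty), the identity is trivially a special bijection, and every function out of a single point is $\rv$-contractible. Assume $n \geq 1$ and that the lemma holds for definable subsets of $\VF^{n-1}$. The overall strategy is to exploit the syntactic normal form of Convention~\ref{topterm}: every formula defining $A$ or the graph of $f$ uses $\VF$-terms only inside the symbol $\rv$, so all of the relevant $\VF$-data is packaged into a finite list $P_1(x), \dots, P_k(x) \in \VF(\bb S)[x_1,\dots,x_n]$ of top-term polynomials. It suffices to produce a special bijection $T$ such that each $\rv \circ P_i \circ T^{-1}$ is $\rv$-contractible, for then so will be the composite $f \circ T^{-1}$ and the characteristic function of $A^\flat$, which forces $A^\flat$ to be an $\RV$-pullback.

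The crux is the first $\VF$-coordinate. View each $P_i$ as a polynomial in $x_1$ with coefficients in $\VF[x_2,\dots,x_n]$. For each $\tilde x = (x_2,\dots,x_n)$, the roots of $P_i(\,\cdot\,,\tilde x)$ form a finite set algebraic over $\tilde x$ (Lemma~\ref{dcl:to:ac}), yielding a definable finitary function $R : \pr_{>1}(A) \fun \mdl P(\VF)$. Apply Corollary~\ref{part:rv:cons} to $R$ to obtain a definable finite partition of $\pr_{>1}(A)$ on each piece of which $R$ splits into finitely many definable functions $\rho_{\ell, j}$ taking values in either a single point or a definable open disc. These $\rho_{\ell,j}$ are the candidate foci. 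Perform a finite sequence of centripetal transformations with these foci: the point is that if $x_1$ is closer to some root $\rho$ of $P_i(\,\cdot\,,\tilde x)$ than to the other roots, then in the expansion $P_i(\rho+y,\tilde x) = \sum_j c_{i,j}(\tilde x)(\rho+y)^j$ the dominant monomial in $y = x_1 - \rho$ is selected purely by $\rv(y)$ together with the $\rv$-classes of the coefficients. The second clause of (\ref{centri}), which collapses discs too close to the focus to a single auxiliary $\RV$-point, absorbs the only region where this dichotomy fails. Composing these centripetal transformations produces a special bijection $T_1$ on $A$ such that on each $\RV$-polydisc of $T_1(A)$ the functions $\rv \circ P_i$ depend on the first coordinate only through its $\rv$-class.

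Finally, invoke the inductive hypothesis on the parameter $\tilde x$: by compactness applied fiberwise to the parametrized family of definable functions that remain nontrivial in the last $n-1$ coordinates, we obtain a uniformly defined special bijection $T_2$ on the coordinates $(x_2,\dots,x_n)$ that renders the remaining data $\rv$-contractible. The composition $T = T_2 \circ T_1$ is the desired special bijection, and $T(A)$ is an $\RV$-pullback because the $\rv$-contractibility of its defining formula makes $\partial_{\RV} T(A)$ empty. The main obstacle is the coupling between coordinates: the root functions $\rho_{\ell,j}(\tilde x)$ are algebraic rather than polynomial in $\tilde x$, so their $\rv$-classes need not themselves be $\rv$-contractible, and if two roots approach each other the centripetal transformations at them can interfere. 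These issues are absorbed respectively by Corollary~\ref{part:rv:cons} (which refines the base so each $\rho_{\ell,j}$ is an honest definable function with controlled image) and by the second clause of (\ref{centri}) (which replaces the overlap zone by a single $\RV$-point, at the cost of introducing extra $\RV$-coordinates that the subsequent induction must carry along).
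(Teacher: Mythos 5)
The paper does not prove this lemma at all; it is quoted from \cite[Theorem~5.5]{Yin:special:trans}, so your proposal can only be judged on its own merits, and as it stands it has genuine gaps at exactly the hard points. First, a mismatch of notions: the statement is about \emph{special bijections}, and, as the paper notes right after Definition~\ref{defn:special:bijection}, a special covariant bijection is a special bijection precisely when the second clause of (\ref{centri}) does not occur. Your argument leans on that second clause twice (to ``absorb the region where the dichotomy fails'' and to ``replace the overlap zone by a single $\RV$-point''), so even if it worked it would produce a special covariant bijection, not the special bijection the lemma asserts (and which later uses in the paper, e.g.\ in Proposition~\ref{simplex:with:hole:rvproduct} and Lemma~\ref{special:term:constant:disc}, actually require). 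Moreover the collapse does not in fact repair anything: in the lifted, injective form the second clause is just an affine translation of the offending discs, and on such a disc $\rv(P_i)$ genuinely varies (already for $P(x)=x-d$ on the disc $\rv(x)=\rv(d)$), so $\rv\circ P_i\circ T^{-1}$ is not rendered contractible there. In the actual proof this region is handled by recursing with further centripetal transformations at definable centres of the relevant algebraic discs, with termination coming from a second induction on the degree/complexity of the top terms --- a complexity measure your sketch never introduces.

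Second, your candidate foci are not legitimate. The individual roots $\rho_{\ell,j}(\tilde x)$ are algebraic over $\tilde x$ but are not definable functions of $\tilde x$, and a focus map must be a single-valued definable function whose graph lies in an $\RV$-pullback locus. Corollary~\ref{part:rv:cons} does not fix this: its hypothesis (that $\rv$ be injective on each fibre of the finitary function, as in Lemma~\ref{at:rv:split}) can fail for root sets, since distinct roots may share a leading term, and its conclusion only splits the function over each open polydisc $\ga$ with parameters $t_{\ga j}$ depending on $\ga$, which is far from a globally definable focus on a locus. Finally, even granting $\rho$ is an exact root, $\rv(P_i(\rho+y,\tilde x))$ is determined by $\rv(y)$ and the coefficients only off the finitely many $\RV$-discs where $\rv(y)$ coincides with $\rv(\rho'-\rho)$ for another root $\rho'$; these exceptional discs are precisely where the induction on degree (with definable substitutes for the roots, e.g.\ symmetric functions of them, and $C$-minimality) is needed, and where your argument instead appeals to the unavailable collapse clause. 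So the skeleton (induction on $n$, reduction to top terms, fibrewise treatment of one coordinate plus compactness) matches the cited proof, but the one-variable core and the definability of the foci --- the substance of Theorem~5.5 of \cite{Yin:special:trans} --- are not established.
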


\begin{rem}\label{special:pseudo}
The proof of Lemma~\ref{special:bi:term:constant} in \cite{Yin:special:trans} actually shows that for every definable set $A$, there is a special bijection $T$ on  $\RVH(A)$  such that $A^{\flat}$ is an $\RV$-pullback. In the present context, for reasons that will become clear, we would like to extend this result, using special covariant bijections  on proper invariant sets $A$. This is not guaranteed by Lemma~\ref{special:bi:term:constant} since the focus maps in $T$ are not required to be (suitably) covariant within each $\RV$-polydisc; except when $A_{\VF} \sub \VF$, in which case  the covariance requirement is half vacuous and it is easy to see how to turn $T$ into a special covariant bijection whose components all have the same aperture. The technique underlying such a modification could be generalized so as to be applicable in other similar situations, see  Terminology~\ref{FMT} below.
\end{rem}

\begin{ter}\label{FMT}
Often in a construction there is obstructing data that is contained in finitely many distinct definable open discs $\ga_i$, all of the same radius, and is presented in the form of a definable function $f : A \fun \VF$ with $A$ a set in $\VF$ and $\ga_i \sub \pr_1(A)$. Then the \memph{FMT procedure} may be applied to circumvent the difficulties, which is simply this: choose a definable point $a_i$ in each $\ga_i$ and replace $f$ with another function $f' : A \fun \VF$ given by $(a,b) \efun f(a_i, b)$ if $a \in \ga_i$ and $(a,b) \efun f(a,b)$ otherwise, now the construction over each $\ga_i$ depends on $a_i$ and hence no longer varies.
\end{ter}

\begin{prop}\label{simplex:with:hole:rvproduct}
Let $H$ be a proper invariant $\RV$-pullback, not necessarily doubly bounded, and $(A_i)_i$ a definable finite partition of $H$ such that each $A_i$ is proper invariant. Then there is a special covariant bijection $T$ on $H$ such that
\begin{itemize}
  \item every $T \rest A_i$ is indeed a special covariant bijection,
  \item every $A_i^{\flat} \sub H^{\flat}$  is a doubly bounded $\RV$-pullback.
\end{itemize}
\end{prop}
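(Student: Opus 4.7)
The plan is to proceed by induction on $n$, the number of $\VF$-coordinates of $H$. The base case $n=0$ is immediate, since each $A_i$ then has only $\RV$-coordinates and, by proper invariance, $H_{\RV}$ (hence each $A_i$) is already doubly bounded, so $T$ can be taken to be the identity.

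For the inductive step, I would use one round of centripetal transformations to clear the first $\VF$-coordinate of each $A_i$'s internal boundaries, producing a configuration that is an $\RV$-pullback in that coordinate, and then apply the inductive hypothesis to the remaining $n-1$ coordinates. Concretely, for each $\RV$-polydisc $\gp \subseteq H$ and each $y \in \pr_{>1}(\gp)$, \cmin-minimality expresses the fiber $(A_i \cap \gp)_y$ in the first $\VF$-coordinate as a boolean combination of finitely many definable subdiscs of $\pr_1(\gp)$. After a definable finite partition in the parameter $y$, this yields for each $i$ finitely many definable disc-valued functions $\gd_{i,j} : \pr_{>1}(\gp \cap A_i) \fun \DC$ encoding the boundary discs. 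Applying the FMT procedure (Terminology~\ref{FMT}) to pick a definable point inside each disc produces focus maps $\lambda_{i,j}$. Proper invariance of $A_i$ forces the relevant invariance parameters $\alpha_{\gp}^i$ to lie in a doubly bounded subset of $\Gamma$ by Lemmas~\ref{db:to:db} and~\ref{vol:par:bounded}, so I can choose a single sufficiently large aperture $(\gamma, t)$ and assemble these foci into one centripetal transformation $\eta_1$ on $H$ whose locus meets each $A_i$ only in its own boundary region and is identity on the other $A_j$'s. Following $\eta_1$ by a regularization, the image of each $A_i$ becomes an $\RV$-pullback in the first $\VF$-coordinate, the boundary discs having been collapsed into new $\RV$-labels.

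Applying the inductive hypothesis to the resulting configuration, viewing each transformed $A_i$ as a family indexed by the new $\RV$-labels and uniformized via compactness as in the proof of Lemma~\ref{special:bi:term:constant}, produces the remaining special covariant bijection. The total composition is a special covariant bijection $T$ on $H$, and by construction each restriction $T \rest A_i$ is again a special covariant bijection (identity steps arising from foci targeting $A_j$ for $j \neq i$ are simply dropped). That each $A_i^{\flat}$ is doubly bounded follows from proper invariance of $A_i$ together with Lemma~\ref{spec:inve}, which ensures that $T$ is a proper covariant homeomorphism and hence preserves doubly bounded $\Gamma$-data.

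The main technical obstacle is the consistency check at each inductive step: ensuring that the foci $\lambda_{i,j}$ are simultaneously proper covariant with compatible apertures across the partition, and that applying $\eta_1$ does not destroy proper invariance of $A_j$ for $j \neq i$. Both reduce to careful bookkeeping of the finitely many invariance parameters using Lemma~\ref{db:to:db} to bound them uniformly, generalizing the technique sketched at the end of Remark~\ref{special:pseudo} for the one-$\VF$-coordinate case.
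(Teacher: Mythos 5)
There is a genuine gap, and it sits exactly where the paper's own proof spends most of its effort. Your plan is to clear the first $\VF$-coordinate and then apply the inductive hypothesis to the remaining coordinates ``fiberwise over the new $\RV$-labels, uniformized via compactness.'' But after your first step the set still has $n$ $\VF$-coordinates, so invoking the inductive hypothesis means applying it to the fibers over the first coordinate and then fusing the resulting $a$-definable special covariant transformations ($a \in \pr_1$) into a single definable one on $H$. Compactness only gives you a definable family of fiberwise transformations; it does not make the assembled focus maps proper covariant or continuous in the first coordinate: two points $a, a'$ lying in the same open disc of the relevant radius can have fiberwise foci that send the same open polydisc into \emph{different} discs of the target radius $\beta_{\gp}$, and then no aperture works. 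Controlling this requires the specific machinery the paper deploys --- Corollary~\ref{part:rv:cons} together with Lemma~\ref{ball:to:ac} to show the exceptional discs in the base form a definable finite set (so FMT applies), Lemmas~\ref{vol:par:bounded} and~\ref{db:to:db} to extract a single definable aperture, and Proposition~\ref{prop:continuity:fiberwise} plus FMT for continuity of the fused focus map --- none of which is replaced by ``careful bookkeeping of invariance parameters using Lemma~\ref{db:to:db}.'' The same difficulty already infects your first step: your foci for the first coordinate are functions of the other $n-1$ $\VF$-coordinates, and choosing points inside the varying boundary discs $\gd_{i,j}(y)$ via FMT does not make the resulting $\lambda_{i,j}$ continuous and $(\beta_{\gp},\alpha_{\gp})$-covariant, which Definition~\ref{defn:special:bijection} demands. (By contrast, the paper's ordering --- inductive hypothesis on the fibers over the first coordinate, then a one-variable special bijection on $H_1$ whose foci are finitely many points --- keeps the covariance requirement on the base foci essentially vacuous and isolates the fusion problem, which it then solves explicitly.)

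Two smaller points. First, a single centripetal transformation translates each fiber by one value $\lambda(x)$, so it cannot collapse several boundary discs per fiber at once; making the first coordinate an $\RV$-pullback needs an iterated composition (as in Lemma~\ref{special:bi:term:constant}), not ``one round.'' This is repairable but should be said. Second, since $H$ is not assumed doubly bounded, you must first invoke Remark~\ref{spec:addendum} to reduce to the doubly bounded case before any special covariant transformation exists at all; double-boundedness of the $A_i^{\flat}$ then comes from that reduction and the form of the centripetal transformations, not from Lemma~\ref{spec:inve}, which only asserts that special covariant bijections are proper covariant homeomorphisms.
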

\begin{proof}
To begin with, by Remark~\ref{spec:addendum}, we may assume that $H$ is doubly bounded. It is equivalent and less cumbersome to construct a special covariant transformation such that its lift is as required (see the last paragraph of this proof). To that end, we proceed by induction on $n$, where $H_{\VF} \sub \VF^n$. The base case $n=1$ follows from Lemma~\ref{special:bi:term:constant} and the discussion in Remark~\ref{special:pseudo}.

For the inductive step, let $A_{i1} = \pr_1(A_i)$ and $H_1 = \pr_1(H)$. By Lemma~\ref{int:inv}, we may assume that $(A_{i1})_i$ form a partition of $H_1$. For each $a \in H_1$, the inductive hypothesis gives an $a$-definable special covariant transformation $T_a$ on $H_a$ such that every $T_a \rest A_{ia}$ is a special covariant transformation and every $T_a(A_{ia}) \sub T_a(H_a)$ is an $\RV$-pullback. Our goal then is to fuse together these transformations $T_a$ so to obtain one special covariant transformation on $H$ as desired. This is in general not possible without first modifying $H_1$ in a suitable way, which constitutes the bulk of the work below.

Let $U_{ak}$ enumerate the loci of the components of $T_{a}$, $\lambda_{ak}$ the corresponding continuous focus maps, and $(\gamma_{ak}, t_{ak})$ their apertures; for each $\RV$-polydisc $\gq \sub U_{ak}$, the map $\lambda_{ak\gq}$ is $(\alpha_{ak\gq}, \beta_{ak\gq})$-covariant. By compactness, there is a definable set $V \sub \VF \times \RV^l$ such that $\pr_1(V) = H_1$ and, for each $a \in H_1$, the set $V_a$ contains the following $\RV$-data of $T_a$:
\begin{itemize}
  \item $\rv(T_{a} \rest A_{ia})$, $\rv(T_{a})$, and $\rv(U_{ak})$,
  \item the $\VF$-coordinates targeted by the focus maps $\lambda_{ak}$,
  \item the $a$-definable apertures $(\gamma_{ak}, t_{ak})$,
   \item the $(a, \rv(\gq))$-definable tuples $(\alpha_{ak\gq}, \beta_{ak\gq})$;
\end{itemize}
the set  $\rv(T_{a})$ is determined by other data in this list and hence is redundant, but we add it in anyway for clarity. Note that $V$ is  doubly bounded but not necessarily proper invariant.

Let $\phi(x, y)$ be a quantifier-free formula that defines $V$ and $G_{i}(x)$ enumerate its top terms. By Lemma~\ref{special:bi:term:constant}, there is a special bijection $R : H_1 \fun H_1^\flat$ such that each $A_{i1}^\flat \sub H_1^\flat$ is an $\RV$-pullback and every function $G_{i} \circ R^{-1}$ is $\rv$-contractible. This implies that,  for every $\RV$-polydisc $\gp \sub H_1^\flat$, the $\RV$-data $V_a$ is constant over $a \in R^{-1}(\gp)$. Also, since $H_1$ is an $\RV$-pullback of $\RV$-fiber dimension $0$, by \cmin-minimality and Lemma~\ref{RV:fiber:dim:same}, each focus map in $R$ consists of only finitely many points.

Fix a sufficiently large definable $\delta \in \Gamma$ such that every $A_i$ is proper $\delta$-invariant.  Then there is an algebraic set of open discs $\ga_j \sub H_1$ of radius $\delta$ such that the restriction of $R$ to $H_1 \mi \bigcup_j \ga_j$ is actually a special covariant bijection --- the reason simply being that, after deleting all the discs $\ga_j$, each focus map in $R$ lies outside the set in question. Each $\ga_j$ contains an $\code {\ga_j}$-definable point $a_j$ and, for all $a, a' \in \ga_j$ and every $A_i$, we have $A_{ia} = A_{ia'}$ (because  $A_i$ is $\delta$-invariant). It follows that, over each $\ga_j$, we can use the same  transformation $T_{a_j}$ to achieve the desired effect. This is the first instance where FMT is applied, namely to $T$, and then $R$ is adjusted so that every $\ga_j$ is mapped to the same $\RV$-disc $t^\sharp_{\delta}$, where $t_{\delta} \in \delta^\sharp$ is a chosen definable point.


Therefore, we may assume that $R$ is a special covariant bijection whose components all have the same  aperture $(\delta, t_{\delta})$.

By compactness, there is  a definable finite partition of $H_1$ such that, over each piece, the  focus maps $\lambda_{ak}$ are uniformly defined by formulas $\lambda_k(a, y, z)$. By \cmin-minimality, there are only finitely many open discs $\ga_i \sub H_1$ of radius $\delta$ that are split by this partition. Thus, by FMT again, we may assume that the partition is indeed trivial. Since $R$ induces a special covariant bijection on $H$, we may actually assume that $R$ is trivial as well.

Over each $t^\sharp \sub H_1$, we can now write $U_{ak}$ as $U_{tk}$, $\alpha_{ak\gq}$ as $\alpha_{k\gq}$ (the first $\RV$-coordinate of $\gq$ is actually $t$), and so on. We are almost ready to fuse together the transformations $T_a$ over $a \in t^\sharp$. The remaining problem  is that, for any $a, a' \in t^\sharp$, although the two focus maps $\lambda_{a1\gq}$, $\lambda_{a'1\gq}$ are both $(\alpha_{1\gq}, \beta_{1\gq})$-covariant,  the images of the same open polydisc of radius $\alpha_{1\gq}$ may lie in two distinct open discs of radius $\beta_{1\gq}$. To solve this problem, consider an open polydisc $\gp$ of radius $\alpha_{1\gq}$ that is contained in $\dom(\lambda_{a1\gq})$ for some (hence all) $a \in t^\sharp$. For each $b \in \gp$, let $\lambda_{1b} : t^\sharp \fun \VF$ be the function defined by $\lambda_1(x, b, z)$. By \cmin-minimality and Corollary~\ref{part:rv:cons}, there are a $b$-definable finite set $C_b \sub \VF$ and, for any $a \in t^\sharp \mi C_b$, an open disc $\ga_a \sub t^\sharp \mi C_b$ around $a$ such that $\lambda_{1b}(\ga_a)$ lies in an open disc of radius $\beta_{1\gq}$. Since for any other $a' \in \ga_a$,  $\lambda_{a'1\gq}(\gp)$ also lies in an open disc of radius $\beta_{1\gq}$, we see that $\lambda_1(\ga_a \times \gp)$ lies in an open disc of radius $\beta_{1\gq}$, where $\lambda_1$ stands for the function defined by $\lambda_1(x, y, z)$. Therefore, we may assume that the finite set $C_b$ is actually $\code \gp$-definable. But then, by Lemma~\ref{ball:to:ac}, it is even definable, that is, it does not depend on $\gp$ (or rather it works for all $\gp$). By compactness and FMT, we may assume that there is an $\go$-partition $p: \dom(\lambda_1) \fun \Gamma$ such that, for each open polydisc $\gb$ in question,  $\lambda_1(\gb)$ lies in a disc of radius $\beta_{1\gq}$, where $\gq$ is related to $\gb$ in the obvious way. By Lemma~\ref{vol:par:bounded}, the image of $p$ is doubly bounded and, by Lemma~\ref{db:to:db}, there is a definable $\gamma_1 \in \Gamma$ with $\gamma_1 \geq \gamma_{t1}$ for all $t \in \rv(H_1)$. We may choose such a $\gamma_1$ together with a definable point $t_1 \in \gamma^\sharp_1$ (see the opening paragraph of this section). All this means that $\lambda_1$ can serve as the focus map of a centripetal transformation $T_1$ on $H$ of aperture $(\gamma_{1}, t_{1})$. To ensure that $\lambda_1$ is continuous, we can appeal to Proposition~\ref{prop:continuity:fiberwise} and, once again, FMT.

At this point the proof would be complete if we could repeat the procedure above for $\lambda_2$, and so on. We still have a small issue, namely some part of the locus $U_{a2}$ may have disappeared because the aperture of  $\lambda_{a1}$ is bumped up to $(\gamma_{1}, t_{1})$; see Remark~\ref{bigger:aper}. It is not hard to see that the inductive hypothesis may be applied to the $\RV$-pullback $W$ contained in $T_1(H)$ that corresponds to the missing locus, ignoring the coordinate of $W$ that was targeted by $T_1$, because the missing locus must have one less $\VF$-coordinate and its intersection with each $T_1(A_i)$ is proper invariant.
\end{proof}

This proposition can indeed be applied to any proper invariant set $A$ against its $\RV$-hull $\RVH(A)$, without specifying a partition of $\RVH(A)$ as required, because, by  Lemma~\ref{int:inv},   $\RVH(A) \mi A$ is proper invariant too.

\begin{rem}\label{cons:extra}
As in the statement of Lemma~\ref{special:bi:term:constant} above, if $f : H \fun \mdl P(\RV^m)$ is a proper covariant function then we can demand as a clause of the conclusion of Proposition~\ref{simplex:with:hole:rvproduct} that the function $f \circ T^{-1}$ be $\rv$-contractible, which would follow if it is simply included as a clause of the inductive hypothesis in the proof.
\end{rem}

Let $A$ be a definable set and $\omega : A \fun \Gamma$ a definable function. We say that the definable pair $\bm A = (A, \omega)$ is \memph{proper invariant} if $A$ is proper invariant and $\omega$ is proper covariant, that is, every fiber $A_\gamma$ is proper invariant.

\begin{lem}\label{L:measure:surjective}
Suppose that $A$ is an $\RV$-pullback and  $(A_i)_i$ is a definable finite partition of $A$ such that each $\bm A_i = (A_i, \omega \rest A_i)$ is proper invariant. Then there is a special covariant bijection $T$ as given in Proposition~\ref{simplex:with:hole:rvproduct} such that $\omega \circ T^{-1}$ is $\rv$-contractible. In particular, if $\bm A_i \in \mVF[k]$ then $T$ induces a morphism  $\bm A_i \fun \mL(\bm U_i, \pi_i)$, where $(\bm U_i, \pi_i) \in \mRV^{\db}[k]$.
\end{lem}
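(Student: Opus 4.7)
The plan is to apply Proposition~\ref{simplex:with:hole:rvproduct} in the augmented form of Remark~\ref{cons:extra}, treating $\omega$ itself as the auxiliary covariant datum whose $\rv$-contraction we are demanding. Although the remark is literally stated for functions $f : H \fun \mdl P(\RV^m)$, inspection of the proof of Proposition~\ref{simplex:with:hole:rvproduct} shows that what is really used is that the graph of $f$ is a proper covariant definable subset living in a sort derivable from $\RV$, so that the top terms of its defining formula may be added to the set $V \sub \VF \times \RV^l$ of $\RV$-data whose $\rv$-values are frozen on each $\RV$-polydisc by the subordinate special bijection $R$. Since $\Gamma$ is a quotient of $\RV$ via $\vrv$ and $\omega$ is proper covariant on each $A_i$ by hypothesis, the function $\omega$ fits into this framework without any substantial change.

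Executing this, I obtain a special covariant bijection $T$ on $A$ such that every $T \rest A_i$ is itself a special covariant bijection, every $A_i^\flat \sub A^\flat$ is a doubly bounded $\RV$-pullback, and $\omega \circ T^{-1}$ is $\rv$-contractible, meaning it is constant on each $\RV$-polydisc contained in any $A_i^\flat$.

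For the ``in particular'' part, suppose $\bm A_i \in \mgVF[k]$, so $\dim_{\VF}(A_i) \leq k$. Since $T \rest A_i$ is a proper covariant homeomorphism (Lemma~\ref{spec:inve}) and centripetal transformations do not increase $\VF$-dimension, $A_i^\flat$ is a doubly bounded $\RV$-pullback of $\VF$-dimension at most $k$ and hence equals $\bb L \bm U_i$ for some $\bm U_i = (U_i, f_i) \in \RV^{\db}[{\leq}k]$. The $\rv$-contractibility of $\omega \circ T^{-1}$ furnishes a definable $\pi_i : U_i \fun \Gamma$ with $\bb L \pi_i = \omega \circ T^{-1}$, so $(\bm U_i, \pi_i) \in \mgRV^{\db}[k]$ and $\mgL(\bm U_i, \pi_i) = (A_i^\flat, \omega \circ T^{-1})$. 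By Lemma~\ref{spec:inve} the map $T \rest A_i$ has constant $\VF$-Jacobian $1$, so the measure-preservation condition (\ref{cond:mg}) degenerates to $\omega(x) = (\omega \circ T^{-1})(T(x)) + 0$, which holds tautologically on all of $A_i$. By Remark~\ref{spec:is:iso}, $T \rest A_i$ thus represents an isomorphism $\bm A_i \fun \mgL(\bm U_i, \pi_i)$ in $\mgVF[k]/{\sim}$.

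The only nontrivial point in the plan is justifying the extension of Remark~\ref{cons:extra} to a $\Gamma$-valued auxiliary function; but because the inductive skeleton of the proof of Proposition~\ref{simplex:with:hole:rvproduct} treats the extra data purely as formulas with top terms to be trivialized by a special bijection, and $\Gamma$-valued atoms are no harder to handle than $\RV$-valued ones, I expect this to be a routine verification rather than a genuine obstacle.
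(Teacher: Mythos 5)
Your proposal is correct and follows essentially the same route as the paper: the paper likewise views $\omega$ as a proper covariant function $A \fun \mdl P(\RV)$ (via $\gamma \efun \gamma^\sharp$) and applies Remark~\ref{cons:extra} on top of Proposition~\ref{simplex:with:hole:rvproduct}, then concludes with Remark~\ref{spec:is:iso} exactly as you do, so no separate ``extension'' of the remark is really needed. The one point to tighten is in the ``in particular'' step: that $\rv(A_i^\flat)_{\leq k}$ is an object of $\RV^{\db}[k]$ does not follow from a $\VF$-dimension bound (which cannot give finiteness of the $\RV$-fibers of $A_i^\flat$), but from $\bm A_i \in \mgVF[k]$ having $\RV$-fiber dimension $0$ together with Lemma~\ref{RV:fiber:dim:same}, which is the lemma the paper cites at this juncture.
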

\begin{proof}
We view $\omega$ as a definable function $A \fun \mdl P(\RV)$. By Proposition~\ref{simplex:with:hole:rvproduct}, there is a special covariant bijection $T$ on $A$ such that, for each $i$, $T \rest A_{i}$ is a special covariant bijection and $A_{i}^\flat$ is an $\RV$-pullback. Moreover, applying Remark~\ref{cons:extra}, we get that each function $\sigma_i \coloneqq (\omega \circ T^{-1}) \rest A_{i}^\flat$ is constant on every $\RV$-polydisc in $A_{i}^\flat$ and hence induces a function $\pi_i : \rv(A_{i}^\flat) \fun \Gamma$. Now, if $\bm A_i \in \mVF[k]$ then, by Lemma~\ref{RV:fiber:dim:same}, $\bm U_i\coloneqq \rv(A_{i}^\flat)_{\leq k} \in \RV^{\db}[k]$ (recall Notation~\ref{0coor}). We see that $(\bm U_i, \pi_i)$ is as required because, by Remark~\ref{spec:is:iso}, $\bm A_i$ is isomorphic to $(\mathbb{L} \bm U_i, \sigma_i)$,
\end{proof}

As above, this lemma may be applied to any proper invariant object $(A, \omega) \in \mVF[k]$, since  the object $(\RVH(A) \mi A, 0) \in \mVF[k]$  is also proper invariant.

\begin{lem}\label{invar:db}
Let $A$ be a definable set with $A_{\VF}$  bounded and $A_{\RV}$  doubly bounded. Let $f : A \fun \Gamma$ be a definable function  that is constant on $A \cap \ga$ for all open polydiscs $\ga$ of radius $\gamma \in \Gamma$. Then $f(A)$ is doubly bounded.
\end{lem}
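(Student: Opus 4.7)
The plan is to reduce the claim directly to Lemma~\ref{vol:par:bounded} via two tricks: first, extend $f$ to the fiberwise $\VF$-closure of $A$ using the $\gamma$-invariance, and then convert $f$ itself into an $\go$-partition by taking a pointwise maximum with the constant $\gamma$.

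For each $t \in A_{\RV}$, let $\bar A_t$ denote the closure of the $\VF$-fiber $A_t$ in $\VF^n$ (where $A_{\VF} \sub \VF^n$), and set $\bar A = \bigcup_{t \in A_{\RV}} \bar A_t \times \{t\}$. This set is definable, $\bar A_{\RV} = A_{\RV}$ remains doubly bounded, and each $\bar A_t$ is closed and bounded because $A_{\VF}$ is bounded. For $(a,t) \in \bar A$ and any sufficiently small neighborhood $\go(a,\gamma)$, the intersection $\go(a,\gamma) \cap A_t$ is nonempty, and $f$ is constant on it by hypothesis; so the assignment $\tilde f(a, t) = f(a', t)$ for any $a' \in \go(a,\gamma) \cap A_t$ produces a well-defined definable extension $\tilde f : \bar A \fun \Gamma$ with $\tilde f(\bar A) = f(A)$. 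The same argument shows $\tilde f$ is $\gamma$-invariant on $\bar A$, i.e., constant on $\go(a,\gamma) \cap \bar A_t$ for every $(a,t) \in \bar A$.

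Now define $p : \bar A \fun \Gamma$ by $p(a, t) = \max(\gamma, \tilde f(a, t))$, which is a definable function since $\max$ is definable in the \omin-minimal $\Gamma$-sort. I claim $p$ is an $\go$-partition of $\bar A$. Indeed, let $(b,t) \in \go((a,t), p(a,t)) \cap \bar A$; since $p(a,t) \geq \gamma$, we have $\vv(b-a) > \gamma$ (componentwise), hence $\go(b, \gamma) = \go(a, \gamma)$, so $\tilde f(b, t) = \tilde f(a, t)$ by $\gamma$-invariance, and consequently $p(b,t) = p(a,t)$. Applying Lemma~\ref{vol:par:bounded} to $p$ on $\bar A$ (whose $\RV$-projection is doubly bounded and whose $\VF$-fibers are closed and bounded), we conclude that $p(\bar A)$ is doubly bounded; in particular, $\tilde f$ is bounded above on $\bar A$ by $\sup p(\bar A)$.

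A symmetric application to $p'(a,t) = \max(\gamma, -\tilde f(a,t))$, which is an $\go$-partition by the identical argument, bounds $-\tilde f$ above, hence $\tilde f$ below. Thus $\tilde f(\bar A)$ is doubly bounded, and so is $f(A) = \tilde f(\bar A)$. The only subtlety in the argument is ensuring that $\tilde f$ genuinely inherits $\gamma$-invariance on the closure (so that $p$ really is an $\go$-partition), but this follows immediately from the fact that the defining condition for $\tilde f$ depends only on the ambient open disc $\go(a,\gamma)$, not on the particular representative $a$.
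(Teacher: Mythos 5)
Your proof is correct, and it takes a genuinely different, more elementary route than the paper's. The paper replaces $A$ by its $\gamma$-tubular neighborhood so that the pair $(A,f)$ becomes proper invariant and then invokes Lemma~\ref{L:measure:surjective} (hence the whole special covariant bijection machinery of Proposition~\ref{simplex:with:hole:rvproduct}) to factor $f$ through a doubly bounded set in $\RV$, finishing with Lemma~\ref{db:to:db}. You instead reduce directly to Lemma~\ref{vol:par:bounded}: the fiberwise closure supplies the closed and bounded $\VF$-fibers that lemma requires, the extension $\tilde f$ is well defined exactly because of the constancy hypothesis on polydiscs of radius $\gamma$ and inherits $\gamma$-invariance, and the truncation $p=\max(\gamma,\tilde f)$ (and symmetrically $\max(\gamma,-\tilde f)$) is the key trick that turns a merely $\gamma$-invariant function into an honest $\go$-partition, so that Lemma~\ref{vol:par:bounded} bounds $\tilde f$ above and below. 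What each approach buys: yours is self-contained within the \S~2 toolkit, avoids the heavy covariant-bijection machinery entirely (so it could even be placed much earlier in the paper), and makes transparent that the statement is essentially Lemma~\ref{vol:par:bounded} in disguise; the paper's version is a one-line corollary of results it has just established and emphasizes the conceptual point that such an $f$ factors through a doubly bounded set in $\RV$. One shared, harmless subtlety: for $\tilde f$ and $p$ (respectively, for the paper's $\gamma$-tubular neighborhood) to be definable one should either take $\gamma$ definable without loss of generality or add a point of valuation $\gamma$ to the parameters; since Lemma~\ref{vol:par:bounded} holds over any such parameter set and the conclusion about $f(A)$ is parameter-free, this costs nothing.
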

\begin{proof}
Observe that, without loss of generality, we can replace $A$ with its $\gamma$-tubular neighborhood and thereby  assume that $(A,f)$ is indeed proper invariant. By Lemma~\ref{L:measure:surjective}, $f$ factors through a doubly bounded definable set in $\RV$. So, by Lemma~\ref{db:to:db},  $f(A)$ must be doubly bounded.
\end{proof}

\begin{cor}
If $(A, \omega)$ is proper invariant then $\omega(A)$ is doubly bounded.
\end{cor}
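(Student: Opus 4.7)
The plan is to reduce the assertion to Lemma~\ref{db:to:db} by invoking Lemma~\ref{L:measure:surjective} to replace $\omega$ with an $\rv$-contraction living on a doubly bounded $\RV$-pullback. First I would enlarge $A$ to $H = \RVH(A)$ and extend $\omega$ to $H$ by setting it to $0$ on $H \setminus A$. Then $H$ is a proper invariant $\RV$-pullback: $H_{\VF}$ is bounded because $A_{\VF}$ is (every $\RV$-polydisc meeting $A$ sits in the same valuation box as its center in $A$), $H_{\RV} = A_{\RV}$ is doubly bounded, and each $\VF$-fiber of $H$, being a union of $\RV$-polydiscs, is clopen, so Lemma~\ref{inv:crit} applies.

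Next I would check that the partition $\{A, H \setminus A\}$, equipped with the extended $\omega$, satisfies the hypothesis of Lemma~\ref{L:measure:surjective}. The pair $(A, \omega)$ is proper invariant by assumption. For $(H \setminus A, 0)$: Lemma~\ref{int:inv} gives that $H \setminus A$ is proper invariant, and since the only non-empty fiber of the zero function is $H \setminus A$ itself, the pair is proper invariant. Applying Lemma~\ref{L:measure:surjective} then produces a special covariant bijection $T$ on $H$ such that $A^\flat \coloneqq T(A)$ is a doubly bounded $\RV$-pullback and $\omega \circ T^{-1}$ is $\rv$-contractible on $A^\flat$; that is, it factors as $\pi \circ \rv$ for some definable $\pi : \rv(A^\flat) \fun \Gamma$. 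Then $\omega(A) = \pi(\rv(A^\flat))$, and since $\rv(A^\flat) \sub \RV$ is doubly bounded, Lemma~\ref{db:to:db} concludes that $\omega(A)$ is doubly bounded.

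The main subtlety is verifying the hypotheses of Lemma~\ref{L:measure:surjective}: the extension by $0$ on $H \setminus A$ is essential so that $\omega$ is defined uniformly on the whole $\RV$-pullback $H$, and proper invariance of the filler piece $(H \setminus A, 0)$ rests on Lemma~\ref{int:inv}. Once this is in place, the rest is a transparent chain of citations.
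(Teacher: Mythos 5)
Your argument is correct: the verification that $H=\RVH(A)$ is a proper invariant $\RV$-pullback, that $(\RVH(A)\mi A,0)$ is proper invariant via Lemma~\ref{int:inv}, and the final reduction through Lemma~\ref{L:measure:surjective} and Lemma~\ref{db:to:db} all check out (indeed, the remark immediately following Lemma~\ref{L:measure:surjective} licenses exactly this application). The route differs from the paper's only in packaging: the paper first uses Lemma~\ref{vol:par:bounded} to get a single $\beta$ such that every fiber $A_\gamma$ of $\omega$ is $\beta$-invariant, and then quotes Lemma~\ref{invar:db}, whose proof is precisely your chain (pass to a proper invariant completion, contract via Lemma~\ref{L:measure:surjective}, finish with Lemma~\ref{db:to:db}, with a tubular neighborhood playing the role of your $\RV$-hull filler). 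So your proof is essentially an inlining of the paper's two citations; what you gain is that the uniformization step becomes unnecessary, since proper covariance of $\omega$ is already part of the hypothesis, while the paper's version gets the statement for free from the more general Lemma~\ref{invar:db}, which only assumes constancy on polydiscs of one fixed radius rather than full proper invariance of the pair.
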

\begin{proof}
We have seen this many times above: as a consequence of Lemma~\ref{vol:par:bounded}, for all sufficiently large $\beta \in \Gamma$, every fiber $A_\gamma$ of $\omega$ is $\beta$-invariant. So the claim is immediate by Lemma~\ref{invar:db}.
\end{proof}

\subsection{Standard contractions}

\begin{defn}\label{rela:unary}
Let $A \sub \VF^{n} \times \RV^{m}$, $B \sub \VF^{n} \times \RV^{m'}$ be definable sets. A definable bijection $G : A \fun B$ is \memph{relatively unary}, or more precisely, \memph{relatively unary in the $(i, j)$-coordinate}, where $i, j \in [n]$, if $(\pr_{\tilde{j}} \circ G)(x) = \pr_{\tilde{i}}(x)$ for all $x \in A$. If, in addition, $G \rest A_a$ is also a special covariant bijection for every $a \in \pr_{\tilde{i}} (A)$ then $G$ is \memph{relatively special covariant} in the $(i, j)$-coordinate.

Let $\bm U = (U, f)$, $\bm V = (V, g)$ be objects of $\RV[k]$. A morphism $F : \bm U \fun \bm V$ is \memph{relatively unary in the $(i, j)$-coordinate}, where $i,j \in [k]$, if $(\pr_{\tilde{j}} \circ g \circ F)(u) = (\pr_{\tilde{i}} \circ f)(u)$ for all $u \in U$.
\end{defn}

Alternatively, for simplicity, we may take $i = j$ in this definition and stipulate that the condition in question holds up to coordinate permutations.

Since identity functions are relatively unary in any coordinate, if a  bijection  is piecewise (over a definable finite partition) a composition of relatively unary bijections then it is indeed a composition of relatively unary bijections.

\begin{exam}
Every  special covariant bijection $T$ of length $1$ is relatively special covariant, but clearly not vice versa (to begin with, the relevant data is not necessarily uniform across the fibers in each $\RV$-polydisc).
\end{exam}

Let $A \sub \VF^{n} \times \RV^{m}$ be a proper invariant set. Let $i \in [n]$ and $T_i : A \fun A^\flat$ be a proper covariant bijection that is
\begin{itemize}
  \item relatively unary in the $(i, i)$-coordinate (up to coordinate permutations),
  \item partially differentiable with the constant Jacobian $1$.
\end{itemize}
Moreover, the $\RV$-coordinates of $A$ are passed on to $A^\flat$ via $T_i$ in the obvious way (the simplest example is that $T_i$ is the regularization of $A$). Adjusting $T_i$ by way of Remark~\ref{spec:addendum} if necessary, we take $A$ and $A^\flat$ to be both doubly bounded. By Lemma~\ref{conv:unif}, $A^\flat$ is proper invariant as well, and $T_i$ is a proper covariant homeomorphism. Suppose that, for all $a \in A_{\tilde i}$ and all $t \in (A_{a})_{\RV}$, the image $T_i(A_{a, t}) \sub A_{a, t}^\flat$ of the fiber $A_{a, t} \sub A$ is an $\RV$-pullback.  Let
\[
 A_{-i} = \bigcup_{a \in A_{\tilde i}} a \times (A_{a}^\flat)_{\RV} \sub \VF^{n-1} \times \RV^{m_i},
\]
which is doubly bounded and proper invariant, and  we think of it as obtained from $A^\flat$ by ``$\rv$-contracting'' the $\VF$-coordinate in question.  Also, under Convention~\ref{conv:can}, $A^\flat$ can be ``fully recovered'' from  $A_{-i}$. Let $\hat T_i : A \fun A_{-i}$ be the function induced by $T_i$.

For any $j \in [n{-}1]$, we repeat the above setup on $A_{-i}$ with respect to the $j$th $\VF$-coordinate, and thereby obtain a doubly bounded proper invariant set $A_{-j} \sub \VF^{n-2} \times \RV^{m_j}$ and a function $\hat T_{j} : A_{-i} \fun A_{-j}$ that is induced by a relatively unary, partially differentiable, proper covariant homeomorphism $T_j : A^\flat \fun A^{\flat\flat}$ (the $\VF$-coordinate in $A^\flat$ that gets $\rv$-contracted in $A_{-i}$ is simply carried along by $T_j$). Continuing thus,  a sequence of such homeomorphisms  $T_{\sigma(1)}, \ldots, T_{\sigma(n)}$ and a corresponding function $\hat T_{\sigma} : A \fun \RV^{l}$ result, where $\sigma$ is the permutation of $[n]$ in question. The composition $T_{\sigma(n)} \circ \ldots \circ T_{\sigma(1)}$, which is referred to as the \memph{lift} of $\hat T_{\sigma}$, is denoted by $T_{\sigma}$. By Lemma~\ref{pr:co:home:com}, $T_{\sigma}$ is a proper covariant homeomorphism. Also note that $T^{-1}_{\sigma}$ is $\rv$-contractible.

\begin{defn}\label{defn:standard:contraction}
The function $\hat T_{\sigma}$, or the set $\hat T_{\sigma}(A)$, is called a \memph{standard contraction} of  $A$.
\end{defn}

\begin{lem}\label{exit:stand}
Every proper invariant set $A \sub \VF^{n} \times \RV^{m}$  admits a standard contraction with respect to any permutation $\sigma$ of $[n]$.
\end{lem}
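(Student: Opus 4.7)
The plan is to proceed by induction on $n$ and, at each step, construct the required map $T_{\sigma(j)}$ by a uniform application of Proposition~\ref{simplex:with:hole:rvproduct} to the one-$\VF$-dimensional slices of $A$. After a permutation of the $\VF$-coordinates we may assume $\sigma = \id$, so it is enough to produce $T_1$; the induction then runs on $A_{-1}$, which has one fewer $\VF$-coordinate and is again doubly bounded and proper invariant.

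First I would set up the fiberwise construction. For each $a \in A_{\tilde 1}$ and each $t \in (A_a)_{\RV}$ the slice $A_{a,t} \sub \VF$ is a bounded definable subset of $\VF$; its $\RV$-hull $\RVH(A_{a,t})$ is a doubly bounded $\RV$-pullback, and by Lemma~\ref{int:inv} both $A_{a,t}$ and $\RVH(A_{a,t}) \mi A_{a,t}$ are proper invariant in that hull. Applying Proposition~\ref{simplex:with:hole:rvproduct} to this two-piece partition of $\RVH(A_{a,t})$ yields a special covariant bijection $S_{a,t}$ that turns $A_{a,t}$ into an $\RV$-pullback. This is exactly the shape of map required in Definition~\ref{defn:standard:contraction}: special covariant bijections are partially differentiable proper covariant homeomorphisms of constant Jacobian $1$ by Lemma~\ref{spec:inve}.

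Next, I would patch these fiberwise bijections into a single map $T_1$ on $A$. By compactness there is a definable partition of $A_{\tilde 1}$ and a single parameterized formula defining the family $\{S_{a,t}\}$ over each piece; on each piece the focuses, loci, and apertures of the component centripetal transformations vary uniformly. Two subtleties arise: (i) the apertures $(\gamma_{a,t}, r_{a,t})$ must be merged into a single aperture so that $T_1$ is well-defined as a bijection on $A$, and (ii) the chosen definable points inside the apertures must vary definably in $(a,t)$. Both are handled by first passing to a common $\gamma \in \Gamma$ majorizing all of the (doubly bounded) fiber apertures using Lemmas~\ref{db:to:db} and \ref{invar:db}, and then invoking the FMT procedure of Terminology~\ref{FMT} to kill the finitely many exceptional open discs in $A_{\tilde 1}$ where the required definable data fails to exist. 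This is the main technical obstacle, and it is exactly the kind of fusion carried out in the proof of Proposition~\ref{simplex:with:hole:rvproduct}.

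Once $T_1$ is assembled, relative unarity in the $(1,1)$-coordinate and constant Jacobian $1$ are immediate from its fiberwise definition, and proper covariance of $T_1$ on all of $A$ follows: on every $\RV$-polydisc the map is a uniformly presented special covariant bijection, so by Lemma~\ref{conti:covar} it is proper covariant on that polydisc, and the uniform aperture shows the required $\Gamma$-bounds hold globally. The image $A^\flat$ is then proper invariant by Lemma~\ref{conv:unif}, and by construction each $T_1(A_{a,t})$ is an $\RV$-pullback. Contracting the first $\VF$-coordinate yields the doubly bounded proper invariant set $A_{-1} \sub \VF^{n-1} \times \RV^{m_1}$ as in the paragraph preceding Definition~\ref{defn:standard:contraction}. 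The inductive hypothesis applied to $A_{-1}$ with the permutation induced by $\sigma$ produces a standard contraction of $A_{-1}$, and composing it with $\hat T_1$ gives the desired standard contraction of $A$.
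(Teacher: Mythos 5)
Your overall strategy---reduce to the single relatively unary step $T_1$ and then induct on the contracted set $A_{-1}$---has the right shape, but the step where you actually assemble $T_1$ contains a genuine gap, and it is exactly the point this lemma is about. You fiber $A$ into one-dimensional slices $A_{a,t}$ over the $(n-1)$-dimensional base $A_{\tilde 1}$, obtain a special covariant bijection on each slice, and then propose to fuse them by compactness plus the FMT procedure, ``killing the finitely many exceptional open discs in $A_{\tilde 1}$.'' But FMT, as formulated in Terminology~\ref{FMT}, disposes of obstructing data lying in finitely many definable open discs in a \emph{single} $\VF$-coordinate; that finiteness is a \cmin-minimality phenomenon available only when the parameter ranges over a one-dimensional set (this is how it is used throughout the proof of Proposition~\ref{simplex:with:hole:rvproduct}, always over the one-dimensional base $H_1$, in combination with Corollary~\ref{part:rv:cons} and Lemma~\ref{ball:to:ac}). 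Over the $(n-1)$-dimensional base $A_{\tilde 1}$ the locus where your uniformly chosen apertures, foci, or centers fail to exist or to behave covariantly is in general a positive-dimensional definable set, not finitely many polydiscs, so FMT does not apply. Relatedly, proper covariance of the patched $T_1$ is not secured by a common aperture in the contracted coordinate alone: a $\VF$-fiber of $T_1$ is the map $(x_1,a) \efun (S_{a,t}(x_1),a)$ on \emph{all} the $\VF$-coordinates simultaneously, so you need $S_{a,t}(x_1)$ to depend on $a$ only modulo $\MM_\alpha$ for some definable $\alpha$, i.e.\ uniform covariance in the base directions, and nothing in your compactness argument produces this; continuity of the patched map across fibers (which must be established \emph{before} Lemma~\ref{conti:covar} or Lemma~\ref{conv:unif} can be invoked) is likewise left unaddressed.

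The paper's proof avoids this inverted fibration altogether: instead of patching one-dimensional fibers over a high-dimensional base, it re-runs the induction of Proposition~\ref{simplex:with:hole:rvproduct} itself, where the base of the fibration is always a single $\VF$-coordinate (so that \cmin-minimality and FMT genuinely control the exceptional data), strengthens the inductive hypothesis so that the fiber transformations are already lifts of standard contractions---processing the coordinates in the order prescribed by $\sigma$ and never revisiting one---and defers the special covariant bijection $R^*$ on the base coordinate to the very end, by grafting the $\RV$-coordinates of $\ran(R)$ onto $A$ directly; the paper explicitly notes that continuity is not automatic at this point, which is precisely the issue your patching glosses over. To repair your argument you would essentially have to reproduce that induction; Proposition~\ref{simplex:with:hole:rvproduct} cannot simply be cited slicewise and then fused over $A_{\tilde 1}$.
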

\begin{proof}
We first remark that, although the lift of a standard contraction is in effect a more general (or rather less stringent) notion than that of a special covariant bijection, for existence, it is enough to exhibit a special covariant bijection that is also the lift of a standard contraction. To that end, the proof of Proposition~\ref{simplex:with:hole:rvproduct} almost works. The  issue is that a standard contraction is required to transform fiberwise a $\VF$-coordinate into an $\RV$-pullback in one fell swoop and subsequently carry the result in the $\RV$-coordinates, whereas in a special covariant bijection one is allowed to come back to modify the same $\VF$-coordinate many times over. We can of course simply add a clause in the inductive hypothesis to fulfill the new requirement. In that case, the special covariant bijection induced by $R$ there --- let us call it $R^*$ here --- should also be made to comply. This needs work: it is clear how to transform fiberwise the first $\VF$-coordinate into an $\RV$-pullback on top of $R^*$, but continuity is not automatic. Fortunately, it is only for convenience that  $R^*$ is carried out in the beginning, that is, we may equivalently graft the $\RV$-coordinates of $\ran(R)$ onto $A$ directly in the obvious way (strictly speaking this operation is not allowed in  a special covariant bijection, and that is why we cannot really opt for it in the proof of Proposition~\ref{simplex:with:hole:rvproduct}) and carry out $R^*$ in the end. With this modification, the same construction yields a standard contraction as desired.
\end{proof}

\begin{rem}\label{stand:vol}
Suppose that there is a $k \in 0 \cup [m]$ such that  $(A_{a})_{\leq k} \in \RV^{\db}[k]$  for  every $a \in A_{\VF}$. If $k=0$ then this just means $A \in \VF_*$. By Lemma~\ref{RV:fiber:dim:same}, $\hat T_{\sigma}(A)_{\leq n+k}$ is an object of $\RV^{\db}[n{+}k]$. Let $\bm A = (A, \omega)$ be a proper invariant pair such that the proper covariant function $\omega : A \fun \Gamma$   induces a function $\omega_{\hat T_{\sigma}}$ on $\hat T_{\sigma}(A)$ via $T_{\sigma}$, which just means that $\omega \circ T^{-1}_{\sigma}$ is invariant on $\RV$-polydiscs. Then the function $\hat T_{\sigma}$, or the object
\[
\hat T_{\sigma}(\bm A)_{\leq n+k} = (\hat T_{\sigma}(A), \omega_{\hat T_{\sigma}})_{\leq n+k} \in \mRV^{\db}[n{+}k],
\]
is understood as  a \memph{standard contraction} of $\bm A$. Using Lemma~\ref{exit:stand} instead of Proposition~\ref{simplex:with:hole:rvproduct} in the proof of Lemma~\ref{L:measure:surjective}, we see that such a $\hat T_{\sigma}$ does exist. Moreover, if we set $A^\sharp = \bigcup_{a \in A_{\VF}} a \times \bb L (A_a)_{\leq k}$, which is a  proper invariant object of $\VF_*$, and denote the  proper covariant function on $A^\sharp$ induced by  $\omega$ still by $\omega$, then  $T_{\sigma}$  indeed induces a $\mVF[n{+}k]$-morphism
\[
(A^\sharp, \omega) \fun {\mL}\hat T_{\sigma}(\bm A)_{\leq n+k}
\]
that can be written as a composition of relatively unary proper covariant homeomorphisms.

Here $k$ is dubbed the \memph{head start} of $\hat T_{\sigma}$, which is usually implicit in the sequel. In fact,  it never has to be anything other than $0$ except in Lemma~\ref{isp:VF:fiberwise:contract}, and can be circumvented even there. This seemingly needless gadget only serves to make the discussion more streamlined: if $A \in \VF_*$ then the intermediate steps of a standard contraction of $A$ may or may not result in objects of $\VF_*$ and hence the definition cannot be formulated entirely within $\VF_*$.
\end{rem}


\subsection{Lifting from $\RV$ to $\VF$}

\begin{lem}\label{bijection:partitioned:unary}
Every morphism in $\RV[k]$ can be written as a composition of relatively unary morphisms, similarly in $\mRV[k]$, $\RV^{\db}[k]$, and $\mRV^{\db}[k]$.
\end{lem}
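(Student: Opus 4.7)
The plan is a two-step reduction combined with a matroid-exchange argument. For $k \le 1$ the lemma is vacuous, so assume $k \ge 2$. Given an arbitrary morphism $F : (U, f) \fun (V, g)$ in $\RV[k]$, set $h = g \circ F$. Since $F$ is a bijection and $g$ is finite-to-one, $h$ is finite-to-one, so $(U, h) \in \RV[k]$. I factor
\[
  (U, f) \xfun{\id_U} (U, h) \xfun{F} (V, g).
\]
The second arrow trivially verifies the defining equation of Definition~\ref{rela:unary} in every $(i,i)$-coordinate, as $\pr_{\tilde i} \circ g \circ F = \pr_{\tilde i} \circ h$ by the choice of $h$. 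The task therefore reduces to decomposing the identity $\id_U : (U, f) \fun (U, h)$.

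For this, I introduce the auxiliary map $\rho : U \fun \RV^{2k}$ defined by $\rho(u) = (f(u), h(u))$, which is finite-to-one (as $f$ is), so $\rho(U)$ has $\RV$-dimension at most $k$. A chain of $k$-element subsets $S_0 = \{1, \dots, k\}, S_1, \dots, S_m = \{k{+}1, \dots, 2k\}$ of $[2k]$ with consecutive subsets differing by exactly one element yields intermediate tags $\pi_l \circ \rho$, where $\pi_l$ is the projection onto $S_l$; note that $\pi_0 \circ \rho = f$ and $\pi_m \circ \rho = h$. Whenever each $\pi_l \circ \rho$ is finite-to-one, the chain of identities
\[
  (U, \pi_0 \circ \rho) \xfun{\id_U} (U, \pi_1 \circ \rho) \xfun{\id_U} \cdots \xfun{\id_U} (U, \pi_m \circ \rho)
\]
realizes $\id_U$ as a composition of morphisms that are relatively unary in the $(i,j)$-coordinates picked out by the single swaps between consecutive $S_l$. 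The hard part will be ensuring the intermediate tags are finite-to-one, which in general fails globally on $U$. I plan to resolve this via the matroid structure on $\rho(U)$ supplied by the exchange property for $\acl$ on the $\RV$-sort (Lemma~\ref{exchange}): the finite-to-one $k$-subsets of the coordinates of $\RV^{2k}$ are exactly the bases, and Steinitz exchange connects any two bases through a chain of single-element exchanges through bases. A parametrization-and-compactness argument then produces a definable finite partition $(U_j)_j$ of $U$ together with, for each $j$, a chain of bases as above whose intermediate tags are finite-to-one on $U_j$. The observation recalled just before the statement of this lemma, namely that a piecewise composition of relatively unary morphisms is itself one, then glues the local decompositions into a global one.

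For the variants, only minor bookkeeping is added. In $\RV^{\db}[k]$ the images $(\pi_l \circ \rho)(U)$ inherit double boundedness from $f(U)$ and $h(U)$ through Lemma~\ref{db:to:db}. For $\mRV[k]$ and $\mRV^{\db}[k]$, equip each intermediate object $(U, \pi_l \circ \rho)$ with the volume form $\omega_l(u) = \omega(u) - \sum_{r=1}^{l} \jcb_\Gamma (\id_U)^\dag_r(u)$, where $(\id_U)^\dag_r$ is the finite correspondence induced by $\id_U$ between the images of $\pi_{r-1} \circ \rho$ and $\pi_r \circ \rho$ at $u$. The telescoping sum collapses to $-\Sigma\vrv(f(u)) + \Sigma\vrv(h(u))$, which equals $\jcb_\Gamma F^\dag(u)$; the $\mu$-morphism equation for $F$ then forces $\omega_m = \sigma \circ F$, so the final factor $F : (U, h, \sigma \circ F) \fun (V, g, \sigma)$ is a $\mu$-morphism with vanishing $\Gamma$-Jacobian. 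The whole chain thus composes to $F$ in $\mRV[k]$, completing the argument.
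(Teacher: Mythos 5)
Your argument is correct in substance but follows a genuinely different route from the paper's. The paper also begins by reducing to $F = \id$ (i.e.\ replacing $(V,g)$ by $(U, g\circ F)$, your first factorization), but from there it proceeds by induction on $\dim_{\RV}(U)$: it first projects to make the tag dimension equal the $\RV$-dimension, then uses Lemma~\ref{finite:fib:def} to swap in one coordinate of $g$ at a time on the locus where finiteness survives, pushing the leftover (which has strictly smaller $\RV$-dimension) into the inductive hypothesis. You instead run a pointwise matroid argument on the $2k$ coordinates of $\rho=(f,h)$, using the pregeometry from Lemma~\ref{exchange} to connect the two tags by single-coordinate swaps, and then a compactness argument to make the chains uniform on a definable finite partition; the gluing via the remark after Definition~\ref{rela:unary} and your telescoping bookkeeping of $\jcb_\Gamma$ for the volume-form categories are both correct (and more explicit than the paper's one-line remark on that point). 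Two spots need tightening. First, the finite-to-one $k$-subsets at a point $u$ are the \emph{spanning} $k$-subsets of the local pregeometry, not bases: the rank of $\acl(u)$ over the base may be strictly less than $k$ (e.g.\ when $U$ is finite), so the Steinitz statement you quote does not literally apply; you need the (easy, but not free) fact that any two spanning sets of equal size are connected by single-element exchanges through spanning sets of that size. Second, the ``parametrization-and-compactness'' step should be spelled out: the condition $u\in\acl(\pi_{S_l}\rho(u))$ is only ind-definable, and the pieces of the partition must be cut out by the very formulas (with bounds) witnessing algebraicity, so that the fiber of $\pi_{S_l}\circ\rho$ \emph{within each piece} is forced to be finite — the fibers within all of $U$ need not be. Both points are routine to repair, so the proposal stands as a valid alternative to the paper's induction on $\RV$-dimension.
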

\begin{proof}
It is enough to show this piecewise, and the situation can be  simplified accordingly step by step. Let $F : (U, f) \fun (V, g)$ be an $\RV[k]$-morphism. The idea is to adjust  $f(u)$ towards $g(F(u))$, simultaneously for all $u \in U$, one coordinate at a time, and the issue is how to stay in $\RV[k]$, that is, how to make the adjusted $f$ remain finite-to-one along the way. To that end, we proceed by induction on $n = \dim_{\RV}(U)$ and, without loss of generality, may take $F = \id$. The base case $n=0$ is straightforward and is left to the reader. For the inductive step, we may assume that both $\pr_{\leq n} \rest f(U)$ and $\pr_{\leq n} \rest g(V)$ are finite-to-one. Let $f' = \pr_{\leq n} \circ f$ and $g' = \pr_{\leq n} \circ g$. Since, as in the base case, the two obvious $\RV[k]$-morphisms
\[
(U,f) \fun (U \times 1, f' \times 1), \quad (U \times 1, g' \times 1) \fun (U, g)
\]
are compositions of relatively unary morphisms, it is enough to show that $(U, f') \fun (U, g')$ is such a morphism as well. So we may assume $n=k$.

Write $\dot U = f(U)$ and $\ddot U = g(U)$. For each $t \in \dot U_{< n}$ and each $i \in [n]$, let
$F^\dag_{t, i} : \dot U_t \fun \mdl P(\ddot U_i)$
be the definable finitary function induced by $F^\dag$ (recall Notation~\ref{mor:ftf}). Then, by Lemma~\ref{finite:fib:def}, there is a $t$-algebraic subset $B_{t, i} \sub \ddot U_i \sub \RV$ such that every fiber of $F^\dag_{t, i}$ over an element in $\ddot U_i \mi B_{t, i}$ is finite; denote by $W_{t, i} \sub \dot U$ the union of such fibers. Indeed, by the compactness argument thereof,  $B_{t, i}$ can be defined uniformly. Let $U_{t, i}$ be the set of those $u \in f^{-1}( t \times W_{t, i})$ such that $g(u)_i \notin B_{t, i}$, and $f'_{t, i} : U_{t, i} \fun \RV^n$ the function given by $u \efun (t, g(u)_i)$, which is finite-to-one. Let $U_i = \bigcup_{t}  U_{t, i}$ and $f'_i = \bigcup_{t}  f'_{t, i}$. Then $(U_i, f \rest U_i) \fun (U_i, f'_i)$ is a relatively unary $\RV[n]$-morphism. By applying the inductive hypothesis fiberwise to $(U_i, f'_i) \fun (U_i, g \rest U_i)$ in the obvious sense, we may actually assume $U_i = \0$, for every $i \in [n]$. Now, since $F^\dag(\dot U_t) \sub \prod_{i \in [n]} B_{t, i}$, which is finite, we must have $\dim_{\RV}(U) < n$. So the claim follows from the inductive hypothesis.

If $F$ is indeed a $\mRV[k]$-morphism then it is straightforward to equip the object in each intermediate step with a volume form so that the map in question  becomes a $\mRV[k]$-morphism. The other cases are rather similar.
\end{proof}

\begin{lem}\label{RVlift}
Let $U$, $V$ be sets in $\RV$ and $f : U \fun \mdl P(V)$ a definable finitary function. Then there is a definable finitary function $f^\sharp : U^\sharp \fun \mdl P(V^\sharp)$ that $\rv$-contracts to $f$. Moreover, for every $(u, v) \in f$, $f^\sharp_{u, v} \coloneqq (u, v)^\sharp \cap f^\sharp$ is a partially differentiable function $u^\sharp \fun v^\sharp$.
\end{lem}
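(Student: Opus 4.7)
The plan is to construct $f^\sharp$ by choosing, for each pair $(u, v)$ in the graph $G$ of $f$, a single definable point $\sigma(u,v) \in v^\sharp \sub \VF^m$, and then setting $f^\sharp(x) = \set{\sigma(\rv(x), v) \given v \in f(\rv(x))}$. With this choice, for every $(u,v) \in G$ the fiber $f^\sharp_{u,v}$ is the constant map $u^\sharp \fun v^\sharp$ sending every $x$ to $\sigma(u,v)$, which is trivially partially differentiable in the sense of Definition~\ref{defn:diff} (all partial derivatives vanish). The graph of $f^\sharp$ is definable because $\sigma$ is, its fibers over $U^\sharp$ are finite of the same size as those of $f$, and its $\rv$-contraction is $f$ by construction.

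The heart of the argument is therefore the existence of a definable section $\sigma: G \fun V^\sharp$ with $\rv \circ \sigma = \pr_V$. Working one coordinate of $V$ at a time reduces this to constructing a definable function $\sigma_j : G \fun \VF$ satisfying $\rv(\sigma_j(u, v)) = v_j$ for each $j$. When $v_j = \infty$ set $\sigma_j(u,v) = 0$; otherwise $v_j \in \RV^\times$ and $v_j^\sharp$ is a nonempty open disc in $\VF$ definable over $(u,v)$. Since $\bb S$ is $\VF$-generated by the standing hypothesis of \S~\ref{sec-pscb}, the substructure generated by $\bb S \cup \set{(u,v)}$ is $(\VF, \RV, \Gamma)$-generated, so by Lemma~\ref{effectiveness} any definable closed subdisc of $v_j^\sharp$ (obtained for instance by picking a definable $\gamma > \vrv(v_j)$) contains a point definable over $(u,v)$, yielding a pointwise $(u,v)$-definable element of $v_j^\sharp$.

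The main obstacle is upgrading this pointwise selection to a uniformly definable function on all of $G$. My plan is to argue by compactness: for each complete type $p$ over $\bb S$ realized in $G$, the argument above produces an $\bb S$-formula that defines such a selection on a definable neighborhood of $p$; finitely many such formulas cover $G$, yielding a definable partition $G = G_1 \sqcup \ldots \sqcup G_N$ with uniformly definable sections $\sigma_{j,k}$ on each $G_k$ that patch into $\sigma_j$. To make the local selections uniform, I would first partition $G$ so that $\vrv \circ f$ becomes piecewise $\Z$-linear in the $\vrv$-coordinates of $u$ in the sense of Remark~\ref{GLZ:char}, allowing one to express $v_j$ as a monomial in the $u_i$'s times a residually constant $\K^\times$-valued term, and then lift this using $\VF$-representatives that exist because $\bb S$ is $\VF$-generated; the residual $\K^\times$-valued selection is then routine by quantifier elimination in the algebraically closed residue field.
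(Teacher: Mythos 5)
Your plan fails at its central step: for generic $(u,v)$ there is no $(u,v)$-definable point of $v_j^\sharp$ at all, so the section $\sigma$ you want does not exist even pointwise, and compactness cannot uniformize selections that are not there. Concretely, since $\bb S$ is $\VF$-generated, Lemma~\ref{dcl:to:ac} shows that any $b \in \VF$ algebraic over $\bb S \cup \{u,v\}$ is already algebraic over $\bb S$; if such a $b$ lay in $v_j^\sharp$ then $v_j = \rv(b)$ would be algebraic over $\bb S$. Hence whenever $v_j \notin \RV(\acl \mathbb{S})$ (already for $U = V = \RV$ and $f = \id$), the $\RV$-disc $v_j^\sharp$ contains no point definable, or even algebraic, over $(u,v)$. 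Your appeal to Lemma~\ref{effectiveness} does not repair this: $v_j^\sharp$ is an \emph{open} disc, and choosing a definable $\gamma > \vrv(v_j)$ does not single out a closed subdisc --- one would also need a definable center, which is exactly the missing datum. The same obstruction defeats the later plan of building a definable section of $\rv$ via piecewise $\Z$-linearity and residue-field quantifier elimination: a definable section over any infinite definable set of non-algebraic points of $\RV$ would again produce forbidden definable points in generic $\RV$-discs. Note also that your construction would make every fiber $f^\sharp_{u,v}$ constant and thus render the ``Moreover'' clause trivial, which is a sign that the lemma's content has been lost.

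This is precisely why the lift cannot be taken constant on the polydiscs $u^\sharp$ and why differentiability is a genuine issue. The paper's proof selects the points of $v^\sharp$ algebraically over a $\VF$-point $x \in u^\sharp$ rather than over $(u,v)$: the substructure generated by $x$ is $\VF$-generated, so Lemma~\ref{algebraic:balls:definable:centers} applies, and compactness then yields a definable finitary $f^\sharp$ whose fibers $f^\sharp_{u,v} : u^\sharp \fun v^\sharp$ genuinely vary with $x$. The ``Moreover'' clause is then secured by Lemma~\ref{diff:almost:every} (partial differentiability almost everywhere) combined with an induction on $\dim_{\RV}(U)$, Lemma~\ref{RV:bou:dim} guaranteeing that the locus where differentiability fails has strictly smaller $\RV$-dimension; your proposal has no counterpart to these steps, and with the constant-lift idea removed, nothing in it remains to prove the statement.
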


Similar to Definition~\ref{defn:special:bijection}, this is another place where we need the property that every algebraic $\RV$-disc contains an algebraic point, and the easiest way to achieve
this is to assume that the substructure $\bb S$ is $\VF$-generated, which validates the repeated use of Lemma~\ref{algebraic:balls:definable:centers} below.

\begin{proof}
We proceed by induction on $\dim_{\RV}(U) = n$. For the base case $n=0$, $U$ is finite and hence, for every $u \in U$, the $\RV$-polydisc $u^\sharp$ contains a $u$-definable point, and similarly for every element in $f(u) \sub V$. So lifting $f$ as desired is trivial. For the inductive step, we may assume that $\pr_{\leq n} \rest U$ is finite-to-one. Let $U' = \pr_{\leq n}(U)$ and $f' : U' \fun \mdl P(V)$ be the definable finitary function induced by $f$. Observe that if $f'$ can be lifted as desired then  $f$ can be lifted (again trivially) as desired as well. So, without loss of generality,  $U \sub \RV^n$. By Lemma~\ref{algebraic:balls:definable:centers} and compactness, there is indeed a definable finitary function $f^\sharp : U^\sharp \fun \mdl P(V^\sharp)$ that $\rv$-contracts to $f$. By Lemma~\ref{diff:almost:every}, every $f^\sharp_{u, v}$ is partially differentiable almost everywhere. By Lemma~\ref{RV:bou:dim}, the $\RV$-boundary of the differential locus is of $\RV$-dimension less than $n$. The lemma follows.
\end{proof}

Let $F: \bm U \fun \bm V$ be an $\RV[k]$-morphism, where $\bm U = (U, f)$ and $\bm V = (V, g)$. If $F^{\sharp} : \bb L \bm U \fun \bb L \bm V$ is a definable bijection that $\rv$-contracts to the function $U_f \fun V_g$ induced by $F$ (recall Notation~\ref{RVcat:tag}) then it is called a \memph{lift} of $F$. We shall also think of the finite-to-finite correspondence $F^\dag$ as the $\rv$-contraction of such a lift.



\begin{lem}[{\cite[Lemma~6.3]{hrushovski:kazhdan:integration:vf}}]\label{iso:lifted:vol}
There is a lift $F^{\sharp}$ of $F$ such that
\begin{equation}\label{jac:mat}
\vv(\jcb_{\VF} F^{\sharp}(a,  u)) = \jcb_{\Gamma} F^\dag(u), \quad \text{for all } (a, u) \in \bb L \bm U.
\end{equation}
\end{lem}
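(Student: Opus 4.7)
The plan is to prove this by reducing $F$ to a composition of relatively unary morphisms via Lemma~\ref{bijection:partitioned:unary} and constructing an explicit affine lift for each piece. The first observation is that the Jacobian condition (\ref{jac:mat}) is preserved under composition: if $G : \bm U \fun \bm V$ and $H : \bm V \fun \bm W$ admit lifts $G^\sharp, H^\sharp$ satisfying (\ref{jac:mat}), then by the chain rule $\jcb_{\VF}(H^\sharp \circ G^\sharp)(a,u) = \jcb_{\VF} H^\sharp(G^\sharp(a,u)) \cdot \jcb_{\VF} G^\sharp(a,u)$, and by the additive behavior
\[
\jcb_\Gamma (H^\dag \circ G^\dag)(u) \;=\; \jcb_\Gamma H^\dag(G(u)) + \jcb_\Gamma G^\dag(u)
\]
of the $\Gamma$-Jacobian under composition of finite-to-finite correspondences (which is immediate from the definition in Notation~\ref{jcbG:op}), we have $\vv(\jcb_{\VF}(H^\sharp \circ G^\sharp)) = \jcb_\Gamma(H^\dag \circ G^\dag)$. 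By Lemma~\ref{bijection:partitioned:unary}, it therefore suffices to treat a relatively unary $F : \bm U \fun \bm V$ in coordinate $(i,i)$, where $\bm U = (U,f)$ and $\bm V = (V,g)$.

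For such $F$, relative unarity forces $g(F(u))_j = f(u)_j$ for every $j \neq i$, so the $\Gamma$-Jacobian collapses to a single term:
\[
\jcb_\Gamma F^\dag(u) = \vrv(g(F(u))_i) - \vrv(f(u)_i).
\]
I would then define $F^\sharp : \bb L \bm U \fun \bb L \bm V$ by the formula $F^\sharp(a, u) = (b, F(u))$, where $b_j = a_j$ for $j \neq i$ and $b_i = c_u \cdot a_i$, with $c_u \in \VF^{\times}$ chosen to satisfy $\rv(c_u) = g(F(u))_i / f(u)_i$. Such a uniform section $u \mapsto c_u$ of the relation $\{(u,c) : \rv(c) = g(F(u))_i/f(u)_i\}$ exists in the saturated $\bb U$ (the fibers are all nonempty by surjectivity of $\rv$, and after possibly enlarging parameters a definable selector is produced by standard compactness; since the lemma only asserts existence of $F^\sharp$, we are free to enlarge parameters).

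With this choice $F^\sharp$ is a definable lift of $F$: for any representative $\tilde a \in f(u)_i^\sharp$, the fiber $f(u)_i^\sharp$ equals $\tilde a(1 + \MM)$, and multiplication by $c_u$ sends it bijectively onto $c_u \tilde a(1+\MM) = g(F(u))_i^\sharp$ because $\rv(c_u \tilde a) = \rv(c_u)\rv(\tilde a) = g(F(u))_i$. The $\VF$-Jacobian matrix of $F^\sharp$ at any point is diagonal with entries $1,\ldots,1,c_u,1,\ldots,1$ (since only the $i$-th $\VF$-coordinate is altered, and the alteration is a scalar multiplication), hence $\jcb_{\VF} F^\sharp(a,u) = c_u$ and
\[
\vv\bigl(\jcb_{\VF} F^\sharp(a,u)\bigr) \;=\; \vv(c_u) \;=\; \vrv(g(F(u))_i) - \vrv(f(u)_i) \;=\; \jcb_\Gamma F^\dag(u),
\]
which is exactly (\ref{jac:mat}).

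The main technical obstacle will be the uniform definability of the scalars $c_u$. This is ultimately handled by saturation, but in a careful writeup one partitions $U$ finitely and chooses sections piecewise, possibly passing through auxiliary $\VF$-parameters whose presence is justified because the proposition is purely existential. A secondary point, which is routine but deserves attention, is verifying that the relatively unary decomposition provided by Lemma~\ref{bijection:partitioned:unary} combines cleanly with the affine fiberwise lifts: each intermediate $F_\ell^\sharp$ has diagonal Jacobian supported on a single coordinate, so the matrix products in the chain rule never mix coordinates nontrivially, and the telescoping cancellation in $\jcb_\Gamma$ matches the multiplicative cancellation in $\jcb_{\VF}$ after taking valuations.
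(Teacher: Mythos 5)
Your reduction to relatively unary morphisms and the compatibility of (\ref{jac:mat}) with composition (additivity of $\jcb_{\Gamma}$ versus multiplicativity of $\jcb_{\VF}$) are fine, but the core step has a genuine gap: the definable family $u \efun c_u$ with $\rv(c_u) = g(F(u))_i/f(u)_i$ need not exist over $\bb S$, and you are not free to enlarge parameters. A lift is by definition an $\bb S$-definable bijection, and this lemma is used precisely to produce $\bb S$-definable $\mVF[k]$-morphisms (Lemma~\ref{L:measure:class:lift}) and hence the surjectivity of ${\mu}\bb L$ over $\bb S$; a lift definable only over extra parameters is useless for that purpose. Moreover, the selector itself is obstructed: for an $\RV$-element $s$ whose disc $s^\sharp$ is parametrized only by $\RV$-data, $s^\sharp$ need in general contain no $s$-definable (nor even $s$-algebraic) point --- e.g.\ over $\bb S = \C\dpar{t}$, if $s \in \K^\times$ is transcendental over $\C$, an automorphism fixing $\C\dpar{t}$ and $s$ can move any lift of $s$ to another lift (say by adding $t$), so $\dcl(\bb S, s) \cap s^\sharp = \0$; averaging a putative finite $s$-definable set of lifts would again produce a definable point, so no finitary choice rescues this either. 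This is exactly why the paper is careful on this point: the remark following Lemma~\ref{RVlift} stresses that algebraic points in $\RV$-discs are only guaranteed after passing to $\VF$-generated parameter sets, Remark~\ref{rvdisc:stre} explicitly hypothesizes that the discs $t^\sharp$, $s^\sharp$ contain definable points, and the remark after Definition~\ref{def:Ga:cat} warns that $\gamma^\sharp$ need not contain a definable point. A concrete instance showing your ansatz is the wrong shape: for $U = V = \K^\times$, $f = g = \id$, $F(u) = u^2$, the correct lift is $a \efun a^2$, whose derivative $2a$ depends on the $\VF$-point; a constant slope $c_u$ per $\RV$-fiber chosen from $u$ alone cannot be had definably.

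By way of comparison, the paper does not reprove this statement: it cites \cite[Lemma~6.3]{hrushovski:kazhdan:integration:vf}, with \cite[Corollary~7.7]{Yin:special:trans} as an alternative (upgrading ``almost all'' to ``all'' via Lemma~\ref{RV:bou:dim}). Those proofs proceed by induction on $\RV$-dimension and use finitary (multi-valued, algebraic) liftings in which the auxiliary points are chosen over $\VF$-parameters already present in $\bb L\bm U$, together with a correction of the Jacobian; the scaling data is allowed to vary with the $\VF$-coordinates rather than being a single scalar per $\RV$-fiber. If you want to salvage your argument, you must replace the selector $u \efun c_u$ by a construction of the fiberwise bijections $f(u)_i^\sharp \fun g(F(u))_i^\sharp$ that is definable from $(a,u)$ with $a$ ranging over the $\VF$-coordinates, in the spirit of Lemma~\ref{RVlift} and Remark~\ref{rvdisc:stre}, and then control the valuation of the resulting derivative; that is essentially the cited proof.
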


See \cite[Corollary~7.7]{Yin:special:trans} for an alternative proof (the weaker qualifier ``for almost all $(a, u) \in \bb L \bm U$'' there may be upgraded to ``for all $(a, u) \in \bb L \bm U$'' here by appealing to Lemma~\ref{RV:bou:dim} at suitable places).

\begin{rem}\label{rvdisc:stre}
Let $t, s \in \RV$ such that the $\RV$-discs $t^\sharp$, $s^\sharp$ contain definable points. It is easy to see that for any definable $c \in \VF^\times$ with $\vv(c) = \vrv(s/t)$, there is a definable bijection $f : t^\sharp \fun s^\sharp$ such that $\ddx f = c$.
\end{rem}

\begin{lem}\label{L:measure:class:lift}
Every $\mRV[k]$-morphism $F : (\bm U, \omega) \fun (\bm V, \sigma)$ admits a lift
\[
F^\sharp: {\mu}\bb{L}(\bm U, \omega) \fun {\mu}\bb{L}(\bm V, \sigma)
\]
that is piecewise a composition of partially differentiable relatively unary $\mVF[k]$-morphisms. Moreover, if $F$ is a $\mRV^{\db}[k]$-morphism then every component of the composition is a  proper covariant homeomorphism and hence so is $F^\sharp$.
\end{lem}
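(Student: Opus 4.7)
The plan is to reduce the statement to the case of a single relatively unary morphism via Lemma~\ref{bijection:partitioned:unary}, then build the lift fiberwise using Lemma~\ref{iso:lifted:vol} (or more concretely Remark~\ref{rvdisc:stre}), and finally verify the additional topological requirements in the doubly bounded case.

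First I would invoke Lemma~\ref{bijection:partitioned:unary} to write $F$ as a composition $F = F_r \circ \cdots \circ F_1$ of relatively unary $\mRV[k]$-morphisms (respectively $\mRV^{\db}[k]$-morphisms in the doubly bounded case), each passing through some intermediate object $(\bm W_i, \tau_i)$. Since the lift of a composition is the composition of lifts, it suffices to treat a single relatively unary morphism $F : (\bm U, \omega) \fun (\bm V, \sigma)$, say in the $(k,k)$-coordinate. Then $F$ restricts, for each $u' \in \pr_{<k} f(U)$, to a bijection between two finite sets of $\RV$-discs lying in $u' \times \RV$, and the underlying $\RV[k]$-morphism condition amounts to a finite-to-finite correspondence $F^\dag$ between these fibers.

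Second, I would apply Lemma~\ref{iso:lifted:vol} to obtain a lift $F^\sharp : \bb L \bm U \fun \bb L \bm V$ with $\vv(\jcb_{\VF} F^\sharp(a,u)) = \jcb_\Gamma F^\dag(u)$ for every $(a,u) \in \bb L \bm U$. Combining this identity with the morphism condition \eqref{cond:mg2}, namely $\omega(u) = \sigma(F(u)) + \jcb_\Gamma F^\dag(u)$, yields
\[
(\bb L \omega)(a,u) = \omega(u) = \sigma(F(u)) + \vv(\jcb_{\VF} F^\sharp(a,u)) = (\bb L \sigma)(F^\sharp(a,u)) + \vv(\jcb_{\VF} F^\sharp(a,u)),
\]
which is precisely the defining condition \eqref{cond:mg} for $F^\sharp$ to be a $\mVF[k]$-morphism. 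Partial differentiability is built into the construction in Lemma~\ref{RVlift}, and relative unarity is inherited from $F$.

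Third, for the doubly bounded case I would make the lift more explicit. Since $\bm U, \bm V \in \mRV^{\db}[k]$, the sets $f(U)$, $g(V)$ and the image of $\omega$ are doubly bounded (Remark~\ref{tensor:bd} and Lemma~\ref{db:to:db}), and every definable $\RV$-disc appearing as a fiber contains a definable point because $\bb S$ is $\VF$-generated (Lemma~\ref{algebraic:balls:definable:centers}). For each pair $(u,v) \in F^\dag$, I would pick via Remark~\ref{rvdisc:stre} a definable bijection $u^\sharp \fun v^\sharp$ given by translation composed with multiplication by a definable scalar of valuation $\vrv(v/u)$; gluing these over the correspondence yields a lift $F^\sharp$ whose $\VF$-fibers are affine (hence continuous, partially differentiable, and concentric). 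Since the domain and range of every $\VF$-fiber of $F^\sharp$ are clopen $\RV$-pullbacks of doubly bounded sets, Lemmas~\ref{conti:covar} and \ref{conv:unif} upgrade each $F^\sharp$ to a proper covariant homeomorphism, and Lemma~\ref{pr:co:home:com} ensures that the composition corresponding to the decomposition of $F$ is also a proper covariant homeomorphism.

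The main obstacle I anticipate is the bookkeeping in the doubly bounded case: one has to check that the decomposition provided by Lemma~\ref{bijection:partitioned:unary} can be arranged so that each intermediate object $(\bm W_i, \tau_i)$ remains in $\mRV^{\db}[k]$, and that the scalar chosen in each fiber can be taken definably and uniformly (so that the resulting lift is itself definable and the $\go$-partition underlying proper covariance is doubly bounded via Lemma~\ref{vol:par:bounded}). Once uniform definability is established, continuity, the disc-to-disc property, and proper covariance follow by the lemmas cited above.
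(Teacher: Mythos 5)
Your skeleton is the same as the paper's (decompose $F$ into relatively unary pieces via Lemma~\ref{bijection:partitioned:unary}, lift fiberwise via Lemma~\ref{iso:lifted:vol} or Remark~\ref{rvdisc:stre}, match the volume forms by combining (\ref{jac:mat}) with (\ref{cond:mg2}), and get proper covariance from Lemmas~\ref{conti:covar}, \ref{conv:unif}, \ref{pr:co:home:com}), but there is a genuine gap precisely at the point you defer as ``the main obstacle'': uniformity of the fiberwise data. Applying Lemma~\ref{iso:lifted:vol} (or an affine map chosen via Remark~\ref{rvdisc:stre}) fiber by fiber gives differentiability in the targeted coordinate, but partial differentiability --- and, in the doubly bounded case, continuity --- with respect to the remaining $\VF$-coordinates requires the chosen data (base points of the $\RV$-discs, scalars) to vary differentiably with those coordinates. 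By Lemmas~\ref{diff:almost:every} and \ref{RV:bou:dim} this can only be guaranteed outside a definable subset of $\RV$-dimension less than $k$, and since the components are required to be genuinely partially differentiable morphisms --- and in the doubly bounded case honest proper covariant homeomorphisms feeding into Corollary~\ref{L:sur:c}, not essential bijections --- this exceptional locus cannot simply be thrown away. Your claim that ``partial differentiability is built into Lemma~\ref{RVlift}'' misattributes the issue: Lemma~\ref{RVlift} itself only secures differentiability after excising a bad locus of smaller $\RV$-dimension (its proof invokes Lemma~\ref{RV:bou:dim} exactly for this).

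The paper closes this gap by an induction on $\dim_{\RV}(U)$, which your proposal lacks, and the induction does two jobs. First, via Lemma~\ref{RVlift} it lifts the auxiliary morphisms $(\bm U, \omega)\fun(\bm U'\times 1,\omega')$ and $(\bm V'\times 1,\sigma')\fun(\bm V,\sigma)$, where $f'=\pr_{\leq n}\circ f$ and $g'=\pr_{\leq n}\circ g$ are finite-to-one, thereby reducing to the case $\dim_{\RV}(U)=k$; your proposal never explains how to lift the passage between $f$ and such a finite-to-one projection, which is not a relatively unary adjustment handled by Lemma~\ref{iso:lifted:vol} alone. Second, and crucially, the locus over which the fiberwise lift fails to be partially differentiable has $\RV$-dimension less than $k$, so the restriction of $F$ over it is lifted by the inductive hypothesis; this is exactly what produces the ``piecewise'' decomposition in the statement. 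Without this inductive repair (or a substitute argument making the chosen sections differentiable and continuous everywhere), your construction yields only an almost-everywhere well-behaved lift, which is insufficient for the second claim.
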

\begin{proof}
We do induction on $\dim_{\RV}(U) = n$. For the base case $n=0$, we may assume that $U$ is just a singleton and $F$ is relatively unary. Since  every $\RV$-disc $t^\sharp$ involved contains a $t$-definable point, it is easy to lift $F$  as desired by applying Remark~\ref{rvdisc:stre} in the coordinate in question.

For the inductive step, again, we may assume that $f' = \pr_{\leq n} \circ f$, $g' = \pr_{\leq n} \circ g$ are finite-to-one. Write $\bm U' = (U, f')$ and $\bm V' = (V, g')$. By Lemma~\ref{RVlift}, there is a definable partially differentiable finitary function that $\rv$-contracts to the finitary function $f(U)_{\leq n} \fun \mdl P(f(U)_{> n})$. This means that the obvious $\mRV[k]$-morphism $(\bm U, \omega) \fun (\bm U' \times 1, \omega')$ can be lifted as desired by applying Remark~\ref{rvdisc:stre} fiberwise as in the base case above, and similarly for the obvious $\mRV[k]$-morphism $(\bm V' \times 1, \sigma') \fun (\bm V, \sigma)$. Thus we are reduced to the case $n = k$. By Lemma~\ref{bijection:partitioned:unary}, we may assume that $F$ is  relative unary. Then, applying Lemma~\ref{iso:lifted:vol} fiberwise in the obvious way, we can lift $F$ to a relatively unary bijection. By Lemmas~\ref{diff:almost:every} and~\ref{RV:bou:dim}, the lift is partially differentiable outside a definable subset of $\RV$-dimension less than $k$, and the restriction  is indeed a $\mVF[k]$-morphism due to the condition (\ref{jac:mat}). So we are done by  the inductive hypothesis.

The second claim is a consequence of Lemmas~\ref{conti:covar}, \ref{conv:unif}, and \ref{pr:co:home:com}.
\end{proof}

\begin{defn}\label{defn:diamond}
The subcategory $\mVF^{\diamond}[k]$ of $\mVF[k]$ consists of the proper invariant objects and the morphisms that are compositions of relatively unary proper covariant homeomorphisms.
\end{defn}

\begin{rem}\label{dia:cat:well}
Obviously the composition law holds in  $\mgVF^{\diamond}[k]$ and hence it is indeed a category. Moreover, every morphism in it is an honest  bijection, as opposed to merely an essential bijection, and is in effect required to admit an inverse.  Thus, $\mgVF^{\diamond}[k]$ is already a groupoid and there is no need to pass to a quotient category as in Remark~\ref{mor:equi}. On the other hand, Proposition~\ref{simplex:with:hole:rvproduct}, Lemma~\ref{spec:inve}, and Remark~\ref{stand:vol} give various constructions of nontrivial morphisms.
\end{rem}

\begin{rem}
The main reason that we need the lift in Lemma~\ref{L:measure:class:lift} to be in that particular form is that, as it will become clear, we are forced to work with explicit compositions of relatively unary proper covariant morphisms, due to the failure of generalizing Lemma~\ref{simul:special:dim:1} below to higher dimensions; see \cite[\S~1]{Yin:int:acvf} for further explanation.

Every definable bijection between definable sets with the same number of $\VF$-coordinates is a composition of relatively unary bijections, see \cite[Lemma~5.6]{Yin:int:acvf}.   It is certainly more satisfying to simply take proper covariant homeomorphisms as morphisms in $\mVF^{\diamond}[k]$ and then show that they are all indeed compositions of relatively unary proper covariant homeomorphisms. We suspect this is not true, although it does not seem straightforward to come up with a counterexample.
\end{rem}

\begin{cor}\label{L:sur:c}
The lifting maps ${\mu}\bb L_{k}$ induce surjective homomorphisms, which are often simply denoted by ${\mu}\bb L$, between the Grothendieck semigroups
\[
 \gsk \mRV^{\db}[k] \epi \gsk \mVF^{\diamond}[k].
\]
\end{cor}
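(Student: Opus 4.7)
The plan is to verify three things in sequence: that the assignment $[(\bm U,\omega)] \efun [\mL(\bm U,\omega)]$ is well-defined on isomorphism classes, that it respects the scissor relation, and that the resulting map is surjective. Additivity is obvious since $\mL$ carries disjoint unions to disjoint unions while preserving volume forms.

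For well-definedness, suppose $F : (\bm U,\omega) \fun (\bm V,\sigma)$ is an isomorphism in $\mRV^{\db}[k]$. By Lemma~\ref{L:measure:class:lift}, $F$ admits a lift $F^\sharp : \mL(\bm U,\omega) \fun \mL(\bm V,\sigma)$ that is piecewise a composition of partially differentiable relatively unary $\mVF[k]$-morphisms, each of which (in the doubly bounded setting) is a proper covariant homeomorphism. After restricting to the definable pieces of this partition, $F^\sharp$ is literally a morphism of $\mVF^{\diamond}[k]$ in the sense of Definition~\ref{defn:diamond}, so the classes of $\mL(\bm U,\omega)$ and $\mL(\bm V,\sigma)$ in $\gsk\mVF^\diamond[k]$ agree. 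The scissor relation is then immediate: whenever $\bm B \sub \bm A$ in $\mRV^{\db}[k]$, we have $\mL\bm B \sub \mL\bm A$ and $\mL(\bm A \mi \bm B) = \mL\bm A \mi \mL\bm B$ set-theoretically with volume forms restricting correctly, and both pieces remain proper invariant by Lemma~\ref{int:inv}. Hence $\mu\bb L_k$ descends to a semigroup homomorphism.

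For surjectivity, let $\bm A = (A,\omega)$ be an arbitrary object of $\mVF^{\diamond}[k]$, so $\bm A$ is proper invariant. Consider the $\RV$-hull $\RVH(A)$ together with the partition $\{A,\ \RVH(A) \mi A\}$; both pieces are proper invariant by Lemma~\ref{int:inv}, and $\omega$ extends to the complementary piece as the constant form $0$. By Lemma~\ref{L:measure:surjective} (applied as in the remark following it, using Proposition~\ref{simplex:with:hole:rvproduct} enhanced per Remark~\ref{cons:extra}), there is a special covariant bijection $T$ on $\RVH(A)$ whose restriction $T \rest A$ realizes an isomorphism $\bm A \fun \mL(\bm U,\pi)$ for some $(\bm U,\pi) \in \mRV^{\db}[k]$. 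By Lemma~\ref{spec:inve}, $T \rest A$ is a proper covariant homeomorphism; moreover, since a special covariant bijection is by construction an alternating composition of centripetal transformations (each modifying only a single $\VF$-coordinate, hence relatively unary) and regularizations (which are trivially relatively unary since they do not touch $\VF$-coordinates), $T \rest A$ is a composition of relatively unary proper covariant homeomorphisms, i.e., a $\mVF^{\diamond}[k]$-morphism. Hence $[\bm A] = [\mL(\bm U,\pi)]$ in $\gsk\mVF^{\diamond}[k]$.

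The main obstacle is the formal one of ensuring that every bijection invoked in both steps genuinely qualifies as a $\mVF^{\diamond}[k]$-morphism rather than merely an essential bijection in $\mVF[k]$, since Definition~\ref{defn:diamond} is strictly more restrictive. This is resolved for well-definedness by the explicit form of the lift in Lemma~\ref{L:measure:class:lift}, and for surjectivity by the structural observation that the length of a special covariant bijection decomposes it automatically into relatively unary proper covariant pieces. Once these structural facts are in hand, no further computation is needed.
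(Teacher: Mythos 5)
Your proposal is correct and follows essentially the same route as the paper: well-definedness on classes via Lemma~\ref{L:measure:class:lift} (whose second clause makes the lift of a $\mRV^{\db}[k]$-morphism a composition of relatively unary proper covariant homeomorphisms, the piecewise issue being absorbed either by restriction and additivity as you do, or by the observation after Definition~\ref{rela:unary} that piecewise compositions of relatively unary bijections are compositions of relatively unary bijections), and surjectivity via Lemma~\ref{L:measure:surjective} applied to the partition $\{A,\RVH(A)\mi A\}$ exactly as in the remark following that lemma. Your extra verification that a special covariant bijection decomposes into relatively unary proper covariant pieces is a correct unpacking of what the paper leaves implicit (cf. Lemma~\ref{spec:inve} and Remark~\ref{stand:vol}).
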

\begin{proof}
By Lemma~\ref{L:measure:class:lift}, every $\mRV^{\db}[k]$-morphism can be lifted to a $\mVF^{\diamond}[k]$-morphism. Thus, ${\mu}\bb L_{k}$ induces a map on the isomorphism classes, which is easily seen to be a semigroup homomorphism. By Lemma~\ref{L:measure:surjective} (see the remark thereafter) or Remark~\ref{stand:vol}, this  homomorphism is surjective.
\end{proof}

\begin{lem}\label{simul:special:dim:1}
Let $f : (A, \omega) \fun (B, \sigma)$ be a $\mgVF^{\diamond}[1]$-morphism.  Then there are special covariant bijections $T_A : A \fun A^{\flat}$, $T_B : B \fun B^{\flat}$ such that $A^{\flat}$, $B^{\flat}$ are doubly bounded $\RV$-pullbacks and the functions $f^{\flat}$, $f^{\flat}_{\downarrow}$ are bijective in
the diagram
\[
\bfig
  \square(0,0)/->`->`->`->/<600,400>[A`A^{\flat}`B`B^{\flat};
  T_A`f``T_B]
 \square(600,0)/->`->`->`->/<600,400>[A^{\flat}`\rv(A^{\flat})`B^{\flat} `\rv(B^{\flat});  \rv`f^{\flat}`f^{\flat}_{\downarrow}`\rv]
 \efig
\]
where the second square commutes (but not necessarily the first one, although it fails at only finitely many points). Moreover, $\omega' = \omega \circ T_A^{-1}$ and $\sigma' = \sigma \circ T_B^{-1}$ are both $\rv$-contractible, and  $f^{\flat}_{\downarrow}$ is a $\mRV^{\db}[1]$-morphism $(\rv(A^{\flat}), \omega')_{1} \fun (\rv(B^{\flat}), \sigma')_1$ (recall Notation~\ref{0coor}).
\end{lem}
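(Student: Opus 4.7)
The plan is to exploit the dimension-one structure of $\mgVF^{\diamond}[1]$-morphisms. Since $f$ is by hypothesis a composition of relatively unary proper covariant homeomorphisms and relative unariness (Definition~\ref{rela:unary}) forces equality of $\RV$-coordinate counts in the ambient sets, we must have $A, B \sub \VF \times \RV^m$ for a common $m$, with $\pr_{\RV} \circ f = \pr_{\RV}$. Hence for each $t \in \pr_{\RV}(A) = \pr_{\RV}(B)$, the restriction $f_t : A_t \fun B_t$ is itself a proper covariant homeomorphism between bounded clopen subsets of $\VF$. Proper covariance (Definition~\ref{prop:pseu}) then produces $t$-definable $\alpha_t, \beta_t \in \Gamma$ for which $A_t = \bigsqcup_i \ga_{t,i}$ and $B_t = \bigsqcup_j \gb_{t,j}$ are finite disjoint unions of open discs of the respective radii, with $f_t$ inducing a bijection $\ga_{t,i} \mapsto \gb_{t,j(t,i)}$ between the disc collections.

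The key step is to build $T_A$ and $T_B$ in tandem so that their foci are matched under $f$. Using Lemma~\ref{algebraic:balls:definable:centers} (applicable since $\mathbb{S}$ is $\VF$-generated) together with compactness, I would choose definable centers $c_{t,i} \in \ga_{t,i}$ uniformly in $t$ after a suitable finite partition of $\pr_{\RV}(A)$, and set $d_{t, j(t,i)} := f_t(c_{t,i})$. These data determine focus maps $\lambda_A$ and $\lambda_B$ satisfying $\lambda_B \circ f_t = f_t \circ \lambda_A$ fiberwise. Taking $T_A$, $T_B$ to be the centripetal transformations with foci $\lambda_A, \lambda_B$ and a common aperture $(\gamma, t_\gamma)$ chosen large enough (via Lemma~\ref{db:to:db}) to collapse every $\ga_{t,i}$ and every $\gb_{t,j}$ to a single $\RV$-polydisc, the lifts $A^\flat$, $B^\flat$ are doubly bounded $\RV$-pullbacks. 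By focus compatibility, $f^\flat$ carries the $\RV$-polydisc of $A^\flat$ attached to $\ga_{t,i}$ bijectively onto the $\RV$-polydisc of $B^\flat$ attached to $\gb_{t,j(t,i)}$; consequently $\rv \circ f^\flat$ is constant on each $\RV$-polydisc of $A^\flat$ and the $\rv$-contraction $f^\flat_\downarrow : \rv(A^\flat) \fun \rv(B^\flat)$ is a bijection. The finitely many failure points of the first square are precisely the collapse witnesses $t_{\gp}$ produced by the second clause of (\ref{centri}).

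The $\rv$-contractibility of $\omega' = \omega \circ T_A^{-1}$ and $\sigma' = \sigma \circ T_B^{-1}$ is folded into the same construction via Lemma~\ref{L:measure:surjective} (applied as in Remark~\ref{cons:extra}), by including $\omega, \sigma$ as extra proper covariant data that $T_A, T_B$ must respect. The $\Gamma$-measure-preservation (\ref{cond:mg2}) demanded of $f^\flat_\downarrow$ then drops out from (\ref{cond:mg}) for $f$: since $T_A, T_B$ each have constant Jacobian $1$ (Lemma~\ref{spec:inve}), the $\VF$-Jacobian equation for $f$ descends through the $\vrv$-contraction to the $\Gamma$-Jacobian equation for $f^\flat_\downarrow$. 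The main obstacle is securing the continuity and $(\beta_{\gp}, \alpha_{\gp})$-covariance of $\lambda_A, \lambda_B$ required by Definition~\ref{defn:special:bijection}: the centers $c_{t,i}$ typically jump as $t$ crosses $\RV$-boundaries and the index bijection $i \mapsto j(t,i)$ may not be globally uniform. I plan to handle this by the same combination of Proposition~\ref{prop:continuity:fiberwise}, compactness, and the FMT procedure (Terminology~\ref{FMT}) deployed throughout the proof of Proposition~\ref{simplex:with:hole:rvproduct}, while carrying the $f$-compatibility constraint between $\lambda_A$ and $\lambda_B$ through each step.
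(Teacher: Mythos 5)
Your reduction already goes wrong at the structural level. First, relative unariness only constrains the $\VF$-coordinates, not the $\RV$-coordinates (special covariant bijections themselves create new $\RV$-coordinates and are relatively unary), so a $\mgVF^{\diamond}[1]$-morphism need not satisfy $\pr_{\RV}\circ f=\pr_{\RV}$; this by itself is repairable by working with the $\VF$-fibers of $f$. The fatal point is the next step: proper invariance of $(A,\omega)$ says that each fiber $A_t$ is a union of open discs of radius $\alpha_t$, but in general an \emph{infinite}, definably parametrized union (e.g.\ $A_t=\OO$), not a finite disjoint union; likewise covariance of $f_t$ only says that each $\alpha_t$-disc is mapped \emph{into} some $\beta_t$-disc, not that $f_t$ induces a bijection of disc collections, and the disc-to-disc property genuinely fails at finitely many exceptional points --- this failure is exactly the delicate part of the lemma. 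Consequently your plan of choosing a definable center $c_{t,i}$ in each disc and collapsing every $\ga_{t,i}$ to a single $\RV$-polydisc by one centripetal transformation of large common aperture cannot be carried out: in dimension one a centripetal transformation has exactly one focus value per $\RV$-polydisc of its locus, and only the aperture-disc around that focus is sent to an $\RV$-point by the second clause of (\ref{centri}); all other $\alpha_t$-discs are merely translated, and a translated disc away from the focus is a \emph{proper} subset of the $\RV$-disc containing it, so the image is not an $\RV$-pullback. Turning $A$ and $B$ into $\RV$-pullbacks requires the iterated special-bijection machinery, whose finitely many focus points come from the \cmin-minimal data attached to $f$ (they form algebraic sets by Lemmas~\ref{dcl:to:ac} and \ref{RV:fiber:dim:same}), not from arbitrarily chosen disc centers.

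The paper's proof is organized differently: it first invokes Lemma~\ref{L:measure:surjective} to assume $A$, $B$ are doubly bounded $\RV$-pullbacks with $\omega$, $\sigma$ already $\rv$-contractible, then imports the simultaneous special bijections $T'_A$, $T'_B$ of \cite[Lemma~5.2]{Yin:int:acvf}, for which $f^{\flat}$ already $\rv$-contracts to a bijection. The work specific to the covariant setting is local at the finitely many focus points: one chooses disjoint definable discs around them on which $\omega$, $\sigma$ are constant, uses the disc-to-disc property outside and weak concentricity (Lemma~\ref{lim:weak}) inside these discs, pairs the exceptional discs via the contracted bijection, and modifies $T'_A$, $T'_B$ according to the second clause of (\ref{centri}) so that they become special \emph{covariant} bijections (in particular so that no $0$ is produced in the $\VF$-coordinate). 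Finally, the claim that $f^{\flat}_{\downarrow}$ is a $\mRV^{\db}[1]$-morphism is not a formal descent of (\ref{cond:mg}): it is verified by gauging the radii of the $\RV$-discs involved against the constant valuation of the derivative, using precisely disc-to-disc and weak concentricity, a control your proposal never secures. As it stands, the proposal has a genuine gap and would need to be rebuilt along these lines.
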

\begin{proof}
This is a variation of \cite[Lemma~5.2]{Yin:int:acvf} adapted to the current environment and can be established by modifying two special bijections $T'_A$, $T'_B$ as constructed in the proof thereof as follows.

By Lemma~\ref{L:measure:surjective}, we may assume that $A$, $B$ are doubly bounded $\RV$-pullbacks and $\omega$, $\sigma$ are already $\rv$-contractible. By Lemmas~\ref{dcl:to:ac} and \ref{RV:fiber:dim:same}, each focus map in $T'_A$ consists of an algebraic set of focus points, similarly for the focus maps in $T'_B$. We think of these focus points as contained in $A$, $B$. Each one of the focus maps may produce $0$ in the $\VF$-coordinate in the resulting set, but this is not allowed in a special covariant bijection. To remedy  this, we choose pairwise disjoint open discs of sufficiently large definable radii such that each of them contains exactly one focus point and the volume forms $\omega$, $\sigma$ are constant on them. By the construction in the proof of \cite[Lemma~5.2]{Yin:int:acvf}, $f$ has the disc-to-disc property outside these open discs. Inside these discs, by Lemma~\ref{lim:weak}, the radii may be chosen so that $f$ is weakly concentric at the focus points. In particular, since $f^\flat$ $\rv$-contracts to a bijection,  these open discs can be paired up via the induced bijection  in question. At this point it is not hard to see how to modify $T'_A$, $T'_B$ according to the second clause of (\ref{centri}). The first claim follows. For the second claim, since the volume forms $\omega'$, $\sigma'$ are $\rv$-contractible, we can use the disc-to-disc property (or weak concentricity for the exceptional discs) to gauge the difference of the radii of the $\RV$-discs in question against the constant valuation of the differentials, and this shows that $\omega'$, $\sigma'$  do match under $f^{\flat}_{\downarrow}$ in the required sense.
\end{proof}

\section{Integrating proper invariant sets}
\label{section:int}

\subsection{Annular blowups}
Recall that we write $\RV$ to mean the $\RV$-sort without the  element $\infty$, and $\RV_\infty$ otherwise, although quite often the difference is immaterial and it does not matter which set $\RV$ stands for. In the definition below, this difference does matter, in particular, $\RV^{\ccirc}$ denotes the set $\rv(\MM \mi 0)$.

\begin{nota}\label{punc:RVd}
Recall from Notation~\ref{defn:disc} that the set $\rv(\MM_{\gamma} \mi 0)$ is denoted by $\RV^{\ccirc}_\gamma$. For $\gamma \in \Gamma^+$ and $t \in \gamma^\sharp$, write
\[
\RV^{\ccirc}(t)_\gamma = (\RV^{\ccirc} \mi \RV^{\ccirc}_\gamma) \uplus t;
\]
if $\gamma = \infty$ then $\RV^{\ccirc}(t)_\gamma = \RV^{\ccirc}_\infty$ and if  $\gamma = 0$ then $\RV^{\ccirc}(1)_\gamma = 1 \in \RV$.
\end{nota}


\begin{defn}\label{defn:blowup:coa}
Let $\bm U = (U, f, \omega)$ be an object of $\mRV[k]$, where $k > 0$, such that the function $\pr_{\tilde j} \rest f(U)$ is finite-to-one for some $j \in [k]$. Write $f = (f_1, \ldots, f_k)$. Let $t \in \RV_\infty$ be a definable element with $\tau = \vrv(t)$ nonnegative.  An \memph{elementary annular blowup} of $\bm U$ in the $j$th coordinate of \memph{aperture} $(\tau, t)$ is the triple $\bm U^{\flat} = (U^{\flat}, f^{\flat}, \omega^\flat)$, where $U^{\flat} = U \times \RV^{\ccirc}(t)_{\tau}$ and for every $(u, s) \in U^{\flat}$,
\[
f^{\flat}_{i}(u, s) = f_{i}(u) \text{ for } i \neq j, \quad f^{\flat}_{j}(u, s) = s f_{j}(u), \quad \omega^\flat(u, s) = \omega(u).
\]

Let $\bm V = (V, g, \sigma)$ be another object of $\mRV[k]$ and $C_i \sub V$ finitely many pairwise disjoint definable sets. Each triple
\[
\bm C_i = (C_i, g \rest C_i, \sigma \rest C_i) \in \mRV[k]
\]
is referred to as a \memph{subobject} of $\bm V$. Suppose that $F_i : \bm U_i \fun \bm C_i$ is a $\mRV[k]$-morphism and $\bm U^{\flat}_i$ is an elementary annular blowup of $\bm U_i$. Let $C = \bigcup_i C_i$, $\bm C = \bigcup_i \bm C_i$,  and $F = \biguplus_i F_i$. Then the object
$
 (\bm V \mi \bm C) \uplus \biguplus_i \bm U_i^{\flat}
$
is  an \memph{annular blowup of $\bm V$ via $F$}, denoted by $\bm V^{\flat}_F$. The subscript $F$ is usually dropped. The object $\bm C$ (or the set $C$) is referred to as the \memph{locus} of $\bm V^{\flat}_F$.

An \memph{annular blowup of length $n$} is a composition of $n$ annular blowups.
\end{defn}

\begin{rem}
An elementary annular blowup could be an elementary blowup (see \cite[Definition~6.1]{Yin:int:acvf}) if the aperture is allowed to be $(\infty, \infty)$. If there is an elementary annular blowup of $\bm U$ then, \textit{a posteriori}, $\dim_{\RV}(U) < k$. For any coordinate of $f(U)$, there is at most one elementary blowup of $\bm U$, whereas there could be many annular elementary blowups of $\bm U$. We should have included the coordinate that is blown up as a part of the data. However, in context, either this is
clear or it does not need to be spelled out, and we shall suppress mention of it for ease of notation.
\end{rem}

\begin{rem}
Clearly if $\bm U \in \mRV^{\db}[k]$  then $\bm U^{\flat} \in \mRV^{\db}[k]$ too. We shall only consider annular blowups  in $\mRV^{\db}[k]$ (then, for simplicity, they will just be referred to as blowups). This notion is dual to that of a  special covariant bijection. As a matter of fact, to make this duality precise is essentially what is left to do for the rest of our main construction.
\end{rem}

For the next few lemmas, let $\bm U = (U, f, \omega)$ and $\bm V = (V, g, \sigma)$ be objects of $\mRV^{\db}[k]$.


\begin{lem}\label{elementary:blowups:preserves:iso:vol}
Suppose that $[\bm U] = [\bm V]$ and $\bm U^{\flat}$, $\bm V^{\flat}$ are elementary blowups of $\bm U$, $\bm V$ in the $k$th coordinate. Then there are  blowups $\bm U^{\flat\flat}$, $\bm V^{\flat\flat}$ of $\bm U^{\flat}$, $\bm V^{\flat}$ of length $1$ such that $[\bm U^{\flat\flat}] = [\bm V^{\flat\flat}]$.
\end{lem}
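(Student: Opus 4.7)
My plan is to reduce to the case of a direct isomorphism and then equalize apertures with one further blowup on each side. Since $[\bm U]=[\bm V]$ in $\gsk \mgRV^{\db}[k]$, after a common piecewise decomposition I may assume there is a genuine $\mgRV^{\db}[k]$-isomorphism $F: \bm U \to \bm V$. Write the apertures of $\bm U^{\flat}$ and $\bm V^{\flat}$ (both in the $k$th coordinate) as $(\tau, t)$ and $(\tau', t')$ respectively, and by symmetry I may assume $\tau \leq \tau'$.

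First I would introduce $\bm V^{\flat_1}$, the elementary annular blowup of $\bm V$ in the $k$th coordinate at aperture $(\tau, t)$ (the same aperture as for $\bm U^{\flat}$). A direct check shows that $F \times \id: \bm U^{\flat} \to \bm V^{\flat_1}$ is a $\mgRV^{\db}[k]$-isomorphism: the extra $\vrv(s)$ contributions coming from the new $k$th-coordinate tag cancel when computing $\jcb_{\Gamma} (F \times \id)^{\dag}$, which therefore reduces to $\jcb_{\Gamma} F^{\dag}$, and the pulled-back volume forms match trivially because they are constant along the newly added coordinate.

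The central step is then to further blow up each of $\bm V^{\flat_1}$ and $\bm V^{\flat}$ exactly once so that they become isomorphic. Take the singleton subobject $V \times \{t\} \subseteq \bm V^{\flat_1}$ and blow it up in the $k$th coordinate with aperture $(\tau' - \tau,\, t'/t)$, obtaining $\bm V^{\flat_1 \flat_2}$; and take the singleton subobject $V \times \{t'\} \subseteq \bm V^{\flat}$ and blow it up trivially with aperture $(0, 1)$, obtaining $\bm V^{\flat\flat}$. I would then exhibit a $\mgRV^{\db}[k]$-isomorphism $\bm V^{\flat_1 \flat_2} \to \bm V^{\flat}$ which is the identity on $V \times (\RV^{\ccirc} \mi \RV^{\ccirc}_{\tau})$, sends $(v, t, s) \mapsto (v, st)$ on the newly added non-contracted annular piece, and sends the new contracted point $(v, t, t'/t) \mapsto (v, t')$. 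In each case the $k$th-coordinate tag evaluates literally to $st \cdot g_k(v)$ (respectively $t' g_k(v)$), so the $\Gamma$-Jacobian vanishes and the volume forms agree.

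Finally, I would transfer the above blowup of $\bm V^{\flat_1}$ back across the isomorphism $F \times \id$: the pulled-back locus in $\bm U^{\flat}$ is $U \times \{t\}$, and blowing it up in the $k$th coordinate with the same aperture $(\tau' - \tau,\, t'/t)$ produces a length-$1$ blowup $\bm U^{\flat\flat}$ of $\bm U^{\flat}$. The same tag-calculation then gives an isomorphism $\bm U^{\flat\flat} \cong \bm V^{\flat_1 \flat_2}$, and chaining everything yields $\bm U^{\flat\flat} \cong \bm V^{\flat_1 \flat_2} \cong \bm V^{\flat} \cong \bm V^{\flat\flat}$. The main obstacle will be the careful bookkeeping of tags, volume forms, and $\Gamma$-Jacobians through the matching bijection, together with the boundary cases $\tau' = \infty$ (where $t' = t'/t = \infty$ and the contracted point is simply sent to $\infty$) and $\tau = \tau'$ (where the secondary blowup on the left also degenerates to the trivial aperture $(0, 1)$); both are accommodated without modification by the conventions in Notation~\ref{punc:RVd}.
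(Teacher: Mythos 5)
Your proof is correct and is essentially the paper's argument: after reducing to an actual isomorphism $F:\bm U \fun \bm V$, you equalize apertures by blowing up the tagged-point locus of the smaller-aperture side with aperture equal to the difference, which is exactly what the paper does (it picks an arbitrary definable $r'$ with $\vrv(r')=\tau'-\tau$ rather than $t'/t$, and matches $\bm U^{\flat\flat}$ directly to $\bm V^{\flat}$). Your intermediate object $\bm V^{\flat_1}$ and the explicit trivial length-$1$ blowup on the other side are just more detailed bookkeeping of the same idea.
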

\begin{proof}
Let $(\rho, r)$, $(\tau, t)$ be the apertures of  $\bm U^{\flat}$, $\bm V^{\flat}$, respectively. If $\rho = \tau$ then clearly any morphism $\bm U \fun \bm V$ induces a morphism $\bm U^{\flat} \fun \bm V^{\flat}$. So we may assume $\rho' \coloneqq \tau - \rho > 0$. Choose a definable element $r' \in \rho'^\sharp$. Let $W = U \times r$ and $\bm W$ be the corresponding subobject of $\bm U^{\flat}$. Then it is easy to see that the blowup $\bm U^{\flat\flat}$  of $\bm U^{\flat}$ with locus $\bm W$ and aperture $(\rho', r')$ is isomorphic to $\bm V^{\flat}$.
\end{proof}

\begin{cor}\label{eblowup:same:loci:iso}
Let $F : \bm U \fun \bm V$ be a morphism and $\bm U^{\flat}$, $\bm V^{\flat}$ blowups of  $\bm U$, $\bm V$ of length $1$. Then there are blowups $\bm U^{\flat\flat}$, $\bm V^{\flat\flat}$ of $\bm U^{\flat}$, $\bm V^{\flat}$ of length $1$ such that $[\bm U^{\flat\flat}] = [\bm V^{\flat\flat}]$.
\end{cor}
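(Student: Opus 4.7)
First I would use the isomorphism $F$ to identify $\bm V$ with $\bm U$, thereby assuming $\bm V = \bm U$ and $F = \id$: any length-$1$ blowup of the pullback of $\bm V^{\flat}$ along $F^{-1}$ transports back along $F$ to a length-$1$ blowup of $\bm V^{\flat}$ of the same class. Write the locus of $\bm U^{\flat}$ as $\bm C = \biguplus_i \bm C_i$ with $\bm C_i$ blown up in coordinate $j_i$ of aperture $(\tau_i, t_i)$, and analogously the locus of $\bm V^{\flat}$ as $\bm C' = \biguplus_{i'} \bm C'_{i'}$ with data $(j'_{i'}, \tau'_{i'}, t'_{i'})$. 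Passing to the common refinement of $\bm C$ and $\bm C'$ inside $\bm U$, every piece $\bm E$ of this refinement belongs to exactly one of four mutually exclusive situations: (i) $\bm E \cap (\bm C \cup \bm C') = \0$; (ii) $\bm E \sub \bm C_i \mi \bm C'$ for a unique $i$; (iii) $\bm E \sub \bm C'_{i'} \mi \bm C$ for a unique $i'$; or (iv) $\bm E \sub \bm C_i \cap \bm C'_{i'}$ for unique $i, i'$.

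In case (i) nothing is done; in case (ii) I would blow up the still untouched side $\bm V^{\flat}$ over $\bm E$ in coordinate $j_i$ with aperture $(\tau_i, t_i)$, and case (iii) is symmetric. In case (iv), when $j_i = j'_{i'}$, Lemma~\ref{elementary:blowups:preserves:iso:vol} applied fiberwise over $\bm E$ delivers matching length-$1$ blowups of each side. When $j_i \neq j'_{i'}$, I would further blow up $\bm U^{\flat}$ over $\bm E \times \RV^{\ccirc}(t_i)_{\tau_i}$ in coordinate $j'_{i'}$ with aperture $(\tau'_{i'}, t'_{i'})$, and symmetrically blow up $\bm V^{\flat}$ over $\bm E \times \RV^{\ccirc}(t'_{i'})_{\tau'_{i'}}$ in coordinate $j_i$ with aperture $(\tau_i, t_i)$; a direct inspection of the formulas of Definition~\ref{defn:blowup:coa} then shows that both resulting objects are canonically isomorphic, via swapping the two extra $\RV$-factors, to $\bm E \times \RV^{\ccirc}(t_i)_{\tau_i} \times \RV^{\ccirc}(t'_{i'})_{\tau'_{i'}}$ equipped with the same structure map and volume form.

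Because all the new elementary annular blowups introduced above live over the pairwise disjoint pieces of the common refinement, they can be assembled into single length-$1$ blowups $\bm U^{\flat\flat}$ of $\bm U^{\flat}$ and $\bm V^{\flat\flat}$ of $\bm V^{\flat}$ satisfying $[\bm U^{\flat\flat}] = [\bm V^{\flat\flat}]$. The main obstacle I expect is the different-coordinate subcase of (iv): one must verify on the nose that successive elementary annular blowups in distinct coordinate slots commute and produce matching structure maps and volume forms. The same-coordinate subcase is handed to us by Lemma~\ref{elementary:blowups:preserves:iso:vol} and requires no further analysis.
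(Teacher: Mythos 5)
Your overall scheme — reduce to $F=\id$, refine the two loci, transport the blowup data onto the untouched side over the pieces where only one locus is present, and reconcile over the common pieces — is the paper's scheme, and cases (i)--(iii) together with the equal-coordinate part of (iv) are fine. The gap sits exactly where you flagged it, but it is worse than a verification of commutativity: in case (iv) with $j_i \neq j'_{i'}$ the second elementary annular blowup you want to perform need not exist. Definition~\ref{defn:blowup:coa} requires, for an elementary blowup in the $j'$th coordinate, that $\pr_{\widetilde{j'}}$ restricted to the image of the structure map be finite-to-one, and this is in general destroyed by the first blowup. Concretely, take $k=2$, $X = 0^\sharp \sub \RV$, $U = \set{(a,a) \given a \in X}$, $f = \id$, $\omega = 0$: both coordinate projections are injective on $f(U)$, so each single elementary blowup exists, but on $U \times \RV^{\ccirc}(t)_{\tau}$ with structure map $(a,a,s) \efun (sa,a)$ neither coordinate projection restricted to the image is finite-to-one (for $0 < \vrv(c) \leq \tau$ every $a \in X$ contributes a point with first coordinate $c$), so no in-place elementary blowup of this object exists in either slot. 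Hence the "successive blowups in distinct coordinates and swap" construction cannot even be set up, and the different-coordinate subcase of your proof collapses. Note also that comparing $j_i$ with $j'_{i'}$ is not intrinsically meaningful, since these are coordinates of the structure maps of the unrelated source objects $\bm U_i$, $\bm U'_{i'}$ of the two blowup morphisms.

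The repair is to realize that the blown-up coordinate is irrelevant up to isomorphism, so the only genuine discrepancy over a common piece is the pair of apertures. A morphism of $\mRV^{\db}[k]$ need not respect the structure maps coordinatewise; condition (\ref{cond:mg2}) only sees the total sum $\Sigma(\vrv \circ f)$, and the extra factor contributes the same $\vrv(s)$ to that sum no matter which coordinate it multiplies. Thus two elementary blowups of isomorphic objects with the same aperture are isomorphic regardless of the coordinates (the map which is the identity on the extra factor has $\jcb_{\Gamma}=0$), and for unequal apertures one reconciles them exactly as in the proof of Lemma~\ref{elementary:blowups:preserves:iso:vol}: further blow up the special-point locus $U \times r$ on the smaller-aperture side, in its own coordinate, by the aperture difference — a step for which the finite-to-one condition is inherited from the original blowup, so the obstruction above never arises. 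This is what the paper does: it restricts the loci to $C \cap F^{-1}(D)$ and $F(C) \cap D$ and invokes Lemma~\ref{elementary:blowups:preserves:iso:vol} there (with no coordinate bookkeeping), while blowing up $F^{-1}(D)\mi C$ and $F(C)\mi D$ on the opposite sides with the transported apertures, i.e.\ your cases (ii) and (iii). So your case (iv) should be handled by the aperture-matching argument throughout, not by a coordinate dichotomy.
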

\begin{proof}
Let  $C$, $D$ be the loci of $\bm U^{\flat}$, $\bm V^{\flat}$, respectively. By Lemma~\ref{elementary:blowups:preserves:iso:vol}, we can make the claim hold by restricting the loci of $\bm U^{\flat}$, $\bm V^{\flat}$ to $C \cap F^{-1}(D)$, $F(C) \cap D$. But in the meantime we can also blow up $\bm U^{\flat}$, $\bm V^{\flat}$ at the loci $F^{-1}(D) \mi C$, $F(C) \mi D$ using the apertures induced by those on $D \mi F(C)$, $C \mi F^{-1}(D)$, respectively. Then the resulting compounded blowups of $\bm U^{\flat}$, $\bm V^{\flat}$ of length $1$ are as desired.
\end{proof}

This corollary also holds in $\RV[k]$, which is essentially the only thing that the otherwise formal proof of \cite[Lemma~6.5]{Yin:special:trans} depends on. Thus, the same proof yields the following analogue in the present context:

\begin{lem}\label{blowup:equi:class:coa}
If $[\bm U] = [\bm V]$ and $\bm U_1$, $\bm V_1$ are blowups of $\bm U$, $\bm V$ of lengths $m$, $n$, respectively, then there are blowups $\bm U_2$, $\bm V_2$ of $\bm U_1$, $\bm V_1$ of lengths $n$, $m$, respectively, such that $[\bm U_2] = [\bm V_2]$.
\end{lem}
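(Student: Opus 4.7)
The plan is to induct on $m+n$, using Corollary~\ref{eblowup:same:loci:iso} as the essential base case (it handles $(m,n)=(1,1)$) and treating the $m=0$ or $n=0$ situation by transporting blowup data across the given isomorphism. The argument is entirely formal once the length-$1$ interchange lemma is available, which is exactly the role that the analogue of \cite[Lemma~6.5]{Yin:special:trans} played in the non-annular setting; since Corollary~\ref{eblowup:same:loci:iso} already delivers this length-$1$ interchange for annular blowups in $\mRV^{\db}[k]$, the induction is expected to go through with no essentially new ingredient.

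For the base cases, note first that if $n=0$ then $\bm V_1 = \bm V$, and an isomorphism $F \colon \bm U \fun \bm V$ witnessing $[\bm U]=[\bm V]$ may be used to transport the subobjects and apertures that define $\bm U_1$ to a length-$m$ blowup $\bm V_2$ of $\bm V$; then $F$ extends to an isomorphism $\bm U_1 \fun \bm V_2$, and $\bm U_2 \coloneqq \bm U_1$ (the length-$0$ blowup) works. The case $m=0$ is symmetric. The case $m=n=1$ is precisely Corollary~\ref{eblowup:same:loci:iso}.

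For the inductive step, assume $m, n \geq 1$ with $(m,n) \neq (1,1)$. By the symmetry of the statement, we may further assume $m \geq 2$. Decompose $\bm U_1$ as a length-$1$ blowup of some length-$(m{-}1)$ blowup $\bm U_1'$ of $\bm U$. First apply the inductive hypothesis to the pair of lengths $(m{-}1, n)$ with the data $[\bm U]=[\bm V]$, $\bm U_1'$, $\bm V_1$: this yields a length-$n$ blowup $\bar{\bm U}$ of $\bm U_1'$ and a length-$(m{-}1)$ blowup $\bm V_3$ of $\bm V_1$ with $[\bar{\bm U}]=[\bm V_3]$. Next apply the inductive hypothesis to the pair $(1,n)$ with the trivial isomorphism $\bm U_1' \fun \bm U_1'$, the length-$1$ blowup $\bm U_1$ and the length-$n$ blowup $\bar{\bm U}$: this produces a length-$n$ blowup $\bm U_2$ of $\bm U_1$ and a length-$1$ blowup $\bar{\bm U}^{\flat}$ of $\bar{\bm U}$ with $[\bm U_2] = [\bar{\bm U}^{\flat}]$. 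Transporting $\bar{\bm U}^{\flat}$ along an isomorphism witnessing $[\bar{\bm U}]=[\bm V_3]$ gives a length-$1$ blowup $\bm V_2$ of $\bm V_3$ (equivalently, a length-$m$ blowup of $\bm V_1$) with $[\bar{\bm U}^{\flat}]=[\bm V_2]$, hence $[\bm U_2]=[\bm V_2]$ as required. Note that both invocations of the inductive hypothesis are legitimate since $(m{-}1)+n < m+n$ and $1+n < m+n$ (the latter uses $m \geq 2$).

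The only conceptual point to verify is that transporting a blowup along an isomorphism produces a blowup of the same length, which is immediate from Definition~\ref{defn:blowup:coa} since the data defining an elementary annular blowup (subobject, coordinate, aperture) is preserved by $\mRV^{\db}[k]$-morphisms. No step in the induction produces new technical difficulties beyond those already addressed in Lemma~\ref{elementary:blowups:preserves:iso:vol} and Corollary~\ref{eblowup:same:loci:iso}, so the argument of \cite[Lemma~6.5]{Yin:special:trans} carries over verbatim.
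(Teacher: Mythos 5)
Your proof is correct and takes essentially the same route as the paper: the paper simply defers to the formal induction of \cite[Lemma~6.5]{Yin:special:trans}, whose only non-formal input is the length-$1$ interchange supplied by Corollary~\ref{eblowup:same:loci:iso}, and your induction on $m+n$ together with the (immediate) transport of blowup data along isomorphisms is exactly that argument written out explicitly.
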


\begin{cor}\label{blowup:equi:class}
Suppose that $[\bm U] = [\bm U']$ and $[\bm V] = [\bm V']$. If there are isomorphic blowups of $\bm U$, $\bm V$ then there are isomorphic blowups of $\bm U'$, $\bm V'$.
\end{cor}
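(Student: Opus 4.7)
The plan is to deduce this as a direct consequence of Lemma~\ref{blowup:equi:class:coa}, which already packages the essential combinatorics of comparing blowups of isomorphic objects. Let $\bm U^{\flat}$, $\bm V^{\flat}$ be the given isomorphic blowups of $\bm U$, $\bm V$, of lengths $m$, $n$ respectively, so that $[\bm U^{\flat}] = [\bm V^{\flat}]$, and let $F : \bm U \fun \bm U'$ and $G : \bm V \fun \bm V'$ be isomorphisms witnessing $[\bm U] = [\bm U']$ and $[\bm V] = [\bm V']$.

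First I would apply Lemma~\ref{blowup:equi:class:coa} to the equality $[\bm U] = [\bm U']$, taking $\bm U_1 = \bm U^{\flat}$ (a blowup of $\bm U$ of length $m$) and $\bm V_1 = \bm U'$, viewed as the trivial blowup of itself of length $0$. The lemma then produces a blowup $\tilde{\bm U}{}'$ of $\bm U'$ of length $m$ and a blowup of $\bm U^{\flat}$ of length $0$ (i.e.\ $\bm U^{\flat}$ itself) with $[\bm U^{\flat}] = [\tilde{\bm U}{}']$. Symmetrically, applying the lemma to $[\bm V] = [\bm V']$ with $\bm V^{\flat}$ and the trivial blowup of $\bm V'$, I obtain a blowup $\tilde{\bm V}{}'$ of $\bm V'$ of length $n$ with $[\bm V^{\flat}] = [\tilde{\bm V}{}']$. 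Chaining these equalities yields $[\tilde{\bm U}{}'] = [\bm U^{\flat}] = [\bm V^{\flat}] = [\tilde{\bm V}{}']$, so $\tilde{\bm U}{}'$ and $\tilde{\bm V}{}'$ are the desired isomorphic blowups of $\bm U'$, $\bm V'$.

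The only mild technical point is the use of a ``length $0$'' blowup, which should be read under the convention that every object is trivially a blowup of itself (a composition of zero elementary blowups). If one prefers to avoid this convention, the same argument can be run by replacing the length-$0$ blowup with a degenerate length-$1$ elementary blowup of aperture $(0, 1)$: by Notation~\ref{punc:RVd}, the extra factor $\RV^{\ccirc}(1)_{0}$ is the singleton $\{1\}$, so such a blowup is canonically isomorphic to the original object, and the lengths in the argument shift by one without affecting the conclusion. Alternatively, one can bypass Lemma~\ref{blowup:equi:class:coa} entirely by the functorial observation that the isomorphisms $F$, $G$ transport the blowup data (loci, apertures, and morphisms $F_i : \bm U_i \fun \bm C_i$) of $\bm U^{\flat}$, $\bm V^{\flat}$ to blowups of $\bm U'$, $\bm V'$ of the same lengths, inducing isomorphisms $\bm U^{\flat} \cong \bm U'{}^{\flat}$ and $\bm V^{\flat} \cong \bm V'{}^{\flat}$, after which composing with $[\bm U^{\flat}] = [\bm V^{\flat}]$ finishes the argument. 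I do not expect any genuine obstacle, since the corollary is essentially a transport-of-structure statement along the two given isomorphisms.
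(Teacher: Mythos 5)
Your main argument is correct and is essentially the paper's (implicit) proof: the corollary is intended to follow at once from Lemma~\ref{blowup:equi:class:coa}, applied exactly as you do with $\bm U'$, $\bm V'$ taken as trivial blowups of themselves (under the convention that a blowup of length $0$ is the object itself) and then chaining the resulting class equalities, and your transport-of-structure remark is an equally valid direct route. One caution about your fallback device: a ``degenerate'' length-$1$ elementary blowup of aperture $(0,1)$ is not always available, because an elementary annular blowup in the $j$th coordinate requires $\pr_{\tilde j} \rest f(U')$ to be finite-to-one for some $j$ (so in particular $\dim_{\RV}(U') < k$), which can fail when $f(U')$ has full $\RV$-dimension; so one should rely on the length-$0$ convention or on transporting the blowup data along the given isomorphisms rather than on that substitute.
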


\begin{defn}
Let $\mispdb[k]$ be the set of pairs $(\bm U, \bm V)$ of objects of $\mRV^{\db}[k]$ such that there exist isomorphic blowups $\bm U^{\flat}$, $\bm V^{\flat}$. Let $\mispdb[*] = \bigoplus_{k} \mispdb[k]$.
\end{defn}

We will just write $\mispdb$ for all these sets when there is no danger of confusion. By Corollary~\ref{blowup:equi:class}, they may be regarded as binary relations on isomorphism classes.

\begin{lem}\label{isp:congruence:vol}
$\mispdb[k]$ is a semigroup congruence relation and $\mispdb[*]$ is a semiring congruence relation.
\end{lem}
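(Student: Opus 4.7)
The goal is to verify, for the relation $\mispdb$ on isomorphism classes (which is well-defined by Corollary~\ref{blowup:equi:class}), the three properties of an equivalence relation, plus compatibility with disjoint union (for the semigroup congruence) and with cartesian product (for the semiring congruence). The key technical input is Lemma~\ref{blowup:equi:class:coa}; everything else amounts to verifying that blowups are functorial under isomorphisms and commute formally with disjoint union and product.

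\emph{Equivalence relation.} Reflexivity is immediate: take the trivial (length-zero) blowups on both sides, so that $\bm U \cong \bm U$. Symmetry is immediate from the symmetric form of the definition. For transitivity, suppose $(\bm U, \bm V)$ and $(\bm V, \bm W)$ both lie in $\mispdb$, witnessed by isomorphic blowups $\bm U^\flat \cong \bm V_1^\flat$ and $\bm V_2^\flat \cong \bm W^\flat$, where $\bm V_1^\flat$ and $\bm V_2^\flat$ are both blowups of $\bm V$. Applying Lemma~\ref{blowup:equi:class:coa} to the trivial equality $[\bm V] = [\bm V]$ and the two blowups $\bm V_1^\flat$, $\bm V_2^\flat$ produces further blowups $\bm V_1^{\flat\flat}$, $\bm V_2^{\flat\flat}$ with $[\bm V_1^{\flat\flat}] = [\bm V_2^{\flat\flat}]$. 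Now transfer the blowup datum yielding $\bm V_1^{\flat\flat}$ from $\bm V_1^\flat$ to $\bm U^\flat$ along the isomorphism $\bm U^\flat \cong \bm V_1^\flat$ (pushing forward the locus and precomposing the transfer morphisms $F_i$); this produces a blowup $\bm U^{\flat\flat}$ of $\bm U^\flat$, which is itself a blowup of $\bm U$ by composition, with $\bm U^{\flat\flat} \cong \bm V_1^{\flat\flat}$. Symmetrically obtain $\bm W^{\flat\flat} \cong \bm V_2^{\flat\flat}$. Chaining the four isomorphisms gives $\bm U^{\flat\flat} \cong \bm W^{\flat\flat}$, so $(\bm U, \bm W) \in \mispdb$.

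\emph{Compatibility with $\uplus$.} If $(\bm U_1, \bm V_1), (\bm U_2, \bm V_2) \in \mispdb$ with witnesses $\bm U_i^\flat \cong \bm V_i^\flat$, then $\bm U_1^\flat \uplus \bm U_2^\flat$ is a blowup of $\bm U_1 \uplus \bm U_2$ (concatenate the blowup data: the locus is the disjoint union of the two loci, the transfer morphisms are taken as the coproduct, and the elementary annular blowups carry over unchanged), and similarly for $\bm V$. The direct sum of the two witnessing isomorphisms then establishes $(\bm U_1 \uplus \bm U_2, \bm V_1 \uplus \bm V_2) \in \mispdb$.

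\emph{Compatibility with $\times$.} Given $(\bm U, \bm V) \in \mispdb[k]$ with witness $\bm U^\flat \cong \bm V^\flat$ and $\bm W = (W, g, \tau) \in \mRV^{\db}[l]$, the key observation is that cartesian product with $\bm W$ commutes with elementary annular blowups in coordinates of $\bm U$: if $\bm U^\flat$ is obtained by an elementary annular blowup of $\bm U$ in the $j$th coordinate (with $j \in [k]$, aperture $(\tau_0, t)$), then $\bm U^\flat \times \bm W$ is an elementary annular blowup of $\bm U \times \bm W$ in the same coordinate with the same aperture; in particular the required finite-to-one condition on $\pr_{\widetilde{j}}$ in $\bm U \times \bm W$ follows at once from the corresponding condition in $\bm U$. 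Iterating, any blowup of $\bm U$ with locus $\bm C$ and transfer morphisms $F_i$ induces a blowup of $\bm U \times \bm W$ with locus $\bm C \times \bm W$ and transfer morphisms $F_i \times \id_{\bm W}$, whose total space is canonically $\bm U^\flat \times \bm W$. The same holds for $\bm V$, and the isomorphism $\bm U^\flat \cong \bm V^\flat$ crossed with $\id_{\bm W}$ witnesses $(\bm U \times \bm W, \bm V \times \bm W) \in \mispdb[k+l]$.

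The only step that is not an entirely formal manipulation is transitivity, where we need Lemma~\ref{blowup:equi:class:coa} to align the two different blowups of $\bm V$ into a common further blowup; once that is done, the rest is bookkeeping with transfer of blowup data along isomorphisms.
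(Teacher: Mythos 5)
Your proof is correct and takes essentially the same approach as the paper, which records the lemma as an easy consequence of Lemma~\ref{blowup:equi:class:coa} (deferring the routine details to the analogous argument in an earlier work). Your write-up simply makes explicit what that entails: transitivity via Lemma~\ref{blowup:equi:class:coa} together with transfer of blowup data along isomorphisms (the content of Corollary~\ref{blowup:equi:class}), and the formal compatibility of blowups with disjoint unions and cartesian products.
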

\begin{proof}
This is an easy consequence of Lemma~\ref{blowup:equi:class:coa} (the proof of \cite[Lemma~6.8]{Yin:special:trans}, which uses \cite[Lemma~6.5]{Yin:special:trans}, contains more details).
\end{proof}

Let  $T$ be a special covariant bijection on $\bb L (U,f)$. Denote the object $(\mathbb{L} (U,f)^\flat_{\RV})_{\leq k} \in \RV^{\db}[k]$ by $(U_{T})_{\leq k}$ and  the object $(U_{T}, \omega_T)_{\leq k} \in \mRV^{\db}[k]$ by $\bm U_{T}$, where $\omega_T : U_{T} \fun \RV$ is the volume form induced by $\omega$.

\begin{lem}\label{special:to:blowup:coa}
The object $\bm U_T$ is isomorphic to a blowup of $\bm U$ of the same length as $T$.
\end{lem}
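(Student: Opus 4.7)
The plan is to induct on $n = \lh(T)$, following the arc of the analogous result \cite[Proposition~6.14]{Yin:special:trans} that compares ordinary special bijections with (non-annular) blowups. The main task will be to check that the new ingredient of a centripetal transformation, namely the second clause of (\ref{centri}) that redirects points near the focus to the distinguished point $t_\gp$, is exactly accounted for by the extra $t$-element appearing in $\RV^{\ccirc}(t)_\gamma$ that distinguishes an elementary annular blowup from an ordinary elementary blowup.

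For $n = 0$, $T$ is a composition of regularizations and so does not alter the $\RV$-data up to canonical identification, whence $\bm U_T \cong \bm U$ is a blowup of itself of length zero. For the inductive step, write $T = T_2 \circ T_1$ with $T_1$ consisting of the first centripetal transformation of $T$ together with any immediately surrounding regularizations, so that $\lh(T_1) = 1$ and $\lh(T_2) = n - 1$, and let $\bm U' \in \mgRV^{\db}[k]$ denote the $\rv$-contracted image of $T_1(\bb L \bm U)$. The crux is to identify $\bm U'$ with a blowup of $\bm U$ of length one; granted this, the inductive hypothesis applied to $T_2$ acting on $T_1(\bb L \bm U) \simeq \bb L \bm U'$ yields that $\bm U_T$ is a blowup of $\bm U'$ of length $n - 1$, and the assembled construction is by definition a blowup of $\bm U$ of length $n$.

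To carry out the length-one identification, analyze the centripetal transformation $\eta$ inside $T_1$ $\RV$-polydisc by $\RV$-polydisc over its locus $C$. Fix $\gp \sub C$ with local data $(\gamma_\gp, t_\gp) = (\rad(\pr_1(\gp)) + \gamma,\, t \cdot \rv(\pr_1(\gp)))$ and focus $\lambda_\gp$. In the first clause of (\ref{centri}), $a - \lambda(x)$ ranges over $\MM_{\rad(\pr_1(\gp))} \mi \MM_{\gamma_\gp}$; in the lifted second clause the shift is redirected to the $\RV$-disc $\lambda(x) \cdot t^\sharp$ via the definable point $c \in t^\sharp$. Applying $\rv$ and renormalizing by the unit $\rv(\pr_1(\gp))$, the newly introduced $\RV$-coordinate takes values in exactly $(\RV^{\ccirc} \mi \RV^{\ccirc}_\gamma) \uplus t = \RV^{\ccirc}(t)_\gamma$, which is precisely the shape prescribed by Definition~\ref{defn:blowup:coa} with aperture $(\gamma, t)$ once the multiplicative rule $f_j^\flat(u, s) = s f_j(u)$ is used to reabsorb the $\gp$-local factor $\rv(\pr_1(\gp)) = f_j(u)$. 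Gluing over a partition of $C$ into the finitely many subobjects $\bm C_i$ on which the focus and aperture data are uniform, these local identifications assemble into a single annular blowup of $\bm U$ of length one; volume forms match automatically since the centripetal transformation has constant Jacobian one and the blowup extends $\omega$ by constancy in the new coordinate.

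The main obstacle will be the bookkeeping that converts the $\gp$-local apertures $(\gamma_\gp, t_\gp)$, which vary multiplicatively with $\rad(\pr_1(\gp))$ and $\rv(\pr_1(\gp))$, into the single global aperture $(\gamma, t)$ of an elementary annular blowup. This is exactly what the multiplicative rule $f_j^\flat(u, s) = s f_j(u)$ was designed to absorb, so the verification is essentially formal; nevertheless, one must organize the correspondence $\gp \leftrightarrow \bm C_i$ compatibly with a finite partition of $C$ on which $\lambda$ and $(\gamma, t)$ behave uniformly, and separately check that the distinguished second clause of (\ref{centri}) contributes the isolated $t$-point of $\RV^{\ccirc}(t)_\gamma$ while the first clause contributes the annular part $\RV^{\ccirc} \mi \RV^{\ccirc}_\gamma$.
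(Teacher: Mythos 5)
Your proposal is correct and takes essentially the same route as the paper: induct on $\lh(T)$, reduce to a single centripetal transformation, and check that its globally chosen aperture $(\gamma,t)$ makes the $\rv$-contraction of the image exactly an elementary annular blowup of the locus subobject, with the multiplicative rule $f_j^{\flat}(u,s)=sf_j(u)$ absorbing the polydisc-local factors and the Jacobian-one property matching the volume forms (the paper compresses the length reduction into an appeal to Lemma~\ref{blowup:equi:class:coa} and declares the length-one case clear, which you spell out). The only cosmetic differences are that you glue over a partition of the locus, although one elementary blowup of the whole locus subobject with the single global aperture already suffices, and that you transport blowups along isomorphisms directly where the paper invokes Lemma~\ref{blowup:equi:class:coa}.
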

\begin{proof}
By induction on the length $\lh (T)$ of $T$ and Lemma~\ref{blowup:equi:class:coa}, this is immediately reduced to the case $\lh (T) = 1$. Without loss of generality, we may assume that the locus of $T$ is indeed $\mathbb{L} (U,f)$. Then it is clear how to construct an (elementary) blowup of $\bm U$ as desired, using the aperture provided by $T$.
\end{proof}

\begin{lem}\label{blowup:same:RV:coa}
Let $\bm U^{\flat}$ be a blowup of $\bm U$. Then $\mL \bm U^{\flat}$ is isomorphic to $\mL \bm U$ in $\mVF^{\diamond}[k]$.
\end{lem}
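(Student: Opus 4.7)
The plan is to proceed by induction on the length of the blowup to reduce to the elementary case, then exhibit the inverse isomorphism directly as a special covariant bijection, appealing to Lemma~\ref{spec:inve} and the observation that a single centripetal transformation is a relatively unary proper covariant homeomorphism (hence a $\mVF^\diamond[k]$-morphism).

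First, by Definition~\ref{defn:blowup:coa} an annular blowup of length $n$ is an $n$-fold composition of length-$1$ blowups, so by induction on $n$ we may assume $\bm U^\flat$ has length $1$. Such a blowup replaces finitely many pairwise disjoint subobjects $\bm C_i \subseteq \bm U$ by elementary annular blowups $\bm U^\flat_i$ of objects $\bm U_i$ along morphisms $F_i : \bm U_i \to \bm C_i$. Since $\mL$ preserves disjoint unions and, by Lemma~\ref{L:measure:class:lift}, each $F_i$ lifts to a $\mVF^\diamond[k]$-isomorphism $\mL \bm U_i \to \mL \bm C_i$, we further reduce to the case where $\bm U^\flat$ is itself an elementary annular blowup of $\bm U = (U, f, \omega)$ in (say) the $j$th coordinate with aperture $(\tau, t)$. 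The degenerate cases $\tau = 0$ (trivial) and $\tau = \infty$ (standard non-annular blowup, as in \cite{hrushovski:kazhdan:integration:vf}) being easy, I assume $0 < \tau < \infty$.

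Using that $\bb S$ is $\VF$-generated and Lemma~\ref{algebraic:balls:definable:centers}, together with the FMT procedure (as in the proof of Proposition~\ref{simplex:with:hole:rvproduct}), after a finite definable partition of $U$ I choose a definable section $b : U \to \VF$ with $\rv \circ b = f_j$, and a definable $t' \in \VF$ with $\rv(t') = t$; set $\alpha(u) = \vv(b(u))$. Consider the centripetal transformation $\eta$ on $\mL \bm U$ acting in the $j$th $\VF$ coordinate, with continuous focus $\lambda(x) = b(u)$ (where $u$ denotes the $U$-component of $x$) and aperture $(\tau, t)$, so that on each $\RV$-polydisc $\gp$ over $u$ we have $\gamma_\gp = \alpha(u) + \tau$ and $t_\gp = t f_j(u)$. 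The lifted centripetal transformation $\eta^\flat$ sends the annular part $b(u) + (\MM_{\alpha(u)} \mi \MM_{\alpha(u)+\tau})$ of $f_j(u)^\sharp$ via $y \mapsto y - b(u)$ onto $b(u)(\MM \mi \MM_\tau)$, and the inner disc $b(u) + \MM_{\alpha(u)+\tau}$ via $y \mapsto y + b(u)(t'-1)$ onto $(tf_j(u))^\sharp$. By Lemma~\ref{spec:inve}, $\eta^\flat$ is a proper covariant homeomorphism with constant Jacobian $1$, and since a single centripetal transformation is relatively unary in its designated $\VF$ coordinate, $\eta^\flat$ is a $\mVF^\diamond[k]$-morphism.

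The image of $\eta^\flat$ coincides with $\mL \bm U^\flat$ after recording the extra $\RV$ coordinate $s$: namely, $s = \rv(y'/b(u)) \in \RV^{\ccirc} \mi \RV^{\ccirc}_\tau$ on the annular piece (where $y' = y - b(u)$) and $s = t$ on the inner piece. This labelling is a tautological relatively unary proper covariant homeomorphism; composing it with $\eta^\flat$ yields the desired $\mVF^\diamond[k]$-isomorphism $\mL \bm U \cong \mL \bm U^\flat$, and $\Gamma$-measure preservation is immediate from $\omega^\flat(u, s) = \omega(u)$ and $\jcb_{\VF} \eta^\flat = 1$. The main technical obstacle is the uniform definability of the section $b$: a priori $b(u)$ is only $u$-definable, but after a finite definable partition of $U$ it becomes uniformly defined on each piece, and the resulting piecewise special covariant bijections must glue into a single global $\mVF^\diamond[k]$-morphism with continuity of the focus $\lambda$ preserved across pieces---a situation handled in the same spirit as in the proof of Proposition~\ref{simplex:with:hole:rvproduct}.
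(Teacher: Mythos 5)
Your overall strategy is the right one and matches the paper's: reduce by induction to a single elementary annular blowup (using Lemma~\ref{L:measure:class:lift} to dispose of the morphisms $F_i$), and then realize $\mL \bm U \cong \mL \bm U^{\flat}$ by an additive translation of centripetal type centered at a point of each $\RV$-disc $f_j(u)^\sharp$, with the inner disc of radius $\vv(f_j(u))+\tau$ sent onto $(t f_j(u))^\sharp$ via a definable point over $t$. The volume-form and Jacobian bookkeeping, and the appeal to Lemmas~\ref{conti:covar} and \ref{conv:unif} for proper covariance, are all fine.

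The gap is in the key technical step: you claim that, after a finite definable partition of $U$, there is a \emph{definable section} $b : U \fun \VF$ with $\rv \circ b = f_j$. No such section exists in general, even piecewise. The parameters $u$ live in $\RV$, so $\bb S\la u \ra$ is not $\VF$-generated and Lemma~\ref{algebraic:balls:definable:centers} does not apply to the $u$-definable \emph{open} disc $f_j(u)^\sharp$ (Lemma~\ref{effectiveness} only covers closed discs); and indeed, by \cmin-minimality a definable subset of $\VF$ is a finite union of Swiss cheeses, so it cannot meet infinitely many pairwise disjoint $\RV$-discs in exactly one point each --- which is what the graph of such a section would do whenever $f_j(U)$ is infinite. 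FMT does not help here: it removes finitely many exceptional discs, it does not manufacture a section. The paper's proof avoids this exactly at this point: for a fixed value $a$ of the \emph{other} $\VF$-coordinates, the substructure $\bb S\la a\ra$ is $\VF$-generated and $u$ (hence the disc $f_j(u)^\sharp$) is algebraic over $a$ because $f$ is finite-to-one, so each such disc contains an $a$-\emph{algebraic} point $b_a$; Lemma~\ref{RVlift} then packages these choices into a definable, partially differentiable finitary function on the $\VF$-pullback, and the translation with respect to $b_a$ and $b_a - c\, b_a$ (with $c \in t^\sharp$ definable) is the desired relatively unary proper covariant homeomorphism. So your argument is repairable, but only by replacing the nonexistent definable section on $U$ with this fiberwise algebraic choice lifted via Lemma~\ref{RVlift}.
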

\begin{proof}
By induction on the length $l$ of $\bm U^{\flat}$, this is immediately reduced to the case $l=1$. We may assume that $\bm U^{\flat}$ is an elementary blowup in the first coordinate. Let $(\tau, t)$ be the aperture of $\bm U^{\flat}$. Fix a definable point $c \in t^\sharp$. For $u \in U$ and $a \in  f(u)_{\tilde 1}^\sharp$, every $\RV$-disc in $f(u)_{1}^\sharp$ contains an $a$-algebraic point $b_a$; moreover, by Lemma~\ref{RVlift}, these points may be chosen uniformly via a partially differentiable finitary function. Thus, there is a continuous (fiberwise) additive translation, with respect to the points $b_a$ and $b_a - c b_a$, between the two sets in question, which is a $\mVF^{\diamond}[k]$-morphism by Lemmas~\ref{conti:covar} and \ref{conv:unif}.
\end{proof}

\begin{lem}\label{kernel:dim:1:coa}
Let $[\bm A] = [\bm B]$ in  $\gsk \mVF^{\diamond}[1]$ and $\bm U, \bm V \in \mRV^{\db}[1]$ be standard contractions of $\bm A$, $\bm B$, respectively. Then $([\bm U], [\bm V]) \in \mispdb$.
\end{lem}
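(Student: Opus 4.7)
The plan is to transfer the hypothesis $[\bm A]=[\bm B]$ in $\gsk \mgVF^{\diamond}[1]$ across the standard contractions to an isomorphism in $\mgRV^{\db}[1]$ between certain $\rv$-contractions, and then invoke Lemma~\ref{special:to:blowup:coa} to exhibit isomorphic blowups of $\bm U$ and $\bm V$.

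First I would use Remark~\ref{stand:vol} (in the case $n=1$ with head start $k=0$) to realize the given standard contractions as $\mgVF^{\diamond}[1]$-isomorphisms $\phi_A: \bm A \fun \mathbb{L}\bm U$ and $\phi_B: \bm B \fun \mathbb{L}\bm V$, each presented as a composition of relatively unary proper covariant homeomorphisms. Letting $f : \bm A \fun \bm B$ be a morphism witnessing $[\bm A]=[\bm B]$, which we may take to be an honest bijection since $\mgVF^{\diamond}[1]$ is already a groupoid (Remark~\ref{dia:cat:well}), I form the composite
\[
\psi \coloneqq \phi_B \circ f \circ \phi_A^{-1} : \mathbb{L}\bm U \fun \mathbb{L}\bm V,
\]
which is again a $\mgVF^{\diamond}[1]$-morphism by Lemma~\ref{pr:co:home:com}.

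Next I would apply Lemma~\ref{simul:special:dim:1} to $\psi$. This yields special covariant bijections $T_U : \mathbb{L}\bm U \fun (\mathbb{L}\bm U)^\flat$ and $T_V : \mathbb{L}\bm V \fun (\mathbb{L}\bm V)^\flat$ onto doubly bounded $\RV$-pullbacks, with the $\rv$-contracted volume forms well-defined, and such that the contracted map $\psi^\flat_\downarrow$ is an isomorphism in $\mgRV^{\db}[1]$. In the notation introduced just before Lemma~\ref{special:to:blowup:coa}, this precisely states $[\bm U_{T_U}] = [\bm V_{T_V}]$ in $\gsk \mgRV^{\db}[1]$.

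Finally, Lemma~\ref{special:to:blowup:coa} asserts that $\bm U_{T_U}$ is isomorphic to a blowup of $\bm U$ (of length $\lh(T_U)$), and $\bm V_{T_V}$ is isomorphic to a blowup of $\bm V$. Combined with the iso $\bm U_{T_U} \cong \bm V_{T_V}$ from the preceding step, this exhibits isomorphic blowups of $\bm U$ and $\bm V$, which is exactly the condition $([\bm U], [\bm V]) \in \mispdb$. The most delicate point I expect is ensuring that $\psi$ genuinely lies in $\mgVF^{\diamond}[1]$ rather than just in the ambient $\mgVF[1]$ (so that Lemma~\ref{simul:special:dim:1} applies), and that the volume-form bookkeeping is consistent when passing through $\phi_A^{-1}$, $f$, and $\phi_B$; both concerns are resolved by the fact that morphisms of $\mgVF^{\diamond}[1]$ are defined to be compositions of relatively unary proper covariant homeomorphisms and that compositions of such stay within the category.
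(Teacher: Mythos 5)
Your proof is correct and takes essentially the same route as the paper: the paper's (one-line) argument is precisely to apply Lemma~\ref{simul:special:dim:1} to the $\mVF^{\diamond}[1]$-morphism $\mL\bm U \fun \mL\bm V$ arising from $[\bm A]=[\bm B]$ and the standard contractions, and then to conclude via Lemma~\ref{special:to:blowup:coa}. Your write-up merely makes explicit the bookkeeping (Remark~\ref{stand:vol} and the groupoid structure of $\mVF^{\diamond}[1]$) that the paper leaves implicit.
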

\begin{proof}
This is immediate by Lemmas~\ref{simul:special:dim:1} (applied to $\mL \bm U$, $\mL \bm V$) and \ref{special:to:blowup:coa}.
\end{proof}

\begin{lem}\label{isp:VF:fiberwise:contract}
Let $A'$, $A''$ be definable sets with $A'_{\VF} = A''_{\VF} = A \sub \VF^n$ and $\omega'$, $\omega''$ definable functions from $A'$, $A''$ into $\Gamma$, respectively. Write $\bm A' = (A', \omega')$ and $\bm A'' = (A'', \omega'')$. Suppose that $\bm A'$, $\bm A''$ are proper invariant and there is a $k \in \N$ such that for  every $a \in A$,
\begin{equation}\label{condi:fiber}
([(A'_a, \omega')]_{\leq k}, [(A''_a, \omega'')]_{\leq k}) \in \mispdb.
\end{equation}
Let $\hat T_{\sigma}$, $\hat R_{\sigma}$ be standard contractions of $\bm A'$, $\bm A''$, respectively. Then
\[
([\hat T_{\sigma}(\bm A')]_{\leq n+k}, [\hat R_{\sigma}(\bm A'')]_{\leq n+k}) \in \mispdb.
\]
\end{lem}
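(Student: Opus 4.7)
The plan is as follows. First I would use compactness to uniformize the fiberwise hypothesis. By hypothesis, for each $a \in A$ there exist $a$-definable blowups $\bm U'_a$ of $(A'_a, \omega' \rest A'_a)_{\leq k}$ and $\bm U''_a$ of $(A''_a, \omega'' \rest A''_a)_{\leq k}$, together with an $a$-definable $\mRV^{\db}[{\leq}k]$-isomorphism $F_a : \bm U'_a \fun \bm U''_a$. By a standard compactness argument (as deployed repeatedly in \S~\ref{sec-pscb}), there is a definable finite partition $(A_i)_i$ of $A$ such that on each $A_i$ the triples $(\bm U'_a, \bm U''_a, F_a)$ are given by a uniform $\lan{}{RV}{}^{\ccirc\ccirc}$-definable family. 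Since $\mispdb$ is additive (Lemma~\ref{isp:congruence:vol}), it suffices to treat each piece separately, so I would assume the data is uniform on all of $A$. Assembling fiberwise, this produces proper invariant objects $\bm B' = \bigcup_{a \in A} a \times \mL\bm U'_a$ and $\bm B'' = \bigcup_{a \in A} a \times \mL\bm U''_a$ (with volume forms inherited from $\bm U'_a$, $\bm U''_a$) lying in $\mVF^{\diamond}[n+k']$ for some $k' \geq k$, together with a $\mVF^{\diamond}[n+k']$-isomorphism $\Phi : \bm B' \fun \bm B''$ obtained by lifting the family $\{F_a\}$ via Lemma~\ref{L:measure:class:lift}.

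Next I would show that fiberwise blowups commute with standard contractions, in the following sense. Since standard contractions operate only on the $\VF$-coordinates via relatively unary proper covariant homeomorphisms (carrying other $\RV$-data along unchanged), any standard contraction $\hat T_\sigma$ of $\bm A'$ with head start $k$ admits a natural extension to a standard contraction $\hat T^\flat_\sigma$ of $\bm B'$ with head start $k'$, obtained by letting the lifts $T_{\sigma(j)}$ act trivially on the additional $\RV$-coordinates contributed by the blowups. The key observation is that the resulting object $\hat T^\flat_\sigma(\bm B')_{\leq n+k'}$ is precisely an annular blowup of $\hat T_\sigma(\bm A')_{\leq n+k}$, in the sense of Definition~\ref{defn:blowup:coa}: the blowup aperture data $(\tau, t)$ defining $\bm B'$ fiberwise does not depend on the $\VF$-coordinates being contracted (this is the content of uniformity), so the annular structure $\RV^{\ccirc}(t)_\tau$ on these coordinates survives the contraction intact. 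The same argument extends $\hat R_\sigma$ for $\bm A''$ to a standard contraction $\hat R^\flat_\sigma$ for $\bm B''$ with $\hat R^\flat_\sigma(\bm B'')_{\leq n+k'}$ an annular blowup of $\hat R_\sigma(\bm A'')_{\leq n+k}$.

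Finally, I would combine these pieces. Both $\hat T^\flat_\sigma(\bm B')$ and $\hat R^\flat_\sigma(\bm B'')$ are standard contractions of isomorphic objects $\bm B' \cong \bm B''$ in $\gsk \mVF^{\diamond}[n+k']$. Writing $\Phi$ as a composition of relatively unary proper covariant homeomorphisms (Definition~\ref{defn:diamond}) and performing one contraction step at a time, I would apply Lemma~\ref{kernel:dim:1:coa} coordinate-by-coordinate: at each stage the relatively unary piece restricts to a $\mVF^{\diamond}[1]$-morphism in the current $\VF$-coordinate over a fixed base, and Lemma~\ref{kernel:dim:1:coa} (applied uniformly over the base via compactness, just as in the first paragraph) places the two standard contractions of that $\VF$-coordinate in the relation $\mispdb$. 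Iterating through all $n$ coordinates, and using that $\mispdb$ is a semiring congruence (Lemma~\ref{isp:congruence:vol}) so that the relation propagates through the subsequent contractions, one obtains $([\hat T^\flat_\sigma(\bm B')_{\leq n+k'}], [\hat R^\flat_\sigma(\bm B'')_{\leq n+k'}]) \in \mispdb$. Combined with the annular-blowup description from the previous paragraph and Lemma~\ref{blowup:equi:class:coa}, this descends to $([\hat T_\sigma(\bm A')_{\leq n+k}], [\hat R_\sigma(\bm A'')_{\leq n+k}]) \in \mispdb$, as required.

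The main obstacle is the second paragraph: justifying that blowing up in the fiber direction genuinely commutes with standard contraction in the $\VF$ direction. The subtlety is that a standard contraction is a composition of \emph{relatively unary} proper covariant homeomorphisms, and such homeomorphisms need not respect a fiberwise blowup structure unless the blowup data (aperture, locus, focus) depend on a fixed set of coordinates --- precisely the condition supplied by the uniformity in the first paragraph. It is also here that the provision for a nontrivial head start in Remark~\ref{stand:vol} is essential: without it we could not legitimately regard the uniformized blowup as living in $\mVF^{\diamond}[n+k']$ and apply standard contractions to it at all.
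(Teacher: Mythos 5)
There is a genuine gap, and it sits at the heart of your third paragraph. Having assembled $\bm B' \cong \bm B''$, you conclude that their standard contractions are $\mispdb$-congruent by applying Lemma~\ref{kernel:dim:1:coa} one $\VF$-coordinate at a time and then asserting that the congruence ``propagates through the subsequent contractions'' because $\mispdb$ is a semiring congruence. But Lemma~\ref{isp:congruence:vol} only says that $\mispdb$ is compatible with disjoint unions and products of objects of $\mRV^{\db}[*]$; it says nothing about compatibility with contracting a $\VF$-coordinate over a base when the congruence is only known fiberwise over that base. That propagation statement is precisely Lemma~\ref{isp:VF:fiberwise:contract} itself --- it is the inductive content of the lemma you are proving --- so your argument is circular at this step. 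Equivalently, what you are invoking is (a form of) the hard direction of Proposition~\ref{kernel:L} in ambient dimension $n+k'$, or Corollary~\ref{contraction:same:perm:isp} applied to the relatively unary pieces of $\Phi$; in the paper these are all proved \emph{after}, and by means of, the present lemma, so none of them is available here. The paper instead runs an induction on $n$ reducing to $n=1$, and for $n=1$ uniformizes the fiber data over $\RV$-polydiscs of the base by a special covariant bijection (Proposition~\ref{simplex:with:hole:rvproduct} plus FMT), after which the argument of \cite[Lemma~6.14]{Yin:int:acvf} with Lemma~\ref{kernel:dim:1:coa} substituted for \cite[Corollary~6.11]{Yin:int:acvf} carries the fiberwise congruence through the contraction of the single $\VF$-coordinate; that uniformization is the missing mechanism in your proposal.

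A secondary problem is the second paragraph's claim that the fiberwise blowups assemble into an annular blowup of $\hat T_\sigma(\bm A')_{\leq n+k}$. The aperture $(\tau_a, t_a)$ and locus of the blowup of $[(A'_a,\omega')]_{\leq k}$ are only $a$-definable and in general vary with $a$ through infinitely many values, whereas Definition~\ref{defn:blowup:coa} requires definable apertures attached to finitely many pairwise disjoint loci; compactness gives uniform \emph{families} of such data, not constancy. Making the apertures, loci and volume forms literally constant on $\RV$-polydiscs of the (transformed) base --- which is what would let the fiberwise blowup structure survive the contraction and descend via Lemma~\ref{blowup:equi:class:coa} --- again requires exactly the special-covariant-bijection uniformization that the paper performs in its $n=1$ step, so this part of your plan also needs that ingredient before it can be made rigorous.
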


Note that condition (\ref{condi:fiber}) makes sense only  over the substructure $\mathbb{S} \la a \ra$.

\begin{proof}
By induction on $n$, this is immediately reduced to the case $n=1$. So assume $A \sub \VF$. Using Proposition~\ref{simplex:with:hole:rvproduct} and FMT, we can construct a special covariant bijection $F : A \fun A^\sharp$, as in the proof of \cite[Lemma~6.14]{Yin:int:acvf}, such that for all $\RV$-polydisc $\gp \sub A^{\sharp}$ and all $a_1, a_2 \in F^{-1}(\gp)$,
\[
\omega' \rest A'_{a_1} = \omega' \rest A'_{a_2} \dand \omega'' \rest A''_{a_1} = \omega'' \rest A''_{a_2}.
\]
Therefore, using Lemma~\ref{kernel:dim:1:coa} in place of \cite[Corollary~6.11]{Yin:int:acvf}, that proof goes through here with virtually no changes, and the additional computations involving $\jcb_{\Gamma}$ are all straightforward. (Also consult the proof of \cite[Lemma~5.36]{Yin:tcon:I}, which is in a different environment but is formally the same and is  written down with more details.)
\end{proof}

\begin{cor}\label{contraction:same:perm:isp}
Let $\bm A', \bm A'' \in \mVF^{\diamond}[n]$ and suppose that there is a morphism $F : \bm A' \fun \bm A''$ that is relatively unary in the $i$th $\VF$-coordinate. Then for any permutation $\sigma$ of $[n]$ with $\sigma(1) = i$ and any standard contractions $\hat T_{\sigma}$, $\hat R_{\sigma}$ of $\bm A'$, $\bm A''$,
\[
([\hat T_{\sigma}(\bm A')]_{\leq n}, [\hat R_{\sigma}(\bm A'')]_{\leq n}) \in \mispdb.
\]
\end{cor}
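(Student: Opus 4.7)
The plan is to use Lemma~\ref{kernel:dim:1:coa} fiberwise and then invoke Lemma~\ref{isp:VF:fiberwise:contract} to propagate the result. Since $F$ is relatively unary in the $i$th $\VF$-coordinate, the projections $\pr_{\tilde i}$ of $\bm A'$ and $\bm A''$ into the remaining $\VF$-coordinates coincide on some common set $A \sub \VF^{n-1}$, and for each $a \in A$ the function $F$ restricts to a morphism $F_a : \bm A'_a \fun \bm A''_a$ in $\mgVF^{\diamond}[1]$ (with one-dimensional $\VF$-part being the $i$th coordinate).

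Now I would unpack the standard contractions $\hat T_\sigma$ and $\hat R_\sigma$ according to the condition $\sigma(1) = i$. Namely, write $\hat T_\sigma = \hat T'_{\sigma'} \circ \hat T_i$ and $\hat R_\sigma = \hat R'_{\sigma'} \circ \hat R_i$, where $\hat T_i$, $\hat R_i$ are the first steps that contract the $i$th $\VF$-coordinate (with head start $0$) and $\hat T'_{\sigma'}$, $\hat R'_{\sigma'}$ are standard contractions (with head start $1$) of the resulting pairs $\bm A'^* := \hat T_i(\bm A')$, $\bm A''^* := \hat R_i(\bm A'')$ with respect to the permutation $\sigma'$ of $[n-1]$ induced by $\sigma$ on the $\tilde i$-coordinates. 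By construction, $\hat T_i$ acts fiberwise over $A$: for each $a \in A$, its restriction to $\bm A'_a$ is an $a$-definable standard contraction $\hat T_i^a$ of this one-dimensional $\VF$-object, and similarly for $\hat R_i^a$ on $\bm A''_a$.

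Applying Lemma~\ref{kernel:dim:1:coa} to the $\mgVF^{\diamond}[1]$-morphism $F_a$ with the standard contractions $\hat T_i^a$, $\hat R_i^a$ over the parameter $a$, we obtain
\[
([(\bm A'^*_a)]_{\leq 1}, [(\bm A''^*_a)]_{\leq 1}) \in \mispdb
\]
for every $a \in A$. This is precisely hypothesis~(\ref{condi:fiber}) of Lemma~\ref{isp:VF:fiberwise:contract} with $n$ replaced by $n-1$, with $k = 1$, and with the new pair $(\bm A'^*, \bm A''^*)$ playing the role of $(\bm A', \bm A'')$. Invoking that lemma with the permutation $\sigma'$ and the standard contractions $\hat T'_{\sigma'}$, $\hat R'_{\sigma'}$ (which have the requisite head start $1$) yields
\[
([\hat T'_{\sigma'}(\bm A'^*)]_{\leq (n-1)+1}, [\hat R'_{\sigma'}(\bm A''^*)]_{\leq (n-1)+1}) \in \mispdb.
\]
Since $\hat T'_{\sigma'}(\bm A'^*) = \hat T_\sigma(\bm A')$ and $\hat R'_{\sigma'}(\bm A''^*) = \hat R_\sigma(\bm A'')$, this is the claim.

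The only real point requiring care is the bookkeeping to check that $\hat T_i$ truly behaves fiberwise in the required sense (so that the $a$-definable restrictions are standard contractions to which Lemma~\ref{kernel:dim:1:coa} applies) and that composing $\hat T'_{\sigma'}$ with $\hat T_i$ reproduces a genuine standard contraction $\hat T_\sigma$ of $\bm A'$ with respect to $\sigma$; this follows directly from the inductive construction of standard contractions in Definition~\ref{defn:standard:contraction} together with Remark~\ref{stand:vol} accounting for the head start.
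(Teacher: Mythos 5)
Your overall strategy is the same as the paper's (fiberwise use of Lemma~\ref{kernel:dim:1:coa} in the $i$th coordinate, then Lemma~\ref{isp:VF:fiberwise:contract} to propagate through the remaining coordinates), and your bookkeeping with the head start and the factorization $\hat T_\sigma = \hat T'_{\sigma'}\circ \hat T_i$ is fine. But there is a genuine gap at the very first step: you assert that ``for each $a \in A$ the function $F$ restricts to a morphism $F_a : \bm A'_a \fun \bm A''_a$ in $\mgVF^{\diamond}[1]$.'' This is not true in general. A morphism of $\mgVF^{\diamond}[n]$ is in particular a $\mgVF[n]$-morphism, so the Jacobian of $F$ need only exist, and the volume-form condition (\ref{cond:mg}) need only hold, outside a definable set of $\VF$-dimension less than $n$. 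That exceptional set can contain entire fibers $A'_a$ for $a$ ranging over a lower-dimensional subset of $\pr_{\tilde i}(A')$, and for such $a$ the restriction $F_a$ is not a $\mgVF^{\diamond}[1]$-morphism (it may not even be measure-preserving anywhere on the fiber), so Lemma~\ref{kernel:dim:1:coa} does not apply there. Since hypothesis (\ref{condi:fiber}) of Lemma~\ref{isp:VF:fiberwise:contract} must be verified for \emph{every} $a$, these exceptional fibers cannot simply be discarded, and your argument stalls exactly where you claim only ``bookkeeping'' remains.

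The repair, which is the actual content of the paper's proof, uses proper invariance: for all sufficiently large $\delta \in \Gamma^+$, both $A'$ and the contracted set $\hat T_{\sigma(1)}(A')$ (likewise $A''$ and $\hat R_{\sigma(1)}(A'')$) are $\delta$-invariant, so every $a \in \pr_{\tilde i}(A')$ lies in the same open polydisc of radius $\delta$ as some $b$ for which $F$ does induce a morphism $\bm A'_b \fun \bm A''_b$, and moreover $\hat T_{\sigma(1)}(A')_a = \hat T_{\sigma(1)}(A')_b$ (similarly for $\hat R$). Applying Lemma~\ref{kernel:dim:1:coa} at $b$ and transferring the conclusion to $a$ via this equality of contracted fibers shows that (\ref{condi:fiber}) holds for every $a$ after all, and then Lemma~\ref{isp:VF:fiberwise:contract} finishes as you describe. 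Without this $\delta$-invariance argument the proof is incomplete.
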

\begin{proof}
Note that, since $F$ is differentiable outside a definable subset of $\VF$-dimension less than $n$, it may not induce a morphism $\bm A'_a \fun \bm A''_a$ for every $a \in \pr_{\tilde i}(A)$. Nevertheless, for all sufficiently large $\delta \in \Gamma^+$, both $A'$ and $\hat T_{\sigma(1)}(A')$ are $\delta$-invariant. This means that for all $a \in \pr_{\tilde i}(A)$ there is a $b \in \pr_{\tilde i}(A)$ such that they are contained in the same open polydisc of radius $\delta$, and $F$ indeed induces a morphism $\bm A'_b \fun \bm A''_b$, and $\hat T_{\sigma(1)}(A')_a = \hat T_{\sigma(1)}(A')_b$. Thus, the claim follows immediately from Lemmas~\ref{kernel:dim:1:coa} and \ref{isp:VF:fiberwise:contract}.
\end{proof}

\subsection{$2$-cells}

\begin{defn}[$\vv$-affine and $\rv$-affine]\label{rvaffine}
Let $\ga$ be an open disc and $f : \ga \fun \VF$ an injection.
We say that $f$ is \memph{$\vv$-affine} if there is a (necessarily unique) $\gamma \in \Gamma$, called the \memph{shift} of $f$, such that, for all $a, a' \in \ga$,
\[
\vv(f(a) - f(a')) = \gamma + \vv(a - a').
\]
We say that $f$ is \memph{$\rv$-affine} if there is a (necessarily unique) $t \in \RV$, called the \memph{slope} of $f$, such that, for all $a, a' \in \ga$,
\[
\rv(f(a) - f(a')) = t \rv(a - a').
\]
\end{defn}

\begin{defn}\label{defn:balance}
Let $A \sub \VF^2$ be a definable set such that $\ga_1 \coloneqq \pr_1(A)$ and $\ga_2 \coloneqq \pr_2(A)$ are both open discs. Let $f : \ga_1 \fun \ga_2$ be a definable bijection that has the disc-to-disc property.  We say that $f$ is \memph{balanced in $A$} if $f$ is actually $\rv$-affine and there are $t_1, t_2 \in \RV_\infty$, called the \memph{paradigms} of $f$, such that, for every $a \in \ga_1$,
\[
A_a = t_2^\sharp + f(a) \dand f^{-1}(A_a) = a - t_1^\sharp.
\]
\end{defn}

\begin{rem}\label{2cell:linear}
Suppose that $f$ is balanced in $A$ with paradigms $t_1$, $t_2$.
If one of the paradigms is $\infty$ then the other one must be $\infty$. In this case $A$ is just the (graph of the) bijection $f$ itself.

Assume that $t_1$, $t_2$ are not $\infty$. Let $\gB_1$, $\gB_2$ be the sets of closed subdiscs of $\ga_1$, $\ga_2$ of radii $\vrv(t_1)$, $\vrv(t_2)$, respectively. Let $a_1 \in \gb_1 \in \gB_1$ and $\go_1$ be the maximal open subdisc of $\gb_1$ containing $a_1$. Let $\gb_2 \in \gB_2$ be the smallest closed disc containing the open disc $\go_2 \coloneqq A_{a_1}$. Then, for all $a_2 \in \go_2$, we have
\[
\go_2 = t_2^\sharp + f(\go_1) = A_{a_1} \dand A_{a_2} = f^{-1}(\go_2) + t_1^\sharp = \go_1.
\]
This internal symmetry of $A$ is illustrated by the following diagram:
\[
\bfig
  \dtriangle(0,0)|amb|/.``<-/<600,250>[\go_1`f^{-1}(\go_2)`\go_2; \pm t_1^\sharp`\times`f^{-1}]
  \ptriangle(600,0)|amb|/->``./<600,250>[\go_1`f(\go_1)`\go_2; f`` \pm t_2^\sharp]
 \efig
\]
Since $f$ is $\rv$-affine, we see that its slope must be $-t_2/t_1$.

Let $\tor(\code {\gb_1})$, $\tor(\code {\gb_1})$ denote the sets of the maximal open subdiscs of $\gb_1$, $\gb_1$. These may be viewed as $\K$-torsors and as such  are equipped with much of the structure of $\K$. Then the set $A \cap (\gb_1 \times \gb_2)$ may be thought of as the ``line'' in $\tor(\code {\gb_1}) \times \tor(\code {\gb_2})$ given by the equation
\[
x_2 = - \tfrac{t_2}{t_1}(x_1 - \code{\go_1}) + (\code{\go_2} - t_2).
\]
Thus, by Lemma~\ref{simul:special:dim:1}, the obvious bijection between $A_1 \times t_2^\sharp$ and $t_1^\sharp \times A_2$ is the lift of an $\RV^{\db}[2]$-morphism modulo special covariant bijections; see Lemma~\ref{2:unit:contracted} below for details.
\end{rem}

\begin{defn}\label{def:units}
We say that a set $A$ is a \memph{$1$-cell} if it is either an open disc contained in a single $\RV$-disc or a point in $\VF$. We say
that $A$ is a \memph{$2$-cell} if
\begin{itemize}
 \item $A$ is a subset of $\VF^2$ contained in a single $\RV$-polydisc and $A_1$ is a $1$-cell,
 \item there is a function $\epsilon : A_1  \fun \VF$ and a $t \in \RV$ such that, for every $a \in A_1$, $A_a = t^\sharp + \epsilon(a)$,
 \item one of the following three possibilities occurs:
  \begin{itemize}
   \item $\epsilon$ is constant,
   \item $\epsilon$ is injective, has the disc-to-disc property, and $\rad(\epsilon(A_1)) \geq \vrv(t)$,
   \item $\epsilon$ is balanced in $A$.
  \end{itemize}
\end{itemize}
The function $\epsilon$ is called the \memph{positioning function} of $A$ and the element $t$ the \memph{paradigm} of $A$.

More generally, a set $A$ with exactly one $\VF$-coordinate is a \memph{$1$-cell} if, for each $t \in A_{\RV}$, $A_t$ is a $1$-cell in the above sense; the parameterized version of the notion of a $2$-cell is formulated in the same way.
\end{defn}

\begin{lem}\label{decom:into:2:units}
Let  $A \sub \VF^2$ be a proper invariant set. Then there is a standard contraction $\hat T_\sigma$ of $A$ such that $A_s$ is a $2$-cell for every $s \in \hat T_\sigma(A)$.
\end{lem}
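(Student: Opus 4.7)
The plan is to build $\hat T_\sigma$ in two stages matching the two $\VF$-coordinates, using the one-dimensional theory fiberwise, then gluing via FMT (Terminology~\ref{FMT}).

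First, fix $\sigma = (1,2)$ and perform the contraction $T_2$ in the second coordinate, relative to $A_1$. By Lemma~\ref{exit:stand} applied fiberwise together with compactness, and using Proposition~\ref{simplex:with:hole:rvproduct} to preserve proper invariance, there is a relatively unary, partially differentiable proper covariant homeomorphism that transforms each fiber $A_a \subseteq \VF$ into an $\RV$-pullback. After this step, we may assume that for each $a \in A_1$ and each new $\RV$-parameter $t$ produced by the contraction, the fiber $A_{a,t} \subseteq \VF$ is either a single point or an open disc contained in a single $\RV$-disc --- that is, a $1$-cell in the sense of Definition~\ref{def:units}. By Lemma~\ref{RVlift} and Lemma~\ref{algebraic:balls:definable:centers} (recall $\mathbb{S}$ is $\VF$-generated), we can pick, uniformly in $(a,t)$, a definable point in each such disc; these give a definable positioning function $\epsilon(a,t)$, and the $\RV$-disc witnessing the $1$-cell structure defines a paradigm $t$, which depends only on the $\RV$-data.

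Second, we carry out the contraction $T_1$ in the first coordinate. Here we think of $\epsilon$, for each fixed $\RV$-parameter $t$, as a definable function from $A_1$ into $\VF$. Apply Corollary~\ref{part:rv:cons} together with Lemma~\ref{open:pro} to partition $A_1$ into finitely many definable pieces on each of which $\epsilon(\cdot, t)$ has the disc-to-disc property (or is constant), and on each piece the image $\epsilon(\ga_1, t)$ is either a point, an open disc of radius exceeding $\vrv(t)$, or an open disc of radius at most $\vrv(t)$. By \cmin-minimality only finitely many algebraic discs in $A_1$ can be split by this partition; apply FMT to those. After a further standard contraction of $A_1$ using Lemma~\ref{exit:stand} adapted to this finer partition, we arrange that each piece is a single open disc (or point), with $\epsilon$ satisfying one of the three alternatives in Definition~\ref{def:units}.

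The main obstacle is the third (balanced) alternative: when $\rad(\epsilon(\ga_1,t)) < \vrv(t)$, we must show that after an additional special covariant adjustment $\epsilon$ becomes $\rv$-affine with matching paradigms $(t_1, t_2)$ as required by Definition~\ref{defn:balance}. The geometric picture in Remark~\ref{2cell:linear} is the guide: inside the closed disc $\gb_1$ of radius $\vrv(t_1)$ containing $\ga_1$, the set carved out by $A_a = t^\sharp + \epsilon(a)$ must look like a ``line'' in $\tor(\code{\gb_1}) \times \tor(\code{\gb_2})$. Invoking Lemma~\ref{special:bi:term:constant} on the top terms of the formula defining $\epsilon$ to make the slope constant, then using Lemma~\ref{hae:component:11} and Lemma~\ref{atomic:open:ball:open} to control the induced map on the $\K$-torsors, we can incorporate the remaining finite obstructions into the special covariant bijection (using the second clause of (\ref{centri})). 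Once this is accomplished, each resulting fiber over the final $\RV$-data is, by construction, a $2$-cell.
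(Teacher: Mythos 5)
Your two-stage plan (contract the second coordinate fiberwise to extract a positioning function and a paradigm, then treat the first coordinate, using FMT to absorb finitely many exceptional discs) is the right general shape, and it matches the spirit of the paper's proof. But the entire content of the lemma sits in the third alternative of Definition~\ref{def:units}: on a piece where $\rad(\epsilon(\ga_1)) < \vrv(t)$ you must arrange that the positioning function is \emph{balanced}, i.e.\ genuinely $\rv$-affine in the sense of Definition~\ref{rvaffine} (the symmetric condition $\epsilon^{-1}(A_a)=a-t_1^\sharp$ then comes for free with $t_1=-t_2/s$, as in Remark~\ref{2cell:linear}, but only once a single slope $s$ exists). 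This is exactly where your sketch stops being an argument. Lemma~\ref{special:bi:term:constant} only makes functions into $\RV$-data $\rv$-contractible; it gives no control of the form $\rv(\epsilon(a)-\epsilon(a'))=s\,\rv(a-a')$ for \emph{all} pairs $a,a'$ in an open disc. Lemmas~\ref{hae:component:11} and~\ref{atomic:open:ball:open} only constrain the shape of images of atomic discs, and the second clause of (\ref{centri}) merely collapses the small disc around a focus point --- none of this produces a slope, hence none of it produces balancedness. So the hard case of the trichotomy is asserted, not proved.

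The paper itself does not reprove this either: its proof is a deferral to \cite[Lemma~4.8]{Yin:int:acvf}, whose proof rests on the auxiliary contraction results \cite[Lemmas~4.2, 4.4]{Yin:int:acvf} that manufacture precisely the $\rv$-affine/balanced structure of the positioning function; the only genuinely new point in the covariant setting is that the finitely many open discs in the first coordinate where those properties fail must be cut out by FMT so that everything remains proper covariant (as in the proof of Proposition~\ref{simplex:with:hole:rvproduct}). So you must either import those results of \cite{Yin:int:acvf} explicitly and check the FMT patch, or supply your own proof of $\rv$-affineness on the pieces; your current citations cannot substitute for it. A secondary inaccuracy: the positioning function is not obtained by ``picking a definable point in each fiber disc'' --- since $A_a=t^\sharp+\epsilon(a)$ with $t\in\RV$, the point $\epsilon(a)$ lies \emph{outside} $A_a$; it is in effect the focus map of the contraction in the second coordinate, and its uniformity over each $\RV$-polydisc comes from $\rv$-contractibility (Corollary~\ref{part:rv:cons}, Remark~\ref{cons:extra}), not from Lemma~\ref{RVlift}.
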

\begin{proof}
This is a variation of  \cite[Lemma~4.8]{Yin:int:acvf}. The proof of the latter proceeds by constructing a positioning function $\epsilon_{(t,s)}$ in each $\VF$-fiber, which heavily relies on the use of standard contractions in the preceding auxiliary results, namely \cite[Lemmas~4.2, 4.4]{Yin:int:acvf}.  It still goes through here, provided that FMT is applied at suitable places to cut out finitely many exceptional open discs in the first coordinate at which the desired properties of $\epsilon_{(t,s)}$ do not hold, as we have done above, say, in the proof of Proposition~\ref{simplex:with:hole:rvproduct}.
\end{proof}

We also remark that Lemma~\ref{decom:into:2:units} holds fiberwise for proper invariant sets $A \sub \VF^n$ with $n \geq 2$, that is, there is a standard contraction $\hat T_\sigma$ of $A$ such that for every $(a, s) \in \hat T_{\sigma(2)}(A)$, $\hat T_{\sigma(2)}^{-1}(a, s)$ is of the form $a \times C$, where $C$ is a $2$-cell. This follows from Lemma~\ref{decom:into:2:units} and the argument in the proof of  Lemma~\ref{exit:stand} (Lemma~\ref{decom:into:2:units} serves as the base case and hence, in the inductive step, we can assume that $\hat T_{\sigma(2)}$ is already as desired fiberwise).

For the next two lemmas, let $12$, $21$ denote the permutations of $[2]$ and $\bm A = (A, \omega) \in \mVF^\diamond[2]$.

\begin{lem}\label{2:unit:contracted}
Suppose that $A \sub \VF^2$ is a $2$-cell and $\omega$ is constant. Then there are standard contractions $\hat T_{12}$, $\hat R_{21}$ of $\bm A$ such that $[\hat T_{12}(\bm A)]_{\leq 2} = [\hat R_{21}(\bm A)]_{\leq 2}$.
\end{lem}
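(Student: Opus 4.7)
The plan is to split into three cases according to which of the three possibilities for the positioning function $\epsilon$ in Definition~\ref{def:units} occurs. In each case I would begin by preparing $A$ via a special covariant bijection so that $\ga_1 \coloneqq A_1$ and the ambient $\RV$-disc in the second coordinate become $\RV$-pullbacks; this is harmless, since special covariant bijections are partially differentiable proper covariant homeomorphisms with constant Jacobian $1$ (Lemma~\ref{spec:inve}).

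In the constant case $\epsilon \equiv c$, the $2$-cell is the honest product $\ga_1 \times (t^\sharp + c)$. Both standard contractions can be chosen as product contractions on the two factors, and since the volume form is constant and the factors are independent, the outputs in $\mRV^{\db}[2]$ coincide up to a trivial reshuffling of $\RV$-coordinates. In the injective case, where $\epsilon$ has the disc-to-disc property and $\rad(\epsilon(\ga_1)) \geq \vrv(t)$, the fiberwise translation $\Phi : (x_1, x_2) \mapsto (x_1, x_2 - \epsilon(x_1))$ is a relatively unary proper covariant homeomorphism of $\mVF^{\diamond}[2]$ with Jacobian $1$: continuity is supplied by Proposition~\ref{prop:continuity:fiberwise} after applying FMT on the exceptional discs where $\epsilon$ fails to be suitably continuous, and proper covariance follows from the disc-to-disc property of $\epsilon$. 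Since $\Phi(A)$ is a product of the form $\ga_1 \times t^\sharp$, composing standard contractions of $\Phi(A)$ with $\Phi$ yields standard contractions of $A$, reducing this case to the previous one.

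The main obstacle is the balanced case, where $\epsilon : \ga_1 \fun \ga_2$ is $\rv$-affine of slope $-t_2/t_1$ for paradigms $(t_1, t_2)$ (so $t = t_2$). Here $A$ carries the internal symmetry recorded in Remark~\ref{2cell:linear}: the obvious bijection $\Psi$ between $\ga_1 \times t_2^\sharp$ and $t_1^\sharp \times \ga_2$ along the linear structure of $A$ is itself $\rv$-affine, hence $\rv$-contractible. My strategy is to fix definable points $a_1 \in \ga_1$ and $a_2 \coloneqq \epsilon(a_1) \in \ga_2$ (available since $\mathbb{S}$ is $\VF$-generated), and to build $\hat T_{12}$ by a centripetal contraction of the first coordinate at $a_1$ followed by a relatively unary centripetal contraction of the second coordinate centered fiberwise at $\epsilon(x_1)$, and dually for $\hat R_{21}$ starting from $a_2$.

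Because $\Psi$ is $\rv$-affine, it $\rv$-contracts to a definable bijection between the two intermediate $\RV^{\db}[2]$-objects produced by $\hat T_{12}$ and $\hat R_{21}$; its $\Gamma$-Jacobian absorbs exactly the difference $\vrv(t_2)-\vrv(t_1)$ dictated by the slope, so condition~\eqref{cond:mg2} is satisfied and $\Psi_\downarrow$ becomes a bona fide $\mRV^{\db}[2]$-isomorphism. The remaining bookkeeping concerns the apertures of the two centripetal contractions in the two constructions: these need to be matched between the two sides, which I would arrange by enlarging both apertures to a common value and invoking Remark~\ref{bigger:aper} to absorb the extra annular contribution; the constancy of $\omega$ ensures no further correction is needed.
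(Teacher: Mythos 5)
Your key move in the balanced case --- matching the two contractions by the $\rv$-contraction of the affine symmetry of $A$, with the slope $-t_2/t_1$ making $\jcb_{\Gamma}$ vanish so that \eqref{cond:mg2} holds for the constant $\omega$ --- is exactly the content of the paper's proof, which otherwise delegates the construction of the contractions to \cite[Lemma~5.7]{Yin:int:acvf} and observes that only two cases survive: $A$ a product of two open discs, or $\epsilon$ balanced with nonzero paradigms. Note in particular that your ``injective'' case is already a product: since $\epsilon$ has the disc-to-disc property and $A_1$ is an open disc (the point case being trivial), $\epsilon(A_1)$ is an open disc of radius at least $\vrv(t)$, so all differences $\epsilon(x)-\epsilon(x')$ lie in $\MM_{\vrv(t)}$ and the fibers $t^\sharp+\epsilon(x)$ coincide; no shear is needed there.

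Two steps as you wrote them would, however, not go through. First, ``composing standard contractions of $\Phi(A)$ with $\Phi$ yields standard contractions of $A$'' fails for the order that contracts the first coordinate first: $\Phi$ is relatively unary in the second coordinate, so the composite's first component is not relatively unary in the first, and the result is not a standard contraction of $\bm A$ (this kind of preliminary relatively unary morphism is what Lemma~\ref{subset:partitioned:2:unit:contracted} permits, not what the present lemma asserts); $\Phi$ is, at best, the first step of the $21$-order contraction. Second, in the balanced case your description of $\hat T_{12}$ is inverted: the coordinate contracted first is treated fiberwise over the other, and the fibers of $A$ over $x_2$ are translates of $t_1^\sharp$ whose centers vary with $x_2$ (Remark~\ref{2cell:linear}), so the focus there cannot be the fixed point $a_1$, and ``the second coordinate centered fiberwise at $\epsilon(x_1)$'' is not meaningful once $x_1$ has been contracted away --- the varying focus supplied by the affine structure must come first and the fixed definable point second. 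Finally, the apertures should simply be chosen from the outset so that they correspond under the slope; if they are mismatched, Remark~\ref{bigger:aper} does not ``absorb'' the discrepancy --- the two classes would then differ by a blowup and be merely $\mispdb$-congruent in $\gsk\mRV^{\db}[2]$, not equal as the lemma requires. With these repairs your argument coincides with the paper's, given in a self-contained form rather than by inspection of the earlier proof.
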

\begin{proof}
All we need to do is to check that the maps constructed in the proof of  \cite[Lemma~5.7]{Yin:int:acvf} are indeed $\mRV^{\db}[2]$-morphisms. By inspection of that proof, we see that there are two cases: $A$ is a product of two open discs or the positioning function $\epsilon$ in question is balanced in $A$ with nonzero paradigms $t_1$, $t_2$. The first case is obvious since we can simply use the identity map. In the second case,  a morphism between the standard contractions can be easily constructed using  $\epsilon$  (we could also cite Lemma~\ref{simul:special:dim:1}, but the situation here is much simpler), and  the requirement on $\jcb_{\Gamma}$ is satisfied since the slope of the $\rv$-affine function $\epsilon$ is $-t_2/t_1$ (see the last paragraph of Remark~\ref{2cell:linear} for further explanation).
\end{proof}

\begin{lem}\label{subset:partitioned:2:unit:contracted}
There are a morphism $\bm A \fun \bm A^*$, relatively unary in both coordinates, and two standard contractions $\hat T_{12}$, $\hat R_{21}$ of $\bm A^*$ such that $[\hat T_{12}(\bm A^*)]_{\leq 2} = [\hat R_{21}(\bm A^*)]_{\leq 2}$.
\end{lem}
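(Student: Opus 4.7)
The plan is to reduce to the setting of Lemma~\ref{2:unit:contracted} by transforming $\bm A$ into a disjoint union of $2$-cells carrying constant volume forms, then combining the resulting fiberwise isomorphisms via compactness. Specifically, I would first apply Lemma~\ref{L:measure:surjective} to $\bm A$ to obtain a special covariant bijection $F_1 \colon \bm A \fun \bm A_1$ such that the $\VF$-part of $A_1$ is an $\RV$-pullback and the volume form $\omega_1$ is constant on each $\RV$-polydisc; by the construction in Proposition~\ref{simplex:with:hole:rvproduct}, $F_1$ is a composition of centripetal transformations, hence of relatively unary proper covariant homeomorphisms. Then, using the parameterized version of Lemma~\ref{decom:into:2:units} (together with the FMT procedure of Terminology~\ref{FMT} and compactness to ensure uniformity over the $\RV$-coordinates), I would construct a further special covariant bijection $F_2 \colon \bm A_1 \fun \bm A^*$ such that, for every $s \in (\bm A^*)_{\RV}$, the fiber $\bm A^*_s \sub \VF^2$ is a $2$-cell on which the induced volume form is constant. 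Both $F_1$ and $F_2$ decompose into relatively unary proper covariant homeomorphisms, so the composite $F \coloneqq F_2 \circ F_1 \colon \bm A \fun \bm A^*$ is the desired morphism relatively unary in both coordinates.

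Next, for each $s \in (\bm A^*)_{\RV}$, applying Lemma~\ref{2:unit:contracted} to the $2$-cell $\bm A^*_s$ yields $s$-definable standard contractions $\hat T_{12}^s$, $\hat R_{21}^s$ of $\bm A^*_s$ satisfying $[\hat T_{12}^s(\bm A^*_s)]_{\leq 2} = [\hat R_{21}^s(\bm A^*_s)]_{\leq 2}$ in $\gsk \mRV^{\db}[2]$, witnessed by an $s$-definable $\mRV^{\db}[2]$-isomorphism. By compactness, after possibly refining $(\bm A^*)_{\RV}$ by a definable finite partition and applying FMT to handle finitely many exceptional fibers, these fiberwise contractions and isomorphisms can be chosen uniformly in $s$. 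This produces global standard contractions $\hat T_{12}$, $\hat R_{21}$ of $\bm A^*$ together with a global $\mRV^{\db}[2]$-isomorphism between $\hat T_{12}(\bm A^*)$ and $\hat R_{21}(\bm A^*)$, giving the required equality of classes.

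The main obstacle will be verifying that the parameterized application of Lemma~\ref{decom:into:2:units} in the first step genuinely produces a morphism in $\mVF^{\diamond}[2]$---that is, one expressible as a composition of relatively unary proper covariant homeomorphisms---rather than a standard contraction descending to $\mRV^{\db}$. This entails carefully tracking the lifts constructed in the proof of Lemma~\ref{decom:into:2:units}, verifying the proper covariance of each intermediate stage via Lemmas~\ref{conti:covar} and~\ref{conv:unif}, and performing the FMT manipulations familiar from the proofs of Proposition~\ref{simplex:with:hole:rvproduct} and Lemma~\ref{exit:stand} to cut out the finitely many exceptional $\RV$-polydiscs where uniformity might otherwise fail.
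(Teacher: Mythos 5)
Your proposal is correct and follows essentially the same route as the paper, whose proof is precisely the combination of Lemma~\ref{decom:into:2:units} (decomposition into $2$-cells), Lemma~\ref{2:unit:contracted} (agreement of the two contraction orders on a $2$-cell with constant form), and compactness; your preliminary use of Lemma~\ref{L:measure:surjective} to make the volume form $\rv$-contractible is a harmless way of ensuring constancy on each cell, since $2$-cells lie in single $\RV$-polydiscs. The only cosmetic difference is that the passage to $\bm A^*$ is most naturally realized by attaching the standard-contraction data of Lemma~\ref{decom:into:2:units} as extra $\RV$-coordinates (a relatively unary proper covariant homeomorphism) rather than by a further special covariant bijection, which is exactly the verification you flag as the remaining obstacle.
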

\begin{proof}
This follows from  Lemmas~\ref{decom:into:2:units}, \ref{2:unit:contracted}, and compactness (see the proof of \cite[Corollary~5.8]{Yin:int:acvf} for a bit more details).
\end{proof}

\begin{lem}\label{contraction:perm:pair:isp}
Let $\bm A = (A, \omega) \in \mVF^\diamond[n]$. Let $i, j \in [n]$ with $i \neq j$ and $\sigma_1$, $\sigma_2$ be  permutations of $[n]$ such that
\[
\sigma_1(1) = \sigma_2(2) = i, \quad \sigma_1(2) = \sigma_2(1) = j, \quad \sigma_1
\rest \set{3, \ldots, n} = \sigma_2 \rest \set{3, \ldots, n}.
\]
Then, for any standard contractions $\hat T_{\sigma_1}$, $\hat T_{\sigma_2}$ of $\bm A$,
\[
([\hat T_{\sigma_1}(\bm A)]_{\leq n}, [\hat T_{\sigma_2}(\bm A)]_{\leq n}) \in \mispdb.
\]
\end{lem}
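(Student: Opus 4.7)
The plan is to isolate the two coordinates $i, j$ along which $\sigma_1$ and $\sigma_2$ differ, reduce to a two-dimensional core via Lemma~\ref{subset:partitioned:2:unit:contracted}, and lift the resulting fibrewise $\mispdb$-equivalence back up through Lemma~\ref{isp:VF:fiberwise:contract}.

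After relabelling, I would take $i = 1$ and $j = 2$. Let $\tau$ denote the common restriction $\sigma_1 \rest \{3, \ldots, n\} = \sigma_2 \rest \{3, \ldots, n\}$, and write $\hat U_\tau$ for the corresponding $(n{-}2)$-step standard contraction, i.e.\ the common tail of both $\hat T_{\sigma_1}$ and $\hat T_{\sigma_2}$. Factor
\[
\hat T_{\sigma_1} = \hat U_\tau \circ \hat T^{(2)}, \qquad \hat T_{\sigma_2} = \hat U_\tau \circ \hat R^{(2)},
\]
where $\hat T^{(2)}$ and $\hat R^{(2)}$ carry out only the first two contraction steps, in orders $(1,2)$ and $(2,1)$ respectively. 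Setting $\bm B_1 = \hat T^{(2)}(\bm A)$ and $\bm B_2 = \hat R^{(2)}(\bm A)$, both are proper invariant objects with the common $\VF$-projection $A_E = \pr_E(A) \sub \VF^{n-2}$, where $E = \{3, \ldots, n\}$.

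Next, for each $c \in A_E$ the fibres $(\bm B_1)_c$ and $(\bm B_2)_c$ are $2$-dimensional standard contractions of $\bm A_c \in \mVF^\diamond[2]$ in orders $(1,2)$ and $(2,1)$, and I would prove the fibrewise claim
\[
([(\bm B_1)_c]_{\leq 2}, [(\bm B_2)_c]_{\leq 2}) \in \mispdb
\]
by invoking Lemma~\ref{subset:partitioned:2:unit:contracted} applied to $\bm A_c$. That lemma supplies a morphism $\bm A_c \fun \bm A_c^*$ (decomposable into relatively unary morphisms in coordinates $1$ and $2$) together with standard contractions $\hat T_{12}$, $\hat R_{21}$ of $\bm A_c^*$ whose classes at level $2$ coincide. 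Two applications of Corollary~\ref{contraction:same:perm:isp} --- once with a permutation starting with $1$, once with one starting with $2$ --- will bridge $(\bm B_1)_c$ to $\hat T_{12}(\bm A_c^*)$ and $(\bm B_2)_c$ to $\hat R_{21}(\bm A_c^*)$ modulo $\mispdb$, and Lemma~\ref{isp:congruence:vol} (transitivity) then closes the chain.

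Finally, I would apply Lemma~\ref{isp:VF:fiberwise:contract} to the pair $\bm B_1, \bm B_2$, with base $A_E \sub \VF^{n-2}$, head-start $k = 2$, and identical tail contraction $\hat U_\tau$ on both sides, yielding
\[
([\hat T_{\sigma_1}(\bm A)]_{\leq n}, [\hat T_{\sigma_2}(\bm A)]_{\leq n}) = ([\hat U_\tau(\bm B_1)]_{\leq n}, [\hat U_\tau(\bm B_2)]_{\leq n}) \in \mispdb,
\]
which is the desired conclusion. The main obstacle will be the middle step: one must verify that the morphism $\bm A_c \fun \bm A_c^*$ furnished by Lemma~\ref{subset:partitioned:2:unit:contracted} admits a decomposition making Corollary~\ref{contraction:same:perm:isp} applicable in \emph{both} orders $(1, 2)$ and $(2, 1)$, and then use compactness to make the choice of $\bm A_c^*$ uniform in $c \in A_E$ so that the fibrewise equivalences assemble into genuine definable data to which Lemma~\ref{isp:VF:fiberwise:contract} can be fed.
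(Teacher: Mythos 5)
This is essentially the paper's own proof: the paper likewise reduces, via compactness and Lemma~\ref{isp:VF:fiberwise:contract} (same tail permutation on both sides, head start $2$), to the two-dimensional fibrewise claim over $a \in A_E$, and settles that case exactly as you propose, by feeding the morphism and the two standard contractions supplied by Lemma~\ref{subset:partitioned:2:unit:contracted} into Corollary~\ref{contraction:same:perm:isp} and closing the chain by transitivity. Your closing worry about making $\bm A_c^*$ uniform in $c$ is superfluous, since the hypothesis (\ref{condi:fiber}) of Lemma~\ref{isp:VF:fiberwise:contract} is only required fibrewise over $\mathbb{S}\la c \ra$ (the uniformization is carried out inside the proof of that lemma), which is precisely how the paper applies it.
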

\begin{proof}
Let $ij$, $ji$ denote the permutations of $\{i, j\}$ and $E = [n] \mi \{i, j\}$. By compactness and
Lemma~\ref{isp:VF:fiberwise:contract}, it is enough to show
that for  all $a \in A_E$ and all standard
contractions $\hat T_{ij}$, $\hat T_{ji}$ of $\bm A_a = (A_a, \omega \rest A_a)$,
\[
([\hat T_{ij}(\bm A_a)]_{\leq 2}, [\hat T_{ji}(\bm A_a)]_{\leq 2})
\in \mispdb.
\]
By Corollary~\ref{contraction:same:perm:isp} and Lemma~\ref{isp:VF:fiberwise:contract}, it is enough to find a morphism $\bm A_a \fun \bm B$, relatively unary in both coordinates, and  standard contractions $\hat R_{ij}$, $\hat R_{ji}$ of $\bm B$ such that $[\hat R_{ij}(\bm B)]_{\leq 2} = [\hat R_{ji}(\bm B)]_{\leq 2}$. This is just Lemma~\ref{subset:partitioned:2:unit:contracted}.
\end{proof}

\subsection{The kernel of $\mL$ and the bounded integral}

The following proposition is the culmination of the preceding technicalities, it identifies the
congruence relation  $\mispdb$ with that induced by
$\mL$.

\begin{prop}\label{kernel:L}
For $\bm U, \bm V \in \mRV^{\db}[k]$,
\[
[\mu\bb L \bm U] = [\mu\bb L \bm V] \text{ in }\gsk \mVF^\diamond[k] \quad \text{if and only if} \quad ([\bm U], [\bm V]) \in \mispdb.
\]
\end{prop}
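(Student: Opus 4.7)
The ``if'' direction lifts isomorphic blowups to the $\VF$-side. Given blowups $\bm U^\flat$ of $\bm U$ and $\bm V^\flat$ of $\bm V$ with $\bm U^\flat \cong \bm V^\flat$, iterate Lemma~\ref{blowup:same:RV:coa} along the constituent elementary blowups to produce $\mL \bm U \cong \mL \bm U^\flat$ and $\mL \bm V \cong \mL \bm V^\flat$ in $\mVF^\diamond[k]$, then apply Lemma~\ref{L:measure:class:lift} to lift the $\mRV^{\db}[k]$-isomorphism $\bm U^\flat \cong \bm V^\flat$ to a $\mVF^\diamond[k]$-isomorphism $\mL \bm U^\flat \cong \mL \bm V^\flat$. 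Chaining the three isomorphisms yields the desired $[\mL \bm U] = [\mL \bm V]$ in $\gsk \mVF^\diamond[k]$.

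For the ``only if'' direction, the plan is to shadow an equality in $\gsk \mVF^\diamond[k]$ by a chain of $\mispdb$-equivalences on the $\mRV^{\db}[k]$-side. By Lemma~\ref{isp:congruence:vol} and the ``if'' direction, $\mu\bb L$ factors through a well-defined surjection $\gsk \mRV^{\db}[k]/\mispdb \fun \gsk \mVF^\diamond[k]$; since the scissor relations on the $\VF$-side pull back compatibly (a disjoint union $\mL \bm U = A_1 \uplus A_2$ forces $U = U_1 \uplus U_2$ with $\mL \bm U_i = A_i$), injectivity reduces to the case of a single $\mVF^\diamond[k]$-isomorphism $F : \mL \bm U \fun \mL \bm V$. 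By Definition~\ref{defn:diamond}, decompose $F = F_n \circ \cdots \circ F_1$ with each $F_i : \bm A_{i-1} \fun \bm A_i$ a relatively unary proper covariant homeomorphism in some $\VF$-coordinate $j_i$, where $\bm A_0 = \mL \bm U$ and $\bm A_n = \mL \bm V$, and the volume form on each $\bm A_i$ is the one transported from $\bm U$ along $F_1, \ldots, F_i$.

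For each $i$, choose a permutation $\sigma_i$ of $[k]$ with $\sigma_i(1) = j_i$ and pick standard contractions of $\bm A_{i-1}$ and $\bm A_i$ along $\sigma_i$; by Corollary~\ref{contraction:same:perm:isp}, their classes in $\gsk \mRV^{\db}[k]$ are $\mispdb$-equivalent. Between consecutive steps, iterate Lemma~\ref{contraction:perm:pair:isp} across adjacent transpositions to swap the standard contraction of $\bm A_i$ from $\sigma_i$ to $\sigma_{i+1}$, again accumulating only $\mispdb$-equivalences. At the endpoints, observe that the ``trivial'' standard contraction of $\mL \bm U$ --- one that merely regularizes each $\VF$-coordinate back into $\RV$ without any centripetal transformations --- recovers $\bm U$ itself, and similarly $\bm V$ at the other end; splicing these connects $[\bm U]$ to $[\bm V]$ through $\mispdb$.

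The main obstacle I foresee is the Grothendieck-semigroup bookkeeping at the start of the ``only if'' direction: the hypothesis $[\mL \bm U] = [\mL \bm V]$ is formally weaker than the existence of a single isomorphism, so the reduction to that case must lean on the semigroup-congruence property from Lemma~\ref{isp:congruence:vol} to transport scissor decompositions across $\mu\bb L$. A secondary nuisance is verifying, each time Lemma~\ref{contraction:perm:pair:isp} is invoked, that the two standard contractions being compared genuinely pertain to the same object $\bm A_i$ with its inherited volume form; this should fall out of the compatibility built into Remark~\ref{stand:vol} and Remark~\ref{spec:is:iso}, but it is the sort of detail that requires careful unwinding to make the chain of equivalences rigorous.
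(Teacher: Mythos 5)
Your ``if'' direction is fine and matches the paper: Lemma~\ref{blowup:same:RV:coa} plus the lifting of morphisms (Lemma~\ref{L:measure:class:lift}, i.e.\ Corollary~\ref{L:sur:c}) is exactly what is needed. The ``only if'' direction, however, has a genuine gap at the permutation-switching step. Lemma~\ref{contraction:perm:pair:isp} only lets you compare two standard contractions whose permutations differ by swapping the \emph{first two} entries of the contraction order ($\sigma_1(1)=\sigma_2(2)$, $\sigma_1(2)=\sigma_2(1)$, equal on $\{3,\ldots,k\}$); it says nothing about transposing positions $m$ and $m{+}1$ for $m\geq 2$. So ``iterating across adjacent transpositions'' to move from $\sigma_i$ to an arbitrary $\sigma_{i+1}$ is not available: with only Corollary~\ref{contraction:same:perm:isp} (same permutation) and Lemma~\ref{contraction:perm:pair:isp} (swap of the first two) you can never alter the order of the coordinates contracted after the second step, and already for $k=3$ you cannot connect the orders $(1,2,3)$ and $(1,3,2)$. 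This is precisely why the paper's proof is an induction on $k$: after arranging the five standard contractions per step so that the two problematic ones share the same first contracted coordinate, the residual mismatch in the deeper coordinates is resolved by applying the proposition itself in dimension $k-1$ \emph{fiberwise} over the just-contracted coordinate, fed through Lemma~\ref{isp:VF:fiberwise:contract}; the base case $k=1$ is Lemma~\ref{kernel:dim:1:coa}, which your argument never invokes but which ultimately carries the whole construction (via Lemma~\ref{simul:special:dim:1}).

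To repair your argument you would have to restore exactly this structure: set up the induction on $k$, take the relatively unary decomposition $F=F_n\circ\cdots\circ F_1$ as you do, choose for each intermediate object $\bm A_i$ contractions whose permutations begin with the coordinates targeted by $F_i$ and $F_{i+1}$, use Lemma~\ref{contraction:perm:pair:isp} for the first-two swap, and then invoke the inductive hypothesis fiberwise (through Lemma~\ref{isp:VF:fiberwise:contract}) to reconcile the remaining orders --- at which point you have reproduced the paper's proof. Your preliminary reduction of the semigroup equality $[\mL\bm U]=[\mL\bm V]$ to a single $\mVF^\diamond[k]$-isomorphism, and the endpoint observation that a standard contraction of $\mL\bm U$ is $\mispdb$-congruent to $[\bm U]$ (via Lemma~\ref{special:to:blowup:coa}), are acceptable, but they do not compensate for the missing inductive mechanism.
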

\begin{proof}
The ``if'' direction simply follows from Lemma~\ref{blowup:same:RV:coa} and
Corollary~\ref{L:sur:c}.

For the ``only if'' direction, we proceed by induction on $k$. The base case $k = 1$ is of course Lemma~\ref{kernel:dim:1:coa}. For the inductive step, let
\[
\mu\bb L \bm U = \bm B_1 \to^{G_1} \bm B_2 \cdots \bm B_l \to^{G_l} \bm B_{l+1} = \mu\bb L \bm V
\]
be a sequence of relatively unary $\mVF^\diamond[k]$-morphisms, which exist by definition. For each $j \leq l - 2$, we can choose five standard contractions
\[
[\bm U_j]_{\leq k}, \quad [\bm U_{j+1}]_{\leq k}, \quad [\bm U'_{j+1}]_{\leq k}, \quad [\bm U''_{j+1}]_{\leq k}, \quad [\bm U_{j+2}]_{\leq k}
\]
of $\bm B_j$, $\bm B_{j+1}$, $\bm B_{j+1}$, $\bm B_{j+1}$, $\bm B_{j+2}$ with the permutations $\sigma_{j}$, $\sigma_{j+1}$, $\sigma'_{j+1}$, $\sigma''_{j+1}$, $\sigma_{j+2}$ of $[k]$, respectively, such that
\begin{itemize}
  \item $\sigma_{j+1}(1)$ and $\sigma_{j+1}(2)$ are the $\VF$-coordinates targeted by $G_{j}$ and $G_{j+1}$, respectively,
  \item $\sigma''_{j+1}(1)$ and $\sigma''_{j+1}(2)$ are the $\VF$-coordinates targeted by $G_{j+1}$ and $G_{j+2}$, respectively,
  \item $\sigma_{j} = \sigma_{j+1}$, $\sigma''_{j+1} =  \sigma_{j+2}$,  and $\sigma'_{j+1}(1) = \sigma''_{j+1}(1)$,
  \item the relation between $\sigma_{j+1}$ and $\sigma'_{j+1}$ is as described in Lemma~\ref{contraction:perm:pair:isp}.
\end{itemize}
By Corollary~\ref{contraction:same:perm:isp} and Lemma~\ref{contraction:perm:pair:isp}, all the adjacent pairs of these standard contractions are indeed $\mispdb$-congruent, except $([\bm U'_{j+1}]_{\leq k}, [\bm U''_{j+1}]_{\leq k})$.  Since we can choose $[\bm U'_{j+1}]_{\leq k}$, $[\bm U''_{j+1}]_{\leq k}$ so that they start with the same contraction in the first targeted $\VF$-coordinate of $\bm B_{j+1}$, the resulting sets from this step are the same. Thus, applying  the inductive hypothesis in each fiber over the just contracted coordinate, we see that this last pair is also $\mispdb$-congruent. This completes the ``only if'' direction.
\end{proof}

\begin{rem}\label{thekernel}
Proposition~\ref{kernel:L} shows that  the kernel  of $\mL$ in $\gsk \mRV^{\db}[*]$ is generated by the pairs $([1], [\RV^{\ccirc}(t)_\gamma])$, where $\gamma \in \Gamma$ and $t \in \gamma^\sharp$ are definable (Notation~\ref{punc:RVd}), and hence  the corresponding ideal of the graded ring $\ggk \mRV^{\db}[*]$  is $(\bm P_\Gamma)$ (Notation~\ref{pgammabis}).
\end{rem}

\begin{thm}\label{main:prop}
For each $k \geq 0$ there is a canonical isomorphism of Grothendieck semigroups
\[
  \int^{\diamond}_+ : \gsk \mVF^\diamond[k] \fun \gsk  \mRV^{\db}[k] /  \mispdb
\]
such that $\int_{+}^{\diamond} [\bm A] = [\bm U]/  \mispdb$ if and only if $[\bm A] = [{\mL}\bm U]$. Taking the direct sum of the groupifications
yields a canonical isomorphism of graded rings
\[
  \int^{\diamond} : \ggk \mVF^\diamond[*] \fun \ggk  \mRV^{\db}[*] /  (\bm P_\Gamma).
\]
\end{thm}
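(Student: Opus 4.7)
The plan is to assemble the theorem directly from the machinery built in the preceding sections, treating $\int^\diamond_+$ as the inverse of $\mgL$ modulo the appropriate congruence. The hard technical work has already been done; what remains is essentially bookkeeping.

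First I would set up $\int^\diamond_+$ at the semigroup level. By Corollary~\ref{L:sur:c}, the lifting map induces a surjective semigroup homomorphism $\mgL : \gsk \mgRV^{\db}[k] \epi \gsk \mgVF^\diamond[k]$. For any $[\bm A] \in \gsk \mgVF^\diamond[k]$, pick any preimage $[\bm U] \in \gsk \mgRV^{\db}[k]$ (existence is surjectivity) and tentatively set $\int^\diamond_+[\bm A] = [\bm U]/\mispdb$. To see this is well-defined, suppose $[\mgL \bm U] = [\mgL \bm V]$; then by the ``only if'' direction of Proposition~\ref{kernel:L}, $([\bm U],[\bm V]) \in \mispdb$, so the class in the quotient does not depend on the choice of preimage. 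Conversely, the ``if'' direction of Proposition~\ref{kernel:L} shows that the assignment $[\bm U]/\mispdb \efun [\mgL \bm U]$ is a well-defined inverse map. Since Lemma~\ref{isp:congruence:vol} confirms $\mispdb$ is a semigroup congruence relation, the quotient $\gsk \mgRV^{\db}[k]/\mispdb$ is a semigroup, and both maps are semigroup homomorphisms by construction, so $\int^\diamond_+$ is an isomorphism of Grothendieck semigroups satisfying the stated equivalence.

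Next I would pass to the ring $\int^\diamond$. Groupification is functorial, so $\int^\diamond_+$ groupifies to an isomorphism $\ggk \mgVF^\diamond[k] \fun \ggk \mgRV^{\db}[k]/\mispdb$ of abelian groups. Taking the direct sum over all $k \geq 0$ yields an isomorphism of graded abelian groups
\[
\int^\diamond : \ggk \mgVF^\diamond[*] \fun \ggk \mgRV^{\db}[*]/(\mispdb),
\]
where $\mispdb[*] = \bigoplus_k \mispdb[k]$ is a semiring congruence relation by Lemma~\ref{isp:congruence:vol}. To upgrade to a ring isomorphism, I would verify multiplicativity: the product in $\ggk \mgVF^\diamond[*]$ is fiber product (with added volume forms) and $\mgL$ respects this product by construction, so $\int^\diamond$ sends products to products. (The fact that the product of two proper invariant objects is proper invariant and the product of two morphisms of the required form is again of that form is straightforward from the definitions.)

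Finally I would identify the quotient by $\mispdb$ with the quotient by the ideal $(\bm P_\Gamma)$. This is exactly the content of Remark~\ref{thekernel}: the pairs generating $\mispdb$ come from elementary annular blowups, and an elementary blowup in aperture $(\gamma, t)$ precisely records the relation $[1] = [\RV^{\ccirc}(t)_\gamma] = [\RV^{\ccirc} \mi \RV^{\ccirc}_\gamma] + [\{t\}]$, that is, $\bm P_\gamma = 0$. So the principal ideal generated by $\{\bm P_\gamma : \gamma \in \Gamma^+_0 \text{ definable}\}$ in $\ggk \mgRV^{\db}[*]$ is precisely the groupification of $\mispdb[*]$, and independence from the choice of $t_\gamma \in \gamma^\sharp$ (noted in Notation~\ref{pgammabis}) is exactly Lemma~\ref{elementary:blowups:preserves:iso:vol} in disguise.

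I do not anticipate a genuine obstacle here — the deep steps (surjectivity of $\mgL$ in Corollary~\ref{L:sur:c}, and the kernel computation in Proposition~\ref{kernel:L}) have already been proved. The one subtle point that deserves care is confirming that $\int^\diamond$ is genuinely a homomorphism of \emph{graded} rings, which requires that $\mispdb[k]$ respects the grading (products of blowups stay in the correct degree); this follows because every annular blowup preserves the ambient dimension $k$ in $\mgRV^{\db}[k]$ by Definition~\ref{defn:blowup:coa}.
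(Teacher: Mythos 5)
Your proposal is correct and follows essentially the same route as the paper, whose proof is exactly the combination of Corollary~\ref{L:sur:c} (surjectivity of ${\mu}\bb L$) and Proposition~\ref{kernel:L} (identification of the kernel with $\mispdb$), with Remark~\ref{thekernel} supplying the passage from $\mispdb$ to the ideal $(\bm P_\Gamma)$ after groupification. The only slip is terminological: the ideal generated by the family $\{\bm P_\gamma\}$ is not principal (as the paper notes in the introduction), but this does not affect the argument.
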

\begin{proof}
This is immediate by Corollary~\ref{L:sur:c} and Proposition~\ref{kernel:L}.
\end{proof}

\begin{cor}\label{db:to:novol}
The diagram (\ref{diag-diamond-interpol}) commutes.
\end{cor}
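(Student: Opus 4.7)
The plan is to verify commutativity of each of the two squares separately by chasing the definitions; no new technical input is required beyond the characterizations of the three integrals already established.

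First, I would check that every arrow in (\ref{diag-diamond-interpol}) is well-defined. The top horizontal arrows are: the forgetful map $[(A,\omega)] \mapsto [A]$ from $\ggk \mgVF^{\diamond}[*]$ to $\ggk \VF_*$, which is well-defined because every $\mgVF^{\diamond}[*]$-morphism is by Lemma~\ref{conv:unif} and Lemma~\ref{pr:co:home:com} an honest definable bijection (hence a $\VF_*$-morphism); and the inclusion $\ggk \mgVF^{\diamond}[*] \hookrightarrow \ggk \mgVF[*]$, which is well-defined because $\mgVF^{\diamond}[*]$ is by Definition~\ref{defn:diamond} a subcategory of $\mgVF[*]$. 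The bottom horizontal arrows are the forgetful map $[(U,f,\omega)] \mapsto [(U,f)]$ from $\ggk \mgRV^{\db}[*]$ to $\ggk \RV[*]$, and the inclusion $\ggk \mgRV^{\db}[*] \hookrightarrow \ggk \mgRV[*]$; both descend to the respective quotients by Notation~\ref{pgammabis}, which already records that the image of $(\bm P_\Gamma)$ under these two forgetful maps is contained in $(\bm P-1)$ and $(\bm P)$, respectively.

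Next, I would verify commutativity of the left square. Let $[(A,\omega)] \in \ggk \mgVF^{\diamond}[*]$. By Theorem~\ref{main:prop}, pick $(\bm U, \omega_0) \in \mgRV^{\db}[*]$ with $(A,\omega) = \mgL(\bm U, \omega_0)$, so that $\int^{\diamond}[(A,\omega)] = [(\bm U, \omega_0)]/(\bm P_\Gamma)$. Forgetting the volume form on the $\RV$-side yields $[\bm U]/(\bm P-1)$. On the other route, forgetting the volume form on the $\VF$-side yields $[A]$, and since $A = \bb L \bm U$ by construction of $\mgL$, Theorem~\ref{main:prop:HK} gives $\int[A] = [\bm U]/(\bm P-1)$. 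The two paths agree.

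Commutativity of the right square is entirely analogous: with $(A,\omega) = \mgL(\bm U, \omega_0)$ as above, the downward-then-inclusion path sends $[(A,\omega)]$ to $[(\bm U, \omega_0)]/(\bm P)$, while the inclusion-then-$\int^{\mu}$ path sends $[(A,\omega)]$ to $\int^{\mu}[(A,\omega)]$, which by Theorem~\ref{main:prop:HK} and $(A,\omega) = \mgL(\bm U, \omega_0)$ equals $[(\bm U, \omega_0)]/(\bm P)$ as well.

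There is no real obstacle: the only thing to be careful about is the bookkeeping around ideals, ensuring that the representative $(\bm U, \omega_0)$ chosen on the diamond side is literally the same element witnessing $\int$ and $\int^{\mu}$ on the two outer sides. This is immediate from the fact that $\mgL$ factors through $\bb L$ on the underlying sets and that the lifting characterization of all three integrals is given by the same equation $[\bm A] = [\mgL \bm U]$ (or its volume-less counterpart).
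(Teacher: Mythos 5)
Your proposal is correct and takes essentially the same route as the paper: well-definedness of the horizontal arrows via Remark~\ref{dia:cat:well} and Notation~\ref{pgammabis}, and commutativity of both squares by exhibiting a single $\RV$-side representative $\bm U$ that simultaneously witnesses all three integrals applied to $[\bm A]$. The paper produces this common representative as a standard contraction of $\bm A$, while you invoke the lifting characterizations of Theorems~\ref{main:prop:HK} and~\ref{main:prop}; this is the same idea in slightly different clothing.
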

\begin{proof}
Note that, in (\ref{diag-diamond-interpol}),  the first horizontal arrow  is induced by the obvious forgetful functor (it exists
because the morphisms in $\mgVF^\diamond[*]$ are bijections instead of essential bijections, see Remark~\ref{dia:cat:well}), the second horizontal arrow is induced by the subcategory relation, and the reason that the bottom row exists is given in Notation~\ref{pgammabis}. That the upper half of the diagram commutes is then simply a consequence of the fact that image of $[\bm A]$ under any of the three integrals  can be represented by a standard contraction of $\bm A$. The rest of it is just a combination of (\ref{edb:eb:com}) and Proposition~\ref{prop:eu:retr:k}.
\end{proof}

\begin{rem}\label{eg:eb:factor}
We may define the motivic volume operator in Notation~\ref{mov:vol} using $\bb E_g$ instead of $\bb E_b$ (of course $[\A]$ will have to be inverted, see Remark~\ref{eb:no:eg}).
For proper invariant objects, this only changes the image by a factor. To see this, first note that  Theorem~\ref{main:prop} (or rather Corollary~\ref{L:sur:c}) shows that every proper invariant object $A \in \VF_*$ with $\dim_{\VF}(A) = n$ is isomorphic to an object of the form $\bb L \bm U$ with $(\bm U, 0) \in  \mgRV^{\db}[n]$. Therefore, in light of the isomorphism ${\mu}\Psi$ in (\ref{bdd:to:2bdd}) and the defining conditions of $\bb E_g$, $\bb E_b$ in \S~\ref{subs:uni:retr:novol}, we have
\begin{equation}\label{off:fac:A}
\bb E_g \Big (\int [A] \Big ) = \bb E_b \Big (\int [A] \Big )[\A]^{-n}.
\end{equation}
\end{rem}

\section{Motivic Milnor fiber}\label{sec:spec:pui}

We begin with a very brief discussion on specialization to henselian subfields. Let $\mathbb{M}$ be a $\VF$-generated  substructure. Recall that $\mathbb{S}$ is a part of the language and hence all other substructures contain it. If $X \sub \VF^n  \times \RV^m$ is a definable (and hence quantifier-free definable) set then the \memph{trace} of $X$ in $\mathbb{M}$, denoted by $X(\mathbb{M})$, is the set of $\mathbb{M}$-rational points of $X$, that is,
\[
X(\mathbb{M}) = X \cap (\VF(\mathbb{M})^n  \times \RV(\mathbb{M})^m).
\]
Such a trace is also called a \memph{constructible set} in $\mathbb{M}$ since it is indeed quantifier-free definable in $\mathbb{M}$. Note that, however, if $f : X \fun \Gamma$ is a definable function then the image $f(X(\mathbb{M}))$  is not necessarily a set in $\Gamma(\mathbb{M})$, but rather a set in the divisible hull $\Q \otimes \Gamma(\mathbb{M})$  of $\Gamma(\mathbb{M})$. For instance, if $\mathbb{M} = \C \dpar t$ then $\Gamma(\mathbb{M}) = \Z$ and hence $\gamma \in \Gamma$ is definable if and only if $\gamma \in \Q \otimes \Gamma(\mathbb{M}) = \Q$. On the other hand, if $X$ is a set in $\Gamma$ and $f$ is a piecewise $\mgl_k(\Z)$-transformation on $X$ with constant $\Gamma(\mathbb{M})$-terms  then $f(X(\mathbb{M}))$ is of course a set in $\Gamma(\mathbb{M})$; this is the situation in the $\Gamma$-categories (see Remark~\ref{GLZ:char}).


If $\mathbb{M}$ is definably closed and $\Gamma(\mathbb{M})$ is nontrivial, or equivalently (see Lemma~\ref{cut:to:hensel:substru}), the valued field $(\VF(\mathbb{M}), \OO(\mathbb{M}))$  is henselian, then $\mathbb{M}$ is \memph{functionally closed}, that is,  for any definable set $X$ and any definable function $f$ on $X$, the image $f(X(\mathbb{M}))$ is a set that is definable in $\mathbb{M}$ (which then, ex post facto, is  constructible in $\bb M$). This is all we need to deduce the results below.

\subsection{Piecewise retraction to $\RES$}\label{piece:res}

As in \S~\ref{jp:fubini}, from here on, we work in the field $\puC$ of complex Puiseux series (with an implicit reduced cross-section) and take  $\mathbb{S} = \C \dpar{ t }$. The value group $\Gamma$ is identified with $\Q$. We also consider the family of henselian subfields $\C \dpar{ t^{1/m} }$,  $m \in \Z^+$, regarded as substructures. The value group $\Gamma(\C \dpar{ t^{1/m} })$ is identified with $m^{-1} \Z$.

\begin{nota}\label{loc:rings}
For any ring $R$, let $R[T^{\Q}]$ denote the ring of Puiseux polynomials over $R$, that is, the group ring of $\Q$ over $R$.

Let $\gmv[[\A]^{-1}][T,T^{-1}]$ be the obvious subring of $\gmv[[\A]^{-1}][T^{\Q}]$. The canonical image of $\gmv [T^{\Q}]$ in  $\gmv[[\A]^{-1}][T^{\Q}]$ is just denoted as such. The assignment $T \efun [\A]$ determines a ring homomorphism
\begin{equation}\label{pui:laurent}
  \bm \eta : \gmv[[\A]^{-1}][T,T^{-1}] \fun \gmv[[\A]^{-1}].
\end{equation}
\end{nota}


Recall Notations~\ref{comp:form} and \ref{bipo:gdv}. Let $\bm U = (U, f, \omega) \in \mgRV^{\db}[k]$. In particular, if $f=(f_1,\dots,f_n)$, we have introduced the shorthand 
\[
\omega_{f}: x\in U \efun \omega(x)+\sum_{i=1}^n\vrv(f_i(x)).
\]
The image $\omega_f(U_\gamma)$ is finite for every $\gamma \in \vrv(U)$, because the $\K$-sort and the $\Gamma$-sort are orthogonal to each other. It follows that there is a bipolar twistoid decomposition $(U_i)_i$ of $U$ such that each restriction $\omega_f \rest U_i$ $\vrv$-contracts to a function $\sigma_i : I_i = \vrv(U_i) \fun \Gamma$. Write $I_{i,m}$ for the set $I_i(m^{-1}\Z)$ of  $m^{-1}\Z$-rational points of $I_i$. We assign to $\bm U$ the expression
\begin{equation}\label{hm:ass}
  h_m(\bm U) = \sum_i [\tbk(U_i)]^{\hat \mu} \sum_{\gamma \in I_{i,m}} T^{-m \sigma_i(\gamma)},
\end{equation}
which is  a finite sum and hence  belongs to $\gmv[[\A]^{-1}][T^{\Q}]$.

If $\bm U$ is already of the form $(V, g, \omega') \times (I, \sigma) \in \mgRES[i] \times \mG^{\db}[j]$, where $\vrv(g)$ is a singleton, $\vrv(V) \in m^{-1}\Z$, and $\omega'_g$ is constant, then clearly
\[
h_m(\bm U) = \Theta([V])T^{-m\omega'_g} \cdot [\G_m]^{j} \sum_{\gamma \in I_{m}} T^{-m (\sigma(\gamma) + \Sigma \gamma )};
\]
note that if $\vrv(V) \notin m^{-1}\Z$ then simply  $h_m(\bm U) = 0$. In light of the isomorphism ${\mu} \Psi^{\db}$ in Remark \ref{tensor:bd}, we may as well take this as the definition of $h_m$, which is essentially how it is done in \cite[\S~8.2]{hru:loe:lef}. Computationally, though,  (\ref{hm:ass}) is more effective.

The ``piecewise retraction'' formula (\ref{hm:ass}) is indeed quite similar to the ``uniform retraction'' formula (\ref{twist:C}). The difference is just that the coefficient $\chi_b(I_i)$ in the latter is replaced in the former by a formal ``counting'' sum $\sum_{\gamma \in I_{i,m}} T^{-m \sigma_i(\gamma)}$ that also takes the volume form into account. As mentioned in Remark~\ref{bipo:desc}, that the ``uniform retraction'' formula does not depend on the choice of the bipolar twistoid decomposition is simply a consequence of the construction of $\bb E_b$ itself. The same holds for the ``piecewise retraction'' formula, but we do need an argument.

\begin{lem}\label{muhat:section}
The  assignment (\ref{hm:ass}) does not depend on the choice of the bipolar twistoid decomposition and is invariant on isomorphism classes.
\end{lem}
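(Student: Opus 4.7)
The plan is to show that, for any bipolar twistoid decomposition, the right-hand side of~(\ref{hm:ass}) coincides with a canonical expression computed on the tensor side via the isomorphism $\mu\Psi^{\db}$ of Remark~\ref{tensor:bd}. Once this identification is in place, both conclusions of the lemma follow at once: the tensor-side expression depends only on the image of $[\bm U]$ in $\gsk\mgRES[*] \otimes \gsk\mG^{\db}[*]$, and is therefore independent of the chosen decomposition and invariant under isomorphism.

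First I would verify that the tensor-side assignment
\[
[(V,g,\omega')] \otimes [(I,\sigma)] \efun \Theta([V])\,T^{-m \omega'_g}\,[\G_m]^{j} \sum_{\gamma \in I_m} T^{-m(\sigma(\gamma) + \Sigma\gamma)}
\]
is a well-defined map out of the tensor product. For the $\mgRES$ factor this is clear, since $\Theta$ and the constant $\omega'_g$ are isomorphism invariants. For the $\mG^{\db}$ factor, a morphism $F : (I,\sigma) \fun (I',\sigma')$ is, by Remark~\ref{GLZ:char}, piecewise $\Z$-linear with constant terms in $\Gamma(\mathbb{S}) = \Z$; hence it bijects $m^{-1}\Z$-rational points, and combining~(\ref{cond:mg2}) with Notation~\ref{jcbG:op} yields $\sigma(\alpha) + \Sigma\alpha = \sigma'(F\alpha) + \Sigma(F\alpha)$ for every $\alpha \in I$, so the inner sum is preserved by $F$.

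Next I would compute (\ref{hm:ass}) on a single bipolar piece $U_\ell$ of an arbitrary decomposition $(U_\ell)_\ell$ of $\bm U = (U, f, \omega)$. The witness bijection of Definition~\ref{bipo:defn} puts $U_\ell$ in the form $(V_\ell, g_\ell, \omega'_\ell) \times (J_\ell, \tau_\ell)$ with $\vrv(V_\ell)$ a singleton; the hypothesis that $\omega_f \rest U_\ell$ $\vrv$-contracts to a function $\sigma_\ell$ forces $\omega'_{\ell, g_\ell}$ to be a constant $c_\ell$ on $V_\ell$. A direct computation then gives
\[
\omega_f(v,a) = \omega'_\ell(v) + \Sigma(\vrv(g_\ell(v))) + \tau_\ell(\alpha) + \Sigma\alpha = c_\ell + \tau_\ell(\alpha) + \Sigma\alpha
\]
for $u = (v,a) \in V_\ell \times \alpha^\sharp$, so the contraction $\sigma_\ell$ is identified with $\tau_\ell + \Sigma + c_\ell$ after the obvious reindexing of $I_\ell$ by $J_\ell$. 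On the twistback side, bipolarity and the argument of Remark~\ref{bipo:desc} yield $[\tbk(U_\ell)]^{\hat\mu} = \Theta([V_\ell])\cdot[\G_m]^{j}$ in $\gmv$. Substituting into~(\ref{hm:ass}) and reparameterizing $I_{\ell,m}$ through $J_{\ell,m}$ reproduces exactly the tensor-side expression applied to $[(V_\ell, g_\ell, \omega'_\ell)] \otimes [(J_\ell, \tau_\ell)]$. Summing over $\ell$ finishes the identification.

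The main obstacle is coordinate bookkeeping around the bipolarity witness: one has to ensure that $c_\ell$ truly factors out of the exponential (which needs $\vrv(V_\ell)$ to be a genuine singleton rather than just finite) and that the product structure of $[\tbk(U_\ell)]^{\hat\mu} = \Theta([V_\ell]) \cdot [\G_m]^{j}$ emerges with the correct $\hat\mu$-equivariance across the fibers of $J_\ell^\sharp$. Once the match with the tensor formula is secure, the lemma reduces to the tautology that $\mu\Psi^{\db}$ being an isomorphism renders the tensor-side value canonically attached to the class $[\bm U]$.
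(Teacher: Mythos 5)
Your overall strategy---identify the assignment (\ref{hm:ass}) with a pairing defined on the tensor side and then invoke the isomorphism ${\mu}\Psi^{\db}$ of Remark~\ref{tensor:bd}---is a legitimate repackaging, and your computation on a single bipolar piece is essentially the observation made in the paper right after (\ref{hm:ass}) together with Remark~\ref{bipo:desc}. The gap is in your first step, the well-definedness of the tensor-side assignment, which is exactly where the content of the lemma sits. Two checks are missing. First, since the tensor product is taken over $\gsk \mG^{\fin}[*]$, you must verify that your pairing is balanced, i.e.\ that moving a finite piece $(\{\gamma_0\},\beta)$ from the $\mG^{\db}$ factor to the $\mgRES$ factor (as the object $(\gamma_0^\sharp,\id,\beta)$) does not change the value; this does hold (both sides acquire the factor $[\G_m]\,T^{-m(\beta+\gamma_0)}$ when $\gamma_0\in m^{-1}\Z$, and both vanish otherwise), but it is part of the work and you never address it.

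Second, and more seriously, the claim that invariance ``for the $\mgRES$ factor is clear'' is not justified, and your proof of invariance for the $\mG^{\db}$ factor does not transfer: a $\mG^{\db}[k]$-morphism is piecewise $\mgl_k(\Z)$ with constants in $\Z$, hence trivially preserves $m^{-1}\Z$-points, whereas an $\mgRES[k]$-morphism is an \emph{arbitrary} definable bijection subject only to matching of total volume forms. A priori such a bijection could carry a $\vrv$-fiber lying over a tuple in $(m^{-1}\Z)^i$ onto fibers lying over tuples outside $m^{-1}\Z$, in which case your rule assigns a nonzero value to one object and $0$ to the other; the same issue reappears when you compare two bipolar decompositions whose $\Gamma$-images differ by a non-integral shift. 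Ruling this out --- and matching the $m^{-1}\Z$-counting under arbitrary definable bijections in general --- is precisely the nontrivial point, and it is what the paper's proof supplies by invoking that $\C \dpar{t^{1/m}}$ is functionally (definably) closed: a definable bijection preserves $\C \dpar{t^{1/m}}$-rational points, and a nonempty fiber over a tuple in $(m^{-1}\Z)^i$ always contains such a point (using $\rv(t^{a/m})\in\gamma^\sharp$ and the fact that the residue field of $\C \dpar{t^{1/m}}$ is the algebraically closed field $\C$). Without this ingredient, or an explicit substitute for it, your reduction through ${\mu}\Psi^{\db}$ does not close; with it, your route would work, but it would then be the paper's argument in different clothing rather than a way around it.
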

\begin{proof}
Let $\bm V = (V, g, \pi) \in \mgRV^{\db}[k]$  and $f : U \fun V$ be a $\mgRV^{\db}[k]$-morphism. Let $D = (U_i)_i$, $E = (V_i)_i$ be bipolar twistoid decompositions of $U$, $V$ satisfying the condition above. We need to show that $h_m(\bm U)$, which depends on $D$, and $h_m(\bm V)$, which depends on $E$, are  equal.
This is clear if $\bm U = \bm V$,  and $E$ is trivial (so $U$ is already a bipolar twistoid and $\omega_f$ already $\vrv$-contracts to a function on $\vrv(U)$), and $D$ is $\Gamma$-cohesive or $\vrv(U_i) = \vrv(U_j)$ for all $i$, $j$. The case that $\bm U = \bm V$ and $D$ is a refinement of $E$  follows easily from this since there is a refinement $(U_{ij})_{ij}$ of $D$ such that $(U_{ij})_{j}$ is a $\Gamma$-cohesive twistoid decomposition of $U_i$  and $\vrv(U_{ij}) = \vrv(U_{i'j})$ for all $i$, $i'$. Finally,  if $\omega_f$, $\pi_g$ both $\vrv$-contract to a function and $f$ $\vrv$-contracts to a bijection then, by Lemma~\ref{coh:decom}, we may assume that $U$, $V$ are already bipolar twistoids. In that case the desired equality follows because $\C \dpar{ t^{1/m} }$ is functionally closed.

For the general case, we first remark that the image $\vrv(f(U_\gamma))$ is finite for every $\gamma \in \vrv(U)$. It then follows from Lemmas~\ref{shift:K:csn:def} and~\ref{coh:decom} that there is a twistoid decomposition $(f_i)_i$ of $f$ such that every $f_i$ is a $\mgRV^{\db}[k]$-morphism as in the last special case considered above and, moreover, is compatible with $D$, $E$ in the obvious sense, in other words, the domains and ranges of these $f_i$ induce bipolar refinements of $D$, $E$.  The result follows.
\end{proof}

Thus, $h_m$ may be viewed as a map on $\gsk \mgRV^{\db}[k]$. It is now routine to check that  we have in effect constructed a ring homomorphism
\[
\bm h_m : \ggk \mgRV^{\db}[*] \fun  \ggk^{\hat \mu} \var_{\C}[[\A]^{-1}][T^{\Q}].
\]

Let $\ggk^\natural_m \mgRV^{\db}[*]$ denote the subring $(\bm h_m)^{-1}(\gmv[[\A]^{-1}][T, T^{-1}])$ of $\ggk \mgRV^{\db}[*]$.

\begin{lem}\label{PG:van}
The homomorphism
\[
\bm \eta \circ \bm h_m : \ggk^\natural_m \mgRV^{\db}[*] \fun \gmv[[\A]^{-1}]
\]
vanishes on $(\bm P_{\Gamma})$.
\end{lem}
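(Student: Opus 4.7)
The plan is to compute $\bm h_m(\bm P_\gamma)$ explicitly for each definable generator $\gamma$, exhibit it as a multiple of $T - [\A]$, and then reduce the general case by a divisibility argument in the Puiseux ring.

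Since $\mathbb{S} = \C \dpar{t}$ we have $\Gamma(\mathbb{S}) = \Z$, so a definable $\gamma \in \Gamma^+_0$ admits a definable $t_\gamma \in \gamma^\sharp$ exactly when $\gamma \in \Z_{\geq 0}$; fix such a $\gamma$ and set $N = m\gamma$. The object $(\RV^{\ccirc} \mi \RV^{\ccirc}_\gamma, \id, 0)$ has $\omega_f = \vrv$, and in any bipolar twistoid decomposition each fiber $\vrv^{-1}(k/m)$ contributes a twistback that is a $\G_m$-torsor carrying a linear $\hat\mu$-action; by the ``good'' convention on $\hat\mu$-actions in $\gmv$ its class is simply $[\G_m] = [\A] - 1$. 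By Lemma~\ref{muhat:section} one may therefore compute
\[
\bm h_m(\RV^{\ccirc} \mi \RV^{\ccirc}_\gamma) = ([\A] - 1)\sum_{k=1}^{N} T^{-k}, \quad \bm h_m(\{t_\gamma\}) = T^{-N}, \quad \bm h_m(\{1\}) = 1,
\]
and a brief rearrangement gives
\[
\bm h_m(\bm P_\gamma) = ([\A] - 1)\sum_{k=1}^N T^{-k} + T^{-N} - 1 = -(T - [\A]) \sum_{k=1}^N T^{-k} = (T - [\A]) R_\gamma,
\]
where $R_\gamma \coloneqq -\sum_{k=1}^N T^{-k} \in \gmv[[\A]^{-1}][T, T^{-1}]$. (The case $\gamma = 0$ is the trivial one, with $N = 0$ and $R_0 = 0$.)

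Now let $x = \sum_i \bm P_{\gamma_i} y_i$ be an arbitrary element of $(\bm P_\Gamma) \cap \ggk^\natural_m \mgRV^{\db}[*]$. Since $\bm h_m$ is a ring homomorphism,
\[
\bm h_m(x) = (T - [\A])\, F, \qquad F \coloneqq \sum_i R_{\gamma_i}\,\bm h_m(y_i) \in \gmv[[\A]^{-1}][T^{\Q}],
\]
where \emph{a priori} $F$ is only a Puiseux polynomial with $T$-exponents in $m^{-1}\Z$.

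The crucial step is the Puiseux-to-Laurent descent: one must show that $F$ actually lies in $\gmv[[\A]^{-1}][T, T^{-1}]$. Writing $F = \sum_{j \in m^{-1}\Z} d_j T^j$ with finite support, and using that $\bm h_m(x) \in \gmv[[\A]^{-1}][T, T^{-1}]$, comparison of the coefficient of $T^j$ in $(T - [\A])F$ for $j \in m^{-1}\Z \mi \Z$ yields $d_{j-1} - [\A] d_j = 0$, i.e.\ $d_{j-1} = [\A]\, d_j$. Since $j - 1$ also lies in $m^{-1}\Z \mi \Z$, iteration gives $d_{j - n} = [\A]^n d_j$ for all $n \geq 0$; the combination of finite support of $F$ and invertibility of $[\A]$ in $\gmv[[\A]^{-1}]$ then forces $d_j = 0$ for every $j \notin \Z$. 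Hence $F \in \gmv[[\A]^{-1}][T, T^{-1}]$, and applying the ring homomorphism $\bm\eta$ gives
\[
\bm\eta(\bm h_m(x)) = \bm\eta(T - [\A]) \cdot \bm\eta(F) = ([\A] - [\A]) \cdot \bm\eta(F) = 0.
\]
The main obstacle is precisely this descent argument, for which the invertibility of $[\A]$ in the localized ring $\gmv[[\A]^{-1}]$ is indispensable; everything else is a routine manipulation of generating series.
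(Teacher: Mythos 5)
Your computation of $\bm h_m(\bm P_\gamma)$ is exactly the paper's: the positive part is $([\A]-1)\sum_{k=1}^{m\gamma}T^{-k}+T^{-m\gamma}$, which telescopes to $1=\bm\eta(\bm h_m([1]))$ after $T\efun[\A]$; your factorization $\bm h_m(\bm P_\gamma)=(T-[\A])R_\gamma$ is just a repackaging of that telescoping, and the identification of each twistback with $[\G_m]=[\A]-1$ via the good-action convention is the intended justification. Where you go beyond the paper is the second half: the paper's proof stops at the generators, implicitly invoking multiplicativity, whereas you explicitly treat an arbitrary element $x=\sum_i\bm P_{\gamma_i}y_i$ of $(\bm P_\Gamma)\cap\ggk^\natural_m\mgRV^{\db}[*]$ in which the coefficients $y_i$ need not lie in $\ggk^\natural_m\mgRV^{\db}[*]$, so that $\bm\eta$ cannot be applied factor by factor. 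Your Puiseux-to-Laurent descent — comparing coefficients at non-integral exponents, iterating $d_{j-1}=[\A]d_j$, and using finite support together with invertibility of $[\A]$ in $\gmv[[\A]^{-1}]$ (one could even just look at the top non-integral exponent and avoid invertibility) — correctly shows that $T-[\A]$ divides $\bm h_m(x)$ already in $\gmv[[\A]^{-1}][T,T^{-1}]$, whence $\bm\eta(\bm h_m(x))=0$. So the proposal is correct; it buys a cleaner justification of the well-definedness used later in Remark~\ref{dia:to:db:dag}, at the cost of a page of bookkeeping that the paper compresses into a one-line generator check.
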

Note that the ideal $(\bm P_\Gamma)$ of $\ggk \mgRV^{\db}[*]$ in Notation~\ref{pgammabis} is now generated by the elements $\bm P_\gamma$ with $\gamma \in \Z$ (because the point $t_\gamma$ there needs to be definable, which is possible only if $\gamma \in \Z$ in the current setting).

\begin{proof}
For $\gamma \in \Z$, the image of $[\RV^{\circ \circ} \mi \RV^{\circ \circ}_{\gamma}] + [\{t_\gamma\}]$ under $\bm h_m$ in $\gmv[[\A]^{-1}][T, T^{-1}]$ is
\[
 ([\A] - 1) \sum_{i = 1}^{m\gamma} T^{-i} + T^{-m\gamma},
\]
which, after passing to $\gmv[[\A]^{-1}]$ via $\bm \eta$, becomes $1 = \bm \eta (\bm h_m([1]))$.
\end{proof}

\begin{rem}\label{dia:to:db:dag}
If $\bm U = (U, f, l) \in \mgRV^{\db}[*]$ with $l \in m^{-1}\Z$ (constant volume form) then the exponents in (\ref{hm:ass}) are all integers and hence $[\bm U] \in \ggk^\natural_m \mgRV^{\db}[*]$. Actually we shall only need the case $l = 0$.

The ring $\bigcap_{m \in \Z^+} \ggk^\natural_m \mgRV^{\db}[*]$ is denoted by $\ggk^\natural \mgRV^{\db}[*]$.

If $\bm A = (A, l) \in \mgVF^\diamond[*]$ with $l$  constant then $\int^\diamond [\bm A]$ may be expressed as $[(U, f, l)]/ (\bm P_{\Gamma})$, and hence if $l \in \Z$ then $\int^\diamond [\bm A]$ belongs to $\ggk^\natural \mgRV^{\db}[*]/ (\bm P_{\Gamma})$. In that case, by Lemma~\ref{PG:van}, for every $m \in \Z^+$, the expression $(\bm \eta \circ \bm h_m \circ \int^\diamond)([\bm A])$ designates a unique element  in $\gmv[[\A]^{-1}]$.
\end{rem}

Denote by $\RES_{m}$ the full subcategory of $\RES$ such that $U \in \RES_{m}$ if and only if every $\gamma \in \vrv(U)$ is a tuple in $m^{-1} \Z$, or equivalently, $U \in \RES_{m}$ if and only if the action on $U$ of the kernel of the canonical projection $\hat \mu \fun \mu_m$ is trivial.

Let $\beta = (\beta_1, \ldots, \beta_n) \in (m^{-1} \Z)^n$ and $A \sub \OO^n \times \RV^l$ be a proper $\beta$-invariant definable set. Then there is a set
\[
A[m;\beta] \sub \prod_i \C[t^{1/m}] / t^{\beta_i + 1/m} \times \RV(\C \dpar{t^{1/m}})^l
\]
such that, for every $t \in \RV(\C \dpar{t^{1/m}})^l$, the $\VF$-fiber $A_t(\C \dpar{t^{1/m}})$ is, under the obvious quotient map, the preimage of the fiber $A[m;\beta]_t$.  We can think of $\C[t^{1/m}] / t^{\beta_i + 1/m}$ as the object $\prod_{0 \leq \gamma \leq \beta_i} \gamma^\sharp$ of $\RES_{m}$. Then, since every element in the value group $\Q$ is definable and $\vrv(A[m;\beta])$ is doubly bounded,  it follows that  $A[m;\beta]$ is indeed a finite disjoint union of definable sets in $\RV$ (no extra parameters other than those in $\mathbb{S} = \C \dpar{ t }$ are needed) and hence, as such, is an object of $\RES_{m}$ (see \cite[\S~4.2]{hru:loe:lef} for more details).

\begin{lem}[{\cite[Lemma~4.2.1]{hru:loe:lef}}]
Let $\beta' \in (m^{-1} \Z)^n$ with $\beta_i \leq \beta'_i$ for all $i$. Then
\begin{equation}\label{arc:com}
  [A[m;\beta']] = [A[m;\beta]][\A]^{m \Sigma (\beta' - \beta)} \in \ggk \RES_{m}.
\end{equation}
\end{lem}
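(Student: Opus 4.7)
The plan is to identify the natural truncation map $\rho: A[m;\beta'] \to A[m;\beta]$ as a trivial affine-space bundle of relative dimension $m\Sigma(\beta'-\beta)$ and to read the claim off the resulting fibration formula in $\ggk\RES_m$, the delicate point being the $\mu_m$-equivariance, which is handled by the good-action convention.

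I would first reduce to the elementary case $\beta'_i = \beta_i + 1/m$ for a single index $i$, all other coordinates unchanged; the general case then follows by iteration and the multiplicativity of $[\A]$. Unraveling the definition for a fixed $t \in \RV(\C\dpar{t^{1/m}})^l$ and invoking $\beta$-invariance of $A$, the defining property $A_t(\C\dpar{t^{1/m}}) = q^{-1}(A[m;\beta]_t)$ from the excerpt, together with the factorization $q = r \circ q'$ through the further truncation map
\[
r: \prod_j \C[t^{1/m}]/t^{\beta'_j+1/m} \twoheadrightarrow \prod_j \C[t^{1/m}]/t^{\beta_j+1/m},
\]
yields the clean description $A[m;\beta']_t = r^{-1}(A[m;\beta]_t)$.

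The fibers of $r$ are torsors under the $\C$-vector space $V = \prod_j t^{\beta_j+1/m}\C[t^{1/m}]/t^{\beta'_j+1/m}$, of total dimension $m\Sigma(\beta'-\beta)$. The canonical section ``pad with zero higher coefficients'' is algebraic, so $r$ restricts to a trivial $V$-bundle over $A[m;\beta]$, producing the set-theoretic identification $A[m;\beta'] \cong A[m;\beta] \times V$ as $\C$-varieties.

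The main obstacle, and the step that requires care, is compatibility with the $\mu_m$-actions. The inherited action of $\mu_m$ on $V$ is linear but of nontrivial weight (the generator $\zeta$ acts on the basis element $t^{\beta_j+k/m}$ by $\zeta^{m\beta_j+k}$), so the canonical trivialization is not $\mu_m$-equivariant on the nose. However, the ``good action'' convention for linear $\hat\mu$-actions on $\C$-vector spaces, which is built into the definition of $\gmv$ and thereby inherited by $\ggk\RES_m$ via the formalism set up in \S\ref{piece:res}, identifies $V$ with $\A^{\dim V}$ equipped with the trivial action. Hence the fiber class is $[\A]^{m\Sigma(\beta'-\beta)}$ in $\ggk\RES_m$, and the trivial-bundle formula delivers the stated identity.
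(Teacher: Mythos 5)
The paper gives no proof of this lemma (it is quoted from Hrushovski--Loeser), so your argument has to stand on its own; its geometric core is certainly the intended one. In the coefficient coordinatization $\prod_{0 \leq \gamma \leq \beta_i}\gamma^\sharp$ your fibration statement is even simpler than you make it: since $A[m;\beta']_t = r^{-1}(A[m;\beta]_t)$ (which you derive correctly from the preimage characterization and the factorization of the quotient maps), the set $A[m;\beta']$ \emph{is} the product $A[m;\beta]\times \prod_j\prod_{\beta_j<\gamma\leq\beta'_j}(\gamma^\sharp\cup\{\infty\})$, so the zero-padding section and the one-step-at-a-time reduction are not really needed. Your computation of the weights of the $\mu_m$-action on the extra coefficient lines is also correct.

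The gap is in the final step, and it is exactly the only non-formal point of the lemma. You need the class of the twisted factor $\prod(\gamma^\sharp\cup\{\infty\})$ to be $[\A]^{m\Sigma(\beta'-\beta)}$ \emph{in} $\ggk\RES_m$, and you justify this by saying the good-action convention of $\gmv$ is ``inherited by $\ggk\RES_m$.'' It is not: as defined in the paper, $\RES_m$ is a full subcategory of $\RES$, whose morphisms are definable bijections over $\bb S=\C\dpar{t}$, and no relation identifying $\gamma^\sharp$ with $\G_m$ has been imposed there. Indeed $[\gamma^\sharp]\neq[\G_m]$ in $\ggk\RES$ over $\C\dpar{t}$ when $\gamma\notin\Z$: assigning to a class $[U]$ the class in $\ggk\var_{\C}$ of its locus of Galois-fixed points (the part of $U$ whose $\vrv$-coordinates lie in $\Z\cup\{\infty\}$, untwisted by powers of $\rv(t)$) is a ring homomorphism invariant under definable bijections over $\C\dpar{t}$, and it separates $[\A]\,[(1/m)^\sharp\cup\{\infty\}]$ from $[\A]^2$. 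So your argument, as written, establishes the identity only after passing to $\sggk\RES$, where the ideal $!I$ imposes precisely the relation $[\gamma^\sharp]=[\G_m]$ you are using (equivalently, after applying $\Theta$, where the good-action convention in $\gmv$ does the work), or else, with no convention at all, over the substructure $\C\dpar{t^{1/m}}$, where $\rv(t^\gamma)$ is definable and the untwisting is an honest definable bijection, at the price of the canonical $\mu_m$-equivariance. Since the paper only ever uses the lemma through $\Theta$ (to define $\tilde A[m]$ and in Lemma~\ref{inv:dir:com}), what you prove suffices for all downstream applications; but you should say explicitly in which ring the identification of the twisted fiber with a trivial affine space is being made, rather than attributing it to $\ggk\RES_m$ itself.
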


Thus the element $[A[m;\beta]][\A]^{- m\sum \beta }$ in $\ggk \RES_{m}[[\A]^{-1}]$ does not depend on $\beta$; we denote it by $\tilde A[m]$.

The localization of $\Theta$ at $[\A]$ is still denoted by $\Theta$.

\begin{lem}\label{inv:dir:com}
Suppose that $(A, 0) \in \mgVF^\diamond[n]$. Then, in $\gmv[[\A]^{-1}]$,
\[
(\bm \eta \circ \bm h_m) \Big( \int^\diamond [(A, 0)] \Big) = \Theta(\tilde A[m]).
\]
\end{lem}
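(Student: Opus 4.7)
The plan is to pull the identity back to $\ggk^{\natural}\mgRV^{\db}[*]$ via the bounded integral and verify it there by an explicit computation on a bipolar twistoid decomposition. By Corollary~\ref{L:sur:c}, choose $\bm U = (U,f,0) \in \mgRV^{\db}[n]$ with $(A,0) \cong \mgL\bm U$ in $\mgVF^{\diamond}[n]$; by Theorem~\ref{main:prop} we have $\int^{\diamond}[(A,0)] = [\bm U]/(\bm P_\Gamma)$. Since the volume form is constantly $0$, Remark~\ref{dia:to:db:dag} places $[\bm U]$ in $\ggk^{\natural}_m\mgRV^{\db}[*]$ for every $m$, and Lemma~\ref{PG:van} tells us that $\bm\eta\circ\bm h_m$ annihilates $(\bm P_\Gamma)$. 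Thus the left-hand side is well-defined and equals $(\bm\eta\circ\bm h_m)([\bm U])$. The isomorphism class of $A$ in $\mgVF^{\diamond}[n]$ determines the right-hand side once invariance is verified, so we may replace $A$ by $\mgL\bm U$.

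Next, apply Lemma~\ref{coh:decom} to obtain a bipolar and $\Gamma$-cohesive twistoid decomposition $(U_i)_i$ of $U$ on which $\omega_f = \Sigma(\vrv\circ f)$ is $\vrv$-contractible to $\sigma_i:I_i=\vrv(U_i)\to\Gamma$. Both sides are additive over this decomposition: the left because $\bm h_m$ is computed piecewise via (\ref{hm:ass}), and the right because $\mgL U$ is a disjoint union of the $\mgL U_i$ and the truncation operator $(-)[m;\beta]$ commutes with disjoint unions. It therefore suffices to treat a single bipolar piece, which by Definition~\ref{bipo:defn} we may assume to be of the form $\bm U = V\times I^{\sharp}$ with $V$ a bipolar twistoid with $\vrv(V)=\{\alpha\}\subset\Q^{n-l}$, $I\subset\Q^l$ doubly bounded, and zero volume form (so $\omega_f(v,x) = \Sigma\alpha + \Sigma\gamma$ on the fiber above $\gamma\in I$).

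On this piece the left-hand side is
\[
(\bm\eta\circ\bm h_m)([V\times I^{\sharp}]) = [\tbk(V)]^{\hat\mu}\,[\G_m]^{l}\,[\A]^{-m\Sigma\alpha}\sum_{\gamma\in I\cap (m^{-1}\Z)^{l}}[\A]^{-m\Sigma\gamma},
\]
since $[\tbk(V\times \gamma^{\sharp})]^{\hat\mu}=[\tbk(V)]^{\hat\mu}[\G_m]^{l}$ ($[\G_m]$ carrying trivial $\hat\mu$-action) and $T\mapsto [\A]$ under $\bm\eta$. For the right-hand side, choose $\beta\in(m^{-1}\Z)^{l}$ doubly bounding $I$; then $\mgL(V\times I^{\sharp})$ is proper $(\alpha,\beta)$-invariant, and its trace in $\C\dpar{t^{1/m}}$ decomposes as
\[
(\mgL V)[m;\alpha]\;\times\;\bigsqcup_{\gamma\in I\cap (m^{-1}\Z)^{l}}\gamma^{\sharp\sharp}[m;\beta].
\]
The first factor equals $[V]$ in $\ggk\RES_m$ under the rcsn-identification (each element of $V$ corresponds to its unique leading-term representative modulo $\mathfrak{m}^{\alpha+1/m}$), while a direct count shows $[\gamma^{\sharp\sharp}_i[m;\beta_i]]=[\G_m][\A]^{m(\beta_i-\gamma_i)}$. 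Multiplying these contributions and dividing by $[\A]^{m(\Sigma\alpha+\Sigma\beta)}$ yields
\[
\widetilde{\mgL\bm U}[m] = [V]\,[\A]^{-m\Sigma\alpha}\sum_{\gamma\in I\cap(m^{-1}\Z)^{l}}[\G_m]^{l}[\A]^{-m\Sigma\gamma},
\]
and applying $\Theta$, which sends $[V]$ to $[\tbk(V)]^{\hat\mu}$ and commutes with tensoring by $[\G_m]$ and powers of $[\A]$, produces exactly the left-hand side above.

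The bookkeeping for cases where $\alpha$ or some $\gamma$ lies outside $(m^{-1}\Z)^{?}$ is automatic: both sides vanish (the left because $I_{i,m}=\emptyset$, the right because $\mgL V\cap\VF(\C\dpar{t^{1/m}})^{n-l}=\emptyset$). The principal obstacle is therefore the piecewise computation on a single bipolar twistoid, specifically the identification $[(\mgL V)[m;\alpha]]=[V]$ in $\ggk\RES_m$; but this is just the content of \cite[\S 4.2]{hru:loe:lef} once one uses the reduced cross-section to identify $\alpha^{\sharp}$ with $\K^{\times}$, and the factor $[\A]^{-m\Sigma\alpha}$ is absorbed into the $\beta$-independence formula (\ref{arc:com}).
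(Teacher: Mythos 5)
There is a genuine gap at the step ``The isomorphism class of $A$ in $\mgVF^{\diamond}[n]$ determines the right-hand side once invariance is verified, so we may replace $A$ by $\mgL\bm U$.'' The phrase ``once invariance is verified'' is doing all the real work, and nothing in the paper (or in your argument) establishes that $\Theta(\tilde A[m])$ is an invariant of the $\mgVF^{\diamond}[n]$-isomorphism class. A morphism in $\mgVF^{\diamond}[n]$ is only a composition of relatively unary proper covariant homeomorphisms: each $\VF$-fiber is $(\alpha,\beta)$-covariant for \emph{some} $\alpha,\beta$, and $(\alpha,\beta)$-covariance does not upgrade to $(\delta,\delta)$-covariance for large $\delta$, so such a map need not induce a bijection between the truncated sets $A[m;\beta]$ and $B[m;\beta]$ at any common level; moreover the auxiliary points it may involve need not be $\C\dpar{t^{1/m}}$-rational, so it need not even interact well with the trace in $\C\dpar{t^{1/m}}$. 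Proving invariance of $\tilde A[m]$ under arbitrary $\mgVF^{\diamond}$-isomorphisms would amount to a change-of-variables statement for truncated arcs, which is essentially the content of the lemma itself, so you cannot assume it.

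The paper's proof avoids this by making a specific choice: it takes the special covariant bijection $F : (A,0) \fun (\bb L\bm U, 0)$ supplied by Lemma~\ref{L:measure:surjective} and observes that its apertures $(\gamma_i, t_i)$ may be chosen with $\gamma_i \in m^{-1}\Z$ (apertures can always be enlarged, and $\mathbb S$-definable points exist in $\gamma_i^\sharp$ for $\gamma_i\in m^{-1}\Z$ after passing to a suitable representative). With this choice, the second clause of (\ref{centri}) and the regularizations only introduce data rational over $\C\dpar{t^{1/m}}$, so $F$ induces an honest definable bijection $A[m;\beta] \fun (\bb L\bm U)[m;\beta]$ for all sufficiently large $\beta$, and only then is the replacement of $A$ by $\bb L\bm U$ on the right-hand side legitimate. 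Your subsequent reduction to a bipolar piece and the explicit computation of both sides (including the count $[\gamma^{\sharp\sharp}_i[m;\beta_i]]=[\G_m][\A]^{m(\beta_i-\gamma_i)}$ and the vanishing when $\vrv(V)\notin (m^{-1}\Z)^{n-l}$) is in line with the ``simple computation'' the paper alludes to, though note the same caution applies when you replace $U_i$ by $V_i\times I_i^\sharp$ via the bipolar witness bijection: that bijection lives in $\RV$ and its lift to $\VF$ must again be checked against traces, which is why the paper instead reduces directly to the case where $\vrv(\bm U)$ is a singleton. To repair your proof, replace the appeal to Corollary~\ref{L:sur:c} by Lemma~\ref{L:measure:surjective} with apertures in $m^{-1}\Z$ and verify that the resulting special covariant bijection restricts to a bijection on $\C\dpar{t^{1/m}}$-truncations.
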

\begin{proof}
By Lemma~\ref{L:measure:surjective}, there exists a special covariant bijection $F : (A, 0) \fun (\bb L \bm U, 0)$, where $\bm U \in \RV^{\db}[*]$. Observe that we can choose apertures $(\gamma_i, t_i)$ with $\gamma_i \in m^{-1} \Z$ so that $F$ induces a definable bijection between $A[m;\beta]$ and  $\bb L \bm U[m;\beta]$ for all sufficiently large $\beta$; it is for this purpose that each $\gamma_i$ needs to lie in $m^{-1} \Z$, due to the second clause of (\ref{centri}) and the mandatory regularizations in Definition~\ref{defn:special:bijection}. Thus, we may assume that $A$ is already of the form $\bb L \bm U$. Let $D$ be a $\Gamma$-cohesive bipolar twistoid decomposition of $\bm U$. Since both sides respect finite disjoint union of proper $\beta$-invariant definable sets, we may further assume that $D$ is actually trivial or even $\vrv(\bm U)$ is a singleton. Then the equality follows from a simple computation.
\end{proof}

\subsubsection{Bounded version}
\label{sec:hm:bounded}
Although not needed, we can  extend the construction above to the category $\mgRV^{\bdd}[*]$; this category is formulated as $\mgRV^{\db}[*]$, but with doubly bounded objects replaced by bounded ones.

Let $\gmv[[\A]^{-1}][T^{\Q}]_{\loc}$ be the localization of $\gmv[[\A]^{-1}][T^{\Q}]$ by the multiplicative family generated by the elements $1 - T^{-i}$, $i \in \Z^+$. The ring $\gmv[[\A]^{-1}][T, T^{-1}]_{\loc}$ is the localization of $\gmv[[\A]^{-1}][T, T^{-1}]$ by the same multiplicative family. Similarly one defines the ring $\gmv[[\A]^{-1}]_{\loc}$ with respect to the multiplicative family generated by the elements $1 - [\A]^{-i}$, $i \in \Z^+$. As in (\ref{pui:laurent}), the assignment the $T \efun [\A]$ determines a ring homomorphism
\begin{equation}\label{pui:laurent:loc}
  \bm \eta_{\loc} : \gmv[[\A]^{-1}][T, T^{-1}]_{\loc} \fun \gmv[[\A]^{-1}]_{\loc}.
\end{equation}

We assign to each $\bm U= (U, f, \omega) \in \mgRV^{\bdd}[k]$ the same expression as in (\ref{hm:ass}), which is now a formal Laurent series in $T^{-1/l}$ for some integer $l > 0$, because each $\sigma_i$ is piecewise $\Q$-linear  and its graph is bounded. By \cite[Lemma~8.2.1]{hru:loe:lef}, if $l =1$ then the series belongs to $\gmv[[\A]^{-1}][T, T^{-1}]_{\loc}$ and hence,  in general, it belongs to $\gmv[[\A]^{-1}][T^{\Q}]_{\loc}$ (consider any isomorphism of $\puC$ that fixes $\C$ and sends $1/l$ to $1$).  So the assignment  yields a ring homomorphism $\bm h_{\loc,m}$ and consequently a subring $\ggk^\natural_m \mgRV^{\bdd}[*]$.

\begin{lem}
\label{lem:hm:bounded}
The homomorphism
\[
\bm \eta_{\loc} \circ \bm h_{\loc,m} : \ggk^\natural_m \mgRV^{\bdd}[*] \fun \gmv[[\A]^{-1}]_{\loc}
\]
 vanishes on $(\bm P)$.
\end{lem}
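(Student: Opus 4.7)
Since $\bm \eta_{\loc} \circ \bm h_{\loc, m}$ is a ring homomorphism and the ideal $(\bm P)$ of $\ggk^\natural_m \mgRV^{\bdd}[*]$ is principal, generated by $\bm P = [1] - [\RV^{\ccirc}] \in \ggk \mgRV^{\bdd}[1]$ (the two classes being taken with constant volume form $0$), it suffices to verify that both $[1]$ and $[\RV^{\ccirc}]$ lie in $\ggk^\natural_m \mgRV^{\bdd}[*]$ and that they have the same image under $\bm \eta_{\loc} \circ \bm h_{\loc, m}$. This is a direct computation mirroring the one carried out in Lemma~\ref{PG:van}.

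First, using the trivial (already bipolar) twistoid decomposition of the singleton $\{1\} \sub \RV$, the displayed formula following (\ref{hm:ass}) gives $\bm h_{\loc, m}([1]) = 1$, which trivially lies in $\gmv[[\A]^{-1}][T, T^{-1}]_{\loc}$ and maps to $1$ under $\bm \eta_{\loc}$. Next, $\RV^{\ccirc}$ admits the one-piece bipolar twistoid decomposition $\{1\} \times (0,\infty)^\sharp \fun \RV^{\ccirc}$ supplied by Lemma~\ref{coh:decom} (the map being the inclusion, since multiplication by $1$ is trivial and the $\vrv$-contraction is the identity on $(0,\infty)$). The constant volume form $0$ satisfies $\omega_{\id}(u) = \vrv(u)$, which $\vrv$-contracts to the identity $\sigma_1 = \id$ on $I_1 = (0,\infty)$. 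The displayed formula following (\ref{hm:ass}) then evaluates to
\[
\bm h_{\loc, m}([\RV^{\ccirc}]) \;=\; \Theta([\{1\}]) \cdot [\G_m]^{1} \sum_{\gamma \in (0,\infty) \cap m^{-1}\Z} T^{-m\gamma} \;=\; [\G_m] \cdot \frac{T^{-1}}{1 - T^{-1}},
\]
which is an element of $\gmv[[\A]^{-1}][T, T^{-1}]_{\loc}$, confirming $[\RV^{\ccirc}] \in \ggk^\natural_m \mgRV^{\bdd}[*]$.

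Now apply $\bm \eta_{\loc}$ by substituting $T = [\A]$:
\[
\bm \eta_{\loc}\bigl(\bm h_{\loc, m}([\RV^{\ccirc}])\bigr) \;=\; ([\A] - 1) \cdot \frac{[\A]^{-1}}{1 - [\A]^{-1}} \;=\; \frac{[\A]-1}{[\A]-1} \;=\; 1,
\]
which coincides with $\bm \eta_{\loc}(\bm h_{\loc, m}([1])) = 1$. Hence $(\bm \eta_{\loc} \circ \bm h_{\loc, m})(\bm P) = 0$, and by the homomorphism property the entire principal ideal $(\bm P)$ is annihilated.

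There is no real obstacle; the only conceptual point worth checking is that $[\tbk(\RV^{\ccirc})]^{\hat\mu}$ really is $[\G_m] = [\A] - 1$ in $\gmv$, and this is immediate from the ``good $\hat\mu$-action'' convention: the $\mu_m$-action by multiplication on $\K^\times$ extends linearly to $\mathbb{A}^1$, so $[\G_m]^{(\mu_m)} = [\A] - 1$ with trivial action. The entire content of the proof is then the telescoping identity that the geometric series $\sum_{i\geq 1}[\A]^{-i}$ (made available by relaxing double boundedness to mere boundedness) contributes exactly the inverse factor needed to cancel $[\G_m]$, which is precisely what makes the ``localized'' variant of Lemma~\ref{PG:van} work and why $[\A]$ must be inverted and the multiplicative family $\{1-[\A]^{-i}\}$ inverted in the target.
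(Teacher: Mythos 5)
Your proposal is correct and follows essentially the same route as the paper's proof: one computes $\bm h_{\loc,m}([\RV^{\ccirc}]) = ([\A]-1)\sum_{i>0}T^{-i} = ([\A]-1)/(T-1)$, substitutes $T = [\A]$ via $\bm \eta_{\loc}$, and observes that the result $1$ agrees with $\bm \eta_{\loc}(\bm h_{\loc,m}([1]))$, so $\bm P$ and hence the ideal $(\bm P)$ is annihilated. Your additional remarks (membership of $[\RV^{\ccirc}]$ in $\ggk^\natural_m \mgRV^{\bdd}[*]$ and the good-$\hat\mu$-action identification $[\tbk(\RV^{\ccirc})]^{\hat\mu} = [\A]-1$) are accurate elaborations of points the paper leaves implicit.
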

\begin{proof}
The image of $[\RV^{\ccirc}]$ in $\gmv[[\A]^{-1}][T, T^{-1}]_{\loc}$ is
\[
 ([\A] - 1) \sum_{i > 0} T^{-i} = ([\A] - 1) / (T-1)
\]
and hence, after passing to $\gmv[[\A]^{-1}]_{\loc}$ via $\bm \eta_{\loc}$, it becomes $1 = \bm \eta_{\loc}( \bm h_{\loc,m}([1]))$.
\end{proof}

\subsection{Constructing motivic Milnor fiber}\label{sec:milnor}

Finally, we proceed to replicate the construction in \cite[\S~8]{hru:loe:lef} so to recover the motivic zeta function with coefficients in $\gmv[[\A]^{-1}]$ (this is the purpose of Lemma~\ref{inv:dir:com}) and the corresponding motivic Milnor fiber.

Let $X$ be a smooth connected complex algebraic variety of dimension $d$ and $f : X \fun \A^1$ a nonconstant regular function. Suppose that $0 \in X(\C) \cap f^{-1}(0)$ is singular. For $m \geq 1$, we consider the set of truncated arcs
\begin{equation}\label{f:m:arc}
\mdl X_{m} = \set{ \varphi \in X(\C[t] / (t^{m+1})) \given f(\varphi) = t^m \mod t^{m+1}},
\end{equation}
which may be viewed in a natural way as the set of closed points of an algebraic variety over $\C$ that carries a natural $\mu_{m}$-action, and the so-called \memph{nonarchimedean Milnor fiber}
\[
\mdl X_f = \set{ x \in X(\MM) \given \rv(f(x)) = \rv(t)}.
\]
Note that $\mdl X_f$ may be constructed in an affine neighborhood of $0$ and hence is indeed a (quantifier-free) definable set. Moreover, it is $\beta$-invariant for every $\beta \geq 1$. Therefore, $\mdl X_f$ may be regarded as an object of $\mgVF^\diamond[*]$ equipped with the constant volume form $0$. The  \memph{motivic zeta function} of $f$ is then the power series
\begin{equation}\label{def:zeta}
Z_f(T) = \sum_{m \in \Z^+} \Big(\bm \eta \circ \bm h_m \circ \int^{\diamond} \Big)([\mdl X_f]) T^m \in \gmv[[\A]^{-1}] \dbra T.
\end{equation}
By Remark~\ref{dia:to:db:dag} and Lemma~\ref{inv:dir:com}, we have, in $\gmv[[\A]^{-1}]$,
\begin{equation}\label{z:coeff:change}
 \Big(\bm \eta \circ \bm h_m \circ \int^{\diamond} \Big)([\mdl X_f]) = \Theta(\tilde{\mdl X_f}[m]) = [\mdl X_{m}][\A]^{-md},
\end{equation}
where the second equality follows if  we change the variable $t$ to $t^{1/m}$ in (\ref{f:m:arc}) (see the discussion in \cite[\S~6.2]{hru:loe:lef} for  detail). This shows that the coefficients of $Z_f(T)$ may also be written as $[\mdl X_{m}][\A]^{-md}$, which is how they are usually defined.

\begin{rem}
One of the key steps of the main construction in \cite[\S~8]{hru:loe:lef} is to show (\ref{z:coeff:change}), see \cite[Proposition~8.3.1]{hru:loe:lef}, but with $\int^\diamond$  replaced by $\int$. This, and a few other statements in \cite[\S~8]{hru:loe:lef}, rely on a tensor decomposition of the ring $\ggk \mathrm{vol}\RV[*]$, as claimed in \cite[Proposition~10.10(2)]{hrushovski:kazhdan:integration:vf}. We have pointed out above that, unfortunately, this result does not hold.
\end{rem}

\begin{nota}\label{dag:loc}
Let $\gmv[[\A]^{-1}][T]_{\dag}$ be the localization of $\gmv[[\A]^{-1}][T]$ by the multiplicative family generated by the elements $1 - [\A]^{a}T^b$, where $a \in \Z$ and $b \in \Z^+$; it may be regarded as a subring of  $\gmv[[\A]^{-1}] \dbra T$.
\end{nota}

It is known that $Z_f(T)$ belongs to $\gmv[[\A]^{-1}][T]_{\dag}$ and, letting ``$T$ go to infinity'' as described in \cite[\S~8.4]{hru:loe:lef}, we get a limit
\[
\mathscr S_f \coloneqq - \lim_{T \limplies \infty} Z_f(T) \in \gmv[[\A]^{-1}],
\]
which is understood as the \memph{motivic Milnor fiber} attached to $f$.

\begin{rem}
Recall the homomorphism $\bb E^{\diamond}$ from (\ref{edb:eb:com}).
By Lemma~\ref{coh:decom} and Notation~\ref{bipo:gdv}, we can construct the homomorphism
\[
\Theta \circ \bb E^{\diamond} : \ggk \mgRV^{\db}[*] \fun \gmv
\]
simply using the expression in (\ref{twist:C}) with respect to  bipolar twistoid decompositions.
\end{rem}

\begin{thm}\label{direct:mil}
$\mathscr S_f = (\Theta \circ \bb E^{\diamond} \circ \int^{\diamond})([\mdl X_f])$.
\end{thm}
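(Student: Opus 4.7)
The plan is to evaluate both sides of the claimed equality against a single representative of $\int^{\diamond}[\mdl X_f]$ and to match them via a geometric-series computation. Fix any $\bm U = (U, f, 0) \in \mgRV^{\db}[*]$ with $[\bm U]/(\bm P_\Gamma) = \int^\diamond [\mdl X_f]$, which is permissible by Remark~\ref{dia:to:db:dag}, and use Lemma~\ref{coh:decom} to choose a $\Gamma$-cohesive bipolar twistoid decomposition $(U_i)_i$ of $U$. Writing $I_i = \vrv(U_i) \subset \Q^{k_i}$, each $I_i$ is a doubly bounded rational polytope and, because the chosen volume form on $\bm U$ is the constant $0$, the function $\omega_f \rest U_i$ $\vrv$-contracts to an affine function $\sigma_i : I_i \fun \Q$.

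Using the commutative diagram (\ref{edb:eb:com}) together with formula (\ref{twist:C}) applied to this decomposition,
\begin{equation*}
(\Theta \circ \bb E^{\diamond})([\bm U]) = \sum_i \chi_b(I_i)\, [\tbk(U_i)]^{\hat \mu}.
\end{equation*}
On the other side, Lemma~\ref{muhat:section} combined with formula (\ref{hm:ass}) gives, after applying $\bm \eta$, the coefficient of $T^m$ in $Z_f(T)$ as $\sum_i [\tbk(U_i)]^{\hat \mu} \sum_{\gamma \in I_{i,m}} [\A]^{-m \sigma_i(\gamma)}$. Interchanging the order of summation, $Z_f(T) = \sum_i [\tbk(U_i)]^{\hat \mu}\, \mdl Z_i(T)$, where
\begin{equation*}
\mdl Z_i(T) = \sum_{m \in \Z^+} \bigg( \sum_{\gamma \in I_{i,m}} [\A]^{-m \sigma_i(\gamma)} \bigg) T^m.
\end{equation*}
Thus the theorem reduces to showing, for each $i$, that $\mdl Z_i(T)$ belongs to $\gmv[[\A]^{-1}][T]_{\dag}$ and that $-\lim_{T \limplies \infty} \mdl Z_i(T) = \chi_b(I_i)$.

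This is the classical polyhedral-generating-function computation already executed in \cite[\S~8.4]{hru:loe:lef}: triangulate $I_i$ into finitely many half-open rational simplices, over each of which the lattice-point sum $\sum_{\gamma \in I_{i,m} \cap \Delta} [\A]^{-m \sigma_i(\gamma)} T^m$ telescopes (summed over $m$) into a finite product of factors of the form $([\A]^{-a}T^b)/(1 - [\A]^{-a}T^b)$, with $b \in \Z^+$ and $a \in \Z$, the integrality of $a$ following from the $\Q$-linearity of $\sigma_i$ together with the integer-valued data carried by $\bm U$. Each such factor is annihilated by addition of $1$ and then vanishes under $-\lim_{T \limplies \infty}$, hence contributes $-1$. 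Summing over the half-open simplices yields a signed count that equals $\chi_b(I_i)$ by the \omin-minimal description of the bounded Euler characteristic.

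The main obstacle is the triangulation of the possibly higher-dimensional polytope $I_i$ in a manner compatible with all the sublattices $m^{-1}\Z^{k_i}$ simultaneously, together with a careful accounting of signs and shifts introduced by the constant term of $\sigma_i$ to guarantee that no denominators lying outside the multiplicative family defining $[T]_\dag$ appear. Once these verifications are carried out, the argument proceeds as in Hrushovski-Loeser, the entire point of the construction of the present paper being that $\int^{\diamond}$ and $\bb E^{\diamond}$ are engineered precisely so that both sides of (\ref{direct:mil}) are evaluable against a common bipolar twistoid decomposition of $\bm U$.
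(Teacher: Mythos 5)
Your proposal follows essentially the same route as the paper: evaluate both sides on a common representative $\bm U$ of $\int^{\diamond}[\mdl X_f]$, pass to a bipolar twistoid decomposition so the residue factor $[\tbk(U_i)]^{\hat\mu}$ splits off, and reduce the theorem to showing that each $\Gamma$-series $\mdl Z_i(T)$ lies in $\gmv[[\A]^{-1}][T]_{\dag}$ with $-\lim_{T\to\infty}\mdl Z_i(T)=\chi_b(I_i)$. The one place you diverge is in proposing to redo the polyhedral lattice-point computation by triangulation and flagging the sublattice-compatibility bookkeeping as an open ``obstacle''; the paper avoids this entirely by observing via Remark~\ref{GLZ:char} that each contracted function $\sigma_i$ is $\Z$-linear (with integer constant terms, since $\Gamma(\bb S)=\Z$) and then quoting \cite[Proposition~8.5.2]{hru:loe:lef} for the $\dag$-rationality and the limit $-\chi(I_i)$, together with \cite[Lemma~8.4.1]{hru:loe:lef} for the Hadamard-product/limit step, so the verification you defer is exactly an existing result and needs no new triangulation argument.
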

\begin{proof}
Let $[\bm U] = [(U, f, l)] \in \ggk^\natural \mgRV^{\db}[*]$ with $l \in \Z$  and consider the zeta function
\begin{equation}\label{U:zeta}
Z([\bm U])(T) = \sum_{m \in \Z^+} (\bm \eta \circ \bm h_m)([\bm U]) T^m \in \gmv[[\A]^{-1}]\dbra T.
\end{equation}
If $[\bm U] / (\bm P_\Gamma) = \int^{\diamond}([\mdl X_f])$ then this is $Z_f(T)$. Thus it is enough to show that  $Z([\bm U])(T)$ belongs to $\gmv[[\A]^{-1}][T]_{\dag}$, and $\lim_{T \limplies \infty} Z([\bm U])(T)$ exists and equals $- (\Theta \circ \bb E^{\diamond})([\bm U])$.

Without loss of generality, we may assume that $U$ is already a bipolar twistoid and the function $l_f: U \fun \Gamma$ (Notation~\ref{comp:form}) $\vrv$-contracts to a function $\sigma : I = \vrv(U) \fun \Gamma$.  Set $v = [\tbk(U)]^{\hat \mu}$. Then
\[
(\Theta \circ \bb E^{\diamond})([\bm U]) = \chi(I) v.
\]
Let $Z(v)(T) = \sum_{m \in \Z^+} v T^m$. This is an element in $\gmv[[\A]^{-1}][T]_{\dag}$ and $\lim_{T \limplies \infty}Z(v)(T) = -v$. Write  $I_{m} = I(m^{-1}\Z)$ and let
\[
Z(\sigma)(T) = \sum_{m \in \Z^+} \sum_{\gamma \in I_{m}} [\A]^{-m \sigma(\gamma)} T^m.
\]
By Remark~\ref{GLZ:char}, $\sigma$ is $\Z$-linear. So, by \cite[Proposition~8.5.2]{hru:loe:lef}, $Z(\sigma)(T)$ is in $\gmv[[\A]^{-1}][T]_{\dag}$ and $\lim_{T \limplies \infty}Z(\sigma)(T) = -\chi(I)$. Since $Z([\bm U])(T)$ is the Hadamard product of $Z(v)(T)$ and $Z(\sigma)(T)$, by \cite[Lemma~8.4.1]{hru:loe:lef}, $\lim_{T \limplies \infty}Z([\bm U])(T) = - \chi(I) v$.
\end{proof}

\begin{rem}
The construction above no longer needs to go through this additional localization process ``$\loc$'' employed throughout \cite[\S~8]{hru:loe:lef}.
\end{rem}

\begin{rem}
Using $\bm \eta_{\loc} \circ \bm h_{\loc,m}$ instead of $\bm \eta \circ \bm h_m$, we can define the zeta function $Z([\bm U])(T)$ for every $[\bm U] \in \ggk^\natural \mgRV^{\bdd}[*]$, and the proof of Theorem~\ref{direct:mil} shows that $\lim_{T \limplies \infty} Z([\bm U])(T)$ exists and belongs to $\gmv[[\A]^{-1}]_{\loc}$. This and  Lemma \ref{lem:hm:bounded} gives the right half of the diagram (\ref{diag-diamond-zeta-function}). However,  we are not  claiming  that $\lim_{T \limplies \infty} Z([\bm U])(T)$ equals $(\Theta \circ \bb E_b)( [\bm U])$, which is reflected in the fact that there is no commutative relation between the left and right columns in (\ref{diag-diamond-zeta-function}). In fact, they are clearly not equal, as seen for example by considering $\bm U = \RV^{\circ\circ}$ (then the first value is $-1$ and the second one is $0$).  Such a failure explains the need to introduce an interpolant, such as the composite map $- \lim \circ Z \circ \int^\diamond$, in (\ref{diag-diamond-zeta-function}), for which Theorem \ref{direct:mil} may be formulated and proven.
\end{rem}

\begin{rem}\label{rem-comp-milnor-fib}
The expression $(\Theta \circ \bb E^{\diamond})([\bm U])$, as an element in $\gmv[[\A]^{-1}]$, does not actually involve  $[\A]^{-1}$. Still, since the coefficients of $Z([\bm U])(T)$ does involve $[\A]^{-1}$ and it is a fact that the natural homomorphism $\gmv \fun \gmv[[\A]^{-1}]$ is not injective, we cannot really take  the motivic Milnor fiber $\mathscr S_f$ of $f$ in $\gmv$, at least not if $\mathscr S_f$ is viewed as something obtained through  $Z_f(T)$. It is this point of view that forces us to work with an integral whose target only involves doubly bounded sets in $\RV$, namely $\int^{\diamond}$, instead of $\int$, so as to facilitate the computation of the coefficients of $Z_f(T)$, and consequently with the nonarchimedean Milnor fiber $\mdl X_f$, which is proper invariant, instead of, perhaps, the more obvious definable set
\[
\mdl X_t = \set{x \in X(\MM) \given f(x) = t},
\]
which is not proper invariant. This set $\mdl X_t$ may be understood as containing the $\puC$-rational points of  the analytic Milnor fiber introduced in \cite{nicaise:sebag:motivic:serre} (see Remark~\ref{rem-comp-milnor-fib-bis} below for more) and does play a role in \cite{hru:loe:lef} (but not in this paper).

On the other hand, in light of Theorem~\ref{direct:mil}, we can forego the zeta function point of view and recover $\mathscr S_f$ directly as $(\Theta \circ \bb E^{\diamond} \circ \int^{\diamond})([\mdl X_f])$. In that case there is truly no need to invert $[\A]$. In fact, we can also recover $\mathscr S_f$  directly as  $\Vol([\mdl X_f])$ (Notation~\ref{mov:vol}), the result is the same because the left half of the diagram (\ref{diag-diamond-interpol}) commutes (Corollary~\ref{db:to:novol}).
\end{rem}

\begin{rem}\label{rem-comp-milnor-fib-bis}
For any $t'\in \rv(t)^\sharp$, there exists an immediate automorphism $\sigma$ of $\puC$ over $\C$ with $\sigma(t') = t$, where ``immediate'' just means that $\sigma$ fixes $\RV(\puC)$ pointwise. Therefore, $\int[\mdl X_{t'}]=\int[\mdl X_t]$. Since $\mdl X_f = \bigcup_{t'\in \rv(t)^\sharp} \mdl X_{t'}$, it follows from compactness  that
\begin{equation*}
\int [\mdl X_f] = \int [\rv(t)^\sharp] \int [\mdl X_t] =  [1]\int [\mdl X_t].
\end{equation*}
Since $\bb E_b([1]) = 1 $, this and Remark \ref{rem-comp-milnor-fib} show that
\[
\mathscr S_f  = \Vol([\mdl X_f]) = \Vol([\mdl X_t]).
\]

This is first shown in \cite{nicaise_ps_tropical}, but without taking the $\hat \mu$-actions into account, and also in \cite{Nic:Pay:trop:fub, forey_virtual_2017}. The arguments there all rely on resolution of singularities as well as other algebro-geometric machineries.
\end{rem}

\providecommand{\bysame}{\leavevmode\hbox to3em{\hrulefill}\thinspace}
\providecommand{\MR}{\relax\ifhmode\unskip\space\fi MR }
\providecommand{\MRhref}[2]{%
  \href{http://www.ams.org/mathscinet-getitem?mr=#1}{#2}
}
\providecommand{\href}[2]{#2}

\end{document}